\documentclass[11pt]{article}
\usepackage{mathrsfs}
\usepackage{}

\usepackage{bbding}
\usepackage{pifont}
\usepackage{amsfonts}
\usepackage{amssymb,amsmath }
\usepackage{graphicx}
\usepackage{amsmath}
\usepackage{amsthm}
\usepackage{amssymb}
\usepackage{concrete}
\usepackage{amscd}

\def\beq{\begin{equation}}
\def\eeq{\end{equation}}

\def\nd{\noindent}

\def\<{\leq}
\def\>{\geq}

\newtheorem{thm}{Theorem}[section]
\newtheorem{lem}{Lemma}[section]
\newtheorem{prop}{Proposition}[section]
\newtheorem{cor}{Corollary}[section]
\newtheorem{defi}{Definition}[section]
\newtheorem{rem}{Remark}[section]

\begin{document}

\title{Quiver data of representations of Hecke algebras and Coxeter groups }
\author{Zhi Chen}
\date{}
\maketitle

\begin{abstract}

  Let $\Gamma$ be a Coxeter matrix and $q$ be a generic parameter, let $H_q (\Gamma)$ be the Iwahori-Hecke algebra.  We propose a  concept $H-$representation of type $[\Gamma;q]$, which are representations of certain quiver satisfying several sets of relations. We show type $[\Gamma;q]$ $H-$representation   and  $H_q (\Gamma)-$representation almost determine each other. By using this we construct all "nonintersecting type" $H_q (\Gamma)-$representations . $H-$representation is compatible with the Kazhdan-Lusztig theory . Some relations between $H-$representation and $W-$graph are proposed, which produce an equation system for a $W-$graph to giving a $W-$representation of $H_q (\Gamma)$ . With the intention of constructing general type $H_q (\Gamma)-$representations we introduce the concept $H^2 -$representation , which is another quiver representation hidden in a $H-$representation. A program to construct general type $H-$representations from a chosen $[\Gamma;q]-$realizable $H^2 -$representation is developed ,by using it we prove that if $\Gamma$ satisfies  $m_{i,j}\geq 5 $ for any $i\neq j$, then any $H^2 -$representation is $[\Gamma;q]-$realizable.

\end{abstract}

\section*{Introduction}

The Iwahori-Hecke algebra $H_q (\Gamma)$ ( $\Gamma=[m_{i,j}]_{i,j\in I}$ is a Coxeter matrix  ) first appeared in the work of Iwahori and Matsumoto on the representation theory of finite groups of Lie type \cite{IM}, where they arise as endomorphism algebras of induced representations from Borel subgroups. These algebras can be understood abstractly as deformations of the group algebra of Coxeter groups when the parameter $q$ are set to $1$.  A transformative moment in the subject arrived with Kazhdan and Lusztig's seminal 1979 paper \cite{KL2}, which introduced a canonical basis and the Kazndan-Lusztig polynomials for Hecke algebras and defined the notion of cells in Coxeter groups. This work revealed deep connections with the representation theory of Lie algebras, the geometry of flag varieties and intersection homology, made Iwahori-Hecke algebra one of the central objects of study in Lie theory. From then on, Hecke algebras, particularly those of finite type and affine type have been studied intensively by many scholars \cite{Ar} \cite{Ch}\cite{EW}\cite{KL1}\cite{Lu1}\cite{Lu2}\cite{OS}\cite{Ra}\cite{So}\cite{Xi2}, revealing many profound and fascinating features of these algebras. Yet currently, very little is known about representations of Hecke algebras of other infinite Coxeter types compared to finite and affine types.

 The constructions of this paper applies to $H_q (\Gamma)$ of all coxeter types, it starts with one of our main observations that a representation $\rho $ of $H_q (\Gamma)$ and certain quiver representation $\Psi_{\rho}$ almost determine each other. The relevant quiver $\mathbb{Q}_I$ is not the Dynkin diagram, but the quiver of the complete directed graph with $I$ (the index set in the definition of $\Gamma$ ) as the set of vertexes. The relevant quiver representation is a $\mathbb{Q}_I -$representations satisfying three sets of relations $(T_{i,j})$ ,$(T^{i,j}_k )$ and $(T^i _{j,k})$, which is called as a \textbf{$H-$representation of type $[\Gamma;q]$} in this paper. Explicitly, suppose $\Gamma = [m_{i,j}]_{i,j\in I}$ be a Coxeter matrix. Suppose the Hecke algebra $H_q (\Gamma)$ and the quiver  $\mathbb{Q}_I$ is defined as in the beginning of section 2.  For a $H_q (\Gamma)-$representation $(V,\rho)$ , the associated $\mathbb{Q}_I -$representation
 $\Psi_{\rho}$ is defined as follows. $\Psi_{\rho}( o_i  )= V_i $, which is the $q-$eigenspace of the morphism $\rho(\sigma_i)$. The
 "jumping operator" $ \Psi_{\rho}( J^i _j) : V_i \rightarrow V_j $ is defined as in Definition 2.1. Then $\Psi_{\rho}$ is proved to be a $H-$representation of type $[\Gamma; q]$ (Definition 2.3) in Theorem 2.2 and Theorem 2.4. Inversely with a $H-$representation $\Psi$ of type $[\Gamma ;q]$, we construct a $H_{q}(\Gamma)-$representation $\rho_{\Psi}$ as in Lemma 2.3$\sim$2.5 and Theorem 2.3. The following facts are established.

 \noindent $\bullet$$\Psi_{\rho_{\Psi}} = \Psi  $ and there is a natural
 morphism $f: \rho_{\Psi_{\rho}} \rightarrow \rho$ whose kernel $\ker(f)$ and cokernel $coker(f)$ are both trivial representations (Lemma 2.7).

 So we could say $\Psi_{\rho}$ and $\rho$ almost determine each other, and call $\Psi_{\rho}$ as the quiver data of $\rho$. In many aspects $\Psi_{\rho}$ can be looked as a condensation of $\rho$, containing almost the same information of $\rho$, but being easier to be constructed and studied. Section 3 displays an immediate outcome of this construction, we construct all $H_q (\Gamma)-$representations of "nonintersecting type" ( $V_i \cap V_j =0 $ for any $i\neq j \in I$   ). The construction of these cases are very easy because in these cases the triangular relations $(T^{i}_{j,k})$ and $(T^{i,j}_k )$ disappear. The nonintersecting representations form a particular family of $H_q (\Gamma)-$representaitons including the famous Tits representation, having relatively simple structure and a clear definition of deformation.

  In section 4, we  show this construction is naturally compatible with Kazhdan-Lusztig theory in several aspects. Firstly if one consider the associated quiver representation $\Psi$ of the regular representation of $H_q (\Gamma)$ , the canonical basis naturally equip every space $\Psi (i)$ with a basis (Proposition 4.2), and by using these bases the operators $\Psi ( J^i _j )$ acquire possibly the simplest matrix representations,  with those constants $\mu ^i _{\alpha ,\beta}$ appearing in its entries ( Theorem 4.2 ). Secondly with the data of a $W-$graph, on one hand one can define a sets morphisms $\tau_i $ as in many literatures, on the other hand  one can define a $\mathbb{Q}_I -$representation $\Psi$ and Theorem 4.3 states $\{ \tau_i \} _{i\in I}$ constitute a $H_q (\Gamma)-$representation if and only if $\Psi$ constitute a $H-$representation of type $[\Gamma ;q]$. For the datum of a $W-$graph indeed form a $W-$graph, the constants
   $\{ \mu^i _{\alpha ,\beta} \}$ should satisfy some complicate algebraic equations coming from the Artin relations of $H_q (\Gamma)$. By using Theorem 4.3, we write down these algebraic relations explicitly in a quite compact form.

   Half of this paper is devoted to construction of $H-$representations of general type. Unlike the nonintersecting type, it seems very hard to choose jumping operators $\{ A^i _j \}_{i\neq j\in I}$ satisfying the eigenvalue conditions $(T_{i,j})$ and the triangular conditions $(T^i _{j,k}),  (T^{i,j}_k )$ simultaneously,even for rank 3 Coxeter matrix $\Gamma$.  The way we found is to introduce another quiver representation $\Theta_{\Psi}$ "inside" a $H-$representation $\Psi$, which is called as the basic data of $\Psi= (V_i , A^i _j )$ (together with two sets of  indicated subspaces ) as follows. The relevant quiver $\mathbb{Q}^2 _I $ is defined in Definition 6.1. The set of vertexes of $\mathbb{Q}^2 _I$ is $I^2 =\{ {i,j}\subset I | i\neq j \}$, that is, the set of unordered pairs of elements of $I$. For any ${i,j}, {k,l} \in I^2$, there
   is a unique edge $J^{i,j} _{k,l}$ from $\{ i,j\}$ to $\{ k,l\}$.  Then $\Theta_{\Psi}( \{ i,j\}  )$ is set as $V_{i,j}\subset V_i $, that is, the $1-$eigenspace of $A^j _i A^i _j$ ( This space is identical to $V_i \cap V_j $ in the $H_q (\Gamma)-$representation $\rho_{\Psi}$  ). The morphism $\Theta_{\Psi}( J^{i,j}_{k,l}  ): V_{i,j} \rightarrow V_{k,l}$ is certain natural morphism. This quiver representation has the following features.

   \noindent \textit{$\bullet$ Morphisms from $V_{i,j}$ to $V_{k,l}$ defined in several different ways coincide (Lemma 6.1)}.

    \noindent \textit{$\bullet$ $\Theta_{\Psi}$ has no special relevance to the Coxeter type $\Gamma$}.

     \noindent \textit{$\bullet$ $\Theta_{\Psi}$ contains the information about the arrangement of subspaces $\{ V_{i,j} \}_{j:j\neq i}$ in $V_i$ and partial information about the jumping operators $A^i _j$}.

     Except for $\Theta_{\Psi}$, we can also derive two sets of subspaces $\{ U_i \subset \oplus _{j: j\neq i} V_{i,j} \} $ and $\{ W_i \subset \oplus _{j: j\neq i} V_{i,j} \} $ for $i\in I$. The combination $( \Theta_{\Psi} ; \{ U_i \} ; \{ W_i \}    )$ is defined ( Definition 6.4) as a "decorated $\mathbb{Q}^2 _{I}-$representaion", and is called as the basic data of the $H-$representation $\Psi$.
     Not every such combination $( \Theta ; \{ U_i \} ; \{ W_i \}    )$ come from a $H-$representation. In Theorem 6.2, several necessary conditions for this combination to be coming from some $H-$representations are found.  In Definition 6.4 we introduce the term \textbf{$H^2 -$repsentation} as such a combination satisfying these necessary relations. Furthermore,  a $H^2 -$representation is called as " $[\Gamma ;q]-$realizable " if it is induced from a $H-$representaion of type $[\Gamma ;q ]$.  We can now decompose the construction of $H-$representations of a prescribed type $[\Gamma ;q]$ into two steps.

     \noindent (1) \textit{Determine all $[\Gamma ;q]-$ realizable $H^2 -$representations}.

     \noindent (2) \textit{For each $[\Gamma ;q]-$ realizable $H^2 -$representation $( \Theta ; \{ U_i \} ; \{ W_i \}    )$, construct all  $[\Gamma ;q]$ type $H-$representations having it as the basic data}.

     In section 7 we propose a procedure to construct a $H-$representation of prescribed type $[\Gamma ;q]$ from a chosen $H^2 -$representation $( \Theta ; \{ U_i \} ; \{ W_i \}    )$ by three steps as follows.

     \noindent \textbf{Step 1. Space realizations.} In section 7.1, series of linear spaces $\{ V_i , V^{\#}_i \} $ (together with some accessory constructions as described below ) are constructed. For any $i\in I$, there is a perfect pairing
     $[-,-]_i : V_i \times V^{\#}_i \rightarrow \kappa$, and for every pair $i\neq j \in I$, there are decompositions (these decompositions are important for $H-$representations ) $V_i \cong V_{i,j} \oplus V_{i/j}$ and  $V^{\#} _i \cong V^{\#} _{i,j} \oplus V^{\#}_{i/j}$ such that $V^{\#}_{i,j} = ( V_{i/j}   )^{\pm}$ and $V^{\#}_{i/j} = ( V_{i,j}   )^{\pm}  $. This construction is  consistent with $\Theta$ and is unique "up to adding a free factor".

      \noindent \textbf{Step 2. Partial realization of jumping operators. } Section 7.2$\sim$7,4 are devoted to construction of the jumping operators $A^i _j : V_i \rightarrow V_j$ and $A^{\# i} _j : V^{\#}_i \rightarrow V^{\#}_j $.  The main point of this construction is if we construct $V_i$ and $A^i _j$ together with $V^{\#}_i$ and $A^{\# i}_j $ in the dual side simultaneously and keep them being consistent with each other in some sense, then the triangular relations $(T^{i,j}_k)$ and $(T^{\# i,j}_k )$ for $\{ A^i _j , A^{\# i} _j \}$will be satisfied automatically. Then the rest of the main tasks is to keeping $\{ A^i _j , A^{\# i} _j \}$ satisfying the triangular relations $(T_{i,j})$. For simple reason one only need to construct morphisms  $ B^i _j : V_{i/j} \rightarrow V_{j/i}$ and $B^{\# i}_j : V^{\#}_{i/j}\rightarrow V^{\#}_{j/i}$. To keeping these morphisms be consistent with $\Theta$, we first choose morphisms $\psi^{ i}_j : V^0 _{i/j} \rightarrow V_{j/i}  $ and  $\psi^{\# i}_j : V^{\# 0} _{i/j} \rightarrow V^{\#}_{j/i}  $ being consistent with $\Theta$ then try to extend them, where $V^0 _{i/j} \subset V_{i/j} $ and  $V^{\# 0} _{i/j}\subset V^{\#}_{i/j}$ are certain subspaces.

       \noindent \textbf{Step 3. Extension of matched pair of cross morphisms. } We introduced the conception "\textbf{cross morphism}" to name the 6-tuple $[V_{i/j} , V^{0}_{i/j}; V_{j/i}, V^0 _{j/i}; \psi^i _j , \psi^j _i   ]$. It form a "\textbf{matched pair of cross morphisms}" together with $[V^{\#}_{i/j} , V^{\# 0}_{i/j}; V^{\#}_{j/i}, V^{\# 0} _{j/i}; \psi^{\# i} _j , \psi^{\# j} _i   ]$ in the dual side. Now the remaining task is to find extension $B^i _j : V_{i/j}\rightarrow V_{j/i}$ of $\psi^i _{j}$ such that

\noindent (1) \textit{$(  B^i _j )^{*}$ is a extension of $\psi^{\# j}_i $}.

\noindent (2) \textit{Eigenvalues of $ B^j _i B^i _j  $ belong to  $\{ 4 cos(\frac{a\pi}{m_{i,j}})^2 \frac{1}{(v+v^{-1})^2} \} _{a=1,2,...,[\frac{m_{i,j}-1}{2} ]} $}.

This is purely  a  linear algebra problem. A thorough study of cross morphism in section 7.3 enable us to turn   "existence of such extension" into "simultaneous solvability of several line equation systems" (Theorem 7.4). One can construct such extensions from solutions of these equation systems. It enable us to construct many general type (not nonintersecting type) $H-$representations. 
  By using Theorem 7.4 we obtain a solution of problem (1) for those $\Gamma =[m_{i,j}]_{i,j\in I}$ where $m_{i,j}\geq 5$ for any $i\neq j\in I$.
\begin{thm} (Theorem 7.5) If $\Gamma=[ m_{i,j}]_{i,j\in I} $ such that $m_{i,j}\geq 5$ for any $i\neq j\in I$, then for any generic $q\in\kappa$,  and for any $H^2 -$representation $\Theta$, $\Theta$ is $[\Gamma;q]-$realizable.
\end{thm}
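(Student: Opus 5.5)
The plan is to follow the three-step procedure of Section 7 and reduce realizability to the solvability criterion of Theorem 7.4.

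Given an $H^2$-representation $(\Theta;\{U_i\};\{W_i\})$, I first perform the space realization of Section 7.1. This produces spaces $V_i, V^{\#}_i$ with perfect pairings $[-,-]_i$, and decompositions $V_i\cong V_{i,j}\oplus V_{i/j}$ and $V^{\#}_i\cong V^{\#}_{i,j}\oplus V^{\#}_{i/j}$ compatible with $\Theta$. Since this realization is unique only up to adding a free factor, I keep the freedom to enlarge $V_{i/j}$ and $V^{\#}_{i/j}$ by free summands. That freedom will be used later.

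Next, following Sections 7.2–7.4, I choose the partial morphisms $\psi^i_j: V^0_{i/j}\to V_{j/i}$ and $\psi^{\# i}_j$ consistent with $\Theta$. By construction the triangular relations $(T^{i,j}_k)$ and $(T^{\# i,j}_k)$ then hold automatically. The whole problem therefore reduces, pair by pair $\{i,j\}$, to extending the matched pair of cross morphisms to $B^i_j$ and $B^j_i$. The extension must satisfy two conditions:

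- $(B^i_j)^*$ extends $\psi^{\# j}_i$;
- every eigenvalue of $B^j_iB^i_j$ lies in the set $\{4\cos^2(a\pi/m_{i,j})/(v+v^{-1})^2\}_{1\le a\le [(m_{i,j}-1)/2]}$.

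Since the pairs $\{i,j\}$ do not interact at this stage, it suffices to treat a single pair. Theorem 7.4 converts existence of such an extension into simultaneous solvability of several linear systems, whose unknowns are the entries of $B^i_j$ and $B^j_i$ on the complements of $V^0_{i/j}$ and $V^0_{j/i}$.

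The key point, and the main obstacle, is showing these linear systems are always solvable when $m_{i,j}\ge5$. The reason to expect this is that for $m_{i,j}\ge 5$ the admissible eigenvalue set has at least two distinct elements, since $[(m_{i,j}-1)/2]\ge2$. With two distinct admissible values $\lambda_1\ne\lambda_2$, the prescribed data on $V^0_{i/j}$ (essentially nilpotent or arbitrary blocks coming from $\Theta$) can be completed. The idea is to make $B^j_iB^i_j$ block upper triangular, with the constrained part sitting in a block whose diagonal can be assigned values from $\{\lambda_1,\lambda_2\}$. The compatibility equations of Theorem 7.4 then become linear systems whose coefficient matrices involve $\lambda_1-\lambda_2$ or $\lambda_a$ times invertible pieces. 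These are invertible, since $\lambda_a\neq0$ because $q$ is generic and $\cos(a\pi/m)\ne0$ for $a<m/2$.

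Concretely, I would try the following:
1. Enlarge $V_{i/j}$ by a free factor, if necessary, so that $\dim V_{i/j}=\dim V_{j/i}$ and the complements to $V^0_{i/j}$ are large enough.
2. Write the equations of Theorem 7.4 in block form.
3. Solve them by choosing the free blocks to make $B^j_iB^i_j$ equal to $\lambda_1\,\mathrm{id}+N$ on one part and $\lambda_2\,\mathrm{id}+N'$ on the other.
4. Check that the duality constraint with $\psi^{\#}$ is consistent, using the pairing relations $V^{\#}_{i,j}=(V_{i/j})^{\perp}$.

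For $m_{i,j}\le4$ only one admissible eigenvalue (or none) is available, which is why the hypothesis is needed. Once every pair is handled, assembling the $B^i_j$ gives an $H$-representation of type $[\Gamma;q]$ with basic data $\Theta$, proving realizability.
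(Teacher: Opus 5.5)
Your reduction to a pair-by-pair extension problem for matched cross morphisms matches the paper's. However, the step you flag as the main obstacle relies on an idea your proposal does not contain. Without it, the claim that these systems are always solvable when $m_{i,j}\geq 5$ is false.

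The criterion you cite as Theorem 7.4 is more than solvability of linear systems. It also requires that $(h|_{V^{\infty}_0},g|_{W^{\infty}_0})$ be a specialized $(m;q)$-couple, and that $[\frac{N_1+1}{2}]+1\leq L_m$ (similarly for $N_2,M_1,M_2$). Moreover, the systems contain one equation $\sum_d\lambda_d^aX_d=\Upsilon(2a)$ for every $0\leq 2a\leq i+j+2$, while there are only $L_m$ unknowns per pair of basis vectors.

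All of these depend on the chosen $\psi^i_j,\psi^{\# i}_j$, that is, on $\Delta_i$ and on the lift $\eta_i$. Your choice ("consistent with $\Theta$") does not control them, and adding free factors cannot help, because trivial extensions do not change $V_0,W_0,h,g$. Two examples:
\begin{itemize}
\item If some $v\in V^0_{i/j}$ has $\psi^i_j(v)\in V^0_{j/i}$ and $\psi^j_i\psi^i_j(v)=cv$ with $c$ non-admissible, then every extension has $B^j_iB^i_j(v)=cv$.
\item For $m_{i,j}=5$ one has $L_m=2$. Any $v\in V^3_0\setminus V^4_0$ makes $v,\,ghv,\,(gh)^2v$ linearly independent, which is impossible for a diagonalizable operator with two eigenvalues.
\end{itemize}
Your block-triangular sketch does not address this. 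Also, taking $B^j_iB^i_j=\lambda_1\mathrm{id}+N$ with $N$ a nonzero nilpotent would break the semisimplicity that an $(m;q)$-couple requires.

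The missing step is to enlarge $V_{i/}$ before choosing the partial morphisms, and then choose them generically, as the paper does.
\begin{itemize}
\item Take the lift $\eta_i=\eta'_i+\tau_i$ with $\tau_i:W_i\to V_{i/}$ injective and $\mathrm{Im}(\tau_i)\cap(V^0_i+\mathrm{Im}\,\eta'_i)=0$. Then $\bar\phi^i_i(v)\neq v$ for $v\neq 0$.
\item Write $\bar V^0=V^0_i\oplus U_i$. Let $\Delta_i$ map $U_i$ isomorphically onto a subspace $U'_i\subset V_{i/}$ that meets $\{v-\bar\phi^i_i(v)\}_{v\in V^0_i}+\bar W_i+V^0_i$ trivially. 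Do the same on the dual side.
\end{itemize}
This forces $\psi^j_i$ to be injective with $\psi^j_i(V^0_{j/i})\cap V^0_{i/j}=0$, and likewise for $\psi^{\#}$. Hence $N_1=N_2=M_1=M_2=0$ and $V^{\infty}_0=W^{\infty}_0=0$, so conditions (1) and (3) of the criterion hold automatically.

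The systems then reduce to pairs $\sum_dX_d=c$, $\sum_d\lambda_dX_d=c'$, together with single equations $\sum_dX_d=c$, in $L_m$ unknowns. These are solvable because $L_m\geq 2$ and the $\lambda_d$ are distinct; this is exactly where $m_{i,j}\geq5$ enters. The criterion and the assembly theorem then finish the proof.
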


\section{The Quiver data construction}

Let $\Gamma=[m_{i,j}]_{i,j\in I}$ be any Coxeter matrix. Let $H_q(\Gamma)$ and $W_{\Gamma}$ be the corresponding Hecke algebra and Coxeter group respectively as follows.
\begin{defi} (\cite{Hu})
The Hecke algebra $H_q (\Gamma)$ is generated by $\{ \sigma_i \}_{i\in I}  $ with relations:

\noindent(1) $[\sigma_i \sigma_j ... ]_{m_{i,j}} = [\sigma_j \sigma_i ...  ]_{m_{i,j}}$ for any $i\neq j \in I$.

\noindent(2) $( \sigma_i -q  )(\sigma_i +1)=0 $ for any $i\in I$.
\end{defi}
The notation $[xy...]_{m}$ means a word of length $m$, in which the letter $x$ and $y$ appear alternatively.For example
$[xy...]_{5}=xyxyx$.
\begin{defi} (\cite{Hu})
 The Coxeter group $W_{\Gamma} $ is the group generated by $\{ s_i \}_{i\in I}$ with relations:

\noindent(1) $[\sigma_i \sigma_j ... ]_{m_{i,j}} = [\sigma_j \sigma_i ...  ]_{m_{i,j}}$ for any $i\neq j \in I$.

\noindent(2) $( \sigma_i )^2 =1 $ for any $i\in I$.
\end{defi}

 Denote the Coxeter type of dihedral group $D_n$  as $I_n$. Through out this paper we suppose the base field is some field $\kappa$, and suppose $q\in \kappa$ is  a generic parameter in the sense that $H_{q}(I_{m_{i,j}})$ is semisimple for any $i,j\in I$.

 With any finite set $I$, a quiver $\mathbb{Q}_I$ is defined as follows. The vertices of $\mathbb{Q}_I$ is just $I$. For each ordered pair of vertices $\{ i ,j \}$, there is a unique arrow $J^i _j$ from $i $ to $j $. So the set of arrows $\mathbb{Q}_I$ is $\{ J^i _j  \}_{ i\neq j \in I }$.

 Let $(V,\rho)$ be a finite dimensional linear representation of $H_q (\Gamma)$. Then the restriction of $\rho$ to any dihedral parabolic subalgebra of $H_q (\Gamma)$ is completely reducible.For any $i\in I$, let $V_i$ and $V^0 _{i}$ be the eigenspace of $\rho(\sigma_i)$ of eigenvalue $q$ and $-1$ respectively. So $V= V_i \oplus V^0 _{i}$.  Now let $i\neq j\in I$. For any $v\in V_i$, there is $v=v_1 +v_2$ in a unique way such that $v_1 \in V_j $ and $v_2 \in V^0 _{j}$. We set $A^i _j (v)= v_1$. In this way  a linear operator $A^i _j : V_i \rightarrow V_j$ is defined.

\begin{defi}

The \textbf{quiver data of the representation $(V, \rho)$ } is the following representation $\Psi_{\rho}$ of  the quiver $\mathbb{Q}_n$. Where

$\Psi_{\rho}(i) =V_i $ for any $i\in I$, and $\Psi_{\rho}(J^i _j) = A^i _j$ for any $i\neq j\in I$.

\end{defi}

The operators $A^i _j$ and the quiver representations $\Psi_{\rho}$ are main objects in this paper.  We call these $A^i _j$ as \textbf{jumping operators }. Representations of $H_q (I_m )$ are known well.

\begin{thm} (\cite{GP}) Let $q\in \kappa$ be generic such that $H_q (I_m )$ is irreducible. Let

$\tau_j = 4\cos (\frac{j \pi}{m}  )^2 q (1+q )^{-2}$. Up to equivalence, the irreducible representations of $H_{q} (I_m)$ are as follows:

\noindent(1) Let $m$ be even.   There are four one-dimensional representations given by

$ind: \sigma_1 \mapsto q , \sigma_2 \mapsto q $;        $\epsilon_1 : \sigma_1 \mapsto q , \sigma_2 \mapsto -1 $;

$\epsilon: \sigma_1 \mapsto -1 , \sigma_2 \mapsto -1 $;  $\epsilon_2 : \sigma_1 \mapsto -1 ,\sigma_2 \mapsto q $.

and $\frac{m}{2}-1$ irreducible representations  of dimension 2 given by

$\mu_j $: $\sigma_1 \mapsto \left(
                    \begin{array}{cc}
                      -1 &  1+q \\
                      0 & q \\
                    \end{array}
                  \right)
,  \sigma_2 \mapsto \left(
                      \begin{array}{cc}
                        q & 0 \\
                        \tau_j (1+q ) & -1 \\
                      \end{array}
                    \right)
 $, for $1\leq j\leq \frac{m}{2}-1$.

\noindent(2) Let $m$ be odd.   There are two one-dimensional representations given by

$ind: \sigma_1 \mapsto q , \sigma_2 \mapsto q $;        $\epsilon: \sigma_1 \mapsto -1 , \sigma_2 \mapsto -1 $;

and $\frac{m-1}{2}$ irreducible representations  of dimension 2 given by

$\mu_j $: $\sigma_1 \mapsto \left(
                    \begin{array}{cc}
                      -1 &  1+q \\
                      0 & q \\
                    \end{array}
                  \right)
,  \sigma_2 \mapsto \left(
                      \begin{array}{cc}
                        q & 0 \\
                        \tau_j (1+q ) & -1 \\
                      \end{array}
                    \right)
 $, for $1\leq j\leq \frac{m-1}{2}$.

\end{thm}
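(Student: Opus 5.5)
We classify the irreducible modules of $H_q(I_m)$ directly from the quadratic relations and the braid relation. Throughout, $q$ is generic, so $H_q(I_m)$ is semisimple.

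\textbf{One-dimensional representations.} Each $\sigma_i$ must act by a root of $(x-q)(x+1)$, so by $q$ or $-1$. The braid relation $[\sigma_1\sigma_2\cdots]_m=[\sigma_2\sigma_1\cdots]_m$ becomes $a^{\lceil m/2\rceil}b^{\lfloor m/2\rfloor}=b^{\lceil m/2\rceil}a^{\lfloor m/2\rfloor}$, where $a,b$ are the scalars for $\sigma_1,\sigma_2$.
\begin{itemize}
\item If $m$ is even, this holds for all four choices, giving $ind,\epsilon_1,\epsilon_2,\epsilon$.
\item If $m$ is odd, it forces $a=b$, since $q\neq -1$ for generic $q$. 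This gives $ind$ and $\epsilon$.
\end{itemize}

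\textbf{Irreducible modules of dimension at least $2$.} Let $V$ be such a module. For any eigenvector $v$ of $\sigma_1$, the span of $v$ and $\sigma_2 v$ is stable under both generators. Indeed, the quadratic relation gives $\sigma_2^2=(q-1)\sigma_2+q$, and $\sigma_1\sigma_2 v$ can be reduced by first writing $v$ in terms of $\sigma_2$-eigencomponents.

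A cleaner way to see this: take $v$ with $\sigma_1 v=-v$ and put $w=\sigma_2 v+v$. Then $\sigma_2 w=qw$ by the quadratic relation, and $\mathrm{span}\{v,w\}$ is invariant. If $\sigma_1$ acts as $q$ on all of $V$, choose $v$ as a $(-1)$-eigenvector of $\sigma_2$ instead. If no $(-1)$-eigenvector exists for either generator, $V$ is a sum of copies of $ind$.

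So irreducibility forces $\dim V=2$. Moreover, each $\sigma_i$ has both eigenvalues $q$ and $-1$; otherwise $\sigma_i$ is scalar and $V$ splits into lines.

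\textbf{Normal form of the two-dimensional modules.} Choose $e_1$ with $\sigma_1 e_1=-e_1$. Let $e_2$ be the component of $e_1$ in the $q$-eigenspace of $\sigma_2$, suitably scaled. After a change of basis, $\sigma_1$ and $\sigma_2$ take the matrix forms
\[
\sigma_1=\begin{pmatrix}-1&1+q\\0&q\end{pmatrix},\qquad \sigma_2=\begin{pmatrix}q&0\\ c&-1\end{pmatrix}.
\]
The scalar $c$ is the only remaining invariant. It is essentially the trace of the product of the two jumping projections.

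\textbf{Imposing the braid relation.} Set $T_i=\sigma_i$, and write $x$ for the eigenvalues of $T_1T_2$. The characteristic polynomial of $T_1T_2$ is $x^2-(\mathrm{tr})x+q^2$, with $\mathrm{tr}(T_1T_2)=c(1+q)-2q$. Put $x=q\zeta$. The braid relation of length $m$ for a $2\times2$ pair with both eigenvalues $q,-1$ is equivalent to $(T_1T_2)^{m/2}$ being central when $m$ is even, or to the odd analogue. Either way, it says $\zeta$ is an $m$-th root of unity with $\zeta\neq\pm1$; the excluded values give reducible modules.

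Writing $\zeta=e^{2\pi i j/m}$ gives $\mathrm{tr}=q(\zeta+\zeta^{-1})=2q\cos(2\pi j/m)$. Hence
\[
c(1+q)=2q\bigl(1+\cos(2\pi j/m)\bigr)=4q\cos^2(j\pi/m),
\]
which yields $c=\tau_j(1+q)$.

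\textbf{The main obstacle.} The key step is rigorously proving the equivalence between the braid relation and this root-of-unity condition. To do it, diagonalize $T_1T_2$ over $\bar\kappa$ and express the alternating words via Chebyshev-type recursions: the braid relation reduces to $U_{m-1}(\mathrm{tr}/2q)=0$ times a fixed factor. Genericity of $q$ then ensures the resulting roots are distinct.

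\textbf{Counting and completeness.} Distinct $j$ with $1\le j\le\lfloor (m-1)/2\rfloor$ give distinct $c$, hence non-isomorphic modules. These give $\frac m2-1$ modules for even $m$ and $\frac{m-1}2$ for odd $m$. As a completeness check, $4\cdot1+(\frac m2-1)\cdot4=2m$ and $2+\frac{m-1}2\cdot4=2m$, which matches $\dim H_q(I_m)=2m$ by semisimplicity (Wedderburn). So the list is complete.
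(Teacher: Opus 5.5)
The paper gives no proof of this statement; it is quoted from \cite{GP}. So your proposal has to stand on its own, and as written it has a genuine gap exactly at the step you call the main obstacle.

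Your even-$m$ reduction is fine. The braid relation is equivalent to $(T_1T_2)^{m/2}$ commuting with $T_1$, hence being central, hence being scalar in an absolutely irreducible $2$-dimensional module. That is equivalent to $\zeta^m=1$, $\zeta\neq\pm1$.

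For odd $m$, however, you never say what the ``odd analogue'' is. The natural candidate, centrality of $(T_1T_2)^m$, is only necessary, since it allows $\zeta^m=-1$. For example, take $m=3$ and $c(1+q)=3q$, so $\mathrm{tr}(T_1T_2)=q$. Then $(T_1T_2)^3=-q^3$, but $T_1T_2T_1\neq T_2T_1T_2$.

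The Chebyshev reduction you propose also has the wrong argument. $U_{m-1}(\mathrm{tr}/(2q))=0$ means $\mathrm{tr}/(2q)=\cos(\pi j/m)$ for $1\le j\le m-1$, i.e.\ $\zeta^{2m}=1$. This contradicts your own $\zeta^m=1$ and would make $\sum(\dim)^2$ exceed $2m$.

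What is true is the following. Put $P_i=(\sigma_i+1)/(1+q)$. Then $P_1P_2P_1=\lambda P_1$ and $P_2P_1P_2=\lambda P_2$ with $\lambda=c/(1+q)$. The difference of the two braid words equals $f_m(\lambda)(P_1-P_2)$ for $m$ odd and $f_m(\lambda)(P_1P_2-P_2P_1)$ for $m$ even. Here $f_m$ is proportional to $U_{m-1}(t)$, resp.\ $U_{m-1}(t)/t$, with $t^2=(1+q)^2\lambda/(4q)$. Note that $\mathrm{tr}(T_1T_2)/(2q)=2t^2-1$, not $t$. For instance, $f_3\propto(1+q)^2\lambda-q$ and $f_4\propto(1+q)^2\lambda-2q$. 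The roots of $f_m$ are exactly $\lambda=\tau_j$, which gives necessity and existence at once. Existence of $\mu_j$ for odd $m$ is precisely what your closing Wedderburn count relies on, so this gap is not cosmetic.

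The reduction to $\dim V=2$ is also invalid as stated. $\mathrm{span}\{v,\sigma_2v\}$ is $\sigma_2$-stable by the quadratic relation, but nothing controls $\sigma_1\sigma_2v$ for an arbitrary eigenvector $v$ of $\sigma_1$. Your reasoning uses only the quadratic relations, so it would apply verbatim to the regular representation. There, for $m\ge3$ and $v=T_1-q$, the vector $\sigma_1\sigma_2v=T_1T_2T_1-qT_1T_2$ leaves the span.

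The fix is to choose $v$ well. Let $E$ be the $(-1)$-eigenspace of $\sigma_1$ and $Q=(q-\sigma_1)/(1+q)$. Take $v\in E$ to be an eigenvector of the compression $Q\sigma_2|_E$. If $Q\sigma_2v=\mu v$, then $\sigma_1\sigma_2v=q\sigma_2v-(1+q)\mu v$, so the span is stable.

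Alternatively, drop this step and let the count $\sum(\dim)^2=2m$ do the work. But then you must check that each $\mu_j$ is absolutely irreducible. That means $c\neq0$, which is automatic, and $c\neq1+q$, which is equivalent to $q\neq\zeta^{\pm1}$. This condition, together with $q\neq-1$, is where genericity enters. It does not enter in the distinctness of the $\tau_j$, which holds for every $q$.

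There are also two smaller slips.
\begin{itemize}
\item When $\zeta=1$ and $q\neq1$, you get an irreducible module on which $T_1T_2$ is a nontrivial Jordan block. It is excluded because the braid relation fails, not because it is reducible.
\item In your normal form, $e_2$ must be a $(-1)$-eigenvector of $\sigma_2$, as your matrix has $\sigma_2e_2=-e_2$. It is not the $q$-component of $e_1$.
\end{itemize}
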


The following theorem  describe the eigenvalues of the operators  $A^j _i A^i _j$.

\begin{thm} Let $i\neq j \in I$.

\noindent(1) If $m_{i,j}$ is odd, then the operator $A^j _i A^i _j$ is semisimple, with eigenvalues in
$\{1\}\cup \{ 4 cos^2(\frac{a\pi}{m_{i,j}}) \frac{1}{q+q^{-1}+2} \} _{a=1,2,...,[\frac{m_{i,j}-1}{2} ]}$.Notice
$[\frac{m_{i,j}-1}{2} ]=\frac{m_{i,j}-1}{2}$ in these cases.

\noindent(2) If $m_{i,j}$ is even, then the operator $A^j _i A^i _j $ is semisimple, with eigenvalues in
$\{ 1,0\} \cup \{ 4 cos^2(\frac{a\pi} {m_{i,j} }) \frac{1}{q+q^{-1}+2} )  \}_{a=1,2,...,[\frac{m_{i,j}-1}{2} ]}$.
Notice $[\frac{m_{i,j}-1}{2} ]=\frac{m_{i,j}}{2} -1 $ in these cases.
In addition, if $u \in V_i $ is a eigenvector of $A^j _i A^i _j $ of eigenvalue $0$, then $A^i _j (u)=0$.

\noindent(3) $v\in V_i $ is an eigenvector of $A^{j} _i A^{i} _j $ of eigenvalue 1 if and only if $v\in V_i \cap V_j $.

For later convenience, the number $4 cos^2(\frac{a\pi}{m}) \frac{1}{q+q^{-1}+2}$  will be denoted  as $\lambda_{m;q}(a)    $.

\end{thm}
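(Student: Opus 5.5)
The plan is to reduce to the dihedral parabolic subalgebra $H_q(I_{m_{i,j}})$ generated by $\sigma_i,\sigma_j$. Call it $H_{i,j}$. By the genericity assumption on $q$, the restriction of $\rho$ to $H_{i,j}$ is completely reducible. So I will decompose $V=\bigoplus_\alpha M_\alpha$ into irreducible $H_{i,j}$-submodules, each isomorphic to one of the modules listed in the theorem of \cite{GP}. Each $M_\alpha$ is $\rho(\sigma_i)$- and $\rho(\sigma_j)$-stable. Hence $V_i=\bigoplus_\alpha (M_\alpha\cap V_i)$, $V^0_j=\bigoplus_\alpha(M_\alpha\cap V^0_j)$, and similarly for the other eigenspaces. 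The projection defining $A^i_j$ (onto $V_j$ along $V^0_j$) therefore preserves each $M_\alpha$. It follows that $A^j_iA^i_j$ preserves each $M_\alpha\cap V_i$, and it suffices to compute it summand by summand. Semisimplicity of $A^j_iA^i_j$ will follow once every block is shown to be a scalar.

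The summands are handled one type at a time, using the listed irreducibles.

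\begin{itemize}
\item \emph{The trivial module $ind$.} Here $M\cap V_i=M\subset V_j$, so $A^i_j$ and $A^j_i$ are the identity and the eigenvalue is $1$.
\item \emph{The modules $\epsilon$ and $\epsilon_2$.} Here $M\cap V_i=0$, so there is no contribution.
\item \emph{The module $\epsilon_1$ ($m$ even).} Here $M\subset V_i\cap V^0_j$, so $A^i_j=0$ on it and the eigenvalue is $0$, with $A^i_j(u)=0$. This gives the extra claim in (2).
\item \emph{The two-dimensional module $\mu_a$.} Take the basis $e_1,e_2$ of the matrices in \cite{GP}. The $q$-eigenspace of $\sigma_1$ is spanned by $x=(1,1)^T$, since $-1+(1+q)=q$. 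The $-1$-eigenspace of $\sigma_2$ is spanned by $e_2$, and its $q$-eigenspace by $y=(q+1,\tau_a(1+q))^T$, i.e. $(1,\tau_a)^T$ up to scalar. Decomposing $x=(1,\tau_a)^T+(0,1-\tau_a)^T$ gives $A^i_j x=(1,\tau_a)^T$. Symmetrically, decomposing $(1,\tau_a)^T$ along $x$ and the $-1$-eigenvector $e_1$ of $\sigma_1$ gives $(1,\tau_a)^T=\tau_a x+(1-\tau_a)e_1$, so $A^j_iA^i_jx=\tau_a x$. Since $\tau_a=4\cos^2(a\pi/m)\,q(1+q)^{-2}=\lambda_{m;q}(a)$, this is the claimed eigenvalue. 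The range of $a$ matches the lists in (1) and (2).
\end{itemize}

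For (3), the eigenvalues $\lambda_{m;q}(a)$ with $1\le a\le[\frac{m-1}{2}]$ and $0$ are all different from $1$ for generic $q$. This is the one point needing care: I would verify that $\tau_a=1$ would force $H_q(I_m)$ to be non-semisimple, using the generic hypothesis. Granting that, the $1$-eigenspace of $A^j_iA^i_j$ is exactly $\bigoplus_{M_\alpha\cong ind}M_\alpha\cap V_i$. This space is contained in $V_i\cap V_j$.

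Conversely, suppose $v\in V_i\cap V_j$. Then $A^i_jv=v$ and $A^j_iv=v$ directly from the definitions, which proves (3).

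The main obstacle is only bookkeeping. The points to check are that $A^i_j$ respects the decomposition, which follows from the eigenspace decompositions being compatible with the direct sum, and the genericity argument excluding $\tau_a\in\{0,1\}$.
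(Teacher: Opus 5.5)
Your proposal is correct and follows the same route as the paper: restrict to the dihedral parabolic subalgebra, decompose into the irreducibles of the cited classification, and check each summand. The paper shows $A^i_j$ respects the decomposition by writing it as $\frac{1}{1+q}(\sigma_j+1)$ restricted to $V_i$, which is equivalent to your eigenspace argument. The point you flag, that $\tau_a\notin\{0,1\}$ (which the paper also passes over with ``it is easy to see''), follows directly from the irreducibility of $\mu_a$ asserted in the classification you cite: if $\tau_a=1$ then $(1,1)^T$ spans a copy of $ind$ inside $\mu_a$, and if $\tau_a=0$ then $e_1$ spans a copy of $\epsilon_2$.
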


\begin{pf}
For any $v\in V^{0} _i $ and $j\in I$, the following identity

$ v=  \frac{1}{1+q} (\sigma_j +1)(v)-\frac{1}{1+q}(\sigma_j -q)(v)  $

implies $A^{i}_{j} (v) = \frac{1}{1+q} (\sigma_j +1)(v)$, since $ \frac{1}{1+q} (\sigma_j +1)(v)\in V^{0}_j    $ and
$-\frac{1}{1+q}(\sigma_j -q)(v) \in V^{-1} _j $. So
 $A^{j} _i A^{i} _j (v) =\frac{1}{(1+q)^2 } ( \sigma_i +1  )(\sigma_j +1)(v)  $.

Now let $H_{i,j} $ be the subalgebra of $H_{\Gamma}(q)$ generated by $ \{  \sigma_i , \sigma_j  \}  $, which is isomorphic to the
Hecke algebra $H_q (I_{m_{i,j}})$. Let $\psi: H_q (I_{m_{i,j}})\rightarrow H_{i,j} $ be the isomorphism that
$\psi(\sigma_1 )= \sigma_i ,\psi(\sigma_2 )= \sigma_j $. Now restrict the representation
$(V,\rho )$ to be a $H_{i,j}-$representation   $(V, \rho_{i,j}   )$ . Through $\psi$, $V$ become a $H_q (I_{m_{i,j}})$-module $( V,\rho_{i,j}\circ \psi  )$   , on which
$\sigma_1 ,\sigma_2$ act as $\rho_{i,j} (\sigma_i ) ,\rho_{i,j} (\sigma_j)  $ respectively. Here we transform important items in the theorem as items of the $H_q (I_{m_{i,j}})$-module $( V,\rho_{i,j}\circ \psi  )$:   $V_i$, $V_j$, $A^i _j$ and  $A^j _i$ can be explained as
 $\ker(\sigma_1 -q)$, $\ker(\sigma_2 -q)$, $\frac{1}{1+q}(\sigma_2 +1)$ and $ \frac{1}{1+q}(\sigma_1 +1) $ respectively.
 In this way, (1),(2), and (3) of this theorem are translated to statements $(1)^{'} $ , $(2)^{'} $ and $(3)^{'}$, which are statements about  the $H_q (I_{m_{i,j}})$-module $( V,\rho_{i,j}\circ \psi  )$.

 Since $ H_q (I_{m_{i,j}}) $ is semisimple, the representation $\rho_{i,j}\circ \psi$ decompose into a direct sum of irreducible representations $\oplus ^{N} _{l=1} \rho ^{l} _{i,j}$. Every $\rho ^{l} _{i,j}$ is some member in the list in Theorem 1.1. We arrange
 these irreducible components such that for $l=1,2,..., n$,  $\rho ^{l} _{i,j}$ is isomorphic to one dimensional representation $ind$ in this list. Since the operators  $A^{j} _i A^{i} _j (= (\frac{1}{1+q}(\sigma_1 +1) )(\frac{1}{1+q}(\sigma_2 +1)) )$,  $\sigma_1 -q$ and $\sigma_2 -q $  commute with any homomorphism of the module $ \rho_{i,j}\circ \psi $, we have $V_i = \oplus^{N} _{l=1} \ker( \sigma_1 |_{ \rho^{l} _{i,j}} -q )   )  $ and  $V_j = \oplus^{N} _{l=1} \ker( \sigma_2 |_{ \rho^{l} _{i,j}} -q )   )  $. So to prove the statements $(1)^{'}$ and $(2)^{'}$ we only need to prove  $(1)^{'}$ and $(2)^{'}$ in cases that the $H_q (I_{m_{i,j}})$-module is a irreducible module in the list of theorem 1.1, which can be checked case by case.

 For (3) , if $v\in V_i \cap V_j $, then $ A^i _j (v)=v   $ and $A^j _i (v) =v $, so $A^{j} _i A^{i} _j (v)=v$.  Inversely if $A^{j} _i A^{i} _j (v)=v$,  it is easy to see $v\in \oplus^{n} _{l=1} \rho ^{l} _{i,j}  $,   which is just  $V_i \cap V_j $. \\

\end{pf}

The following definition is inspired by this theorem.

\begin{defi}
Let $V,W$ be two $\kappa-$linear space.Let $f\in Hom(V,W), g\in Hom(W,V)$, then the pair $(f,g)$ is called as a couple between $V$ and $W$.

Let $m\in 2\mathbb{Z}^{\geq 0} +1$, then a couple $(f,g)$ between $V$ and $W$ is called as a \textbf{$(m;q)-$couple}, if both $g\circ f$ and $f\circ g$ are semisimple, with eigenvalues in $\{1\}\cup \{ 4 cos(\frac{a\pi}{m})^2 \frac{1}{q+q^{-1}+2} \} _{a=1,2,...,[\frac{m-1}{2} ]}$.

In in addition the eigenvalues of $g\circ f$ and $f\circ g$ are in  $\{ 4 cos(\frac{a\pi}{m})^2 \frac{1}{q+q^{-1}+2} \} _{a=1,2,...,[\frac{m-1}{2} ]}$, then  this $(m;q)-$couple is called  as a \textbf{specialized $(m;q)-$couple}.

Let $m\in 2\mathbb{Z}^{\geq 1} $, a couple $(f,g)$ is called as a $(m; q)-$couple, if
both $g\circ f$ and $f\circ g$ are semisimple, with eigenvalues in $\{1,0 \}\cup \{ 4 cos(\frac{a\pi}{m})^2 \frac{1}{q+q^{-1}+2} \} _{a=1,2,...,[\frac{m-1}{2} ]}$, and for any $0-$eigenvector $v\in V$ of $g\circ f$  $(w\in W$ of $f\circ g )$ , there is $f(v)=0$  $(g(w)=0)$.

In in addition the eigenvalues of $g\circ f$ and $f\circ g$ are in  $\{ 0\} \cup \{ 4 cos(\frac{a\pi}{m})^2 \frac{1}{q+q^{-1}+2} \} _{a=1,2,...,[\frac{m-1}{2} ]}$, then this $(m;q)-$couple is called  as a specialized $(m;q)-$couple.

\end{defi}



\begin{rem}
 In this convention, the order of $f,g$ can't be altered. "$(f,g)$ is a couple between $V$ and $W$ " is equivalent to "$(g,f)$ is a couple between $W$ and $V$   ", but not to "$(g,f)$ is a couple between $V$ and $W$  ".
\end{rem}

\noindent\textbf{Algebraic properties of $(m;q)-$couples}. Denote a $(m;q)$-couple $(f, g)  $ as $\mathscr{C}$, where $f\in Hom(V,W), g\in Hom(W,V)$. Suppose $m\in \mathbb{Z}^{\geq 2}$.

Let $V_{\mathscr{C}, \cap }$  be the $1$-eigenspace of $g\circ f$, $V_{\mathscr{C},0}$ be the $0$-eigenspace of $g\circ f$,
$V_{\mathscr{C},a}$ be the $\lambda_{m;q}(a) $-eigenspace for $1\leq a\leq [\frac{m-1}{2}]$. Notice if $m$ is odd then $V_{\mathscr{C},0}=0$
 since it does not exist. Let $W_{\mathscr{C}, \cap }$, $W_{\mathscr{C},0}$,and $W_{\mathscr{C},a}$ be subspaces of $W$ defined similarly. Let
$V_{\mathscr{C}, \setminus }= ( \oplus^{[\frac{m-1}{2}]} _{a=1}  V_{\mathscr{C}, a} )\oplus V_{\mathscr{C} ,0}  $, so
$V= V_{\mathscr{C},\cap } \oplus V_{\mathscr{C}, \setminus }$. Similarly we define $W_{\mathscr{C}, \setminus }$ which fits in a decomposition
$W= W_{\mathscr{C},\cap } \oplus W_{\mathscr{C}, \setminus }$.  The following lemma can be proved by elementary linear algebra.

\begin{lem}
(1) The restriction of $f$ on $V_{\mathscr{C},\cap }$ and $V_{\mathscr{C}, a}$  are isomorphism to the subspace $W_{\mathscr{C},\cap }$ and $W_{\mathscr{C}, a}$ for any $1\leq a\leq [\frac{m-1}{2}]$.

\noindent(2) The restriction of $f$ on $V_{\mathscr{C},0 }$ is a zero map.

\noindent(3) $V_{\mathscr{C},\setminus }= Im ( g\circ f - id_{V}  )$.

\noindent(4) $f ( V_{\mathscr{C}, \setminus }  )\subset W_{\mathscr{C}, \setminus } $.

\end{lem}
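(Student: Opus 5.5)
The whole argument rests on the semisimplicity of $g\circ f$ and $f\circ g$, together with the intertwining identities
\[
f\circ(g\circ f)=(f\circ g)\circ f, \qquad g\circ(f\circ g)=(g\circ f)\circ g .
\]
These identities show that $f$ maps the $\lambda$-eigenspace of $g\circ f$ into the $\lambda$-eigenspace of $f\circ g$, for every eigenvalue $\lambda$. Likewise, $g$ maps $\lambda$-eigenspaces of $f\circ g$ into $\lambda$-eigenspaces of $g\circ f$.

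For (1), take $\lambda$ to be either $1$ or $\lambda_{m;q}(a)$. In both cases $\lambda\neq 0$; for $\lambda_{m;q}(a)$ this holds because $1\le a\le[\frac{m-1}{2}]$ forces $\cos(\frac{a\pi}{m})\neq0$. On $V_{\mathscr{C},\lambda}$ the composite $g\circ f$ acts as $\lambda\cdot \mathrm{id}$, so $f$ is injective there. Symmetrically, $g$ is injective on $W_{\mathscr{C},\lambda}$. Both maps preserve the $\lambda$-eigenspaces, so
\[
\dim V_{\mathscr{C},\lambda}\le \dim W_{\mathscr{C},\lambda}\le \dim V_{\mathscr{C},\lambda},
\]
and $f$ restricts to an isomorphism $V_{\mathscr{C},\lambda}\to W_{\mathscr{C},\lambda}$. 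If the spaces are not assumed finite dimensional, surjectivity can be seen directly instead. For $w\in W_{\mathscr{C},\lambda}$ we have $w=f(\lambda^{-1}g(w))$, and $\lambda^{-1}g(w)\in V_{\mathscr{C},\lambda}$.

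Part (2) is built into the definition of a $(m;q)$-couple for even $m$: every $0$-eigenvector $v$ of $g\circ f$ satisfies $f(v)=0$. For odd $m$ the space $V_{\mathscr{C},0}$ is zero, so there is nothing to prove.

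For (3), $g\circ f-\mathrm{id}_V$ is semisimple, and $V$ is the direct sum of the eigenspaces of $g\circ f$. On each eigenspace, $g\circ f-\mathrm{id}_V$ acts as the scalar $\lambda-1$. This scalar is $0$ exactly on $V_{\mathscr{C},\cap}$, and it is nonzero on $V_{\mathscr{C},0}$ and on each $V_{\mathscr{C},a}$. For the latter we need $\lambda_{m;q}(a)\neq1$, which I expect is the only point requiring care. This should follow from genericity of $q$: if $\lambda_{m;q}(a)=1$, then $4\cos^2(\frac{a\pi}{m})=q+q^{-1}+2$, which forces $q=\zeta$ for a root of unity $\zeta=e^{\pm 2\pi i a/m}$. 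For such $q$ the algebra $H_q(I_m)$ is not semisimple. Granting this, the image of $g\circ f-\mathrm{id}_V$ is exactly the sum of the eigenspaces with $\lambda\neq1$, which is $V_{\mathscr{C},\setminus}$.

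For (4), by (1) and (2) $f$ sends $V_{\mathscr{C},a}$ into $W_{\mathscr{C},a}$ and sends $V_{\mathscr{C},0}$ to $0$. Both targets lie in $W_{\mathscr{C},\setminus}$, so $f(V_{\mathscr{C},\setminus})\subset W_{\mathscr{C},\setminus}$.
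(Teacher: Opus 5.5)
Your proof is correct and is exactly the elementary linear-algebra argument the paper has in mind; the paper states this lemma without proof. You use the intertwining $f\circ(g\circ f)=(f\circ g)\circ f$ together with nonvanishing of the eigenvalues for (1), the defining property of an even $(m;q)$-couple for (2), and semisimplicity of $g\circ f-\mathrm{id}_V$ for (3)--(4). Your check that $\lambda_{m;q}(a)\neq 1$ follows from genericity of $q$ makes explicit an assumption the paper uses silently: it is already built into the decomposition $V=V_{\mathscr{C},\cap}\oplus V_{\mathscr{C},\setminus}$.
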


Let $f^{*} \in Hom(W^* , V^* )$ and $g^* \in Hom (V^* , W^* )$ be the conjugates of $f$ and $g$ respectively.

Recall the decomposition $V\cong V_{\mathscr{C},\cap } \oplus  ( \oplus^{[\frac{m-1}{2}]} _{a=1}  V_{\mathscr{C}, a} )\oplus V_{\mathscr{C} ,0}$. Let $V^* _{\mathscr{C},\cap } $  ( $V^* _{\mathscr{C},a } $ and $V^* _{\mathscr{C},0 }  $ ) be the subspace of $V^* $ consisting of those linear functions on $V$ disappearing over direct sum components except  $V _{\mathscr{C},\cap } $ (  $V _{\mathscr{C},a } $ and $V _{\mathscr{C},0 }  $ ) . Subspaces $W^* _{\mathscr{C},\cap } $  , $W^* _{\mathscr{C},a } $, $W^* _{\mathscr{C},0 }  $  and $W^* _{\mathscr{C}, \  }$ are defined similarly. The following lemma is also consequence of elementary linear algebra.

\begin{lem}

$\mathscr{C}^{*} $ is also a $(m;q)$-couple, and there are the following identities.

 $ V^* _{ \mathscr{C}^{*} , \cap    }  = V^* _{\mathscr{C},\cap }  $, $ V^* _{ \mathscr{C}^{*} , a   }  = V^* _{\mathscr{C},a }  $ ,
$ V^* _{ \mathscr{C}^{*} , 0    }  = V^* _{\mathscr{C},0 }  $, and  $ V^* _{ \mathscr{C}^{*} , \setminus   }  = V^* _{\mathscr{C},\setminus }  $.

$\mathscr{C}^{*} $ is called as the dual of $\mathscr{C}$ (as a $(m;q)-$couple ).

\end{lem}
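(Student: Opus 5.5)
The claim is that the dual couple $\mathscr{C}^*=(g^*,f^*)$ is an $(m;q)$-couple between $V^*$ and $W^*$, with the same eigenspace decomposition of $V^*$ as the one induced from $V$. I will use that $\mathscr{C}^*$ is a couple between $V^*$ and $W^*$ in the sense of the convention of Remark 1.1. Its first map goes $V^*\to W^*$ and is $g^*$; its second goes $W^*\to V^*$ and is $f^*$. The relevant composites are therefore
\[
f^*\circ g^*=(g\circ f)^* \quad\text{on } V^*, \qquad g^*\circ f^*=(f\circ g)^* \quad\text{on } W^*.
\]

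The plan is to reduce everything to the elementary fact that if $T$ is semisimple on a finite-dimensional space $V$, with decomposition $V=\oplus_\lambda V_\lambda$ into eigenspaces, then $T^*$ is semisimple with the same eigenvalues. Moreover, the $\lambda$-eigenspace of $T^*$ is exactly the set of functionals vanishing on $\oplus_{\mu\neq\lambda}V_\mu$.

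To check this, let $\phi$ be a functional supported on $V_\lambda$ and let $v=\sum_\mu v_\mu$. Then
\[
(T^*\phi)(v)=\phi(Tv)=\sum_\mu \mu\,\phi(v_\mu)=\lambda\,\phi(v_\lambda)=\lambda\,\phi(v).
\]
So these subspaces are $\lambda$-eigenspaces of $T^*$, and they span $V^*$. Hence they are the full eigenspaces, and $T^*$ is semisimple.

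Applying this to $T=g\circ f$ and $T=f\circ g$ gives the eigenvalue conditions in Definition 1.2 for $\mathscr{C}^*$. It also gives the identities $V^*_{\mathscr{C}^*,\cap}=V^*_{\mathscr{C},\cap}$, $V^*_{\mathscr{C}^*,a}=V^*_{\mathscr{C},a}$ and $V^*_{\mathscr{C}^*,0}=V^*_{\mathscr{C},0}$, with the analogues for $W$. The identity for $V^*_{\setminus}$ follows by summing over $a$ and $0$.

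For even $m$ the remaining condition is the kernel condition: a $0$-eigenvector $\phi\in V^*$ of $f^*g^*$ should satisfy $g^*\phi=0$, and similarly on $W^*$. This step needs the structure of $\mathscr{C}$ itself, namely Lemma 1.1.

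By the eigenspace description above, $\phi$ vanishes on $V_{\mathscr{C},\cap}\oplus\bigoplus_a V_{\mathscr{C},a}$. I must show that $g^*\phi=\phi\circ g$ vanishes on $W$. By Lemma 1.1(1) applied to the couple $(g,f)$ between $W$ and $V$, the map $g$ sends $W_{\mathscr{C},\cap}$ and $W_{\mathscr{C},a}$ into $V_{\mathscr{C},\cap}$ and $V_{\mathscr{C},a}$ respectively. Also $g(W_{\mathscr{C},0})=0$ by the defining kernel property. Hence $\phi\circ g=0$ on all of $W$. The symmetric argument handles $W^*$.

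I expect the only point needing care to be the bookkeeping of which composite acts on which space, since the order of the couple matters (Remark 1.1). The rest is routine.
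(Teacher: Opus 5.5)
Your argument is correct and supplies exactly the elementary linear algebra the paper invokes without writing out a proof. The eigenspaces of $f^*\circ g^*=(g\circ f)^*$ and $g^*\circ f^*=(f\circ g)^*$ are the annihilators of the complementary eigenspaces, and for even $m$ the kernel condition holds because $g$ maps $W_{\mathscr{C},\cap}$ and $W_{\mathscr{C},a}$ into $V_{\mathscr{C},\cap}$ and $V_{\mathscr{C},a}$ and kills $W_{\mathscr{C},0}$.

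Two minor remarks. The couple definition is Definition 1.4 in the paper's numbering, not Definition 1.2. The inclusion $g(W_{\mathscr{C},a})\subseteq V_{\mathscr{C},a}$ follows directly from $(g\circ f)\circ g=g\circ(f\circ g)$, so you do not need Lemma 1.1 for it.
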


Now we ask if  a  $H_{\Gamma}(q)$ representation could be constructed from a representation of the quiver  $\mathbb{Q}_I$ satisfying
suitable conditions, at least those conditions implied in theorem 1.2.

 Let $\Psi$ be a representation of the quiver $\mathbb{Q}_I$. Denote $\Psi(i)$ as $\bar{V}_i$, and $\Psi(J^i _j )$ as $A^i _j$. Denote the $1$-eigenspace of $A^j _i A^i _j  $ in $\bar{V}_i $ as $\bar{V}_{i,j}$.   Inspired by Theorem 1.2, we consider such a $\Psi$ satisfying the following two set of conditions.

\noindent $\bullet$ $(T^{(m_{i,j};q)}_{i,j}) $ (  for any $i\neq j\in I$  ):  $ ( A^i _j , A^j _i )$ is a $(m_{i,j} ;q)-$couple between $\bar{V}_i $ and $\bar{V}_j$.

\noindent $\bullet$ $(T^{i.j}_k)$( for any three different elements  $i,j,k \in I$  ):  for any $v\in \bar{V}_{i,j} $ , $A^i _k (v)= A^j _k A^i _j (v) $.
Here is one of the main definition of this paper.

\begin{defi} ($H-$representation)  Let $\Gamma = ( m_{i,j} ) _{i,j \in I}$ be any Coxeter system. Let $\kappa$ be a field and $q \in \kappa$. Then
a representation  $\Psi$ of $\mathbb{Q}_{I}$ is called a \textbf{$H-$representation of type $(\Gamma;q )$} if it satisfy the conditions $(T^{(m_{i,j};q)}_{i,j})$ for any $i\neq j\in I$, the conditions $(T^{i,j} _k ) $ for any $i,j,k \in I$ different with each other, and the following conditions:

$(T^k _{i,j}):$ $Im(A^k _j -A^i _j A^k _i )\subset Im(id_{V_j} -A^i _j A^j _i ),$ for any $i,j,k \in I$ different with each other.

\end{defi}

\noindent\textbf{Construction of a $H_q (\Gamma)-$representation from a $H-$representation.}

 A $H_{\Gamma}(q)-$representation is constructed  from a  $H-$representation  $\Psi$  of type $(\Gamma;q)$ as follows. The following settings and lemmas are preparatory works for Theorem 1.3.

 Assume notations related to $\Psi$ as before.  Let $\bar{V}= \oplus^{n} _{i=1} \bar{V}_i $.  For each pair $1\leq i\neq j \leq n$, let $V^0 _i [j]=\{ v-A^j _i (v) \} _{v\in \bar{V}_j} ,$
and set $\bar{V}^{0}_i = \oplus_{j\neq i} V^0 _i [j]$.  Then  $\bar{V} \cong \bar{V}_i \oplus \bar{V}^0 _{i}$ for any $i$.

For each pair $1\leq i\neq j\leq n$, and for any $a\in [1, [\frac{m_{i,j}-1}{2}]]$, let $\bar{V}_{i/j}(a)$ be  the $\lambda_{m_{i,j};q} (a)$-eigenspace of $A^j _i A^i _j$.  let $\bar{V}_{i/j}(0)$ be the $0$-eigenspace of $A^j _i A^i _j$. Notice if
$m_{i,j}$ is odd, then $\bar{V}_{i/j}(0)=0$.  Denote the direct sum
 $ (  \oplus^{ [\frac{m_{i,j}-1}{2}   ]   } _{a=1}  \bar{V}_{i/j}(a) )\oplus \bar{V}_{i/j} (0)   $ as  $\bar{V}_{i/j}$. Let $\bar{V}_{i,j}$ be the $1$-eigenspace of $A^j _i A^i _j$. Under these assumptions there is a decomposition $\bar{V}_i = \bar{V}_{i,j}\oplus \bar{V}_{i/j} $, which is important for later discussing and are called as \textbf{canonical decompositions}. Denote the projection operator from $\bar{V}_i$ to $\bar{V}_{i,j}$ with respect to this direct sum as \textbf{$p^{i} _{ij}$}, and denote the natural injection from $\bar{V}_{i,j}$ to $\bar{V}_i$ as \textbf{$J^{i,j} _i $}.

\begin{rem} Here is an explanation of the condition $(T^k _{i,j})$.  Since $Im (id_{V_j } - A^i _j A^j _i )$ is the sum of the eigenspaces of $A^i _j A^j _i$
 of eigenvalues except $1$, so  $Im (id_{V_j } - A^i _j A^j _i )= \bar{V}_{j/i}$. Since $A^i _j ( \bar{V}_{i/j}  )\subset \bar{V}_{j/i}$ and $A^i _j (V_{i,j})= V_{j,i}$, the meaning of $(T^k _{i,j})$ can be understood as: for any $v\in \bar{V}_k$,
  $A^i _j ( p^i _{i,j} (  A^k _i (v)    )    )= p^j _{j,i} ( A^k _j (v)   )   $. That is, the component of $A^k _i (v)$ in $V_{i,j }$ with respect to the canonical decomposition $\bar{V}_i = \bar{V}_{i,j} \oplus \bar{V}_{i/j}$ is identified with the component of $A^k _j (v)$ in $\bar{V}_{j,i}$ through $A^i _j$. So $(T^k _{i,j})$ is equivalent to $(T^k _{j,i})$.
  \end{rem}

  Denote the subspace of $\bar{V}$ spanned by those vectors $ v-A^j _i (v) $ for all pairs $i\neq j$ and all vectors $v\in \bar{V}_{j,i} $ as $W$.

 The notations $\bar{V}_{i,j}, \bar{V}_{i/j} , \bar{V}_{i/j}(a)$ and $ \bar{V}_{i/j}(0) $ will be used through out this paper. If the
 $(m_{i,j} ;q)$-couple $( A^i _j , A^j _i  )$ is denoted as $\mathscr{C}$, then these spaces are just subspaces
 $ (\bar{V}_i )_{\mathscr{C}, \cap } $ , $ (\bar{V}_i )_{\mathscr{C}, \setminus} $ , $(\bar{V}_i )_{\mathscr{C}, a }$  and
 $(\bar{V}_i )_{\mathscr{C}, 0 }$  of $\bar{V}_{i}$ by using notations in Lemma 1.1.

Let $\iota _i \in GL( \bar{V})$ be the linear automorphism of $\bar{V}$ acting on $\bar{V}_i$ as $q id_{\bar{V}_i }$ and acting on $\bar{V}^0 _i $ as $-id_{\bar{V}^0 _i }$. So $\iota_i $ satisfies the Hecke algebra relation $(\iota_i +1)(\iota_i -q)=0$.


\begin{lem} Assume notations as above, then  $W\subset \bar{V}^0 _i $ for any $i$.

\end{lem}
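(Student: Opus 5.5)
The plan is to check the inclusion on the spanning vectors of $W$, one index at a time. Fix $i\in I$. Recall that $\bar{V}=\oplus_l \bar{V}_l$. For $u\in\bar{V}_j$, the element $u-A^j_i(u)$ has $\bar{V}_j$-component $u$ and $\bar{V}_i$-component $-A^j_i(u)$, and these elements make up $V^0_i[j]\subset\bar{V}^0_i$. To avoid a clash with the fixed index $i$, write a generator of $W$ as
$$w=v-A^a_b(v),\qquad a\neq b,\quad v\in\bar{V}_{a,b}\subset\bar{V}_a .$$
We must show $w\in\bar{V}^0_i$. I split into three cases according to the position of $i$ relative to $\{a,b\}$.

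\emph{Case $b=i$.} Here $w=v-A^a_i(v)$ with $v\in\bar{V}_a$. This is by definition an element of $V^0_i[a]$, so nothing more is needed.

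\emph{Case $a=i$.} Put $u=A^i_b(v)\in\bar{V}_b$. Since $v\in\bar{V}_{i,b}$ is a $1$-eigenvector of $A^b_iA^i_b$, we get $A^b_i(u)=v$. Hence
$$u-A^b_i(u)=A^i_b(v)-v=-w,$$
so $w\in V^0_i[b]$. This case uses only the definition of $\bar{V}_{i,b}$ as the $1$-eigenspace.

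\emph{Case $i\notin\{a,b\}$.} Again set $u=A^a_b(v)\in\bar{V}_b$, and decompose
$$w=\bigl(v-A^a_i(v)\bigr)-\bigl(u-A^b_i(u)\bigr)+\bigl(A^a_i(v)-A^b_iA^a_b(v)\bigr).$$
The first bracket lies in $V^0_i[a]$ and the second in $V^0_i[b]$. The third bracket lies in $\bar{V}_i$, and it vanishes by the triangular condition $(T^{a,b}_i)$, which says $A^a_i(v)=A^b_iA^a_b(v)$ for $v\in\bar{V}_{a,b}$. Therefore $w\in V^0_i[a]+V^0_i[b]\subset\bar{V}^0_i$.

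Since $\bar{V}^0_i$ is a subspace, it then contains all of $W$, which proves the lemma. The only step that needs more than the definitions is the third case. There the correction term lying in $\bar{V}_i$ has to vanish, and this is exactly where the condition $(T^{i,j}_k)$ from the definition of an $H$-representation is used. I expect no further obstacle.
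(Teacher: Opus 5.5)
Your proof is correct and follows the paper's argument. The paper likewise checks a generator $v-A^j_i(v)$ with $v\in\bar{V}_{j,i}$ against three targets: $\bar{V}^0_i$ (by definition), $\bar{V}^0_j$ (via $v=A^i_jA^j_i(v)$), and $\bar{V}^0_k$ for $k\neq i,j$ (via the triangular relation $A^j_k(v)=A^i_kA^j_i(v)$, giving the same splitting into an element of $V^0_k[j]$ plus one of $V^0_k[i]$); you simply fix the target index and vary the generator instead.
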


\begin{pf}
Consider the vector $v-A^j _i (v)$ for some pair $i\neq j$ and some $v\in \bar{V}_{ji}$. By definition it is in $\bar{V}^0 _i $. Now since $v\in \bar{V}_{ji}$, there is $A^j _i (v) \in \bar{V}_{ij}$, and $ v= A^i _j A^j _i (v) $, so $v-A^j _i (v)=  A^i _j A^j _i (v)-A^j _i (v) \in \bar{V}^0 _j$.   For $k\neq i,j$,
 by $(T^{i,j}_k )$, there is $A^j _k (v) = A^i _k A^j _i (v)$. So we have $v-A^j _i (v)= v-A^j _k (v) + A^i _k A^j _i (v) -A^j _i (v) \in \bar{V}^0 _k $ and the proof is complete.

\end{pf}

\begin{lem}
For any $i\neq j$, $( \bar{V}_i \oplus \bar{V}_j  ) \cap W = \kappa < v-A^i _j (v)   >_{v\in \bar{V}_{i,j}}$.

\end{lem}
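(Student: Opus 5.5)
The plan is to exhibit, for every $l\in I$, a linear map $\pi_l:\bar V\to\bar V_l$ that vanishes on $W$. Applied with $l=i$ and $l=j$, these maps force any element of $(\bar V_i\oplus\bar V_j)\cap W$ into the required form.

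The inclusion $\supseteq$ is immediate, since each $v-A^i_j(v)$ with $v\in\bar V_{i,j}$ is one of the generators of $W$ and lies in $\bar V_i\oplus\bar V_j$. For $\subseteq$, define for each $l\in I$
\[
\pi_l:\bar V=\oplus_k \bar V_k\rightarrow \bar V_l,\qquad \pi_l\big(\textstyle\sum_k x_k\big)=x_l+\sum_{k\neq l}A^k_l(x_k).
\]

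The first key step is to check that $\pi_l(W)=0$ for every $l$. It suffices to test a generator $v-A^k_m(v)$ with $v\in\bar V_{k,m}$, and there are three cases.
\begin{itemize}
\item If $l=k$, we get $\pi_k(v-A^k_m v)=v-A^m_kA^k_m v=0$, because $v$ lies in the $1$-eigenspace of $A^m_kA^k_m$.
\item If $l=m$, we get $A^k_m v-A^k_m v=0$.
\item If $l\neq k,m$, we get $A^k_l v-A^m_lA^k_m v=0$, which is exactly the relation $(T^{k,m}_l)$.
\end{itemize}
This is the only place where a defining relation of an $H$-representation enters in an essential way. I expect it to be the main (though mild) obstacle, namely noticing that $(T^{k,m}_l)$ is precisely what makes these projection-like maps kill $W$.

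The second step treats an arbitrary $w=x_i+x_j\in W$ with $x_i\in\bar V_i$ and $x_j\in\bar V_j$. Applying $\pi_i$ and $\pi_j$ gives
\[
x_i+A^j_i(x_j)=0,\qquad A^i_j(x_i)+x_j=0.
\]
Substituting the first equation into the second yields $x_j=A^i_jA^j_i(x_j)$, so $x_j\in\bar V_{j,i}$. Set $v=x_i=-A^j_i(x_j)$. Since $A^j_i$ maps $\bar V_{j,i}$ into $\bar V_{i,j}$ (part (1) of Lemma 1.1, applied to the couple $(A^j_i,A^i_j)$), we have $v\in\bar V_{i,j}$. Moreover $A^i_j(v)=-x_j$, hence
\[
w=v-A^i_j(v),
\]
which lies in the right-hand side. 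This finishes the plan.
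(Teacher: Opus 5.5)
Your proposal is correct and is essentially the paper's own proof. The paper defines the same maps $p_l:\bar V\to\bar V_l$ (the identity on $\bar V_l$ and $A^k_l$ on $\bar V_k$), notes that they kill $W$ by $(T^{k,m}_l)$, and applies them with $l=i,j$ to obtain $A^i_j(x_i)=-x_j$ and $A^j_i(x_j)=-x_i$. Where you differ, it is cosmetic: you check the cases $l=k$ and $l=m$ explicitly, which the paper glosses over, and you pass through $x_j\in\bar V_{j,i}$ instead of verifying $A^j_iA^i_j(x_i)=x_i$ directly.
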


\begin{pf} First we have $\kappa <v-A^i _j (v) >_{v\in \bar{V}_{i,j} } \subset ( \bar{V}_i \oplus \bar{V}_j ) \cap W$ by definition of $W$.  Now
 suppose $v+w\in ( \bar{V}_i \oplus \bar{V}_j  ) \cap W$, for $v\in \bar{V}_i$ and $w\in \bar{V}_j$. Since $v+w$ is in $W$, by definition of $W$ there is
 $v+w = \Sigma _{k\neq l} ( u_{k,l} - A^k _l (u_{k,l} ) )$, where $u_{k,l}$ is certain element in $ \bar{V}_{k,l}$.

 Now for any $i\in I$ we define a projection $p_i : \bar{V} \rightarrow \bar{V}_i $ by:

 \[
 p_i (v) = \begin{cases}
 v, &\text{ when $v\in \bar{V}_i ,$ }\\
 A^k _i (v), &\text{when $v\in \bar{V}_k $ and $k\neq i$.   }\\

\end{cases}
\]

 Then come back to the element $v+w$. By  $(T^{i,j} _k )$, there is
  $p_j ( u_{k,l} -A^k _l (u_{k,l})   )=0 $ for any pair $k \neq l$.  So  $p_j (v+w)=0 $.  On the other hand, there is
  $p_j (v) = \bar{A}^i _j (v) $ and $ p_j (w) =w $, so  $p_j (v+w) = A^i _j (v) +w $. So there is $A^i _j (v) = -w$. Similarly by considering $p_i (v+w)$ we can prove $\bar{A}^j _i (w)= -v $. By summarizing these two identities we have
   $A^j _i A^i _j (v) =v $ which means $v$ is a eigenvector of the operator $A^j _i A^i _j $ with eigenvalue $1$. So  $v\in \bar{V}_{i,j} $  and the element $v+w=v-A^i _j (v)$ is in the space $\kappa < v-A^i _j (v)   >_{v\in \bar{V}_{i,j}} $. So  $( \bar{V}_i \oplus \bar{V}_j  ) \cap W \subset \kappa < v-A^i _j (v)   >_{v\in \bar{V}_{i,j}}$ and the proof is complete.\\

  \end{pf}
Now let $V= \bar{V}/W$, and denote the quotient map from $\bar{V}$ to $V$ as $p$. Denote $p( \bar{V}_i   )\subset V$ as $ V_i $, and
 $  p( \bar{V}^0 _i  ) $ as $V^0 _i$. Lemma 1.3 and Lemma 1.4 imply the following corollary.

 \begin{cor} Assuming notations as above. Then

\noindent(1) The restriction $p| _{\bar{V}_i } : \bar{V}_i \rightarrow V_i$ is an isomorphism.

\noindent(2) For any two different $i,j\in I$, the restriction $p|_{\bar{V}_{i,j}} : \bar{V}_{i,j}\rightarrow V  $ is an injection whose image is  the subspace $V_i \cap V_j \subset V$.
 \end{cor}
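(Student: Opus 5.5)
We need to prove the Corollary: (1) $p|_{\bar V_i}$ is an isomorphism onto $V_i$; (2) $p|_{\bar V_{i,j}}$ is injective with image $V_i\cap V_j$.

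For (1), surjectivity onto $V_i=p(\bar V_i)$ holds by definition, so only injectivity is at stake, i.e. $\bar V_i\cap W=0$. By Lemma 1.3 we have $W\subset \bar V^0_i$. Since $\bar V=\bar V_i\oplus\bar V^0_i$, this gives $\bar V_i\cap W\subset \bar V_i\cap\bar V^0_i=0$. The direct-sum decomposition $\bar V=\bar V_i\oplus \bar V^0_i$ was asserted in the setup. If it needs justification, it is immediate: $\bar V^0_i=\oplus_{j\neq i}\{v-A^j_i(v)\}$, and each $v\in\bar V_j$ equals $A^j_i(v)+(v-A^j_i(v))$, so the map $\bar V^0_i\to\oplus_{j\ne i}\bar V_j$ killing the $\bar V_i$ component is an isomorphism.

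For (2), injectivity of $p|_{\bar V_{i,j}}$ follows from (1), since $\bar V_{i,j}\subset\bar V_i$. It remains to identify the image, which takes two inclusions.

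\emph{Image lies in $V_i\cap V_j$.} Take $v\in\bar V_{i,j}$. Then $v-A^i_j(v)\in W$, so $p(v)=p(A^i_j(v))\in p(\bar V_j)=V_j$. Trivially $p(v)\in V_i$.

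\emph{$V_i\cap V_j$ lies in the image.} This is the main step. Suppose $p(v)=p(w')$ with $v\in\bar V_i$ and $w'\in\bar V_j$. Then $v-w'\in(\bar V_i\oplus\bar V_j)\cap W$, so by Lemma 1.4 there is $u\in\bar V_{i,j}$ with $v-w'=u-A^i_j(u)$. Comparing components in the direct sum $\bar V_i\oplus\bar V_j\subset\bar V=\oplus_k\bar V_k$ gives $v=u$. Hence $v\in\bar V_{i,j}$, and therefore $p(v)\in p(\bar V_{i,j})$.

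So the only substantive input is Lemma 1.4, and the main care needed is the component comparison in the direct sum $\oplus_k \bar V_k$. That comparison is valid because $\bar V$ is defined as the external direct sum of the $\bar V_k$.
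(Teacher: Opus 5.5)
Your proof is correct and follows the paper's route: (1) comes from Lemma 1.3 together with $\bar V=\bar V_i\oplus\bar V^0_i$, and (2) comes from Lemma 1.4. The paper phrases (2) as injectivity of the induced map $(\bar V_i\oplus\bar V_j)/\kappa\langle v-A^i_j(v)\rangle_{v\in\bar V_{i,j}}\to V$ and leaves the identification with $V_i\cap V_j$ implicit, whereas your component comparison in $\oplus_k\bar V_k$ makes that last step explicit.
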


 \begin{pf} Clearly (1) is the consequence of Lemma 1.3.  By definition of $W$, there is a natural map
 $ \chi  :  ( \bar{V}_i \oplus \bar{V}_j  )/ \kappa < v-A^i _j (v)   >_{v\in \bar{V}_{i,j}} \rightarrow V $ induced by the inclusion

 $  \bar{V}_i \oplus \bar{V}_j  \rightarrow \bar{V}  $.  Lemma 2.4 shows $\chi$ is an injection, whose image is just the subspace
 $V_i + V_j$. It shows $\bar{V}_{i,j}$ is isomorphic to $V_i \cap V_j$ through $p$.

 \end{pf}

\begin{lem}
For any $H-$representation $\Psi$, there is
$\bar{V}\cong (\bar{V}_i \oplus \bar{V}_j ) + ( \bar{V}^0 _{i} \cap \bar{V}^0 _{j}   )$ and $    V\cong (V_i + V_j ) + ( V^0 _{i} \cap V^0 _{j} ) $  for any pair of different indices $i,j\in I$ .

\end{lem}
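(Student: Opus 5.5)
The plan is to prove the first identity in $\bar V=\oplus_k\bar V_k$ by a direct computation, and then push it down to $V=\bar V/W$ via $p$.

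\textbf{Step 1: a criterion for membership in $\bar V^0_i$.} Let $p_i:\bar V\to\bar V_i$ be the map introduced in the proof of Lemma 1.4. On $\bar V_i$ it is the identity, and on each $V^0_i[l]$ it vanishes, since $p_i(w-A^l_i w)=A^l_i w-A^l_i w=0$. Because $\bar V=\bar V_i\oplus\bar V^0_i$, $p_i$ is therefore the projection onto $\bar V_i$ along $\bar V^0_i$. Consequently an element $\sum_l u_l$ with $u_l\in\bar V_l$ lies in $\bar V^0_i$ if and only if
\[ u_i+\sum_{l\neq i}A^l_i u_l=0. \]
The same criterion holds with $j$ in place of $i$.

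\textbf{Step 2: reduction to a single summand $\bar V_k$.} Since $\bar V_i$ and $\bar V_j$ already lie in the right-hand side, it suffices to treat $v\in\bar V_k$ with $k\neq i,j$. We look for $x\in\bar V_i$ and $y\in\bar V_j$ such that $c=v-x-y$ lies in $\bar V^0_i\cap\bar V^0_j$. By Step 1 this is equivalent to the linear system
\[ x+A^j_i y=A^k_i v,\qquad A^i_j x+y=A^k_j v. \]
We set $y=A^k_j v-A^i_j x$ and substitute into the first equation. This reduces the system to
\[ (\mathrm{id}_{\bar V_i}-A^j_iA^i_j)\,x=A^k_i v-A^j_iA^k_j v. \]

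\textbf{Step 3: solvability.} The reduced equation has a solution $x$ exactly when the right-hand side lies in $\mathrm{Im}(\mathrm{id}_{\bar V_i}-A^j_iA^i_j)$. This is precisely what the triangular condition $(T^k_{j,i})$ gives: $\mathrm{Im}(A^k_i-A^j_iA^k_j)\subset \mathrm{Im}(\mathrm{id}_{V_i}-A^j_iA^i_j)$, where the right side equals $\bar V_{i/j}$ by Remark 1.1. So $x$ exists, $y$ is then determined, and $v=x+y+c$ has the required form. This proves $\bar V=(\bar V_i\oplus\bar V_j)+(\bar V^0_i\cap\bar V^0_j)$.

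\textbf{Step 4: the quotient.} We apply the surjection $p:\bar V\to V$. By definition $p(\bar V_i)=V_i$ and $p(\bar V^0_i)=V^0_i$, and $p(\bar V^0_i\cap\bar V^0_j)\subset V^0_i\cap V^0_j$. The image of the first identity therefore gives $V=(V_i+V_j)+(V^0_i\cap V^0_j)$.

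The only nontrivial point is Step 3. It is the one place where an axiom of an $H$-representation is used, namely $(T^k_{j,i})$, which is equivalent to $(T^k_{i,j})$ by Remark 1.1.
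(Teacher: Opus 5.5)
Your proof is correct and is essentially the paper's argument. The paper also solves one linear equation supplied by the triangular condition: it uses $(T^k_{i,j})$ to produce $w\in\bar V_j$ and splits $v-A^k_i(v)$ into $w-A^j_i(w)$ plus an explicit element of $\bar V^0_i\cap\bar V^0_j$, and your $x,y$ are exactly the paper's $A^k_i(v)-A^j_i(w)$ and $w$. The only difference is that you find them through the projection criterion of Step 1 rather than by exhibiting the pieces by hand, and you need no appeal to the remark, since $(T^k_{j,i})$ is itself one of the defining axioms (the definition ranges over all ordered triples of distinct indices).
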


\begin{pf}
We only need to prove the first identity, since the second one can be induced from the first one by using Lemma 1.3.

\noindent$\bullet$ Since $\bar{V}\cong \bar{V}_i \oplus \bar{V}^0 _i $, the identity is true if and only if $\bar{V}^0 _i $ is in the left side space of this identity, which is equivalent to $\bar{V}^0 _i \cap [( \bar{V}_i + \bar{V}_j ) + \bar{V}^0 _i \cap \bar{V}^0 _j ]=\bar{V}^0 _i . $ 

\noindent$\bullet$ Recall $\bar{V}^0 _i \cong \oplus _{k\neq i} \bar{V}^0 _i [k]$. We need to show the space $\bar{V}^0 _i [k]$ is in the left side space for any $k\neq i$.  First for $k=j$,  $\bar{V}^0 _i [j] =\{  v-A^j _i (v) \}_{v\in \bar{V}_j } \subset    \bar{V}^0 _i \cap (\bar{V}_i + \bar{V}_j )$, so this case is done.

\noindent$\bullet$ Suppose $k\neq i,j$. Choose any $v- A^k _i (v) \in \bar{V}^0 _i [k]$ where $v\in \bar{V}_k $.  Now by the relation $(T^k _{i,j})$, there exist some vector $w\in \bar{V}_j $ such that $A^k _j (v) -A^i _j A^k _i (v) + A^i _j A^j _i (w) -w =0  $. Now we have
\[
v-A^k _i (v) = (w- A^j _i (w)) + ( v-A^k _i (v) -w + A^j _i (w)  ).
\]
The vector in the first bracket is evidently in $\bar{V}^0 _i \cap (\bar{V}_i + \bar{V}_j )$. The vector in the second bracket is in $\bar{V}^0 _i $ obviously. Any by above identity this vector equals

$(v-A^k _j (v)) + [ ( A^j _i (w) -A^k _i (v) ) - A^i _j ( A^j _i (w) - A^k _i (v)  )    ],$
which is in $\bar{V}^0 _j $. So it is in $\bar{V}^0 _i \cap \bar{V}^0 _j $.\\

\end{pf}

For any $1\leq i\leq n$, $V= V_i \oplus V^0 _i$.  Let  $t_i \in GL( V )$ be the automorphism that acting on $V_i$ as $q id_{V_i }$ and acting on $V^0 _i$ as $-id_{V^0 _i }$.  The following is the main theorem of this section.

\begin{thm} Suppose $\Psi$ is a $H-$representation of $\mathbb{Q}_I$ of type $(\Gamma ;q)$ , then the correspondence $\sigma _i \mapsto t_i ,i\in I$ extends to a representation $\rho_{\Psi} $ of the Hecke algebra $H_{q} (\Gamma) $ on $V$.

\end{thm}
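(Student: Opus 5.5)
The quadratic relation $(t_i+1)(t_i-q)=0$ holds by construction, because $V=V_i\oplus V^0_i$ (Lemma 1.3 and Corollary 1.1) and $t_i$ acts by $q$ and $-1$ on the two summands. So only the braid relations $[t_it_j\cdots]_{m_{i,j}}=[t_jt_i\cdots]_{m_{i,j}}$ for a fixed pair $i\neq j$ remain. The plan is to show that $V$, restricted to the subalgebra generated by $t_i,t_j$, splits into pieces on which these relations can be checked directly.

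We use Lemma 1.5 to write $V=(V_i+V_j)+(V^0_i\cap V^0_j)$. On $V^0_i\cap V^0_j$ both $t_i$ and $t_j$ act as $-1$, so both braid words act as $(-1)^{m_{i,j}}$ and agree. The subspace $V_i+V_j$ is stable under $t_i$ and $t_j$. Indeed, for $w\in V_j$ the identity $w=A^j_i(w)+(w-A^j_i(w))$ has first summand in $V_i$ and second in $V^0_i[j]$. Hence
$$t_i(w)=q A^j_i(w)-(w-A^j_i(w))\in V_i+V_j,$$
and symmetrically for $t_j$ on $V_i$. Since both braid words are polynomials in $t_i,t_j$, it suffices to check the relation on $V_i+V_j$ and on $V^0_i\cap V^0_j$ separately. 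The sum need not be direct, but agreement on each summand gives agreement on the sum.

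By Corollary 1.1 and Lemma 1.4, $V_i+V_j\cong(\bar V_i\oplus\bar V_j)/\langle v-A^i_j v\rangle_{v\in\bar V_{i,j}}$. On this space $t_i,t_j$ are given by the explicit formulas above, which involve only the couple $(A^i_j,A^j_i)$. Using the $(m_{i,j};q)$-couple structure and Lemma 1.1, we decompose $\bar V_i=\bar V_{i,j}\oplus\bigoplus_a\bar V_{i/j}(a)\oplus\bar V_{i/j}(0)$, and similarly $\bar V_j$, with $A^i_j$ matching $\bar V_{i/j}(a)$ isomorphically onto $\bar V_{j/i}(a)$ and killing $\bar V_{i/j}(0)$. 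The quotient then decomposes into $t_i,t_j$-stable pieces of four kinds:
\begin{itemize}
\item the image of $\bar V_{i,j}$, on which $t_i=t_j=q$;
\item the kernels $\bar V_{i/j}(0)$ and $\bar V_{j/i}(0)$, which give the one-dimensional characters $\epsilon_1,\epsilon_2$;
\item for each $a\ge1$ and each basis vector $v$ of $\bar V_{i/j}(a)$, the plane spanned by $v$ and $A^i_jv$.
\end{itemize}
Stability of these pieces follows from the formulas for $t_i$ and $t_j$ together with $A^j_iA^i_jv=\lambda_{m_{i,j};q}(a)v$.

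On each plane we compute the matrices of $t_i$ and $t_j$ in the basis $(v,A^i_jv)$. We then identify them, after a rescaling of basis, with the representation $\mu_a$ of $H_q(I_{m_{i,j}})$ from Theorem 1.1, using $\lambda_{m;q}(a)=\tau_a/q$ up to the normalization $(1+q)^2$. The braid relation holds in $\mu_a$ and in the one-dimensional pieces, so it holds on all of $V_i+V_j$.

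This identification is the main obstacle: the factors $q(1+q)^{-2}$ must match exactly. As a fallback that avoids any dependence on normalization, one can verify directly that on the plane $t_it_j$ has trace and determinant whose eigenvalue ratio is $e^{\pm 2\pi i a/m}$. Then $(t_it_j)^{m/2}$ (for $m$ even), or the analogous odd-length computation, is scalar and the two braid words coincide. Note that relations $(T^{i,j}_k)$ and $(T^k_{i,j})$ enter only through Lemmas 1.3--1.5, and those lemmas are what make the restriction to $\{i,j\}$ legitimate.
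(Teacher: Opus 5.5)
Your proposal is correct and follows the paper's proof essentially step for step: the quadratic relation from $V=V_i\oplus V^0_i$, the splitting $V=(V_i+V_j)+(V^0_i\cap V^0_j)$ of Lemma 1.5, and the decomposition of $V_i+V_j$ into $V_i\cap V_j$ and the planes $\kappa\langle p(v),p(A^i_jv)\rangle$; you also supply the even-$m$ pieces $\epsilon_1,\epsilon_2$ that the paper omits. The normalization you flag as the main obstacle is in fact exact, since $\frac{1}{q+q^{-1}+2}=\frac{q}{(1+q)^2}$ gives $\lambda_{m;q}(a)=\tau_a$, and in the basis $(p(v),p(A^i_jv))$ your matrices of $t_i,t_j$ are precisely the transposes of $\mu_a(\sigma_2),\mu_a(\sigma_1)$ rather than a rescaling of them; this suffices, because transposition reverses words and so maps the two alternating words of length $m_{i,j}$ to each other or to themselves, preserving the braid relation.
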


\begin{pf} First there is $( \rho_{\Psi }(\sigma _i ) -q  ) ( \rho_{\Psi } (\sigma _i ) +1  )=(t_i -q)(t_i +1) =0   $ for any $i\in I$.  We need to prove for any pair $i\neq j $, the Artin relation $ [\sigma_i \sigma_j ...]_{m_{i,j}} = [\sigma_j \sigma_i ... ]_{m_{i,j}} $ holds on $V$.

  First suppose  $m_{i,j}$ is odd.  Recall the quotient map $p: \bar{V} \rightarrow V$. By lemma 1.1 we see $V_i \cong \bar{V}_i $. And by lemma 1.2 there is
 $V_i + V_j \cong \bar{V}_i \oplus \bar{V}_j / \kappa < v - \bar{A}^i _j (v)  >_{v\in \bar{V}_{i,j}}$.
Denote the subspaces $p( \bar{V}_{i/j}(a)   )$ and $p( \bar{V}_{j/i}(a) )$ as $V_{i/j}(a)$ and $V_{j/i}(a)$ respectively for any $1\leq a\leq [\frac{m_{i,j}-1}{2}]$. Since
$V_i = ( \oplus^{[\frac{m_{i,j}-1}{2}]}_{a=1} \bar{V}_{i/j}(a) )\oplus \bar{V}_{i,j}$, $V_j = ( \oplus^{[\frac{m_{i,j}-1}{2}]}_{a=1} \bar{V}_{j/i}(a) ) \oplus \bar{V}_{j,i}   $, and since both
$\bar{V}_{i,j}$ and $\bar{V}_{j,i}$ are identified with $V_i \cap V_j $ through $p$, there is
\[V_{i} + V_{j} =  (  \oplus^{[\frac{m_{i,j}-1}{2}]}_{a=1} V_{i/j}(a) ) \oplus (V_i \cap V_j ) \oplus (  \oplus^{[\frac{m_{i,j}-1}{2}]}_{a=1} V_{j/i}(a)  ) .\]
 For any $1 \leq a\leq [\frac{m_{i,j}-1}{2}] $,  let $\{  v^a _{s}   \}_{s=1,..., N_a }$ be a basis of $V_{i/j}(a)$. Denote $A^i _j (v^a _{s})$ as $v^{' a} _{s}$. So the set $\{ v^{' a} _{s} \}_{s=1,...,N_a } $ is a basis of $V_{j/i}(a)$. Notice $A^j _i (v^{' a} _{s}) = \lambda _{m_{i,j};v} (a) v^a _{s}$.

 Since $V_i $ is the characteristic space of the morphism $t_i $ in $V$ of eigenvalue $v^2 $, there is  $V_i + V_j = V_i \oplus (V_j \cap V^0 _i )$. So $t_i $ is stable on $V_i + V_j$. Similarly  $t_j$ is also stable on $V_i + V_j $.

 Now we show the Artin relations for $t_i , t_j$ holds on $V_i + V_j$.   First on $V_i \cap V_j$ , both $t_i$ and $t_j$ act as multiplication by $q$, so the Artin relation $ [t_i t_j ...]_{m_{i,j}} = [t_j t_i ... ]_{m_{i,j}} $ holds on $V_i \cap V_j$.

 Then for each $1\leq a\leq [\frac{m_{i,j}-1}{2}]$ and $1\leq s\leq N_a $, there is

\noindent $t_i ( v^a _{s}  ) =q v^a _{s}$,
 $ t_i ( v^{' a}_{s} - A^j _i (v^{' a} _{s})   ) = t_i (v^{' a}_{s} -  \lambda _{m_{i,j};v} (a) v^a _{s} ) = -( v^{' a}_{s} -  \lambda _{m_{i,j};v} (a) v^a _{s} )    $,

\noindent $ t_j ( v^{' a} _{s}  )= q v^{' a}_{s} $ and
 $  t_j ( v^a _{s} - v^{' a} _{s}   )=- (   v^a _{s} - v^{' a} _{s}   )   $.

 Where the second identity and the fourth identity are derived  from $ v^{'}_{a;s} -  \lambda _{m_{i,j};v} (a) v^a _{s} \in V^0 _{i} $ and $  v^a _{s} - v^{' a} _{s} \in V^0 _j  $ respectively.  These identities, on  one hand imply that $t_i$ and $t_j$ are stable on the two dimensional space $\kappa < v^a _{s} , v^{' a}_{s} >$, on the other hand, by writing  down the matrix form of $t_i $ and $ t_j$ according to this basis, imply $t_i $ and $t_j $ just combine to give a two dimensional representation of the Rank 2 Hecke algebra $H_{I_{m_{i,j}}}(v)$. So the Artin relation for $t_i ,t_j$ is satisfied on $\kappa < v^a _{s} , v^{' a}_{s} > $.
  Since $V_i +V_j =  \oplus^{[\frac{m_{i,j}-1}{2}  ] , N_a} _{a=1,s=1}  \kappa < v^a _s , v^{' a}_s   >  \oplus (V_i \cap V_j) $,  so the Artin relation for $t_i , t_j$ is satisfied on $ V_i +V_j $.

   Then on the subspace $ V^0 _i \cap V^0 _j $, both $t_i $ and $t_j$ acts as multiplication by constant $-1$, so the Artin relation for $t_i ,t_j $ is satisfied on $ V^0 _i \cap V^0 _j $.  As the last step,  by lemma 1.5  there is  $V= (V_i + V_j) + ( V^0 _{i} \cap V^{0} _{j}  )   $, so we have proved the Artin relation for $t_i ,t_j$ is satisfied on the whole space $ V $.

   The cases when $m_{i,j}$ is even can be proved similarly so we omit.

 \begin{rem}
Let $\bar{t}_i \in GL( V )$ be the automorphism that acting on $V_i$ as $- id_{V_i }$ and acting on $V^0 _i$ as $q id_{V^0 _i }$, then the correspondence $\sigma_i \mapsto \bar{t}_i $ also extends to a $H_{\Gamma}(q)-$representations  assuming conditions in the theorem.  The reason is as follows. Assuming conditions in the theorem, the $H_{\Gamma}(q)-$representation $\bar{\Psi}$ is already constructed. For $i\in I$, from
$ ( t_i -q   )(t_i +1)=0  $ , there is $( -q (t_i )^{-1} -q  )( -q (t_i )^{-1} +1     )=0$. So the correspondence $\sigma_i \mapsto -q (t_i)^{-1} $ for $i\in I$ also extends to a $H_{\Gamma}(q)-$representation. Now $-q (t_i)^{-1}= \bar{t}_i$. Denote this representation as
$\bar{\rho}_{\Psi}$.

 \end{rem}

\end{pf}

Inversely, there is  the following theorem.

\begin{thm} If $(V, \rho)$ is a representation of the Hecke algebra $H_{\Gamma}(q)$, then its Quiver data $\Psi _{\rho}$ is a
$H-$representation of type $(\Gamma ;q)$. Further more, $V = (  V_i + V_j ) \oplus V^0 _i \cap V^0 _j  $.

\end{thm}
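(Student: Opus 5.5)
The plan is to verify the three sets of conditions defining an $H-$representation, together with the decomposition $V=(V_i+V_j)\oplus (V^0_i\cap V^0_j)$, by restricting to rank $2$ and rank $3$ parabolic pieces and exploiting semisimplicity.

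\textbf{Conditions $(T^{(m_{i,j};q)}_{i,j})$.} These come from Theorem 1.2. That theorem shows $A^j_iA^i_j$ is semisimple with the correct eigenvalues and that the $0$-eigenvectors are killed by $A^i_j$. By symmetry in $i,j$ the same holds for $A^i_jA^j_i$, so $(A^i_j,A^j_i)$ is an $(m_{i,j};q)-$couple.

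\textbf{The decomposition.} I restrict $\rho$ to $H_{i,j}\cong H_q(I_{m_{i,j}})$, which is semisimple, and decompose $V$ into the irreducibles listed in Theorem 1.1. Each summand is checked separately.
\begin{itemize}
\item On a $2$-dimensional $\mu_a$, the $q$-eigenvectors of $\sigma_1$ and of $\sigma_2$ are independent, so $V_i+V_j$ is the whole summand and $V^0_i\cap V^0_j=0$.
\item On $ind,\epsilon_1,\epsilon_2$ the summand lies in $V_i+V_j$ and meets $V^0_i\cap V^0_j$ trivially.
\item On $\epsilon$ the summand lies in $V^0_i\cap V^0_j$.
\end{itemize}
Summing over the summands gives $V=(V_i+V_j)\oplus(V^0_i\cap V^0_j)$.

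\textbf{Condition $(T^{i,j}_k)$.} Take $v\in V_{i,j}=V_i\cap V_j$ (Theorem 1.2(3)). Then $A^i_j(v)=v$, so $A^j_kA^i_j(v)=A^j_k(v)$. Both $A^i_k(v)$ and $A^j_k(v)$ are the $V_k$-component of the same vector $v$ in $V=V_k\oplus V^0_k$, hence they are equal.

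\textbf{Condition $(T^k_{i,j})$.} By Remark 1.2, it suffices to show that for $v\in V_k$,
\[A^i_j\bigl(p^i_{i,j}(A^k_i v)\bigr)=p^j_{j,i}(A^k_jv).\]
Here the right framework is the decomposition just proved, with the projection $\pi$ of $V$ onto $V_i\cap V_j$ along $V_{i/j}\oplus V_{j/i}\oplus (V^0_i\cap V^0_j)$. This is the projection onto the trivial isotypic ($ind$) component for $H_{i,j}$.

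I claim $p^i_{i,j}(A^k_iv)=\pi(v)$. To see this, write $v=A^k_iv+w$ with $w\in V^0_i$. Since $\pi$ is $H_{i,j}$-equivariant, it kills $V^0_i$: indeed $\sigma_i$ acts by $-1$ on $V^0_i$ and by $q$ on the image of $\pi$. So $\pi(v)=\pi(A^k_iv)$. Restricted to $V_i$, $\pi$ equals $p^i_{i,j}$, because $V_{i/j}$ is the sum of the $\mu_a$ and $\epsilon_1$ components within $V_i$. Symmetrically, $p^j_{j,i}(A^k_jv)=\pi(v)$. Since $A^i_j$ is the identity on $V_i\cap V_j$, the required identity follows.

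\textbf{Main obstacle.} The delicate point is to justify that $p^i_{i,j}$ coincides with the restriction of the isotypic projection $\pi$. This amounts to identifying $V_{i/j}$ with $V_i\cap(\text{non-trivial isotypic part})$, which I would do again component by component using Theorem 1.1.
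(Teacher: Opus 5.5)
Your proof is correct and follows the paper's route. Like the paper, you restrict to $H_{i,j}$, use semisimplicity to get $V=(V_i+V_j)\oplus(V^0_i\cap V^0_j)$ with $V_i+V_j=V_{i/j}\oplus(V_i\cap V_j)\oplus V_{j/i}$, and show that both $p^i_{i,j}(A^k_iv)$ and $p^j_{j,i}(A^k_jv)$ equal the $V_i\cap V_j$-component of $v$. The differences are cosmetic: you get $\pi(V^0_i)=0$ from equivariance of the $ind$-isotypic projection (using $q\neq -1$), where the paper writes $v_3=A^j_i(v_3)+(v_3-A^j_i(v_3))$ explicitly, and you spell out $(T^{i,j}_k)$, which the paper only attributes to Theorem 1.2.
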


\begin{pf} By Theorem 1.2, $\Psi _{\rho}$ satisfies the relations $(T^{(m_{i,j};q)}_{i,j})$ and the relations $(T^i_{j,k})$. We need to prove
it also satisfies the relations $(T^k _{i,j})$. As before for any $k\in I$ let $V_k , V^0 _{k}$ be the
$q-$eigenspace and $-1 -$eigenspace of $\rho(\sigma_k )$respectively. Let $i\neq j \in I$,  and denote the parabolic subalgebras of
$H_{\Gamma}(q)$ generated by $\sigma_i $ and $\sigma_j $ as $H_{i,j}$, which is isomorphic to the dihedral Hecke algebra
$H_{I_{m_{i,j}}}$. Let $\phi: H_{I_{m_{i,j}}}\rightarrow H_{i,j} $ be the isomorphism extending
$ \phi(\sigma_1)=\sigma_i , \phi(\sigma_2)=\sigma_j  $. So $\rho\circ\phi$ is a $H_{I_{m_{i,j}}}-$representation on $V$. We suppose
$m_{i,j}$ is even. The cases when $m_{i,j}$ is odd can be proved in the same way and be easier.

Since $q$ is generic the representation $\rho\circ\phi$ decompose into a direct sum of irreducible representations. So there is
 $V = (  V_i + V_j ) \oplus V^0 _i \cap V^0 _j $. Where $V_i +V_j $ is the sum of irreducible subrepresentations of type
     $ind, \epsilon_1 ,\epsilon_2 $ and $\mu_j$ for $1\leq j\leq \frac{m}{2}-1$ , and $V^0 _i \cap V^0 _j$ is the sum of irreducible
     subrepresentations of type $\epsilon$.  Denote the projection from $V$ to $V_i + V_j$ with respect to this decomposition as $p$.

      By Theorem 1.2, there are $V_i + V_j = V_{i/j} \oplus ( V_{i} \cap V_j ) \oplus V_{j/i}$, $V_i = V_{i/j} \oplus ( V_{i} \cap V_j ) $
      and $V_j = ( V_{i} \cap V_j ) \oplus V_{j/i}$.   Now let $v\in V_k$. Suppose $p(v) = v_1 + v_2 + v_3 \in V_i + V_j$, where
      $v_1 \in V_{i/j} , v_2 \in V_i \cap V_j $ and $v_3 \in V_{j/i}$. So $v= v_1 + A^k _i (v_3 )+ v_2 + v_3 - A^k _i (v_3)$. Since
      $  v_1 + A^k _i (v_3 )+ v_2 \in V_i  $ and $   v_3 - A^k _i (v_3) \in V^0 _i $ and $ v_1 + A^j _i (v_3) \in V_{i/j} $, so $A^k _i (v)= v_1 + A^j _i (v_3) + v_2$ and  $ p^i _{i,j} ( A^k _i (v)  )= v_2 $.  Similarly we can prove $ p^j _{j,i} ( A^k _j (v)  )= v_2  $,  thus $ p^i _{i,j} ( A^k _i (v)  )= p^j _{j,i} ( A^k _j (v)  )  $, which is equivalent to $(T^k _{i,j})$ according to Remark 1.2.

   \end{pf}

\begin{rem}
Let $ (V,\rho )$ be a $H_{\Gamma}(q)-$representation. For $i\in I$,let $V_i ,V^0 _i$ be the $q-$eigenspace and $-1-$eigenspace of $\rho(\sigma_i )$ respectively. For $i\neq j\in I$, An operator $B^i _j : V^0 _i \rightarrow V^0 _j$ similar with $A^i _j$ is defined
as : for $v\in V^0 _i$, $B^i _j (v)$ is defined as the $V^0 _j -$component of $v$ with respect to the direct sum decomposition
$V= V_j \oplus V^0 _j$. Then we have another representation $\Psi^0 _{\rho}$ of $\mathbb{Q}_I$ as: $\Psi^0 _{\rho}(o_i)=V^0 _i$,
$\Psi^0 _i (J^i _j)= B^i _j$. The following argument shows $\Psi^0 _{\rho}$ is also a $H-$representation of type $(\Gamma;q)$. Since
$ ( \rho(\sigma_i) -q  )( \rho(\sigma_i) +1 )=0$, then $ ( -q \rho(\sigma_i)^{-1} -q   )( -q \rho (\sigma_i )^{-1}+1   )=0 $, so the map
 $\bar{\rho}( \sigma_i )=-q \rho(\sigma_i)^{-1} $ extends to another $H_{\Gamma}(q)-$representation $\bar{\rho}$ on $V$. Now $V^0 _i$ become the $q-$eigenspace of $\bar{\rho}(\sigma_i)$ and $V_i$ become the $-1-$eigenspace of $\bar{\rho}(\sigma_i)$. So by Theorem 1.4,
 $\Psi^0 _{\rho}$ is a $H-$representation of type $(\Gamma ;q)$.
 \end{rem}

   Let $\Psi$ be a representation of $\mathbb{Q}_I $. Denote $\Psi (o_i )$ as $V_i $, and $\Psi ( J^i _j )$ as $A^i _j$.

   Then $\Psi$ has a natural dual representation $\Psi^{\#  }$ of $\mathbb{Q}_I$,
   which is defined as follows.  Let $\Psi^{\# } (o_i ) = V^{*} _i  $,which is denoted as $V^{\#}_i$,  and let $\Psi^{\#}(J^i _j ) = ( A^j _i  )^{*} $, which is denoted as $A^{\# i}_{j} $. Recall the canonical decomposition $V_i \cong V_{i,j}\oplus V_{i/j}$. By Lemma 1.2, the pair
   $( A^{\# i}_j , A^{\# j}_i  )$ is also a $( m_{i,j};v )-$couple between $V^{\#} _i$ and $ V^{\# }_j $, so there is a canonical decomposition
    $V^{\#}_i \cong V^{\#}_{i,j}\oplus V^{\#}_{i/j}  $. Recall the subspaces of $V^* _i$ ( or $V^{\#} _i $ ): $V^* _{i,j} =\{ f \in V^* _i | f(  V_{i/j} )=0 ) \} $ and
     $V^* _{i/j}= \{ f \in V^* _i | f(  V_{i,j} )=0 ) \}$ defined behind Lemma 1.1.  By Lemma 1.2, we have the following lemma.

      \begin{lem} Assuming above notations, then:  $V^{\#}_{i,j}= V^* _{i,j}$ and  $V^{\#}_{i/j} = V^*_{i/j} $.

  \end{lem}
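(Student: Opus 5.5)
The plan is to read both identities off the duality of couples in Lemma 1.2, applied to the $(m_{i,j};q)$-couple $\mathscr{C}=(A^i _j , A^j _i)$ between $V_i$ and $V_j$.

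First I fix the dual couple. By definition of $\Psi^{\#}$ we have $A^{\# i}_j = (A^j _i)^{*}$ and $A^{\# j}_i = (A^i _j)^{*}$, so the pair $(A^{\# i}_j , A^{\# j}_i)$ between $V^{\#}_i$ and $V^{\#}_j$ is exactly $((A^j _i)^{*},(A^i _j)^{*})$. This is the dual couple $\mathscr{C}^{*}$ of Lemma 1.2, read in the orientation starting from $V^{*}_i$. The composite relevant for $V^{\#}_i$ is
\[
A^{\# j}_i A^{\# i}_j = (A^i _j)^{*}(A^j _i)^{*} = (A^j _i A^i _j)^{*},
\]
the transpose of the operator whose eigenspaces define the canonical decomposition $V_i = V_{i,j}\oplus V_{i/j}$.

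Next I compare eigenspaces. Since $A^j _i A^i _j$ is semisimple (Theorem 1.2, or the defining property of an $(m_{i,j};q)$-couple), $V_i$ splits as the direct sum of its eigenspaces $V_{i,j}$, $V_{i/j}(a)$ and $V_{i/j}(0)$. For the transpose of a semisimple operator $T$ on $V_i$, the $\lambda$-eigenspace of $T^{*}$ consists of the functionals vanishing on every $\mu$-eigenspace of $T$ with $\mu\neq\lambda$.

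Two short checks establish this. Such a functional $f$ satisfies $f\circ T = \lambda f$ on each eigenspace of $T$, so $f$ lies in the $\lambda$-eigenspace of $T^{*}$. Conversely, if $f\circ T=\lambda f$, then for $v$ in the $\mu$-eigenspace we get $\mu f(v)=\lambda f(v)$, so $f(v)=0$ whenever $\mu\neq\lambda$.

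Applying this with $\lambda=1$ gives
\[
V^{\#}_{i,j}=\{f : f(V_{i/j})=0\}=V^{*}_{i,j}.
\]
Summing the $\lambda\neq 1$ eigenspaces gives
\[
V^{\#}_{i/j}=\bigoplus_{\lambda\neq 1}\{f: f \text{ kills all other eigenspaces}\}=\{f : f(V_{i,j})=0\}=V^{*}_{i/j}.
\]
This is precisely the content of Lemma 1.2 ($V^{*}_{\mathscr{C}^{*},\cap}=V^{*}_{\mathscr{C},\cap}$ and $V^{*}_{\mathscr{C}^{*},\setminus}=V^{*}_{\mathscr{C},\setminus}$), so the lemma follows by citing it.

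The main obstacle is bookkeeping of orientation rather than mathematics. Lemma 1.2 is phrased for $(f^{*},g^{*})$ with $f^{*}:W^{*}\to V^{*}$. I must check that the subspace of $V^{\#}_i$ used in the canonical decomposition of $\Psi^{\#}$ comes from the composite $(A^j _i A^i _j)^{*}$. It must not come from $(A^i _j A^j _i)^{*}$, which acts on $V^{*}_j$. Once $A^{\# j}_i A^{\# i}_j=(A^j _i A^i _j)^{*}$ is verified, the identification is immediate.
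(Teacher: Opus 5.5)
Your proof is correct and follows the paper's route. The paper simply obtains this lemma from Lemma 1.2. Your computation $A^{\# j}_i A^{\# i}_j=(A^j_i A^i_j)^{*}$, combined with the description of the eigenspaces of the transpose of a semisimple operator, is exactly the elementary linear algebra behind that lemma, and you check the orientation correctly.
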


   \begin{prop} Assume notations as above. Then

   (1) $\Psi$ satisfies the relations $(T^{(m_{i,j};q)}_{i,j})$ if and only if $\Psi^{\#}$ satisfy the relations $(T^{(m_{i,j};q)}_{i,j})$.

   (2) Suppose $\Psi$ satisfy the relations $(T^{(m_{i,j};q)}_{i,j})$. Then $\Psi$ satisfy the relations $(T^k _{i,j})$ if and only if
   $\Psi^{\#}$ satisfy the relations $(T^{i,j} _k) $.

   (3) If $\Psi$ is a $H-$representation of type $(\Gamma ; q)$, then $\Psi^{\#} $ is also a $H-$representation of type
   $(\Gamma ; q)$.

   \end{prop}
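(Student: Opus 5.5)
Part (1) is pure linear algebra. Write $\mathscr{C}=(A^i_j,A^j_i)$, a couple between $V_i$ and $V_j$. The dual datum is $(A^{\# i}_j,A^{\# j}_i)=((A^j_i)^*,(A^i_j)^*)$, a couple between $V^{\#}_i$ and $V^{\#}_j$, and this is exactly the dual couple $\mathscr{C}^*$ of Lemma 1.2 up to relabeling. The compositions are transposes of the original ones:
\[
A^{\# j}_i A^{\# i}_j=(A^i_j)^*(A^j_i)^*=(A^j_iA^i_j)^*,\qquad A^{\# i}_jA^{\# j}_i=(A^i_jA^j_i)^*.
\]
Transposes are semisimple with the same eigenvalues, so Lemma 1.2 gives one direction. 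For the extra condition when $m_{i,j}$ is even, note that the $0$-eigenspace of the transposed operator annihilates the image of the non-$0$ eigenspaces. Hence $f(v)=0$ for all $0$-eigenvectors $v$ dualizes to the same statement for $f^*$. Since $\Psi^{\#\#}=\Psi$, the converse follows.

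For (2), by Remark 1.1, $(T^k_{i,j})$ says
\[
A^i_j\,p^i_{i,j}\,A^k_i=p^j_{j,i}\,A^k_j \quad\text{as maps } V_k\to V_{j,i}.
\]
I will take transposes of this identity. By Lemma 1.6, $(p^i_{i,j})^*$ is the inclusion of $V^{\#}_{i,j}=V^*_{i,j}$ into $V^{\#}_i$, composed with the projection onto it. Also $(A^i_j)^*=A^{\# j}_i$ restricts to an isomorphism $V^{\#}_{j,i}\to V^{\#}_{i,j}$, by Lemma 1.1 applied to the dual couple. So the transposed identity reads: for every $g\in V^{\#}_{j,i}$,
\[
A^{\# i}_k A^{\# j}_i(g)=A^{\# j}_k(g).
\]
Since $A^{\# j}_i(g)$ runs over all of $V^{\#}_{i,j}$ and $A^{\# i}_jA^{\# j}_i$ is the identity on $V^{\#}_{j,i}$, this is equivalent to
\[
A^{\# i}_k(h)=A^{\# j}_kA^{\# i}_j(h)\quad\text{for all } h\in V^{\#}_{i,j},
\]
which is $(T^{i,j}_k)$ for $\Psi^{\#}$. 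Each step is an equivalence, so (2) follows.

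The main obstacle is bookkeeping in this step: making sure the transpose of a projection onto a component is the projection onto the annihilator-defined component. That is precisely the content of Lemma 1.6, and the indices must swap correctly, $j\leftrightarrow i$ in the superscripts.

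For (3), assume $\Psi$ is an $H$-representation. Part (1) gives $(T^{(m_{i,j};q)}_{i,j})$ for $\Psi^{\#}$. Part (2) applied to $\Psi$ gives $(T^{i,j}_k)$ for $\Psi^{\#}$. It remains to get $(T^k_{i,j})$ for $\Psi^{\#}$. Apply (2) to $\Psi^{\#}$, which satisfies the couple relations by (1): $\Psi^{\#}$ satisfies $(T^k_{i,j})$ if and only if $\Psi^{\#\#}=\Psi$ satisfies $(T^{i,j}_k)$. This holds by hypothesis. Hence $\Psi^{\#}$ is an $H$-representation of type $(\Gamma;q)$.
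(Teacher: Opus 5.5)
Your proof is correct and follows the paper's argument: (1) is Lemma 1.2, (2) dualizes $(T^k_{i,j})$ using $V^{\#}_{i,j}=(V_{i/j})^{\perp}$ from Lemma 1.6, and (3) combines (1) and (2). The only difference is cosmetic: the paper evaluates $(T^{i,j}_k)$ for $\Psi^{\#}$ pointwise to get $\mathrm{Im}(A^k_i-A^j_iA^k_j)\subset V_{i/j}$, i.e.\ $(T^k_{j,i})$, whereas you transpose the projection form of Remark 1.2 (not 1.1) and reparametrize by $h=A^{\# j}_i(g)$ to reach $(T^k_{i,j})$ directly, and in (3) you spell out the use of $\Psi^{\#\#}=\Psi$, which the paper leaves implicit.
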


   \begin{pf}
  First, (1) follows from Lemma 1.2, and (3) follows from (1) and (2).  Then we prove (2). What we need to prove is that

   $(*)$  for any $f\in V^{\#}_{i,j} $, there is $ A^{\# i} _k (f) = A^{\# j} _k A^{\# i} _j (f)  $ for any $k$.

  Recall the canonical decomposition $V_i = V_{i,j} \oplus V_{i/j}  $.  Let $V^{*}_{i,j} $ and $ V^{*} _{i/j}$ be subspace of $V^{*}_i $  defined as above. Then  $V^{\#}_i = V^{*} _i = V^{*}_{i,j} \oplus V^{*} _{i/j}=  V^{\#}_{i,j} \oplus V^{\#} _{i/j} $,
   $V^{\#} _{i,j} =  V^{*}_{i,j} $ and $V^{\#} _{i/j} =  V^{*}_{i/j} $ by Lemma 1.2 and Lemma 1.6.

   The statement $(*)$ is equivalent to:

   for any $f\in V^{\#}_{i,j} $, and for any $v\in V_k$, $A^{\# i} _k (f) (v) = A^{\# j} _k A^{\# i} _j (f)  (v)  $, which is equivalent to

    for any $f\in V^{\#}_{i,j} $, and for any $v\in V_k$, $ f (  (  A^k _i - A^j _i A^k _j      )(v)     )=0    $, which is  equivalent to

    for any $v\in V_k$,  $ (  A^k _i - A^j _i A^k _j      )(v) \in V_{i/j} $. Since $V^{\#}_{i/j} = V^* _{i/j}$ by Lemma 1.6.  The last condition is just the condition $ (T^k _{i,j})$ by Remark 1.2.\\

    \end{pf}

    This gives another definition of $H-$representations as follows.

    \begin{defi}
    Let $\Gamma = ( m_{i,j} )_{i,j\in I}$ be a Coxeter matrix, $v\in \kappa$.  Let $\Psi$ be a representation of the quiver $\mathbb{Q}_{I}$. Then $\Psi$ is a $H-$representation of type $(\Gamma ; q )$ if and only if :

    (1) $(T^{(m_{i,j};q)}_{i,j})$ : For any $i\neq j\in I$, $\Psi$ satisfies the conditions $(T^{(m_{i,j};q)}_{i,j})$;

    (2) $(T^{i,j}_k)$  For any three different $i,j,k\in I$,  $\Psi$ satisfy the conditions $(  T^{i,j} _k )$.

    (3) $( T^{\# i,j} _k) $   For any three different $i,j,k\in I$,  $\Psi^{\#}$ satisfy the conditions $(  T^{i,j} _k )$.

    \end{defi}

    Suppose $(\bar{V},\rho)$ is a $H_{\Gamma}(q)$-representation. Denote the $q-$eigenspace and the $(-1)-$eigen
    space of $\rho(\sigma_i)$ as $\bar{V}_i$ and $\bar{V}^0 _i$ respectively for any $i\in I$. Denote the imbedding map from $\bar{V}_i$ to $\bar{V}$ as $ J_i$.
     For any $i\neq j\in I$, denote the jumping operator from $\bar{V}_i$ to
    $\bar{V}_j$ as $A^i _j$. Then $(\bar{V}_i , A^i _j )$ is a $H-$representation of type $(\Gamma ;v)$ by Theorem 1.4, which is denoted as $\Psi_{\rho}$.  Then recall the $H_{\Gamma}(q)-$representation  $(V,  \rho^{'} )$ derived from $\Psi_{\rho}$. Where
    $V= \oplus_{i\in I} \bar{V}_i /W $ and $W$ is the subspace spanned by vectors $\{ v-A^i _j (v)  \}_{i\neq j, v\in \bar{V}_{i,j}  }$.

    A natural map $f : V\rightarrow \bar{V}$ is defined as follows. First let $ \bar{f} : \oplus_{i\in I} \bar{V}_i \rightarrow \bar{V}$ be the map defined as $ \bar{f}( \sum_{i\in I} v_i  ) = \sum_{i\in I} J_i ( v_i )$ for any $\sum_{i\in I} v_i \in  \oplus_{i\in I} \bar{V}_i$.  Since $W\subset \ker(\bar{f})$, then a map $f: V\rightarrow \bar{V}$ is induced.

    \begin{lem} Assume notations as above, then
     (1) $f$ is a $H_{\Gamma}(q)-$morphism.

     \noindent(2) $\ker(f) \subset \cap_{i\in I} V^0 _i $ so $\ker(f)$ is a direct sum of $\epsilon$ (as in Theorem 1.1 ) as a

     $H_{q}(\Gamma)-$representation.

    \noindent (3) $coker(f)$ is a direct sum of $ind$ ( as in Theorem 1.1)as a $H_{q}(\Gamma)-$representation.

    \end{lem}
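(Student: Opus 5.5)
The plan is to verify the three parts in order, using only the explicit construction of $V=\bigoplus_i\bar{V}_i/W$. Part (1) rests on one computation on generators. Part (2) is a formal consequence of (1) together with injectivity of $f$ on each $V_i$. Part (3) comes from the fact that the image of $f$ contains every $\bar{V}_i$.

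\textbf{(1).} Since $V$ is spanned by the subspaces $p(\bar{V}_j)$, it suffices to check $f(t_i x)=\rho(\sigma_i)f(x)$ for $x=p(v)$ with $v\in\bar{V}_j$. For $j=i$ both sides equal $qv$. For $j\neq i$, split $v=A^j_i(v)+(v-A^j_i(v))$. Here $A^j_i(v)\in\bar{V}_i$, and $v-A^j_i(v)$ lies in $V^0_i[j]\subset\bar{V}^0_i$ in the quiver construction. By the definition of the jumping operator, $v-A^j_i(v)$ also lies in the $(-1)$-eigenspace of $\rho(\sigma_i)$ in $\bar{V}$. Hence $t_i p(v)=q\,p(A^j_i v)-p(v-A^j_i v)$. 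Applying $f$ gives $qA^j_i(v)-(v-A^j_i(v))$, which is exactly $\rho(\sigma_i)(v)$.

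\textbf{(2).} By (1), $\ker(f)$ is stable under each $t_i$. Since $t_i$ is semisimple with eigenspaces $V_i$ and $V^0_i$, we get $\ker(f)=(\ker(f)\cap V_i)\oplus(\ker(f)\cap V^0_i)$. The restriction $f|_{V_i}=J_i\circ(p|_{\bar{V}_i})^{-1}$ is injective by Corollary 1.1(1), so $\ker(f)\cap V_i=0$. Therefore $\ker(f)\subset V^0_i$ for every $i$. Every $\sigma_i$ then acts on $\ker(f)$ by $-1$, so $\ker(f)$ is a direct sum of copies of $\epsilon$.

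\textbf{(3).} The image of $f$ is $\sum_i\bar{V}_i$. Because $\bar{V}=\bar{V}_i\oplus\bar{V}^0_i$ and $\bar{V}_i\subset\mathrm{Im}(f)$, each $\rho(\sigma_i)$ acts on $coker(f)$ through its action on $\bar{V}^0_i$, that is, by the single scalar $-1$. Consequently every vector of $coker(f)$ spans a subrepresentation, and $coker(f)$ is a direct sum of copies of one one-dimensional character of $H_q(\Gamma)$ in which all generators act by the same scalar.

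The main obstacle is matching this character with the label in the statement. The computation above gives the scalar $-1$, which is the character called $\epsilon$ in Theorem 1.1, whereas the statement names $ind$. I would reconcile the two through the involution $\sigma_i\mapsto -q\sigma_i^{-1}$ of the Remark following Theorem 1.3, which exchanges $ind$ and $\epsilon$ and exchanges the roles of $V_i$ and $V^0_i$. Under that identification the cokernel is the direct sum of $ind$ asserted in (3). In the untwisted convention of Theorem 1.1 the same argument shows that both $\ker(f)$ and $coker(f)$ are sums of $\epsilon$.
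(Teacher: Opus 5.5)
Your arguments for (1) and (2) are correct and match the paper's. The paper records $f(V_i)=\bar V_i$ and $f(V^0_i)\subset\bar V^0_i$, and reads off from this both the intertwining property and $\ker(f)\subset\bigcap_i V^0_i$. Your generator-by-generator check and your eigenspace splitting of $\ker(f)$, using injectivity of $f|_{V_i}$, make exactly this explicit.

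The problem is the last paragraph of (3). Your computation is right: $\mathrm{Im}(f)=\sum_i\bar V_i$, so each $\rho(\sigma_i)$ acts on $\mathrm{coker}(f)$ by $-1$, and the cokernel is a direct sum of copies of $\epsilon$.

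The ``reconciliation'' through $\sigma_i\mapsto -q\sigma_i^{-1}$ does not prove (3) as stated. That twist puts a \emph{different} $H_q(\Gamma)$-module structure on the same space. Statement (3) concerns $\mathrm{coker}(f)$ with the structure induced by $\rho$, which is the structure for which $f$ is a morphism in (1). You cannot twist only the cokernel, and twisting source and target consistently would turn $\ker(f)$ into a sum of $ind$ as well.

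In fact (3) as written is false. Take $\bar V=\epsilon$: then every $\bar V_i=0$, so $V=0$ and $\mathrm{coker}(f)=\epsilon\neq 0$. Since $q\neq -1$, this is not a sum of copies of $ind$.

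The correct statement is that $\mathrm{coker}(f)$ is a direct sum of copies of $\epsilon$. This is what the paper's own proof concludes (``$\sigma_i$ acts as $(-1)$-multiplication on $coker(f)$''), so the label $ind$ in the statement is a misprint. The paper's stated reason, $\sum_i\bar V^0_i\subset \mathrm{Im}(f)$, should read $\sum_i\bar V_i\subset\mathrm{Im}(f)$, which is the fact you used. Drop the twist and assert (3) with $\epsilon$ in place of $ind$; your final sentence already says exactly this.
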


    \begin{pf}
    First, for $i\in I$ recall $V= V_i \oplus V^0 _i$. By definition of $V^0 _i$ and the jumping operator $A^i _j$, there hold
\[f( V_i ) = \bar{V}_i \quad \text{ and }\quad f( V^0 _i ) \subset \bar{V}^0 _i \quad  (*)\]
 So for any $i\in I$ and any $v=v_1 +v_2 \in V$ where $v_1 \in V_i $ and $v_2 \in V^0 _i$,
     $f(  \sigma_i (v) )= f( q v_i - v_2  )= q f(v_1) -f(v_2)= \sigma_i f(v) $. So $f$ is a $H_{\Gamma}(q)-$morphism.

     (2) is also an evident fact derived from $(*)$. For (3), since $\sum_i \bar{V}^0 _i \subset im (f)$, so $\sigma_i$ acts as
     $(-1)-$multiplication on $coker(f)$ for any $i$.

    \end{pf}

    \begin{rem} If $M,N$ are two $H_q (\Gamma)-$modules, the group $EXT(M,N)$ is easily computed without considering projective resolutions. So construction of $H-$representation  is equivalent to construction of $H_q (\Gamma)-$representation in some sense by above lemma.

    \end{rem}

\section{ Nonintersecting $H-$representations  }

  First application of  $H-$representation is to constructing representations of Hecke algebra, through constructing $H-$representations. To do that we need to construct linear representations of the quiver $\mathbb{Q}_{I}$ satisfying the eigenvalue relations $(T^{(m_{i,j};q)}_{i,j})$, the  triangular relations $(T^{i,j} _{k} )$ and $( T^k _{i,j}  )$. Usually it is still  hard to construct quiver representations satisfying all these three classes of relations simultaneously. But if $V_{i,j}=0$ for all $i\neq j\in I$ in a $H-$representation, then the triangular relations $(T^{i,j} _{k} )$ and $( T^k _{i,j}  )$ are satisfied automatically, so the operators $\Psi ( J^i _j  )$ only need to satisfy  relations $( T^{(m_{i,j};q)}_{i,j} )$.

 Let $\Gamma =(m_{i,j})_{i,j\in I}$ be any Coxeter matrix. Let $q\in \kappa$ be generic as in the beginning of Section 2.

 \begin{thm}
 Let $\Psi $ be a $H-$representation of type $(\Gamma;q )$. Denote $\Psi (i  ) $ as $V_i $, $ \Psi(J^i _j ) $ as $A^i _j$, and let
 $V_{i,j} , V_{i/j} $ be subspaces of $V_i $ as in comments following Definition 2.3. Then if $V_{i,j}=0$ for any $i\neq j\in I$,  $\Psi$
  is a $H-$representation of type $(\Gamma; q)$ if and only if  $( A^i _j , A^j _i  )$ is a specialized $(m_{i,j};q)$-couple between $V_i$ and $V_j$ for any $i\neq j\in I$.

    This kind of $H-$representations  $\Psi$ are called of \textbf{nonintersecting type.}

 \end{thm}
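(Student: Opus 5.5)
The plan is to treat the statement as a quiver representation $\Psi$ with $V_{i,j}=0$ for all $i\neq j$, and check the three families of conditions in the definition of $H-$representation one at a time. The only real content is the translation between the eigenvalue condition $(T^{(m_{i,j};q)}_{i,j})$ under the hypothesis $V_{i,j}=0$ and the notion of a specialized $(m_{i,j};q)-$couple. The two triangular families should be vacuous or trivially true.

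\textbf{Triangular conditions.} For $(T^{i,j}_k)$, the condition quantifies over $v\in V_{i,j}$, and $V_{i,j}$ is the $1-$eigenspace of $A^j_iA^i_j$. Since $V_{i,j}=0$, only $v=0$ occurs, and $A^i_k(0)=0=A^j_kA^i_j(0)$. For $(T^k_{i,j})$, I will use the remark following the definition of $H-$representation. It gives $Im(id_{V_j}-A^i_jA^j_i)=V_{j/i}$, and $V_j=V_{j,i}\oplus V_{j/i}$ holds whenever $(A^i_j,A^j_i)$ is an $(m_{i,j};q)-$couple. Since $V_{j,i}=0$, we get $V_{j/i}=V_j$, so the required inclusion $Im(A^k_j-A^i_jA^k_i)\subset V_j$ is automatic. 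Hence, assuming the couple condition holds, both triangular families impose nothing.

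\textbf{Equivalence of the couple conditions.} Suppose $(A^i_j,A^j_i)$ is a specialized $(m_{i,j};q)-$couple. By definition it is an $(m_{i,j};q)-$couple, so $(T^{(m_{i,j};q)}_{i,j})$ holds. Together with the paragraph above, this shows $\Psi$ is an $H-$representation. Conversely, suppose $\Psi$ is an $H-$representation. Then $(A^i_j,A^j_i)$ is an $(m_{i,j};q)-$couple, so $A^j_iA^i_j$ and $A^i_jA^j_i$ are semisimple with eigenvalues in the allowed set. The $1-$eigenspaces are $V_{i,j}$ and $V_{j,i}$, both zero, so the eigenvalue $1$ does not occur. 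The remaining eigenvalues therefore lie in $\{\lambda_{m_{i,j};q}(a)\}$, together with $\{0\}$ when $m_{i,j}$ is even. In the even case, the extra condition that $0-$eigenvectors are killed is part of both definitions.

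\textbf{Main obstacle.} The only delicate point is in the $(T^k_{i,j})$ step: reading the remark correctly so that $Im(id-A^i_jA^j_i)$ is all of $V_j$ once $V_{j,i}=0$. This needs the semisimplicity of $A^i_jA^j_i$, so that the image of $id-A^i_jA^j_i$ is exactly the sum of the non-$1$ eigenspaces. That semisimplicity comes from the couple hypothesis, which is available in both directions of the argument.
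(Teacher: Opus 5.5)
Your proof is correct and follows the paper's own brief justification (given in the text just before the theorem): with $V_{i,j}=0$ the triangular relations $(T^{i,j}_k)$ and $(T^k_{i,j})$ hold automatically, and the condition $(T^{(m_{i,j};q)}_{i,j})$ reduces to the specialized one because $1$ cannot occur as an eigenvalue. The step you flag as the main obstacle is easier than you suggest: $V_{j,i}=0$ means $id_{V_j}-A^i_jA^j_i$ is injective, hence surjective on the finite-dimensional space $V_j$, so semisimplicity is not needed there.
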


 By this theorem, to construct a nonintersecting type  $H-$representation is very easy.  We only need to take a set of $\kappa$-linear spaces $\{ V_i \} _{i\in I}$ such that $ \dim V_i =\dim V_j $ if $m_{i,j}$ is odd, then for any  binary subset $\{ i,j\} \in I$, take a specialized  $(m_{i,j};q)$-couple $(f,g)$ between $V_i $ and $V_j $ and let $A^i _j =f , A^j _i =g$. Proof of the following theorem is direct. (3) of this theorem shows nonintersecting $h-$representations are naturally deformable.

 \begin{thm} Let $\Psi$ be a nonintersecting $H-$representation of type $(\Gamma;q)$, and assume the attached notations as in Theorem 2.1.

\noindent (1) If $\Psi$ is of nonintersecting type then its dual $\Psi^{\#}$ is also.

\noindent (2) Let $\Psi ^{'} $ be the $\mathbb{Q}_{I}$-representation such that $ \Psi^{'} (o_i ) =\Psi (o_i) =V_i  $ for any $i\in I$, and
 $ \Psi^{'} (J^i _j) = - \Psi (  J^i _j )$ for any $i\neq j \in I$, then $\Psi^{'}$ is also a $H-$representation of type
 $(  \Gamma;q ) $.

\noindent (3) Let $0\neq q^{'} \in \kappa$,   let $\Psi^{"} $ be the $\mathbb{Q}_{I}$-representation such that $ \Psi^{"} (i ) =\Psi (i) =V_i  $ for any $i\in I$, and
 $ \Psi^{"} (J^i _j) = q^{'} \Psi (  J^i _j )= q^{'} A^i _j $ for any $i\neq j \in I$, then $\Psi^{"}$ is a $H-$representation of type
 $(  \Gamma;qq^{'} ) $.
  \end{thm}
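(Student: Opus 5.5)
The statement has three parts, and I would prove them one at a time. In each case I would reduce to the characterization in Theorem 2.1. By that theorem, a nonintersecting $\mathbb{Q}_I$-representation (one with $V_{i,j}=0$ for all $i\neq j$) is an $H$-representation of type $(\Gamma;q)$ exactly when every couple $(A^i_j,A^j_i)$ is a specialized $(m_{i,j};q)$-couple. So the triangular relations never need to be checked. What remains is a statement about eigenvalues of the composites $A^j_iA^i_j$ and $A^i_jA^j_i$, together with their semisimplicity and, for even $m_{i,j}$, the kernel condition on $0$-eigenvectors.

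For (1), I would use Lemma 1.2. The dual couple $(A^{\# i}_j,A^{\# j}_i)=((A^j_i)^*,(A^i_j)^*)$ has composites
\[A^{\# j}_iA^{\# i}_j=(A^i_jA^j_i)^*,\qquad A^{\# i}_jA^{\# j}_i=(A^j_iA^i_j)^*.\]
These are the transposes of the original composites, so they are semisimple with the same eigenvalues. The $1$-eigenspace $V^{\#}_{i,j}$ is therefore $0$, which also follows from Lemma 1.6 since $V^*_{i,j}=0$ when $V_{i,j}=0$. For even $m_{i,j}$ one also needs the kernel condition on $0$-eigenvectors, and Lemma 1.2 asserts precisely that the dual is again an $(m;q)$-couple, so this condition is inherited. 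Hence $\Psi^{\#}$ is nonintersecting and satisfies the specialized couple conditions. By Proposition 1.1(3), or directly by Theorem 2.1, it is an $H$-representation.

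For (2), replacing both $A^i_j$ and $A^j_i$ by their negatives leaves the composites $A^j_iA^i_j$ and $A^i_jA^j_i$ unchanged. It also preserves kernels, so the condition "$0$-eigenvector $\Rightarrow f(v)=0$" still holds. The $1$-eigenspaces stay zero, so $\Psi'$ is again nonintersecting, and Theorem 2.1 gives the result.

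For (3), the composites are multiplied by $q'^2$, so their eigenvalues become $q'^2\lambda_{m;q}(a)$, which is to be compared with $\lambda_{m;qq'}(a)=4\cos^2(a\pi/m)\,\frac{qq'}{(1+qq')^2}$. This is the step I expect to be the main obstacle, because the scalings do not obviously match. I would first read the statement through the normalization of Theorem 1.1, where the relevant constants are $\tau_a=\lambda\cdot q(1+q)^{-2}$-type expressions. I would then check whether the intended identity is $q'^2\lambda_{m;q}(a)=\lambda_{m;q''}(a)$ for the appropriately defined new parameter. Concretely, I would solve $q'^2\,\frac{q}{(1+q)^2}=\frac{q''}{(1+q'')^2}$ for $q''$, check that the solution can be taken as the parameter the statement calls $qq'$, and then verify that it is generic. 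If the literal identification fails, I would adjust the parameter to the one solving this equation. In all cases semisimplicity, the kernel condition and $V_{i,j}=0$ are preserved under scaling by $q'\neq0$, so Theorem 2.1 finishes the argument once the eigenvalue sets match.
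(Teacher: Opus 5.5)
Your treatment of (1) and (2) is correct, and it is the ``direct'' verification the paper has in mind; the paper itself writes no proof. For a nonintersecting $\mathbb{Q}_I$-representation, Theorem~2.1 reduces everything to the specialized-couple condition. That condition passes to the dual by Lemma~1.2, with $V^{\#}_{i,j}=V^*_{i,j}=0$ by Lemma~1.6. It is also untouched by $A\mapsto -A$, since both composites and all kernels are unchanged. There is one harmless slip: the composite on $V^{\#}_i$ is $A^{\# j}_iA^{\# i}_j=(A^j_iA^i_j)^*$, so your two displayed identities are interchanged.

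For (3) there is a genuine problem, and it cannot be fixed inside a proof, because the statement is false as written. The step you defer (``check that the solution can be taken as the parameter the statement calls $qq'$'') fails. The scaled composites have eigenvalues $q'^2\lambda_{m;q}(a)$, with $\lambda_{m;q}(a)=4\cos^2(a\pi/m)\,q/(1+q)^2$. The equality $q'^2\lambda_{m;q}(a)=\lambda_{m;qq'}(a)$ is therefore equivalent to $q'(1+qq')^2=(1+q)^2$, which is not an identity.

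Here is a concrete counterexample. Let $I=\{1,2\}$, $m_{1,2}=3$, $\kappa=\mathbb{Q}$, $q=1$, $V_1=V_2=\kappa$, $A^1_2=1$ and $A^2_1=\frac14$. Both composites equal $\frac14=\lambda_{3;1}(1)$, and rank $2$ has no triangular conditions. So this is a nonintersecting $H$-representation of type $(\Gamma;1)$. For $q'=3$ the composites become $\frac94$, which is neither $1$ nor $\lambda_{3;3}(1)=\frac{3}{16}$, even though $qq'=3$ is generic for $I_3$.

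What your computation actually proves is a corrected statement. The parameter $q$ enters only through the factor $q/(1+q)^2$, uniformly in $m$ and $a$. Hence $\Psi''$ is a nonintersecting $H$-representation of type $(\Gamma;q'')$ for any generic $q''\in\kappa$ with $q''/(1+q'')^2=q'^2q/(1+q)^2$. Such a $q''$ need not exist in $\kappa$; in the example it would have to satisfy $9q''^2+14q''+9=0$.

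You should also drop the claim that $V_{i,j}=0$ is preserved by scaling ``in all cases''. The new $1$-eigenspace is the $q'^{-2}$-eigenspace of $A^j_iA^i_j$. In the example, $q'=2$ turns the composite into the identity, so $\Psi''$ becomes intersecting and Theorem~2.1 no longer applies. In the corrected statement, the nonintersecting property comes from genericity of $q''$ instead. Indeed, $\lambda_{m;q''}(a)=1$ would force $q''+q''^{-1}=2\cos(2a\pi/m)$, i.e.\ $q''$ would be a nontrivial $m$-th root of unity, which genericity excludes.
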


The construction of nonintersecting type $H-$rerpesentations is relatively simple , but there are many not simple questions can be asked for them, especially when $q=1$, that is the $H-$representations corresponding to representation of Coxeter groups.

\noindent(1) To classify the irreducible nonintersecting  $H-$representations for every $(\Gamma;q)$.

\noindent(2) When $\Gamma$ is of affine type, to clarify which elements in the classical classification of irreducible representations are nonintersecting.

\noindent(3) When $q=1$, one can produce $H-$representations of intersecting type by taking components in $\Psi_1 \otimes ...\otimes \Psi_n  $, where $\Psi_i$ is nonintersecting for any $1\leq i\leq n$. If any irreducible $H-$representation can be realized as such a component, we could say "all irreducible $H-$representations of type $(\Gamma;1)$ are reachable". So the following question is natural,

Whether for all irreducible $H-$representation $\Psi$ of type $(\Gamma;1)$, there exist a finite sequence of $H-$representations of
type $(\Gamma;1)$ $\Psi_1 ,..., \Psi_n$, such that $\Psi$ is a component of $ \Psi_1 \otimes ...\otimes \Psi_n $?

\noindent(4) When $q=1$, the geometric representation ( Tits representation  ) seems in some sense the simplest member in the family of nonintersecting $H-$representations. It invoke the following questions.

Which nonintersecting $H-$representations of type $(\Gamma ;1)$ are faithful?
Which nonintersecting $H-$representations of type $(\Gamma ;1)$ are discrete?

 \section{Relation with the Kazhdan-Lusztig theory}
  In this section we combine $H-$representation with the famous  Kazhdan-Lusztig theory for Hecke algebra representations.

  First let's recall the Kazhdan-Lusztig theory. In section 3 and section 4 we need to take a square root of $q$, and view $H_q (\Gamma)$ as a module over certain Laurent polynomial ring. We set it as follows. $\Lambda= \kappa[ v^{\pm} ]$ be a laurent polynomial ring and let $q=v^2$. Let $H_q [\Gamma]$ be a $\Lambda-$algebra generated by the same set of generators and relations as in Definition 1.1. It is well known the Hecke algebra $H_{q}(\Gamma) $ is a free $\Lambda$-module with a natural basis $\{  T_{w} \}_{w\in W_{\Gamma}}$, in which $T_{s_i }$ can be identified with $ \sigma_i $ in the definition of $H_q (\Gamma)$ for any $i\in I$.

   Denote the Bruhat order in $W_{\Gamma}$ as $ "<" $. Denote length of $w$ as $l(w)$ for any $w\in W_{\Gamma}$, and denote $ v^{l(w)} $, $(-1)^{l(w)} $ as
   $v_w , \epsilon_w $ respectively. There is a canonical involution $\overline{( \cdot  ) }$ on $H_{q}(\Gamma)$, which is defined by $ \overline{f(v)  } = f ( v^{-1}  ) $ for any $f(v)\in \kappa[v^{\pm}]$ and $ \overline{  T_{w} } = ( T_{w^{-1}}   )^{-1}  $ for any $w\in W_{\Gamma}$.

   \begin{thm}(\cite{KL2})
   For any $w\in W_{\Gamma}$, there is a unique element $ C_w \in H_q (\Gamma) $ satisfying:

  \noindent (1) $\overline{C_w } = C_w  $,

  \noindent (2) $C_w = \sum_{y\leq w} \epsilon_y \epsilon_w v_w q^{-1}
   _y \overline{  P_{y,w} } T_y  $, where $P_{y,w}$ is a polynomial in $\kappa[q ]$ of degree $\leq \frac{1}{2}( l(w)-l(y)-1    )$ for $y<w$, and $P_{w,w}=1$.

   \end{thm}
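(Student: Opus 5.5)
The statement is the Kazhdan--Lusztig theorem; I would prove it directly in $H_q(\Gamma)$ over $\Lambda=\kappa[v^{\pm}]$. I would split it into three parts: the properties of the involution $\overline{(\cdot)}$, uniqueness via a triangularity argument, and existence by induction on $l(w)$ using left multiplication by $C_{s}$.

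\textbf{Step 1: the involution.} First I would check that $\overline{(\cdot)}$ is a ring involution. It suffices to check that the relations of Definition 1.1 are preserved by $\sigma_i\mapsto \sigma_i^{-1}$, $v\mapsto v^{-1}$. From $(\sigma_i-q)(\sigma_i+1)=0$ one gets
\[
\sigma_i^{-1}=q^{-1}\sigma_i-(1-q^{-1}),
\]
and $\sigma_i^{-1}$ satisfies the quadratic relation with $q^{-1}$ in place of $q$. The braid relations are preserved because inverting both sides of a braid relation reverses the words, and the reversed relation is again a braid relation. Expanding $\overline{T_w}=T_{w^{-1}}^{-1}$ along a reduced expression gives a triangular formula
\[
\overline{T_w}=\epsilon_w q_w^{-1}\sum_{y\leq w}\epsilon_y R_{y,w}T_y,\qquad R_{w,w}=1 .
\]
This holds because each factor $T_s^{-1}$ only produces $T_{y}$ with $y$ a subexpression. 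Here $q_y=q^{l(y)}$, consistent with $v_y^2=q_y$.

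\textbf{Step 2: uniqueness.} I would rewrite condition (2) in the normalized form $C_w=\sum_{y\leq w}p_{y,w}\,\tilde T_y$, where $\tilde T_y=v_y^{-1}T_y$ (up to the signs $\epsilon_y$). In this form $p_{w,w}=1$ and $p_{y,w}\in v^{-1}\mathbb Z[v^{-1}]$ for $y<w$; this is exactly the degree bound $\deg P_{y,w}\leq\frac12(l(w)-l(y)-1)$. Suppose $C_w, C'_w$ both satisfy (1) and (2). Then $D=C_w-C'_w$ is bar-invariant and equals $\sum_{y<w}d_y\tilde T_y$ with $d_y\in v^{-1}\mathbb Z[v^{-1}]$. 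If $D\neq 0$, take $y$ maximal in the Bruhat order with $d_y\neq0$. By Step 1, $\overline{\tilde T_y}$ lies in $\tilde T_y$ plus lower terms, so comparing the coefficients of $\tilde T_y$ in $D=\overline D$ gives $d_y=\overline{d_y}$. This is impossible for a nonzero element of $v^{-1}\mathbb Z[v^{-1}]$, so $D=0$.

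\textbf{Step 3: existence.} I would induct on $l(w)$. For the base cases take $C_e=T_e$ and $C_{s_i}=v^{-1}T_{s_i}-v$. A direct check using $\sigma_i^{-1}=q^{-1}\sigma_i-(1-q^{-1})$ shows $C_{s_i}$ is bar-invariant, and it matches (2) with $P=1$. For $l(w)>1$, write $w=sw'$ with $l(w)=l(w')+1$. Let $\mu(z,w')$ be the coefficient of degree $\frac12(l(w')-l(z)-1)$ in $P_{z,w'}$, and set
\[
C_w=C_sC_{w'}-\sum_{z<w',\ sz<z}\mu(z,w')\,C_z .
\]
Bar-invariance is immediate from the induction hypothesis and Step 1. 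Next, compute $C_sT_y$ in the two cases $sy>y$ and $sy<y$. This yields the recursion
\[
P_{x,w}=q^{1-c}P_{sx,w'}+q^{c}P_{x,w'}-\sum_{z}\mu(z,w')\,q_z^{-1}q_x\ldots P_{x,z},
\]
with $c=1$ if $sx<x$ and $c=0$ otherwise. From this recursion one must show $P_{w,w}=1$ and the degree bound for every $x<w$.

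\textbf{Main obstacle.} The main obstacle is this degree check. A priori, the term $q^{c}P_{x,w'}$ (or $q^{1-c}P_{sx,w'}$) can reach degree exactly $\frac12(l(w)-l(x))$, which is one too large. The subtracted sum $\sum\mu(z,w')C_z$ is designed to cancel precisely these top-degree contributions. This works only when $sz<z$, since for $sz>z$ the term $C_sC_z$ would not have the offending top coefficient. I would handle this by case analysis on whether $sx<x$. In that case analysis I would use the standard lemma that $P_{x,w'}=P_{sx,w'}$ when $sw'<w'$; for $w'$ this lemma follows from the induction hypothesis applied to the construction of $C_{w'}$. I would also use the property of the Bruhat order that $x\leq w$ implies $x\leq w'$ or $sx\leq w'$. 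With these, the top-degree terms cancel exactly, and the bound $\deg P_{x,w}\leq\frac12(l(w)-l(x)-1)$ follows for all $x<w$.
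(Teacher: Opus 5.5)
The paper does not prove this statement; it quotes it from \cite{KL2}. Your outline is the original Kazhdan--Lusztig argument: your inductive definition of $C_w$ is formula (3) of the proposition quoted right after the theorem. The strategy is sound, but several details are wrong as written, including in the step you single out as the main obstacle.

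\textbf{Normalization.} In the normalization of the statement, the coefficient of $\tilde T_y=v_y^{-1}T_y$ in $C_w$ is $\epsilon_y\epsilon_w v^{l(w)-l(y)}\overline{P_{y,w}}$. For $y<w$ this lies in $v\kappa[v]$, not in $v^{-1}\mathbb{Z}[v^{-1}]$; already $C_s=\tilde T_s-v\tilde T_e$. Your uniqueness argument survives unchanged, since $v\kappa[v]\cap v^{-1}\kappa[v^{-1}]=0$. Note that only this containment is used; it is not equivalent to the degree bound.

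\textbf{The recursion and the degree check.} The correction term in your recursion is $\mu(z,w')\,q^{\frac12(l(w)-l(z))}P_{x,z}$, equivalently with factor $q_z^{-1/2}q_{w'}^{1/2}q^{1/2}$. This factor is independent of $x$, and $l(w)-l(z)$ is even whenever $\mu(z,w')\neq 0$. With the correct factor, the degree check is immediate and needs neither of the auxiliary facts you propose.

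\emph{The lemma does not apply.} The lemma $P_{x,w'}=P_{sx,w'}$ for $sw'<w'$ cannot be invoked, because your $s$ satisfies $sw'=w>w'$. In \cite{KL2} that identity is a consequence, proved afterwards for $w$ itself.

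\emph{The direct check.} By induction, each of $q^{1-c}P_{sx,w'}$, $q^{c}P_{x,w'}$ with $c=0$, and every summand with $x<z$ already has degree at most $\frac12(l(w)-l(x)-1)$. The only possible excess is $qP_{x,w'}$ when $sx<x$. Its coefficient in degree $\frac12(l(w)-l(x))$ is $\mu(x,w')$, and it is cancelled exactly by the summand $z=x$, which is present precisely because $sx<x$. This is also the real reason for restricting the sum to $sz<z$: a summand with $sz>z$ would create an uncancelled top-degree term at $x=z$.

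\emph{The Bruhat-order fact.} The direction you actually need is the other one: $y\le w'$ implies $y\le w$ and $sy\le w$. This makes $C_sC_{w'}$ supported on $\{y\le w\}$. Together with the coefficient $v_w^{-1}$ of $T_w$ in $C_sC_{w'}$, it gives $P_{w,w}=1$.
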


   It is known the set $ \{ C_w \} _{w\in W_{\Gamma }} $ constitute a basis of $H_q (\Gamma)$ and is called the canonical basis.  The polynomials $P_{y,w}$ in the theorem are the celebrated Kazhdan-Lusztig polynomials. For $y,w\in W_{\Gamma}$, the notation $y \prec w $ means:
    $y<w$, $\epsilon_y = -\epsilon_w$, and the degree of $P_{y,w}$ is exactly $\frac{1}{2}( l(w)- l(y) -1  )$. When $ y\prec w $, a important integer $ \mu (y,w )$ is introduced as the coefficient of the highest power in $P_{y,w}$.
 The  following "Multiplication formulas" is important for our intent.

   \begin{prop}(\cite{KL2})
   (1) If $s_j w < w$, then $T_{s_j} C_w = -C_w $.

   \noindent(2) If $w< s_j w $,then $T_{s_j} C_w = v^2 C_w + v C_{s_j w} + v \Sigma _{ z\prec w; s_j z<z } \mu(z,w) C_z $.

   \noindent(3) $C_{s_j w} = C_{s_j} C_{w}- \Sigma _{s_j z <z ; z\prec w   } \mu(z,w) C_z $, if $s_j w>w$.
   \end{prop}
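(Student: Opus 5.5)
The statement is the classical multiplication formula of \cite{KL2}, and I would prove (1), (2) and (3) together by induction on $l(w)$, using only the characterization of $C_w$ in Theorem 3.1 (bar-invariance plus the triangularity and degree conditions). Throughout write $s=s_j$. First I compute $C_s$ from Theorem 3.1(2), where $P_{e,s}=P_{s,s}=1$:
\[
C_s=v^{-1}T_s-v\,T_e .
\]
Two consequences follow at once. First, $T_s=v\,C_s+q$. Second, the quadratic relation $(T_s-q)(T_s+1)=0$ becomes $C_s^2=-(v+v^{-1})C_s$.

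\emph{Step 1: (2) follows from (3).} Substituting $T_s=vC_s+q$ gives $T_sC_w=q\,C_w+v\,C_sC_w$. Inserting (3) for $C_sC_w$ yields exactly the formula in (2). So it suffices to prove (1) and (3).

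\emph{Step 2: (1) for $w$, assuming (1) and (3) for shorter elements.} The claim $T_sC_w=-C_w$ is equivalent to $C_sC_w=-(v+v^{-1})C_w$. Write $w=sw'$ with $w'<sw'$. Then (3) for $w'$ gives
\[
C_w=C_sC_{w'}-\sum_{z\prec w',\,sz<z}\mu(z,w')C_z .
\]
Multiply on the left by $C_s$:
\begin{itemize}
\item the first term becomes $C_s^2C_{w'}=-(v+v^{-1})C_sC_{w'}$;
\item each $z$ in the sum has $sz<z$ and $l(z)<l(w)$, so the induction hypothesis (1) gives $C_sC_z=-(v+v^{-1})C_z$.
\end{itemize}
Recombining the terms gives $C_sC_w=-(v+v^{-1})C_w$.

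\emph{Step 3: (3) for $w$ with $sw>w$, assuming (1) for elements of length at most $l(w)$.} Set
\[
D=C_sC_w-\sum_{z\prec w,\,sz<z}\mu(z,w)C_z .
\]
By Theorem 3.1(1) and the fact that the involution is a ring map, $D$ is bar-invariant. By uniqueness in Theorem 3.1, it then suffices to show that $D$ has the shape of Theorem 3.1(2) for $sw$.

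To check this, expand $C_sC_w$ in the basis $\{T_y\}$ using the rules
\[
T_sT_y=T_{sy}\ \text{ if } sy>y, \qquad T_sT_y=(q-1)T_y+qT_{sy}\ \text{ if } sy<y .
\]
This gives the standard recursion
\[
P_{x,sw}=q^{1-c}P_{sx,w}+q^{c}P_{x,w}-\sum_{z\prec w,\,sz<z}\mu(z,w)\,q^{(l(w)-l(z))/2}P_{x,z},
\]
where $c=1$ if $sx<x$ and $c=0$ otherwise, with $P$'s vanishing outside Bruhat intervals. From this recursion the triangularity follows: the support is $\le sw$ and the coefficient of $T_{sw}$ has $P_{sw,sw}=1$.

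\emph{Main obstacle: the degree bound.} It remains to show $\deg P_{x,sw}\le\frac12(l(sw)-l(x)-1)$ for $x<sw$. This step is where the real content lies.
\begin{itemize}
\item When $sx<x$, the term $q^{0}P_{sx,w}+q\,P_{x,w}$ stays within the bound.
\item When $sx>x$, the term $q\,P_{sx,w}$ can reach degree exactly $\frac12(l(sw)-l(x)-1)$, but only when $x\prec w$ with leading coefficient $\mu(x,w)$. (For $sx>x$ the term $P_{x,w}$ is harmless; one also needs $P_{x,w}=P_{sx,w}$ when $sw<w$, which one gets from (1).)
\end{itemize}
In the dangerous case I would show that the offending top-degree coefficient is cancelled exactly by the correction term for $z=x$. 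The contributions $q^{(l(w)-l(z))/2}P_{x,z}$ with $z\ne x$ stay strictly below the bound, since $\deg P_{x,z}\le\frac12(l(z)-l(x)-1)$.

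Once the degree bound holds, uniqueness in Theorem 3.1 gives $D=C_{sw}$, which is (3). This completes the induction, with (2) following as in Step 1.
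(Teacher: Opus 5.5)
\textbf{Gap in the degree bound (Step 3).} The paper does not prove this proposition; it only cites \cite{KL2}. Your overall route is the original Kazhdan--Lusztig argument: form $D=C_sC_w-\sum\mu(z,w)C_z$, check bar-invariance and the shape required by Theorem 3.1(2), and conclude by uniqueness. Steps 1 and 2 are correct. The argument breaks in Step 3, at the point you yourself call the main obstacle: the recursion has the wrong exponent and the degree case analysis is reversed.

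First, the correction term is wrong. Divide the $T_x$-coefficient of $-\mu(z,w)C_z$ by that of $C_{sw}$, which is $\epsilon_x\epsilon_{sw}v^{l(sw)}q^{-l(x)}$. Use $\epsilon_z=\epsilon_{sw}$ (true because $z\prec w$) and then apply the bar. This gives
\[
-\mu(z,w)\,q^{(l(sw)-l(z))/2}P_{x,z}=-\mu(z,w)\,q^{(l(w)-l(z)+1)/2}P_{x,z},
\]
not $q^{(l(w)-l(z))/2}P_{x,z}$. Since $l(w)-l(z)$ is odd for $z\prec w$, your exponent is not even an integer.

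Second, you have the harmless and dangerous cases the wrong way round.
\begin{itemize}
\item For $sx>x$ there is nothing to cancel. We have $\deg(qP_{sx,w})\le 1+\frac12(l(w)-l(sx)-1)=\frac12(l(sw)-l(x)-1)$, so this term can reach the bound but never exceeds it. This is fortunate, because $z=x$ does not occur in the correction sum when $sx>x$ (the sum runs over $sz<z$). So the cancellation you propose is not available in that case.
\item For $sx<x$, the term $qP_{x,w}$ can have degree $\frac12(l(w)-l(x)+1)=\frac12(l(sw)-l(x))$, which violates the bound. This happens exactly when $x\prec w$, and the leading coefficient is then $\mu(x,w)$. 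In precisely this situation $z=x$ satisfies $z\prec w$ and $sz<z$. Its term $-\mu(x,w)q^{(l(sw)-l(x))/2}P_{x,x}$ removes the offending monomial.
\item The terms with $x<z$ have degree at most $\frac12(l(sw)-l(z))+\frac12(l(z)-l(x)-1)=\frac12(l(sw)-l(x)-1)$. So they reach the bound; they do not stay strictly below it.
\end{itemize}
Your assertion that for $sx<x$ ``the term stays within the bound'' is therefore false, and the degree condition is not established as written. Once you correct the exponent and interchange the two cases, the argument goes through. For example, take $w=ts$ and $x=s$ with $m_{s,t}\ge 3$. Then $P_{s,sts}=P_{e,ts}+qP_{s,ts}-\mu(s,ts)\,q=1+q-q=1$.

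\textbf{Minor points.}
\begin{itemize}
\item The identity $P_{x,w}=P_{sx,w}$ for $sw<w$ plays no role, so Step 3 does not need (1) at all.
\item The support claim does need the lifting property of Bruhat order: if $y\le w<sw$ and $sy>y$, then $sy\le sw$. You should state this explicitly.
\end{itemize}
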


   Denote the characteristic space of $T_{s_i}$ of the regular representation with eigenvalue $-1$ , $q$ as $V^{-1}_i $, $V^{q} _i$ respectively. Set $\bar{T}_{s}= v^{-1} T_s $. Here is another automorphism $(\delta )$ of $H_{q}(\Gamma)$.

   \begin{lem}  The correspondence $ \bar{T}_{s_j }\mapsto \bar{T}_{s_j }  $ for any $j\in I$ and $v\mapsto -v^{-1}$  extends to a degree 2 automorphism of
   $H_{q}(\Gamma)$. Denote this automorphism as $\delta$.

   \end{lem}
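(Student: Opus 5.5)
The plan is to define $\delta$ by the universal property of $H_q(\Gamma)$ as a $\Lambda$-algebra given by generators and relations, twisted by a ring automorphism of $\Lambda$. Let $\theta:\Lambda\to\Lambda$ be the $\kappa$-algebra automorphism with $\theta(v)=-v^{-1}$. Since $\theta^2(v)=\theta(-v^{-1})=-(-v^{-1})^{-1}=v$, we have $\theta^2=id_{\Lambda}$, and $\theta(q)=\theta(v^2)=v^{-2}=q^{-1}$. Because $v$ is invertible in $\Lambda$, the elements $\bar{T}_{s_i}=v^{-1}T_{s_i}$ together with $\Lambda$ generate $H_q(\Gamma)$. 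I will first rewrite the defining relations of Definition 1.1 in terms of the $\bar{T}_{s_i}$.

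Next I rewrite the two kinds of relations. The quadratic relation $(T_{s_i}-q)(T_{s_i}+1)=0$, i.e.\ $T_{s_i}^2=(q-1)T_{s_i}+q$, becomes after dividing by $v^2=q$
\[
\bar{T}_{s_i}^2=(v-v^{-1})\bar{T}_{s_i}+1 .
\]
The braid relation $[T_{s_i}T_{s_j}\cdots]_{m_{i,j}}=[T_{s_j}T_{s_i}\cdots]_{m_{i,j}}$ has words of the same length $m_{i,j}$ on both sides. Multiplying by $v^{-m_{i,j}}$ therefore gives exactly the same relation $[\bar{T}_{s_i}\bar{T}_{s_j}\cdots]_{m_{i,j}}=[\bar{T}_{s_j}\bar{T}_{s_i}\cdots]_{m_{i,j}}$. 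Conversely, these rewritten relations present $H_q(\Gamma)$ over $\Lambda$ in the generators $\bar{T}_{s_i}$.

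Now define a $\theta$-semilinear ring homomorphism $\tilde\delta$ from the free $\Lambda$-algebra on symbols $\bar{T}_{s_i}$ to $H_q(\Gamma)$. It acts by $\theta$ on scalars and sends each symbol to $\bar{T}_{s_i}$. I then check that $\tilde\delta$ kills the defining relations:
\begin{itemize}
\item The braid relations have coefficients in $\kappa$ and are fixed.
\item For the quadratic relation, the only coefficient is $v-v^{-1}$, and $\theta(v-v^{-1})=-v^{-1}+v=v-v^{-1}$. So the image of the quadratic relation is again the quadratic relation, which holds in $H_q(\Gamma)$.
\end{itemize}
Hence $\tilde\delta$ descends to a ring endomorphism $\delta$ of $H_q(\Gamma)$ that is $\theta$-semilinear and fixes every $\bar{T}_{s_j}$. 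Concretely, $\delta(T_{s_j})=\delta(v\bar{T}_{s_j})=-v^{-1}\bar{T}_{s_j}$.

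Finally, $\delta^2$ is a ring endomorphism that is $\theta^2=id$-semilinear, i.e.\ $\Lambda$-linear, and fixes all generators $\bar{T}_{s_j}$. Therefore $\delta^2=id$, so $\delta$ is bijective, an automorphism, and $\delta^{-1}=\delta$. It is not the identity because $\delta(v)=-v^{-1}\neq v$ in $\Lambda$, so it has order exactly $2$. There is no real obstacle here. The one point needing care is that $\delta$ is a ring automorphism that is only $\theta$-semilinear, not $\Lambda$-linear, so the presentation must be used over $\kappa$ with $v^{\pm1}$ adjoined. The single nontrivial verification is the invariance of the coefficient $v-v^{-1}$ in the normalized quadratic relation.
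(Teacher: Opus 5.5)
Your proof is correct and follows the paper's argument. The paper also checks that the normalized quadratic relation $(\bar{T}_{s_j}-v)(\bar{T}_{s_j}+v^{-1})=0$ (equivalent to your $\bar{T}_{s_j}^2=(v-v^{-1})\bar{T}_{s_j}+1$) is invariant under $v\mapsto -v^{-1}$, notes that the braid relations are preserved, and concludes from $\delta^2=id$; your version additionally makes explicit the $\theta$-semilinear bookkeeping that the paper leaves implicit.
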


   \begin{pf}
   It is easy to check the identity $(  \bar{T}_{s_j } -v   )(\bar{T}_{s_j} + v^{-1}  )=0  $, so the relations (2) (of Definition 1.1) are preserved. Evidently the relations (1) are also preserved. So we have a morphism $\delta$. Since $\delta ^2 = id _{H_{\Gamma}(q)}$, so $\delta$ is a degree 2 automorphism.  \\

   \end{pf}

    For convenience  denote $\delta ( C_w)  $ as $C^{'} _w $ for any $w\in W_{\Gamma}$. Now consider the regular representation of $H_q (\Gamma)$ and its associated $H-$representation. Denote the $q-$eigenspace and $(-1)-$eigenspace of $\sigma_j$ simply as $V_j ^q $ and $V_j ^{-1}$ respectively for $j\in I$.

    \begin{prop}
     $V_j ^{-1} = \Lambda < C_w   >_{s_j w<w  }  $, and $V_j ^{q} = \Lambda <   C^{'} _w    >_{s_j w <w }$.

       \end{prop}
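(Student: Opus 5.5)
The first equality will be proved by combining the multiplication formulas of Proposition 4.1 with a triangularity argument in the canonical basis. The second equality will then follow by transporting the first through the automorphism $\delta$ of Lemma 4.1. Throughout I work in the regular representation, regarded as a free $\Lambda$-module with basis $\{C_w\}_{w\in W_{\Gamma}}$. I use only two facts about $\Lambda=\kappa[v^{\pm}]$: it is a domain, and $v$ is invertible in it.

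\textbf{Step 1: the inclusion $\supseteq$ for $V_j^{-1}$.} By Proposition 4.1(1), $T_{s_j}C_w=-C_w$ whenever $s_jw<w$. Hence $\Lambda\langle C_w\rangle_{s_jw<w}\subset V_j^{-1}$.

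\textbf{Step 2: the reverse inclusion for $V_j^{-1}$.} Let $h\in V_j^{-1}$ and expand $h=\sum_w a_wC_w$ with $a_w\in\Lambda$. Subtracting the part indexed by $\{w: s_jw<w\}$, which lies in $V_j^{-1}$ by Step 1, we may assume $h=\sum_{s_jw>w}a_wC_w$ with $(T_{s_j}+1)h=0$. Suppose $h\neq 0$, and choose $w$ in the support of $h$ of maximal length. By Proposition 4.1(2), for every $w'$ with $s_jw'>w'$ we have
\[
(T_{s_j}+1)C_{w'}=(q+1)C_{w'}+vC_{s_jw'}+v\sum_{z\prec w',\,s_jz<z}\mu(z,w')C_z .
\]
Now compare coefficients of $C_{s_jw}$ in $(T_{s_j}+1)h$.
\begin{itemize}
\item The terms $(q+1)C_{w'}$ never contribute, since $s_jw'>w'$ while $s_j(s_jw)=w<s_jw$, so $w'\neq s_jw$.
\item The terms $vC_{s_jw'}$ contribute only when $w'=w$.
\item The terms $C_z$ with $z\prec w'$ satisfy $l(z)<l(w')\le l(w)<l(s_jw)$, so $z\neq s_jw$.
\end{itemize}
Hence the coefficient of $C_{s_jw}$ is exactly $v\,a_w$. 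Since $v$ is a unit and $a_w\neq 0$, this coefficient is nonzero, contradicting $(T_{s_j}+1)h=0$. Therefore $h=0$, and $V_j^{-1}=\Lambda\langle C_w\rangle_{s_jw<w}$.

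\textbf{Step 3: the statement for $V_j^q$ via $\delta$.} The map $\delta$ is a ring automorphism of $H_q(\Gamma)$ that restricts to the automorphism $v\mapsto -v^{-1}$ of $\Lambda$. In particular $\delta(q)=q^{-1}$ and
\[
\delta(T_{s_j})=\delta(v)\,\bar T_{s_j}=-v^{-1}\cdot v^{-1}T_{s_j}=-q^{-1}T_{s_j}.
\]
Suppose $T_{s_j}x=-x$. Applying $\delta$ gives $-q^{-1}T_{s_j}\delta(x)=-\delta(x)$, that is, $T_{s_j}\delta(x)=q\,\delta(x)$. Conversely, suppose $T_{s_j}y=qy$. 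Applying $\delta$ gives $-q^{-1}T_{s_j}\delta(y)=q^{-1}\delta(y)$, so $T_{s_j}\delta(y)=-\delta(y)$. Since $\delta^2=\mathrm{id}$, $\delta$ restricts to a bijection $V_j^{-1}\to V_j^q$.

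Because $\delta(\Lambda)=\Lambda$, the map $\delta$ carries $\Lambda$-spans to $\Lambda$-spans. Applying $\delta$ to Step 2 therefore gives $V_j^q=\Lambda\langle C'_w\rangle_{s_jw<w}$.

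\textbf{Main obstacle.} The only delicate point is Step 2. One must check that no other term in the expansion produces $C_{s_jw}$; choosing $w$ of maximal length handles this. One must also avoid dividing by $1+q$, which is not invertible in $\Lambda$; the argument does this by using the coefficient $v$ of $C_{s_jw}$, which is a unit.
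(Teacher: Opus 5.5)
Your proof is correct, but it closes the argument differently from the paper.

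The paper proves only two containments. The first, $\Lambda\langle C_w\rangle_{s_jw<w}\subset V_j^{-1}$, comes from Proposition 4.1(1). The second, $\Lambda\langle C'_w\rangle_{s_jw<w}\subset V_j^{q}$, comes from applying $\delta$. It then finishes by counting:
\begin{itemize}
\item $w\mapsto s_jw$ is a fixed-point-free involution, so each family has $\frac12|W_\Gamma|$ members;
\item the two spans meet trivially because the eigenvalues differ;
\item a direct sum of rank $|W_\Gamma|$ is then declared to be all of $H_q(\Gamma)$.
\end{itemize}

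You replace the counting with a leading-term argument. You show $\ker(T_{s_j}+1)\subset\Lambda\langle C_w\rangle_{s_jw<w}$ directly, using the coefficient $va_w$ of $C_{s_jw}$ for $w$ of maximal length in the support. You also use $\delta$ more strongly, as a bijection $V_j^{-1}\to V_j^{q}$ that carries the exact equality across.

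The paper's route is shorter and gives the eigenspace decomposition at once. However, it only makes sense for finite $W_\Gamma$. Over $\Lambda$ its last step is also not literally true: the direct sum of the two spans is a proper submodule of full rank. Indeed, $1=\frac{1}{v+v^{-1}}(C'_{s_j}-C_{s_j})$ is the unique expression of $1$ in these generators, by their linear independence. Its coefficients are not in $\Lambda$, so $1$ is not in the $\Lambda$-span. To repair this, the paper's argument would have to be run over the fraction field of $\Lambda$ and then intersected with the lattice $\bigoplus_w\Lambda C_w$.

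Your argument avoids all of this. It works for arbitrary Coxeter matrices, which is the generality the paper intends. It stays over $\Lambda$ and never inverts $1+q$, as you point out in your closing remark.
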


    \begin{pf}
    First consider the bijection $W_{\Gamma} \rightarrow W_{\Gamma}$, $w\mapsto s_j w  $, as a permutation it is of degree 2, and has no fixed element. So  $| \{ w \} _{s_j w<w }  | =\frac{1}{2} | W_{\Gamma} | $. By the multiplication rule of Proposition 3.1, every nonzero element in
    $\Lambda < C_w >_{s_j w<w} $ is an eigenvector of $T_{s_j}$ with eigenvalue $-1$. Now for some $w$ satisfying $s_j w<w$,  apply the automorphism $\delta$ to the identity
     $ T_{s_j} C_w = -C_w $,  we have $ -q^{-1}T_{s_j } C^{'} _w  = - C^{'}_w  $, which is equivalent to
     $T_{s_j} C^{'} _w = q C^{'} _w  $.  So every element in $\Lambda <  C^{'} _w  >_{s_j w<w }$ is an $q-$eigenvector of
     $T_{s_j}$. Since these two subspaces contain eigenvectors of $T_{s_j}$ with different eigenvalues, the subspace spanned by them is a direct sum, $ \Lambda <  C_w > _{s_j w<w} \oplus \Lambda < C^{'}_w  >_{s_j w<w} $, whose dimension (rank as a free $\Lambda-$module ) is just
     $|W_{\Gamma}|$, so it must be the whole space $\Lambda W_{\Gamma}$ and the proposition is proved.

    \end{pf}

    Based on this proposition, we have the following description of the jumping operators. Suppose $i,j\in I$. By this proposition,
    $ V^{q} _i = \Lambda < C^{'}_w    >_{s_i w<w}$ and $V^{q} _j = \Lambda < C^{'}_w >_{s_j w<w}$.

    \begin{thm}
     \[
      A^i _j ( C^{'}_w   )=\begin{cases}
   C^{'}_w, &\text{ for $w: s_i w<w , s_j w<w $,    } \\
 -\frac{1}{v+v^{-1} } (C^{'} _{s_j w} - \Sigma_{z:s_j z<z; z\prec w} \mu(z,w) C^{'} _z ),
    &\text{for  $w: s_i w<w $ and $ s_j w > w$}.\\

\end{cases}
\]

    \end{thm}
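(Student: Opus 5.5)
The plan is to compute $A^i_j(C'_w)$ directly from the definition of the jumping operator. For $u\in V^q_i$, $A^i_j(u)$ is the $V^q_j$-component of $u$ in the decomposition $V=V^q_j\oplus V^{-1}_j$. Since $(T_{s_j}+1)$ kills $V^{-1}_j$ and acts as $q+1$ on $V^q_j$, this component is
\[
A^i_j(u)=\tfrac{1}{1+q}(T_{s_j}+1)(u),
\]
exactly as in the proof of Theorem 1.2. So the statement reduces to computing $(T_{s_j}+1)C'_w$ for $w$ with $s_iw<w$, splitting into the two cases $s_jw<w$ and $s_jw>w$.

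\textbf{Case $s_jw<w$.} By Proposition 3.2, $C'_w\in V^q_j$, since $C'_w$ is a $q$-eigenvector of $T_{s_j}$ (this was obtained by applying $\delta$ to Proposition 3.1(1)). Hence $A^i_j(C'_w)=C'_w$ immediately.

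\textbf{Case $s_jw>w$.} Here I would apply the automorphism $\delta$ of Lemma 3.1 to the multiplication formula Proposition 3.1(2):
\[
T_{s_j}C_w=v^2C_w+vC_{s_jw}+v\sum_{z\prec w,\,s_jz<z}\mu(z,w)C_z .
\]
$\delta$ fixes $\bar T_{s_j}=v^{-1}T_{s_j}$ and sends $v\mapsto -v^{-1}$. Hence $T_{s_j}=v\bar T_{s_j}$ is sent to $-v^{-1}\bar T_{s_j}=-v^{-2}T_{s_j}$. Also $v^2\mapsto v^{-2}$ and $v\mapsto -v^{-1}$, while the integers $\mu(z,w)$ are fixed. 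This gives
\[
-v^{-2}T_{s_j}C'_w=v^{-2}C'_w-v^{-1}C'_{s_jw}-v^{-1}\sum\mu(z,w)C'_z .
\]
Multiplying by $-v^2$:
\[
T_{s_j}C'_w=-C'_w+vC'_{s_jw}+v\sum_{z\prec w,\,s_jz<z}\mu(z,w)C'_z .
\]
Therefore
\[
(T_{s_j}+1)C'_w=v\Bigl(C'_{s_jw}+\sum_{z\prec w,\,s_jz<z}\mu(z,w)C'_z\Bigr),
\]
and dividing by $1+q=1+v^2$ gives the coefficient $\frac{v}{1+v^2}=\frac{1}{v+v^{-1}}$.

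\textbf{Main obstacle.} The sign conventions inside the bracket and in front of $\frac{1}{v+v^{-1}}$ depend on exactly how $\delta$ acts on the $\mu(z,w)$-terms and on the normalization of $C_w$ in Theorem 3.1. My computation yields $+\frac{1}{v+v^{-1}}(C'_{s_jw}+\sum\mu C'_z)$. The statement instead has $-\frac{1}{v+v^{-1}}(C'_{s_jw}-\sum\mu C'_z)$, and the two differ by the sign of the $C'_{s_jw}$-term.

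To resolve this I would recheck carefully how $\delta$ transforms $T_{s_j}$, namely whether $\delta(T_{s_j})=-q^{-1}T_{s_j}$ as used in the proof of Proposition 3.2. I would also cross-check with Proposition 3.1(3) transformed by $\delta$, which gives $C'_{s_jw}$ in terms of $C'_{s_j}C'_w$, and track signs consistently; the sign convention in the theorem may need adjustment. As a sanity check, the result must lie in $V^q_j$: Proposition 3.2 guarantees this, since $s_j(s_jw)<s_jw$ and $s_jz<z$ for every term.
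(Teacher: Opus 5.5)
Your computation is correct and is essentially the paper's proof. The paper writes the $V^q_j$-projection as left multiplication by $\frac{1}{v+v^{-1}}C'_{s_j}$, using $1=\frac{1}{v+v^{-1}}(C'_{s_j}-C_{s_j})$; this is the same operator as your $\frac{1}{1+q}(T_{s_j}+1)$. It then applies the $\delta$-image of Proposition 3.1(3), $C'_{s_j}C'_w=C'_{s_jw}+\sum\mu(z,w)C'_z$, which is equivalent to your $\delta$-image of Proposition 3.1(2).

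The sign discrepancy is not an error in your work. The paper's own proof ends with $A^i_j(C'_w)=\frac{1}{v+v^{-1}}\bigl(C'_{s_jw}+\sum_{z\prec w,\,s_jz<z}\mu(z,w)C'_z\bigr)$, which is exactly your formula, so the minus signs in the displayed statement are a misprint. For example, $w=s_i$ gives $A^i_j(C'_{s_i})=\frac{1}{v+v^{-1}}C'_{s_js_i}$ with no $\mu$-terms, not its negative. You should state and prove this corrected formula rather than hedge.
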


   \begin{pf}
   For (1), since $ C^{'}_w \in V^{q} _i \cap V^{q} _j  $ when $s_i w<w , s_j w<w$, we have simply  $A^i _j (C^{'}_w )= C^{'}_w$. For (2), in cases $s_i w<w $ and $ s_j w >w $,  there is the following explicit decomposition of $C^{'}_w$.
   First there are $C_{s_j} = v^{-1} T_{s_j} -v  $ and $ C^{'} _{s_j } = v^{-1} T_{s_j} + v^{-1} $, which induce a "decomposition of 1 ":
   $1= \frac{1}{v+v^{-1}} ( C^{'} _{s_j} -C_{s_j}  ) $. This gives a decomposition

   $C^{'} _w = 1 \cdot C^{'} _w = \frac{1}{v+v^{-1}}C^{'} _{s_j} \cdot C^{'} _{w} -\frac{1}{v+v^{-1}} C_{s_j} \cdot C^{'} _w $.

   Now by the identity $( v^{-1} T_{s_j} -v  )( v^{-1} T_{s_j} +v^{-1}  )=0 $, there are $T_{s_j} C^{'} _{s_j} =q C^{'} _{s_j}  $
   and $T_{s_j } C_{s_j } = - C_{s_j } $.  So  $ C^{'}_{s_j} C^{'} _{w} \in V^{q} _{j} $ and
   $ C_{s_j} C^{'} _{w} \in V^{-1} _{j}  $, which imply $ A^i _j ( C^{'} _w  )=  \frac{1}{v+v^{-1}} C^{'}_{s_j} C^{'} _{w}  $. At last by the multiplication rule there is
    $ C^{'} _{s_j } C^{'} _{w} = C^{'} _{s_j w} + \Sigma_{s_j z<z; z\prec w   } \mu(z,w) C^{'} _z  $.\\
   \end{pf}

  Now recall the important conception $W-$graph and $W-$representations of $H_q (\Gamma)$, which are introduced by Kazhdan and Lusztig in \cite{KL2}. Here we use a slightly different version in \cite{GP}.   A $W$-graph $( X, I , X_i , \{ \mu^j _{y,x}  \}   )$ consists of a set of vertices  $X$ and a set of edges $Y$; for any $x\in X$, a subset  $I_x \subset I$ is associated; for any $x,y,i : \{y,x\}\in Y,   i\in I_y \setminus I_x$,  a number $0\neq \mu^{i} _{y,x}\in \kappa $ is associated. For these data to become a $W-$graph they need to satisfy further representational conditions ( defining a $H_q (\Gamma)-$ representation as follows ). But for our purpose we would like to introduce the term "\text{pre $W-$graph  }   "  just for these data without the representational conditions.

    With these data, one can define a set of linear operators as follows. Let $V$ be the
   free $\Lambda$-module with basis $X$. For any $i\in I$, let
\[
\tau_i (x)= \begin{cases}
-x, &\text{ when $i\in I_x,$   }\\
v^2x+v\sum_{y\in X, \{y,x\} \in Y, i\in I_y }\mu^{i} _{y,x}y, &\text{when $ i\notin I_x$}.\\

\end{cases}
\]

 Since by definition $(\tau_i -q)(\tau_i +1)=0$ for any $i\in I$, if the Artin  relations $[\tau_i \tau_j ...   ]_{m_{i,j}}= [\tau_j \tau_i ...]_{m_{i,j}}  $ hold  for any $i\neq j\in I$, then a $H_{q}(\Gamma)$-representation is defined on $V$. These representations are called $W$-graph representations, and the original set of data is called a $W-$graph. It is known that for a finite type $\Gamma$, any irreducible representation of $H_{\Gamma}(q)$ is a $W$-graph representation. But for certain $\Gamma$ of affine type, there exist irreducible $H_{q}(\Gamma)- $representations that can't be $W$-graph representations.  In the following we show very close relationship between $W$- graph representations and $H-$representations.

 Given a pre $W$-graph, we define a $\mathbb{Q}_{I}-$representation ( by imitating the identities in Theorem 3.2 ) as follows. Here we consider $H_q (\Gamma)$ as the $\kappa-$algebra in Definition 1.1, and $H_q (\Gamma)-$representations are $\kappa-$linear spaces.  For any $i\in I$ let $X_i = \{ x | i\in I_x \} $, and $\bar{V}_i = \kappa < x   >_{x\in X_i }$. Here $q\in \kappa$ is generic, and $v\in \kappa $ satisfying $q=v^2$. For $j\neq i\in I$, define linear morphism $A^j _{i;v}: \bar{V}_j \rightarrow \bar{V}_i$ as follows.

\[
A^j _{i;v} (x )=\begin{cases}
x,   &\text{when $  x\in X_j \cap X_i $},\\
 \frac{-1}{v+ v^{-1}} \sum_{ y\in X_i  } \mu^{i}_{y,x} y, &\text{when $x\in X_j \setminus X_i $}.    \\

 \end{cases}
 \]

The associated $\mathbb{Q}_{I}-$representation $(\bar{V}_i , A^i _{j;v})$ assigns $\bar{V}_i$ to $i$ and $ A^i _{j;v} $ to $J^i _j$.


 \begin{thm} With a pre $W$-graph as above, let $q$ be generic, then the $W$-graph representation is a $H_{q}(\Gamma )$-representation if and only if the following conditions hold.

 \noindent(1)The associated $\mathbb{Q}_I$-representation $(\bar{V}_i , A^i _{j;v} )$ is a $H-$representation of type $(\Gamma ; v)$.

  \noindent (2) For any $i\neq j \in I$, the $1$-eigenspace of $A^j _{i;v} A^i _{j;v }   $ is the subspace $\kappa < \{ x \}_{x\in X_i \cap X_j }   > $.
\end{thm}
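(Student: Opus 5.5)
The plan is to prove both directions by comparing, for each pair $i\neq j$, the action of $\tau_i,\tau_j$ on $V$ with the operators $t_i,t_j$ built from $\Psi=(\bar V_i,A^i_{j;v})$ in Theorem 1.3. The key observation is that $\tau_i$ satisfies $(\tau_i-q)(\tau_i+1)=0$, its $(-1)$-eigenspace is $\kappa\langle x\rangle_{x\in X_i}$, and the $q$-eigenspace can be written down explicitly.

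\textbf{Step 1 (the $q$-eigenspace of $\tau_i$).} I claim that for $x\in X_i$ the vector
\[
c_x=x+\frac{-1}{v+v^{-1}}\sum_{y\notin X_i}v^{-1}\mu^i_{x,y}\,y
\]
(suitably normalized) together with its companions spans $\ker(\tau_i-q)$. This is modelled on $C'_w$ in Proposition 3.2, and Theorem 3.2 suggests that the jumping operators computed on these vectors are exactly $A^i_{j;v}$. To make this precise:

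\begin{itemize}
\item Write $V=V_i^{q}\oplus V_i^{-1}$ with $V_i^{-1}=\kappa\langle X_i\rangle$, a complement of rank $|X\setminus X_i|$.
\item Identify $V_i^q$ with $\bar V_i$ via a map $\phi_i$ that sends the eigenvector attached to $x$ to $x$. I expect the correct identification to be a "dual" one: the $q$-eigenspace of $\tau_i$ is naturally isomorphic to $V/V_i^{-1}$, which has basis $X\setminus X_i$.
\end{itemize}

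So I will first check on which side the convention of the theorem places $\bar V_i$. Most likely the relevant representation is $\bar\rho$ of Remark 1.3, in which $\kappa\langle X_i\rangle$ becomes the $q$-eigenspace. This fits Remark 1.4, which says $\Psi^0_\rho$ is an $H$-representation whenever $\Psi_\rho$ is.

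\textbf{Step 2 (computing the jumping operators).} Let $B^j_i:V^{-1}_j\to V^{-1}_i$ be the component of $x$ along $V_i^{-1}$ in the decomposition $V=V_i^{-1}\oplus V_i^q$. For $x\in X_j\cap X_i$ this component is $x$. For $x\in X_j\setminus X_i$ I use the decomposition of $1$,
\[
1=\frac{1}{v+v^{-1}}\bigl((v^{-1}\tau_i+v^{-1})-(v^{-1}\tau_i-v)\bigr),
\]
exactly as in the proof of Theorem 3.2. The piece $-\frac{1}{v+v^{-1}}(v^{-1}\tau_i-v)x$ lies in $V_i^{-1}$, and expanding it with the formula for $\tau_i(x)$ gives $\frac{-1}{v+v^{-1}}\sum_{y\in X_i}\mu^i_{y,x}y$. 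Hence $B^j_i=A^j_{i;v}$, so $\Psi$ coincides with the quiver data $\Psi^0_\tau$ of Remark 1.4.

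\textbf{Step 3 ($\Rightarrow$).} Suppose $\tau$ is a representation. Remark 1.4 combined with Theorem 1.4 gives (1). For (2), Theorem 1.2(3), applied to $\bar\rho$, identifies the $1$-eigenspace of $A^j_{i;v}A^i_{j;v}$ with $V^{-1}_i\cap V^{-1}_j=\kappa\langle X_i\cap X_j\rangle$.

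\textbf{Step 4 ($\Leftarrow$).} Suppose (1) and (2) hold. Theorem 1.3 (in its barred form, Remark 1.2) produces a representation $\bar\rho_\Psi$ on $\bigoplus_i\bar V_i/W$. Condition (2) says that $W$ is spanned by the vectors $x-x'$, where $x'$ is the copy of the same basis element $x\in X_i\cap X_j$ in the other summand. One natural map $\bigoplus\bar V_i/W\to V$ sends each copy of $x$ to $x$, and its image is $\kappa\langle\bigcup X_i\rangle$.

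The Artin relation only needs to be checked on $V_i^{-1}+V_j^{-1}$ together with a complement on which $\tau_i$ and $\tau_j$ act as the scalar $q$. This mirrors Lemma 1.5: by the eigenvalue conditions, $V=(V^{-1}_i+V^{-1}_j)+(V^q_i\cap V^q_j)$. On $V^{-1}_i+V^{-1}_j$ the actions of $\tau_i,\tau_j$ agree with those of $\bar t_i,\bar t_j$, because Step 2 shows that both are determined by the same jumping operators.

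\textbf{Main obstacle.} The difficulty lies in Step 4: proving $V=(V^{-1}_i+V^{-1}_j)+(V^q_i\cap V^q_j)$ directly from the $(m_{i,j};q)$-couple condition, without already knowing that $\tau$ is a representation. My first attempt is to compute the restriction of $\tau_i,\tau_j$ to the $\tau$-stable span of $X_i\cup X_j$ together with its quotient. On the quotient by $\kappa\langle X_i\cup X_j\rangle$ both operators act as $q$. On the sub-span, I would decompose into two-dimensional blocks using the eigenvector bases of $A^j_{i;v}A^i_{j;v}$, exactly as in the proof of Theorem 1.3.
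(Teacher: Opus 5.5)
Your forward direction (Steps 2--3) is the paper's argument. The splitting $1=\frac{1}{v+v^{-1}}\bigl((v^{-1}\tau_i+v^{-1})-(v^{-1}\tau_i-v)\bigr)$ identifies $(\bar V_i,A^i_{j;v})$ with $\Psi^0$ of the $\tau$-representation. The remark on $\Psi^0_\rho$ together with Theorem 1.4 then gives (1), and Theorem 1.2(3) applied to $\bar\rho$ gives (2). Your Step 1 formula for $c_x$ is garbled: the $q$-eigenspace of $\tau_i$ is spanned by $x+\frac{1}{v+v^{-1}}\sum_{y\in X_i}\mu^i_{y,x}y$ for $x\notin X_i$. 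Nothing later depends on it.

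The converse has a genuine gap, exactly at your ``main obstacle''. The decomposition $V=(V^{-1}_i+V^{-1}_j)+(V^q_i\cap V^q_j)$ does not follow from the eigenvalue condition on the pair $(i,j)$. That condition only controls the $\tau_i,\tau_j$-stable subspace $\kappa\langle X_i\cup X_j\rangle$, where your two-dimensional block argument does work. But Artin relations on a stable subspace and on the quotient (where both operators are $q$) do not imply them on $V$. What you need is, for each $x\notin X_i\cup X_j$, some $c\in\kappa\langle X_i\cup X_j\rangle$ with $x+c$ a common $q$-eigenvector of $\tau_i$ and $\tau_j$. For $x\in X_k$ this is precisely what the triangular relation $(T^k_{i,j})$ supplies, via Lemma 1.5. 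A pairwise argument would have to re-prove that lemma inside $\kappa\langle X\rangle$.

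The paper avoids pairwise splitting by finishing the route you started and dropped. The map $\bar\phi\colon\bigoplus_i\bar V_i/W\to\kappa\langle X\rangle$, $[x]_i\mapsto x$, is injective: upstairs its kernel is spanned by the $[x]_i-[x]_j$ with $x\in X_i\cap X_j$, which by (2) is $W$. It also intertwines $\bar t_i$ with $\tau_i$. It maps $V_i$ onto $\kappa\langle X_i\rangle$, and it sends the class of $[x]_j-A^j_{i;v}[x]_j$ ($x\in X_j\setminus X_i$) to the $q$-eigenvector $x+\frac{1}{v+v^{-1}}\sum_{y\in X_i}\mu^i_{y,x}y$. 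So when $X=\bigcup_iX_i$, every Artin relation is inherited from $\bar\rho_\Phi$ at once.

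Your remark that the image is only $\kappa\langle\bigcup_iX_i\rangle$ is correct. It exposes a point the paper passes over when it asserts that $\bar\phi$ is onto $\kappa\langle X\rangle$. Vertices with $I_x=\emptyset$ are genuinely uncontrolled, since their $\mu^i_{y,x}$ enter neither (1) nor (2). For example, take $I=\{i,j\}$, $m_{i,j}=3$, $X=\{y,x_0\}$, $I_y=\{i,j\}$, $I_{x_0}=\emptyset$, $\mu^i_{y,x_0}=1$, $\mu^j_{y,x_0}=2$. Then $A^i_{j;v}$ and $A^j_{i;v}$ are the identity of $\kappa\langle y\rangle$, so (1) and (2) hold. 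Yet the $y$-coefficient of $(\tau_i\tau_j\tau_i-\tau_j\tau_i\tau_j)(x_0)$ is $-v(q^2+q+1)$, which is nonzero for generic $q$. So the converse, argued either your way or the paper's, holds only under the extra hypothesis $I_x\neq\emptyset$ for every $x\in X$.
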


\begin{pf} Denote the associated $\mathbb{Q}_{I}-$representation $(\bar{V}_i , A^i _{j;v} )$ as $\Phi$.  If the $W-$graph representation is a $H_{\Gamma}(q)-$representation, denote it as $\rho$. We show $\Phi$ is just the $\mathbb{Q}_{I}$ representation $\Psi^0 _{\rho}$ as in Remark 1.4.   By definition of $\tau_i$, the $(-1)-$eigenspace of $\tau_i$ is
$\bar{V}^0 _i =\kappa < x >_{x\in X_i }$ and the the $q-$eigenspace of $\tau_i$ is
$\bar{V} _i = \kappa < \{ x+ \frac{1}{v+v^{-1}} \sum_{ \{ y,x\} \in Y , y\in X_i }  \mu^i _{y,x} y  \}_{x\in X\setminus X_i} >$.

 Now for
$x\in X_j \setminus X_i $,
$x= ( x+ \frac{1}{v+v^{-1}} \sum_{ \{ y,x\} \in Y , y\in X_i }  \mu^i _{y,x} y ) +   ( -\frac{1}{v+v^{-1}} \sum_{ \{ y,x\} \in Y , y\in X_i }  \mu^i _{y,x} y )  $ where the first term is in $\bar{V}_i $ and the second term is in $\bar{V}^0 _i$, so $ B^j _i (x) =-\frac{1}{v+v^{-1}} \sum_{ \{ y,x\} \in Y , y\in X_i }  \mu^i _{y,x} y =A^j _{i;v} (x) $. So by Remark 1.3,  $(\bar{V}_i , A^i _{j;v})$ is a $H-$representation of type $(\Gamma ,v)$. Since in
$V$, for any $i\neq j\in I $, there is $\bar{V}_i \cap \bar{V}_j =\kappa < x >_{x\in X_i \cap X_j }  $, we see the $1$-eigenspace of $A^j _{i;v} A^i _{j;v}$ is $\kappa < x >_{x\in X_i \cap X_j }$.

On the other if the conditions on $\Phi$ in the theorem are satisfied, recall the procession deriving a $H_{\Gamma}(q)-$representation   $\bar{\rho}_{\Phi}$ from $(\bar{V}_i , A^i _{j;v})$ as in Remark 1.3.  For $x\in X_i $, denote the corresponding element in $\bar{V}_i$ as $[x]_i$, so $\bar{V}_i =\kappa < \{   [x]_i  \}_{x\in X_i}   >$. Since the $1$-eigenspace of $A^j _{i;v} A^i _{j;v }   $ is the subspace $\kappa < \{ [x]_i \}_{x\in X_i \cap X_j }   > $, the representation space of $\rho$ is
$V=  \oplus_{i\in I} \bar{V}_i  / W $, where $W $ is spanned by vectors $\{  [x]_i - A^i _{j;v} (  [x]_i  )    \}_{ i\neq j, x\in X_i \cap X_j}$. Let $\phi: \oplus_{i\in I} \bar{V}_i \rightarrow V  $ be the morphism sending $ [x]_i $ to $x$ for any $i\in I$ and any $x\in X_i$.
Since $ [x]_i - A^i _{j;v} (  [x]_i  )=[x]_i - [x]_j  $,so $\phi$ induces an isomorphism $\bar{\phi}$ from $V$ to $\kappa <  X >$. It is readily to check the operator $ \bar{\rho}_{\Phi}(\sigma_i )  $ coincides with $\tau_i$ for any $i\in I$ through the isomorphism $\bar{\phi}$.
\end{pf}

\begin{defi} Let $\Gamma$ be a Coxeter matrix and $q=v^2 \in \kappa $ be generic for $\Gamma$.
A pre $W$-graph $( X, I , X_i , \{ \mu^j _{y,x}  \}   )$ is called as $(\Gamma ;v)$-admissible, if one of the following equivalent conditions holds.

\noindent (1) The associated operators $\{ \tau_i  \} $ satisfy the relations of the Hecke algebra $H_q (\Gamma)$.

\noindent (2) The associated quiver representation $(\bar{V}_i ,A^i _{j;v} )$ is a $H-$representation of type $(\Gamma;q)$.

\end{defi}

 We introduce \textbf{flat $H-$representations} as follows, inspired by $W$-representation and Theorem 3.3.
Let $V$ be a linear space over some field $\kappa$, $\mathbb{A}=\{ V_i \}_{i\in I}$ be a set of subspaces. $\mathbb{A}$ is called as \textbf{an arrangement of subspaces}.
For any $J\subset I$, denote $V_{J} =\cap_{j\in J} V_j$. Let $W_J = V_J / \Sigma_{K: J\subsetneq K } V_K$ for $K\subsetneq I$, and
 let $W_I =V_I$.

\begin{defi}

An arrangement $\mathbb{A}$ is called as flat, if for any $J\subset I$, $\dim V_J = \sum _{K:K\subset J} \dim W_K $.

\end{defi}

The following lemma explains that for a flat arrangement $\{ V_i \} _{i\in I}$, the operation $\cup$ and $\cap$ of these subspaces $V_i$ can be reduced to $\cup$ and $\cap$ on the level of sets by certain basis.

\begin{lem}
 Suppose $ \mathbb{A}=\{  V_i   \}_{i\in I}  $ is an arrangement in $V$.

 \noindent (1) $\mathbb{A}$ is flat if and only if there is a basis $B= \{ v_{\alpha } \}_{\alpha \in S } $ of $V$, such that for any $J\in I$, $B \cap V_J $ is a basis of $V_J $.

\noindent (2) $\mathbb{A}$  is flat if and only if there is a basis $B= \{ v_{\alpha } \}_{\alpha \in S } $ of $V$, such that for any $i\in I$, $B \cap V_i$ is a basis of $V_i$.

 \noindent (3) The arrangement $ \mathbb{A}=\{  V_i   \}_{i\in I}  $ is always flat if  $|I|=2$.
 \end{lem}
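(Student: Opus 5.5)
The statement to prove is the last lemma: (1) flatness is equivalent to the existence of a basis $B$ of $V$ such that $B\cap V_J$ is a basis of $V_J$ for every $J\subset I$; (2) the same with only the single subspaces $V_i$; (3) any arrangement with $|I|=2$ is flat.

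\textbf{Easy directions.} I will prove (1) first, then derive (2) from it, and treat (3) directly.

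In (1), suppose such a basis $B$ exists. For $\alpha\in S$ put $J_\alpha=\{i: v_\alpha\in V_i\}$. Then $B\cap V_J=\{v_\alpha: J\subset J_\alpha\}$. Hence $\Sigma_{K\supsetneq J}V_K$ is spanned by the $v_\alpha$ with $J\subsetneq J_\alpha$. Therefore $W_J$ has basis the images of $\{v_\alpha : J_\alpha=J\}$. Summing over all $K$ with $J\subset K$, which is clearly the intended index set, gives the dimension identity
$\dim V_J=\sum_{K\supset J}\dim W_K$.

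For the converse of (1), I will construct $B$ by descending induction on $|J|$. Start with a basis of $V_I=W_I$. Having built bases of all $V_K$ with $|K|>|J|$ compatibly, lift a basis of $W_J$ to vectors in $V_J$. The candidate basis of $V_J$ is the union of these lifts with the previously chosen vectors lying in $\Sigma_{K\supsetneq J}V_K$. Two things must be checked. First, the vectors already chosen span $\Sigma_{K\supsetneq J}V_K$; this is clear by induction. Second, they are linearly independent, and this is exactly where the dimension hypothesis enters. By induction $\dim\Sigma_{K\supsetneq J}V_K$ is at most the number of distinct chosen vectors, which is $\sum_{K\supsetneq J}\dim W_K$. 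Flatness gives $\dim V_J-\dim W_J=\sum_{K\supsetneq J}\dim W_K$. So the count matches the dimension and the chosen vectors are independent.

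\textbf{Main obstacle.} The delicate point is that each vector must be counted once, at the unique maximal $K$ containing it. Then $B\cap V_J$ consists of exactly the vectors tagged with some $K\supset J$, and the induction closes. Taking $J=\emptyset$ with $V_\emptyset=V$ yields a basis of all of $V$.

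\textbf{Deriving (2).} Let $B$ be a basis such that each $B\cap V_i$ spans $V_i$. For $v=\sum c_\alpha v_\alpha\in V_J$, the coordinates are unique and $B\cap V_i$ is a basis of $V_i$. So every $\alpha$ with $c_\alpha\neq0$ satisfies $v_\alpha\in V_i$ for each $i\in J$. Hence $B\cap V_J$ spans $V_J$, and (2) reduces to (1).

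\textbf{Proving (3).} Take a basis of $V_1\cap V_2$. Extend it separately to bases of $V_1$ and of $V_2$. The union is independent because $\dim(V_1+V_2)=\dim V_1+\dim V_2-\dim(V_1\cap V_2)$. Finally extend the union to a basis of $V$, and apply (2).
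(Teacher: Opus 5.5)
Your proof is correct and follows essentially the same route as the paper: lift bases of the quotients $W_J$, compare counts with dimensions, and reduce (2) to (1) via the support argument. The differences are minor. In the converse of (1), the paper chooses all lifts at once and uses only the single identity $\dim V=\sum_{J\subset I}\dim W_J$: the union spans $V$ and has exactly $\dim V$ elements, so it is a basis. You instead invoke flatness at every $J$ inside a descending induction. You also give the argument for (3), which the paper omits as evident.
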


\begin{pf}
First suppose there exist such a basis $B= \{ v_{\alpha } \}_{\alpha \in S } $. For $i\in I$, let $S_i \subset S $ be determined by $B\cap V_i = \{ v_{\alpha } \}_{\alpha \in S_i }$. For any $J\subset I$, let $S_J = \cap _{j \in J} S_j $. Then
$\{ v_{\alpha}  \}_{\alpha \in S_J  }$ is a basis of $V_J$, and $\{ v_{\alpha } \}_{ \alpha \in  \cup_{K: J\subsetneq K} S_K }  $ is a basis of $\sum_{ K: J\subsetneq K } V_K $. So $\{ v_{\alpha} \}_{ \alpha \in S_J \setminus  \cup_{K: J\subsetneq K} S_K }$ projects to a
basis of $W_J$. Denote $S^{'} _{J} = S_{J} \setminus \cup_{K: J\subsetneq K} S_K $.  So $S = \sqcup _{J\subset I} S^{'} _{J}$ and
$\dim V = \sum _{J\subset I} \dim W_J $.

 Then suppose an arrangement $\{ V_i \}_{i\in I}$ is flat. For any $J
 \subset I$, we choose a subset $\{  v_{\alpha } \}_{\alpha \in S^{'} _J }  \subset V_J  $ which project to a basis of the quotient space $W_J$.
 Since $| S^{'} _J  | = \dim W_J $, and the union $B= \cup _{J\subset I } \{ v_{\alpha }  \}_{\alpha \in S^{'} _J }   $ spans $V$, by the
 identity $\dim V = \sum _{J\subset I} \dim W_J$, this union is a basis of $V$. For any $J\subset I$, denote
 $S_J = \cup_{K\subset J } S^{'} _K $, and $S= \cup _{J\subset I} S_J = \cup _{J\subset I} S^{'} _J $. It isn't hard to see
 $B= \{  v_{\alpha} \}_{\alpha \in S } $ is a requested basis, so (1) is proved.

 Now let $B=\{ v_{\alpha} \}_{\alpha \in S}$ be a basis of $V$, such that for any $i\in I$, $B\cap V_i$ is a basis of $V_i$.  Then for any
 $J\subset I$, and for any $v\in V_J$, represent $v= \sum_{\alpha} k_{\alpha} v_{\alpha} $ and  let $Supp(v)=\{ \alpha | k_{\alpha}\neq 0 \}$. Then for any $i\in J$, $Supp(v)\subset S_i $, so $B\cap V_J$ is a basis of $V_J$. This fact along with above arguments conclude (2). (3) is evident so we omit the proof.

\end{pf}

\begin{rem}
we call a basis of $V$ in above theorem as a atomic basis with respect to the arrangement $\mathbb{A}$.
If $\mathbb{A}=\{ V_i \}_{i\in I}$ is an arrangement in $V$,  then for any $J\subset I$, $\{ V_J \cap V_i \}_{i\notin J }  $ is an arrangement in $V_J $, which is denoted  as $\mathbb{A}_J $.
\end{rem}

\begin{cor} Assuming notations in Remark 3.1, If $\mathbb{A}$ is a flat arrangement, then $\mathbb{A}_J $ is also flat.
\end{cor}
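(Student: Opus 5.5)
The plan is to use the atomic basis criterion of Lemma 3.2(2). Since $\mathbb{A}$ is flat, Lemma 3.2(1) gives a basis $B=\{v_\alpha\}_{\alpha\in S}$ of $V$ such that $B\cap V_K$ is a basis of $V_K$ for every $K\subset I$. I will show that $B_J:=B\cap V_J$ is an atomic basis for the arrangement $\mathbb{A}_J=\{V_J\cap V_i\}_{i\notin J}$ in $V_J$. Lemma 3.2(2), applied to the ambient space $V_J$ with index set $I\setminus J$, then yields flatness of $\mathbb{A}_J$.

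First, $B_J$ is a basis of $V_J$, because $J\subset I$ and $B$ is atomic for all intersections. Next, fix $i\notin J$. Then $V_J\cap V_i=V_{J\cup\{i\}}$, so $B\cap V_{J\cup\{i\}}$ is a basis of $V_J\cap V_i$. Moreover, every element of $B$ lying in $V_J\cap V_i$ already lies in $V_J$, so
\[B\cap V_{J\cup\{i\}}=B_J\cap (V_J\cap V_i).\]
Hence $B_J\cap(V_J\cap V_i)$ is a basis of $V_J\cap V_i$ for each $i\notin J$, which is exactly the hypothesis of Lemma 3.2(2) for $\mathbb{A}_J$.

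The only point needing care is that Lemma 3.2 is stated for an arbitrary arrangement in an arbitrary space, so it applies verbatim with $V$ replaced by $V_J$ and $I$ by $I\setminus J$. The degenerate case $J=I$, where the arrangement is empty, is trivially flat: the only intersection is over the empty index set, namely the whole space. I do not expect a real obstacle. The step most worth double-checking is the direction of Lemma 3.2(2) used here ("basis adapted to each $V_i$ implies flat"), which the paper proves via the support argument in its proof of Lemma 3.2.
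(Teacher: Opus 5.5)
Your proof is correct and is essentially the paper's argument: you take an atomic basis $B$ for $\mathbb{A}$ and observe that $B\cap V_J$ is an atomic basis for $\mathbb{A}_J$. The only difference is immaterial. The paper checks $(B\cap V_J)\cap V_K=B\cap V_K$ for all $K\supset J$ and invokes Lemma 3.2(1), whereas you check only $K=J\cup\{i\}$ and invoke Lemma 3.2(2), which is equally valid.
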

\begin{pf}
By Lemma 3.2, suppose $B= \{ v_{\alpha } \}_{\alpha \in S }$ is a atomic basis with respect to $\mathbb{A}$. Then $B\cap V_J $ is a basis of $V_J$, and for any $K: J\subset K$, $(B\cap V_J ) \cap V_K = B\cap V_K $ is a basis of $V_K$. So by using Lemma 3.2 again, $\mathbb{A}_J$ is flat, with $B\cap V_J $ as an atomic basis.
\end{pf}

\begin{defi}
A $H_q (\Gamma )-$representation $(V, \rho )$ is called as flat if $\{  V_i  \}_{i\in I }$ is a flat arrangement in $V$.
A $H-$representation $(  \bar{V}_i  , A^i _j  )$ is called as flat if for any $i\in I$, $\{  V_{i,j }  \}_{j\in I , j\neq i}$
 is a flat arrangement in $V_i$.
\end{defi}

The next theorem says two kinds of flatness in this definition are closely connected. Here is a preparatory lemma. If $(\bar{V}_i , A^i _j )$ is a $H-$epresentation, recall $\bar{V}_{i,j}\subset \bar{V}_i$ is defined as the 1-eigenspace of $A^j _i A^i _j$. For convenience set $\bar{V}_{i,i}=\bar{V}_i$.
 For $J\subset I$ and $i\in J$, let $\bar{V}_{i|J}=\cap _{j\in J} \bar{V}_{i,j}$.

\begin{lem}
Let $J\subset I$ and $i,j \in J$ such that $i\neq j$. Then the operator $A^i _j $ restricts to a isomorphism  $A^i _{j;J}$  from
$\bar{V}_{i|J }  $ to $\bar{V}_{j| J }$,  whose inverse  is  $A^j _{i;J}$.

Let $i,j,k \in J\subset I$ be different to each other.  Then
$ A^i _{k;  J } = A^j _{k; J } \circ   A^i _{j;  J } $.

\end{lem}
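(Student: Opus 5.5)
The plan is to work directly inside the $H-$representation $\Psi=(\bar{V}_i ,A^i _j)$, using the defining relations $(T^{(m_{i,j};q)}_{i,j})$ and $(T^{i,j}_k)$ together with Lemma 1.1. By definition $\bar{V}_{i|J}=\cap_{l\in J}\bar{V}_{i,l}$, where $\bar{V}_{i,i}=\bar{V}_i$. The first task is to show that $A^i _j$ sends $\bar{V}_{i|J}$ into $\bar{V}_{j|J}$. Take $v\in\bar{V}_{i|J}$. Since $v\in\bar{V}_{i,j}$, Lemma 1.1(1) applied to the couple $(A^i _j ,A^j _i)$ gives $A^i _j(v)\in\bar{V}_{j,i}$ and $A^j _i A^i _j (v)=v$. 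It remains to check $A^i _j(v)\in\bar{V}_{j,l}$ for every $l\in J\setminus\{i,j\}$, i.e. that $A^l _j A^j _l A^i _j(v)=A^i _j(v)$.

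For this I will use the triangular relation $(T^{i,j}_l)$ in three ways. First, $v\in\bar{V}_{i,j}$ gives $A^i _l(v)=A^j _l A^i _j(v)$. Second, $v\in\bar{V}_{i,l}$ gives $A^i _j(v)=A^l _j A^i _l(v)$, using $(T^{i,l}_j)$. Third, $v\in\bar{V}_{i,l}$ means $A^i _l(v)\in\bar{V}_{l,i}$, so $A^l _i A^i _l(v)=v$ by Lemma 1.1. Combining these,
\[A^l _j A^j _l A^i _j(v)=A^l _j A^i _l(v)=A^i _j(v),\]
so $A^i _j(v)$ is a $1$-eigenvector of $A^l _j A^j _l$, that is, $A^i _j(v)\in\bar{V}_{j,l}$. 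Hence $A^i _j$ restricts to a map $A^i _{j;J}:\bar{V}_{i|J}\to\bar{V}_{j|J}$. By symmetry $A^j _{i;J}$ is also defined, and the identity $A^j _i A^i _j=\mathrm{id}$ on $\bar{V}_{i,j}\supset\bar{V}_{i|J}$, together with the same identity with $i,j$ swapped, shows that the two restrictions are mutually inverse isomorphisms.

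For the second statement, let $i,j,k\in J$ be distinct and $v\in\bar{V}_{i|J}\subset\bar{V}_{i,j}$. Relation $(T^{i,j}_k)$ gives $A^i _k(v)=A^j _k A^i _j(v)$ directly. By the first part both sides lie in $\bar{V}_{k|J}$, so this is exactly $A^i _{k;J}=A^j _{k;J}\circ A^i _{j;J}$.

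The only point needing care is the membership $A^i _j(v)\in\bar{V}_{j,l}$ for the third indices $l$. The key is to apply $(T^{i,l}_j)$ rather than only $(T^{i,j}_l)$, and to use Lemma 1.1(1) for the couple $(A^i _l,A^l _i)$ so that $A^i _l(v)$ lands in $\bar{V}_{l,i}$. Everything else is immediate from the definitions.
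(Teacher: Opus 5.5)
Your proof is correct, but it takes a different route from the paper's.

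\textbf{The paper's route.} The paper argues globally. It passes to the representation $V=\bar{V}/W$ of Theorem 1.3. By Corollary 1.1, $p|_{\bar{V}_i}$ is injective and $p(\bar{V}_{i,j})=V_i\cap V_j$, which gives isomorphisms $\phi_{i|J}:\bar{V}_{i|J}\to V_J=\cap_{j\in J}V_j$. Since $v-A^i_j(v)\in W$ for $v\in\bar{V}_{i,j}$, one gets $A^i_{j;J}=\phi_{j|J}^{-1}\circ\phi_{i|J}$. Invertibility and the composition rule $A^i_{k;J}=A^j_{k;J}\circ A^i_{j;J}$ then follow formally.

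\textbf{Your route.} You prove $A^i_j(\bar{V}_{i|J})\subset\bar{V}_{j|J}$ directly, via
\[
A^l_jA^j_lA^i_j(v)=A^l_jA^i_l(v)=A^i_j(v),
\]
using $(T^{i,j}_l)$ and then $(T^{i,l}_j)$. This is more elementary and self-contained. It avoids the quotient construction and Lemmas 1.3, 1.4 and Corollary 1.1. It also makes plain that only the relations $(T^{i,j}_k)$ and the definition of $\bar{V}_{i,j}$ as a $1$-eigenspace are needed. The eigenvalue conditions and $(T^k_{i,j})$ play no role. The one couple fact you use is that $A^i_j$ carries the $1$-eigenspace of $A^j_iA^i_j$ into that of $A^i_jA^j_i$, with inverse $A^j_i$ there. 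That holds for any pair of linear maps.

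\textbf{What the paper's version buys.} It identifies all the $\bar{V}_{i|J}$, $i\in J$, with a single subspace $V_J$ of $V$. That identification is exactly what the proof of Theorem 3.4 reuses to glue the bases $\{v^i_\alpha\}$ into one basis of $V$.

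\textbf{A small remark.} Your ``third'' observation, $A^l_iA^i_l(v)=v$ with $A^i_l(v)\in\bar{V}_{l,i}$, is never used in the displayed chain. So the emphasis on it in your final paragraph is unnecessary. The relation $(T^{i,l}_j)$ applies directly because $v\in\bar{V}_{i,l}$.
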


\begin{pf}
Recall the $H_q (\Gamma)-$representation $\rho_{\Psi}$ associated with $\Psi$ constructed in Theorem 1.3, whose representation space is $V$. As is corollary 1.1 there is isomorphism $p|_{\bar{V}_i } : \bar{V}_i \rightarrow V_i \subset V $ for any $i\in I$, and
$p|_{\bar{V}_i } (  \bar{V}_{i,j} )= V_i \cap V_j$ for any $i\neq j \in I$.  For $J\subset I$, let $V_J = \cap_{j\in J} V_j $. Then
 $ p|_{\bar{V}_i } ( \bar{V}_{i |J}  )=
 p|_{\bar{V}_i } (\cap _{j\in J , j\neq i} \bar{V}_{i,j})= \cap _{j\in J , j\neq i} p|_{\bar{V}_i } (\bar{V}_{i,j}) =
 \cap _{j\in J , j\neq i} ( V_i \cap V_j ) = V_J $. Denote this isomorphism from $\bar{V}_{i|J} $ to $V_J$ as $\phi_{i|J}$.

Let $J\subset I$ and $i,j \in J$ such that $i\neq j$. Let $v\in \bar{V}_{i|J}$. As in the construction of the space $V$,  both
$v$ and $A^i _j (v)$ are identified with the vector $ p|_{\bar{V}_i } (v) =p|_{\bar{V}_j }(  A^i _j (v) ) \in V $, so
$ A^i _{j;J} (v) = ( \phi_{j|J} )^{-1} \phi_{i|J} (v)  $. All statements of this lemma follow from this identity.

\end{pf}

\begin{thm}
If a $H_q (\Gamma )-$representation $(V,\rho )$ is flat,  then its associated quiver representation $(V_i , A^i _j   )$ is  flat.
Inversely if a $H-$representation $( V_i , A^i _j   )$ of type $(\Gamma;q)$ is flat, then the associated $H_q (\Gamma)-$representation  is flat also.
\end{thm}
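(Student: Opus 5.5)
The plan is to reduce both directions to Lemma 3.2(2) and Corollary 3.1. In both directions we transport atomic bases between the arrangement $\{V_i\}_{i\in I}$ in $V$ and the arrangements $\{\bar V_{i,j}\}_{j\neq i}$ in $\bar V_i$. The dictionary is Corollary 1.1 together with the proof of Lemma 3.3. There is an isomorphism $\bar V_i\cong V_i$ carrying $\bar V_{i,j}$ onto $V_i\cap V_j$ and $\bar V_{i|J}$ onto $V_J$. For a $H_q(\Gamma)$-representation $\rho$ we compare $\rho$ with $\rho_{\Psi_\rho}$ via the map $f$ of Lemma 1.7: $f$ maps $V_i$ of $\rho_{\Psi_\rho}$ isomorphically onto $V_i$ of $\rho$, compatibly with intersections.

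For the first direction, assume $\{V_i\}$ is flat in $V$. By Corollary 3.1, for each $i$ the induced arrangement $\mathbb{A}_{\{i\}}=\{V_i\cap V_j\}_{j\neq i}$ in $V_i$ is flat. By Theorem 1.2(3), for the quiver data $\Psi_\rho$ the $1$-eigenspace of $A^j_iA^i_j$ is exactly $V_i\cap V_j$. Hence the arrangement $\{V_{i,j}\}_{j\neq i}$ in $V_i$ is literally $\mathbb{A}_{\{i\}}$, and $\Psi_\rho$ is flat. This direction is immediate.

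For the converse, assume each $\{\bar V_{i,j}\}_{j\neq i}$ is flat in $\bar V_i$, and let $V=\bar V/W$. Via $p|_{\bar V_i}$, the arrangement $\{V_i\cap V_j\}_{j\neq i}$ in $V_i$ is flat for each $i$. We build an atomic basis of $V$ for $\{V_i\}$ in the spirit of the proof of Lemma 3.2. For each nonempty $J\subset I$, choose $S'_J\subset V_J$ projecting to a basis of $W_J=V_J/\sum_{K\supsetneq J}V_K$. Let $B_0$ be the union of the $S'_J$ over nonempty $J$. Then $B_0\cap V_i\supseteq\bigcup_{J\ni i}S'_J$, which is an atomic basis of $V_i$ for the flat arrangement $\mathbb{A}_{\{i\}}$. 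Flatness of $\mathbb{A}_{\{i\}}$ gives $\dim V_i=\sum_{J\ni i}\dim W_J$, and this is exactly what that claim needs. Finally, extend by a basis of a complement $W_\emptyset$ of $\sum_i V_i$. By Lemma 3.2(2) it then remains to show that $B$ is a basis of $V$, i.e. that $B_0$ is linearly independent; equivalently $\dim\sum_iV_i=\sum_{J\neq\emptyset}\dim W_J$.

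The main obstacle is this independence statement. It is a global condition, whereas flatness of the $H$-representation is only local at each vertex. I would attack it through the presentation $V=\bigoplus_i\bar V_i/W$, writing $\dim V=\sum_i\dim\bar V_i-\dim W$, and compute $\dim W$ using local atomic bases chosen compatibly across vertices. Lemma 3.3 makes such choices possible: the maps $A^i_{j;J}$ identify the $\bar V_{i|J}$ for different $i\in J$, consistently by the cocycle identity $A^i_{k;J}=A^j_{k;J}A^i_{j;J}$. So one can pick, for each $J$ with $|J|\ge2$, a single family in $V_J$ whose copies in every $\bar V_i$ ($i\in J$) are part of local atomic bases. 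Then $W$ is spanned by the differences $[x]_i-[x]_j$ of copies of the same atom. For a fixed atom in $S'_J$ these span a space of dimension $|J|-1$, and together they give $\dim W=\sum_J(|J|-1)\dim W_J$. That yields $\dim V=\sum_{J\neq\emptyset}\dim W_J$, since no other vectors besides these differences occur in $W$. If the compatible choice across vertices fails in general, I would first try to prove it by induction on $|J|$ from the top ($J=I$) downward, using Corollary 3.1 and Lemma 3.3 at each stage.
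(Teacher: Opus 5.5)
Your proposal is correct and follows essentially the paper's proof. The first direction goes via Theorem 1.2(3) and Corollary 3.1, and the converse transports a single family of atoms in each $V_J$ to every $\bar V_i$ with $i\in J$ and observes that the quotient by $W$ only identifies copies of the same atom; your count $\dim W=\sum_J(|J|-1)\dim W_J$ makes precise what the paper merely asserts. Your closing hedge is unnecessary: choosing $S'_J\subset V_J$ and pulling back along the injective maps $p|_{\bar V_i}$ already gives the compatible family, and $A^i_j[x]_i=[x]_j$ holds because $[x]_i\in\bar V_{i,j}$, so $[x]_i-A^i_j[x]_i\in W$, and $p|_{\bar V_j}$ is injective.
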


\begin{pf}
First suppose $(V,\rho )$ is a flat $H_q (\Gamma )-$representation, then $\mathbb{A}=\{ V_i \}_{i\in I}$ is a flat arrangement in $V$, where $V_i $ is the $q-$eigenspace of $\sigma_i$. By definition of the jumping operators $A^i _j$, the 1-eigenspace  $V_{i,j}$ of $A^j _i A^i _j$ can be identified with $V_i \cap V_j$. By using Corollary 3.1, $\{  V_i \cap V_j \} _{j\neq i} $ is a flat arrangement in $V_i$, so the associated quiver data $(V_i , A^i _j   )$ is a flat.

Then suppose $(\bar{V}_i , A^i _j   )$ is a flat $H-$representation of type $(\Gamma ;q)$, suppose $(V, \rho)$ is the associated $H_q (\Gamma)-$representation constructed in Theorem 1.3. Recall in that construction, the quotient map $p : \bar{V}=\oplus_{i\in I} \bar{V}_i \rightarrow V$ maps $\bar{V}_i $ isomorphically to a subspace $V_i$ and $\bar{V}_{i,j}$ isomorphically to $V_i \cap V_j$. For any $J\subset I$, let $V_J = \cap_{i\in J} V_i$.
Choose any total order $\prec $ in $2^I$(the power set of $I$), such that if $ J_1 \subset J_2$ then $J_1 \prec J_2$. By using this total order we define an index set $C_J $ for $J\subset I$ inductively as follows.

First for $J=I$, let $ \{ v_{\alpha} \}_{\alpha \in C_{I}} $ be a basis of $V_I$. It means $C_I $ is the index set for this basis. Assuming notations as in Lemma 3.3. For any $i\in I$ and $\alpha\in C_I$, denote $( \phi_{i|I}  )^{-1} (v_{\alpha})  $ as $ v^i _{\alpha} $, so
$ \{ v^i _{\alpha} \}_{\alpha \in C_{I}}   $ is a basis of $\bar{V}_{i|I}$ by Lemma 3.3.

Now Let $J\in 2^I$, and suppose for any $K: J\prec K$, $C_{K}$ and a linear independent subset $\{  v_{\alpha} \}_{\alpha \in C_k} \subset V$ is already constructed. Choose any $i\in J$, choose any linear independent subset $\{ v^i _{\alpha} \}_{ \alpha \in C_J  }\subset  \bar{V}_{i|J}$ such that
$ \bar{V}_{i|J} = \kappa < \{ v^i _{\alpha} \}_{ \alpha \in C_J  }  > \oplus ( \sum_{K: J\precnsim K }   V_{i|K} ) $. For  $\alpha \in C_J$ and $j\in J$ let $v^j _{\alpha}= A^i _j ( v^i _{\alpha} )$ and $v_{\alpha}= \phi_{i|J}( v^i _{\alpha} )$,  so by Lemma 3.3, there are
\[
\bar{V}_{j|J}= \kappa <  \{ v^j _{\alpha} \}_{ \alpha \in C_J  }  >  \oplus ( \sum_{K: J\precnsim K }  ) \bar{V}_{j|K} \quad \text{and} \quad V_{J} = \kappa <  \{ v_{\alpha} \}_{ \alpha \in C_J  }  >  \oplus ( \sum_{K: J\precnsim K }  ) V_{K}   . \]

Since  $(\bar{V}_i , A^i _j   )$ is flat then for any $i\in I$, the set
$\{  v^i _{\alpha}  \}_{\alpha \in \sqcup_{J: i\in J}  C_J }$ is a basis of $ \bar{V}_i $. Recall the construction
$ V= \bar{V} / W $, the process of taking a quotient of $\bar{V}=\oplus_{i\in I} \bar{V}_i $ is nothing but identifying all
$\{ v^i _{\alpha } \}_{ i\in J  }$  with one vector $v_{\alpha} \in V$ for $\alpha \in C_J$. So $\{ v _{\alpha}  \}_{\alpha \in \sqcup_{J\subset I} } C_J  $ is a basis of $V$ and $\{ v _{\alpha}  \}_{\alpha \in \sqcup_{J: i\in J } } C_J  $ is a basis of $V_i$ for any $i\in I$. So the representation $\rho$ is flat.\\

\end{pf}

Suppose $\Psi=( \bar{V}_i , A^i _j   )$ is a flat $H-$representation. Denote the associated $H_q (\Gamma)-$representa
tion as $(\rho_{\Psi}, V)$ and assume notations in that proof, for example the indices sets $C_J$ for any $J\subset I$, and vectors $v_{\alpha}, v^i _{\alpha} $.
For any $i\in I$, let $X_i = \bigcup_{J:i\in J} C_J$, then $\bar{V}_i$ has a basis $\{ v^i _{\alpha} \}_{\alpha \in X_i  }$. Using these
bases the jumping operator $A^i _j$ is presented as follows.

\[
A^i _{j} (v^i _{\alpha } )=\begin{cases}
 v^j _{\alpha },   &\text{when $  \alpha\in X_i \cap X_j $},\\
 \frac{-1}{v+ v^{-1}} \sum_{ \beta \in X_j  } \mu^{j}_{i: \beta ,\alpha } y, &\text{when $\alpha \in X_i \setminus X_j $}.\\

 \end{cases}
 \]

Since $\Psi$ satisfies the triangular relation $( T^{i,j} _k )$, for different $i,k\in I$ there must be
$  \mu^{j}_{i; \beta ,\alpha }= \mu^{j}_{k; \beta ,\alpha}   $, so we can conveniently rename all $\mu^j _{i; \beta ,\alpha}$'s as
$\mu^j _{\beta , \alpha}$. A comparison of this identity with the identity about $A^j _{i;v}$ before Theorem 3.3 indicates the close
relationship between flat $H-$representations and pre $W-$graphs, that is, every flat $H-$representation can be presented by a pre $W-$graph through the identity above Theorem 3.3.

\begin{prop} If $\gamma$ is a rank 3 Coxeter system, then any type $(\Gamma,v)$ $H-$representation is flat.

\end{prop}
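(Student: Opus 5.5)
By the definition of flatness for $H-$representations, we have to show that for every $i\in I$ the arrangement $\{\bar V_{i,j}\}_{j\neq i}$ in $\bar V_i$ is flat. When $|I|=3$, say $I=\{i,j,k\}$, this arrangement consists of only the two subspaces $\bar V_{i,j}$ and $\bar V_{i,k}$ of $\bar V_i$. Lemma 3.2(3) says that any arrangement of two subspaces is flat. The proof therefore reduces to checking that the rank 3 hypothesis gives exactly two subspaces at each vertex.

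The plan has three steps.

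\textbf{Step 1.} Fix $i\in I$. By definition $\bar V_{i,j}$ is the $1$-eigenspace of $A^j_iA^i_j$. It is a well-defined subspace of $\bar V_i$ because $(A^i_j,A^j_i)$ is an $(m_{i,j};q)-$couple by $(T^{(m_{i,j};q)}_{i,j})$. Since $|I|=3$, the family $\{\bar V_{i,j}\}_{j\neq i}$ has exactly two members.

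\textbf{Step 2.} Apply Lemma 3.2(3) to this two-member family. For completeness one can also verify directly the basis criterion of Lemma 3.2(2). Choose a basis of $\bar V_{i,j}\cap\bar V_{i,k}$. Extend it separately to a basis of $\bar V_{i,j}$ and to a basis of $\bar V_{i,k}$. The union of these is a basis of $\bar V_{i,j}+\bar V_{i,k}$, by the dimension formula $\dim(A+B)=\dim A+\dim B-\dim(A\cap B)$. Finally extend this union to a basis of $\bar V_i$. The resulting basis meets each of the two subspaces in a basis of that subspace.

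\textbf{Step 3.} Since $i$ was arbitrary, $\Psi$ is flat in the sense of the definition. By Theorem 3.4, the associated $H_q(\Gamma)-$representation $\rho_\Psi$ is flat as well; this is a corollary, not needed for the statement itself.

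None of these steps is a real obstacle. The only point needing care is the interpretation of the statement: "rank 3" must mean $|I|=3$, and flatness of $\Psi$ is meant vertex-wise, as in the definition. With fewer than three generators each vertex carries at most one subspace, so flatness is trivial. For rank $\geq 4$ each vertex carries three or more subspaces and the argument breaks down, since three subspaces need not be flat. This is why the hypothesis is rank 3.
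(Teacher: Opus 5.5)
Your proposal is correct and follows the paper's own proof. At each vertex $i$ of a rank 3 system the arrangement consists only of $\bar V_{i,j}$ and $\bar V_{i,k}$, and this two-member arrangement is flat by Lemma 3.2(3). The paper closes by citing Theorem 3.4; as you note, that gives only the additional conclusion that $\rho_\Psi$ is flat and is not needed for flatness of $\Psi$ itself.
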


\begin{pf} Suppose the index set $I=\{ 1,2,3 \}$ for $\Gamma$. Choose any $i\in I$, denote the left two elements in $I$ as $j,k$.
Then the arrangement $\{ V_{i,j} ,V_{i,k} \} $ consists of two subspace only. This arrangement must be flat by Lemma 3.2,  so the
proposition follows by Theorem 3.4.
\end{pf}

\section{Equations for $W-$graphs }

Last section connects pre $W$-graphs with $H-$representations. Combining with conditions in definition of  $H-$representation, we derive equations ( about constants $\{ \mu^j _{y,x} \}$ ) that making a pre $W-$graph to be a $W-$graph.
 Assume  $\mathcal{G}=(X, I, X_i , \mu^j _{\beta ,\alpha} ) $ is a pre $W$-graph, $0\neq v\in \kappa$. Suppose $(\bar{V}_i , A^i _{j;v})$ is the associated quiver representation as in discussions above Theorem 3.3.

\begin{thm} Let $\bar{V}^0 _{i/j} =\kappa < v^i _{\alpha} >_{\alpha \in X_i \setminus X_j } $ for any $i\neq j\in I$.  Let $P^i _j : \bar{V}^0 _{i/j} \rightarrow \bar{V}^0 _{j/i}$ be the linear map
 $P^i _j ( v^i _{\alpha})=\sum_{ \beta \in X_j \setminus X_i } \mu^j _{\beta,\alpha} v^j _{\beta}  $ for $\alpha \in C_i \setminus C_j $ for any $i\neq j \in I$. Then

\noindent(1) $( \bar{V}_i , A^i _{j;v} )$ satisfies the condition $(T^{(m_{i,j};q)}_{i,j})$ and the space $\bar{V}_{i,j} = \kappa < v^i _{\alpha} >_{\alpha \in X_i \cap X_j }$

if and only if $P^i _j  P^j _i $ is diagonalizable  with eigenvalues in $\{ 4\cos ( \frac{a\pi}{m_{i,j} } ) \} _{a=1,2,...,[\frac{m_{i,j}-1}{2}]   } $.

\noindent(2) If the conditions in (1) are satisfied, then $(\bar{V}_i , A^i _j )$ satisfies the conditions $( T^{i,j} _k  ) $ automatically.

\end{thm}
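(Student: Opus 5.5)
I will write the operators $A^i_{j;v}$ in block form with respect to the decompositions $\bar V_i=\bar V^{\cap}_{i,j}\oplus \bar V^0_{i/j}$, where $\bar V^{\cap}_{i,j}=\kappa<v^i_\alpha>_{\alpha\in X_i\cap X_j}$. The plan is to reduce (1) to the rank-two computation behind Theorem 1.2, and to get (2) from the fact that the intersection parts are identified by the identity map.

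\textbf{Step 1: the block form.} By definition, $A^i_{j;v}$ sends $v^i_\alpha\mapsto v^j_\alpha$ for $\alpha\in X_i\cap X_j$. For $\alpha\in X_i\setminus X_j$ it sends $v^i_\alpha$ to $-\frac{1}{v+v^{-1}}\sum_{\beta\in X_j}\mu^j_{\beta,\alpha}v^j_\beta$. This sum splits into a part landing in $\bar V^{\cap}_{j,i}$ (indices $\beta\in X_j\cap X_i$) and the part $-\frac{1}{v+v^{-1}}P^i_j(v^i_\alpha)$ in $\bar V^0_{j/i}$. So $A^i_{j;v}=\begin{pmatrix} 1 & N^i_j\\ 0 & -\frac{1}{v+v^{-1}}P^i_j\end{pmatrix}$ for a suitable off-diagonal block $N^i_j$, and
\[A^j_{i;v}A^i_{j;v}=\begin{pmatrix}1 & N^i_j+N^j_iQ\\ 0 & Q'\end{pmatrix},\qquad Q'=\frac{1}{(v+v^{-1})^2}P^j_iP^i_j.\]

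\textbf{Step 2: proof of (1).} Suppose the $1$-eigenspace is exactly $\bar V^{\cap}_{i,j}$ and the couple is an $(m_{i,j};q)$-couple. Then $A^j_{i;v}A^i_{j;v}$ is semisimple, and the induced map $Q'$ on the quotient by $\bar V^{\cap}_{i,j}$ is semisimple with eigenvalues in $\{0\}\cup\{\lambda_{m_{i,j};q}(a)\}$, with no eigenvalue $1$. Since $\lambda_{m;q}(a)=4\cos^2(\frac{a\pi}{m})/(v+v^{-1})^2$, this says exactly that $P^j_iP^i_j$ is diagonalizable with eigenvalues $4\cos^2(\frac{a\pi}{m_{i,j}})$ (the statement presumably means $\cos^2$), plus possibly $0$ when $m_{i,j}$ is even.

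Conversely, if $P^j_iP^i_j$ has this property, then $Q'$ is semisimple with eigenvalues $\neq 1$. The block-triangular matrix then has $1$-eigenspace exactly $\bar V^{\cap}_{i,j}$ and is semisimple, because $1$ is not an eigenvalue of $Q'$ and so the off-diagonal block can be removed by a change of basis. The same argument applies to $A^i_{j;v}A^j_{i;v}$. The remaining $0$-eigenvector condition for even $m_{i,j}$ needs $P^i_j$ to kill the kernel of $P^j_iP^i_j$. I expect this to be the main obstacle. I would use the symmetry of the pre $W$-graph data to handle it, or accept it as implicit in ``diagonalizable with those eigenvalues'' for both orders of composition.

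\textbf{Step 3: proof of (2).} Take $v=v^i_\alpha$ with $\alpha\in X_i\cap X_j$, so that $A^i_{j;v}(v)=v^j_\alpha$. If $\alpha\in X_k$, both $A^i_{k;v}(v)$ and $A^j_{k;v}(v^j_\alpha)$ equal $v^k_\alpha$. If $\alpha\notin X_k$, both equal $-\frac{1}{v+v^{-1}}\sum_{\beta\in X_k}\mu^k_{\beta,\alpha}v^k_\beta$, because the formula depends only on $\alpha$ and $k$ and not on the source index. This gives $(T^{i,j}_k)$ on a basis of $\bar V_{i,j}$, which by (1) is $\kappa<v^i_\alpha>_{\alpha\in X_i\cap X_j}$.
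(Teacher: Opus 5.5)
Your route is the paper's. Your block-triangular form of $A^i_{j;v}$ on $\kappa\langle v^i_\alpha\rangle_{\alpha\in X_i\cap X_j}\oplus\bar V^0_{i/j}$ is the paper's observation that the couple induced on $\bar V_i/\bar V_{i,j}$ and $\bar V_j/\bar V_{j,i}$ is $(\omega P^i_j,\omega P^j_i)$ with $\omega=-1/(v+v^{-1})$. The converse uses the same ``$1$ is not an eigenvalue of the quotient block'' argument, and (2) is the same check on basis vectors. Your reading $4\cos^2$ is correct (the paper's proof has the same slip), and for odd $m_{i,j}$ your proof is complete.

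The even case is where the gap lies. The paper's proof also passes over it, since it only ever mentions the eigenvalues $1$ and $\lambda_{m_{i,j};q}(a)$. Neither of your two suggestions closes it.

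\textbf{The kernel condition is not implicit.} Diagonalizability of both $P^j_iP^i_j$ and $P^i_jP^j_i$ does not force $P^i_j$ to vanish on $\ker(P^j_iP^i_j)$. Take vertices $a_1,a_2$ with $I_{a_t}=\{i\}$ and $b_1,b_2$ with $I_{b_s}=\{j\}$, all four edges $\{a_t,b_s\}$, $\mu^j_{b_s,a_t}=1$ and $\mu^i_{a_t,b_s}=(-1)^{s+t}$. Then $P^i_j=\begin{pmatrix}1&1\\1&1\end{pmatrix}$ and $P^j_i=\begin{pmatrix}1&-1\\-1&1\end{pmatrix}$, and both products vanish. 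Yet $A^i_{j;v}=\omega P^i_j$ does not kill the $0$-eigenspace of $A^j_{i;v}A^i_{j;v}$.

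\textbf{Symmetry is not available.} The paper's pre $W$-graphs impose no symmetry on $\mu$. Even symmetric data fail over a field containing $\sqrt{-1}$, since $\begin{pmatrix}1&\sqrt{-1}\\ \sqrt{-1}&-1\end{pmatrix}$ is symmetric with square $0$.

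So the statement itself must be amended for even $m_{i,j}$, and you should say so explicitly rather than look for a proof.
\begin{itemize}
\item If $0$ stays excluded, your converse is complete, because no $0$-eigenvectors occur. But the forward direction is false: a single vertex $x$ with $I_x=\{j\}$ gives an $(m_{i,j};q)$-couple between $\bar V_i=0$ and $\bar V_j=\kappa x$, with $P^i_jP^j_i=0$ on $\kappa x$.
\item If $0$ is admitted, you must add the hypothesis $\ker(P^j_iP^i_j)=\ker P^i_j$, and the same with $i,j$ swapped. Your block form shows this is exactly the $0$-eigenvector condition. A $0$-eigenvector of $A^j_{i;v}A^i_{j;v}$ is $(x,y)$ with $P^j_iP^i_jy=0$ and $x=-(N^i_j+\omega N^j_iP^i_j)y$. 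Then $A^i_{j;v}$ sends it to $(-\omega N^j_iP^i_jy,\ \omega P^i_jy)$, which is zero iff $P^i_jy=0$.
\end{itemize}
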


\begin{pf} First if $( V_i , A^i _{j;v} )$ satisfies the eigenvalue condition $(T_{i,j})$ and the space $\bar{V}_{i,j} = \kappa < x >_{x\in C_i \cap C_j }$, since $A^i _{j;v} ( \bar{V}_{i,j} ) \subset \bar{V}_{j,i}$ and $A^j _{i;v} ( \bar{V}_{j,i} ) \subset \bar{V}_{i,j}$, so the couple $( A^i _{j;v} , A^j _{i;v}  )$ induces
 a couple $ ( \bar{A}^i _j , \bar{A}^j _i   ) $ between $\bar{V}_i / \bar{V}_{i,j}   $ and $ \bar{V}_j / \bar{V}_{j,i}  $.  Clearly this couple is equivalent to the couple $( \omega P^i _j , \omega P^j _i  ) $ between $\bar{V}^0 _{i/j} $ and $\bar{V}^0 _{i/j} $ .  Since $A^j _{i;v} A^i _{j;v}$ is semisimple and $\bar{V}_{i,j}$ is its $1-$eigenspace,
 so $\bar{A}^j _i \bar{A}^i _j $ is semisimple with eigenvalues in $\{ 4\omega^2 \cos ( \frac{a\pi}{m_{i,j} } ) \} _{a=1,2,...,[\frac{m_{i,j}-1}{2}]   } $, which implies (1).

Inversely if the conditions in (1) for  $( P^i _j , P^j _i )$ are true,  so $A^j _{i;v} A^i _{j;v}   $ is semisimple with eigenvalues in $\{ 4\omega^2 \cos ( \frac{a\pi}{m_{i,j} } ) \} _{a=1,2,...,[\frac{m_{i,j}-1}{2}]   } \cup \{ 1\} $. Since $\bar{V}_{i,j}$ is in the $1-$eigenspace of
  $A^j _{i;v} A^i _{j;v}$ and $1\neq 4\omega^2 \cos ( \frac{a\pi}{m_{i,j} } )$   for any $1\leq a\leq [\frac{m_{i,j}-1}{2}]$, so $ A^j _{i;v} A^i _{j;v}$ is semisimple and its $1-$eigenspace is  $\bar{V}_{i,j}$.

For $(2)$, by above argument if the conditions in (2) hold then $\bar{V}_{i,j}=\kappa <x>_{x\in C_i \cap C_j }$. Now let $k\in I\setminus \{i,j \} $, and let $x\in C_i \cap C_j $, we see both $A^i _{k;v} (x)$ and $A^j _{k;v} (x)$ are $ \sum_{y\in C_k} \mu^k _{y,x} y $, which implies (2).\\

\end{pf}

Suppose  $\Psi=( \bar{V}_i , A^i _{j;v}  )$ satisfy the conditions (1) in Theorem 4.1.  Now we derive the sufficient and necessary conditions for $\Psi $ to satisfying the triangular conditions $(T^i _{j,k})$ in addition. Let $i,j,k\in I$ be different with each other. By definition of $A^j _{k;v} $, the $1-$eigenspace of $A^k _{j;v} A^j _{k;v}$, that is, the space $\bar{V}_{j,k}$ is simply
$\kappa <  \{ v^j _{\alpha} \}_{\alpha\in X_j \cap X_k }   >  $.  Let $\bar{V}^0 _{j/k}=\kappa < \{ v^{j} _{\alpha}  \}_{\alpha \in X_j \setminus X_k }   >$.

Unfortunately the space $\bar{V}_{j/k}$ is not $ \bar{V}^0 _{j/k}$. Instead, there is a morphism
$\phi_1 : \bar{V}^0 _{j/k} \rightarrow  \bar{V}_{j,k} $ such that $\bar{V}_{j/k}= \{  v+\phi_1 (v)       \}_{v \in \bar{V}^0 _{j/k}}  . $ Similarly there is a morphism
$\phi_2 : \bar{V}^0 _{k/j} \rightarrow \bar{V}_{k,j} $ such that $\bar{V}_{k/j}= \{  v+\phi_2 (v)       \}_{v \in \bar{V}^0 _{k/j}}  . $ First we determine $\phi_1$ and $\phi_2$.

Take a total order $\prec_{1}$ in $X_j$ and a total order $\prec_2$ in $X_k$ such that,
(1)for any $\alpha \in X_j \cap X_k $ and any
$\beta \in X_j \setminus X_k$, there is $\beta \prec_1 \alpha$;
(2)for any $\alpha \in X_k \cap X_j $ and any
$\beta \in X_k \setminus X_j$, there is $\beta \prec_2 \alpha$;
 (3)restriction of $\prec_1$ on $X_j \cap X_k \subset X_j$ is the same total order as the restriction of $\prec_2$ on $X_k \cap X_j \subset X_k$.

 By using $\prec_1$, $\{ v^j _{\alpha} \}_{\alpha \in X_j }$ become a ordered basis of $\bar{V}_j$. Similarly by using  $\prec_2$, $\{ v^j _{\alpha} \}_{\alpha \in X_k }$ become a ordered basis of $\bar{V}_k$. With respect to these bases,
the operators $A^j _{k;v}$ and $A^k _{j;v}$ are represented as the following block matrices.
\[
\begin{pmatrix}
 M (f_{11}) &  M(f_{12})\\
0 &  I
\end{pmatrix}     ,      \begin{pmatrix}
M( g_{11}) &  M(g_{12})\\
0 &  I
\end{pmatrix}
\]

where $f_{11}$ ( $f_{12}$ ) is certain morphism from $\bar{V}^0 _{j/k} $ to
$\bar{V}^0 _{k/j} $ ( $ \bar{V}_{k,j} $     ). Similarly $g_{11}$ ( $g_{12}$ ) is certain morphism from $\bar{V}^0 _{k/j} $ to
$\bar{V}^0 _{j/k}$ ( $\bar{V}_{j,k}$     ).
 $"I"$ means an identity matrix.    Then we take a new ordered basis of $\bar{V}_i$ as follows. In the original one, for
 $\alpha \in X_j \setminus X_k$, replace $v^j _{\alpha} $ with the vector $v^j _{\alpha} + \phi_1 ( v^j _{\alpha} ) $, and if
 $\alpha \in X_j \cap X_k$, still take the same vector $v^j _{\alpha}$. The transition matrix from the old basis to the new basis
 is represented by the block matrix
 $\begin{pmatrix}
 I &  M( \phi_1 )\\
0 &  I
\end{pmatrix} $ . Similarly take a new basis of $\bar{V}_k$, the transition matrix from the old basis to the new basis is
$\begin{pmatrix}
 I &  M( \phi_2 )\\
0 &  I
\end{pmatrix} $.  Now write down the matrices of $A^j _{k;v}$ and $A^k _{j;v}$ with respect to these new bases. Since $\{ v^j _{\alpha} + \phi_1 ( v^j _{\alpha} ) \}_{\alpha \in X_j \setminus X_k } $ is a basis of $\bar{V}_{j/k}$ and  $\{ v^k _{\alpha} + \phi_2 ( v^k _{\alpha} ) \}_{\alpha \in X_k \setminus X_j } $ is a basis of $\bar{V}_{k/j}$, so these matrices should be diagonal block matrices. It gives the following identities.

(1) $ M( f_{12}) + M(\phi_1) - M(f_{11})M( \phi_2)  =0  $;  (2)  $ M( g_{12}) + M(\phi_2) - M(g_{11})M( \phi_1)  =0  $.

Now $M( \phi_1 )$ and $ M(\phi_2 )  $ can  be solved from these identities as

$ M( \phi_1 ) = -( I - M(f_{11}) M( g_{11} )   )^{-1} ( M(  f_{11} )M( g_{12} ) +M( f_{12})  )   $ and

$M( \phi_2 ) = -( I - M(g_{11}) M( f_{11} )   )^{-1} ( M(  g_{11} )M( f_{12} ) +M( g_{12})  )   $.

With respect to the decomposition $\bar{V}_j =  \bar{V}^0 _{j/k} \oplus \bar{V}_{j,k}   $, denote the projection from
 $\bar{V}_j $ to the first direct sum component and the second component as $p_{j,1}  $ and $ p_{j,2}  $ respectively.
  Similarly  define two projection $p_{k,1}$ and $p_{k,2} $ from $\bar{V}_{k}$ to $\bar{V}^0 _{k/j}$ and $\bar{V}_{k,j}$ respectively.
  Denote the morphism $p_{j,1} \circ A^i _{j;v} $ ($p_{j,2} \circ A^i _{k;v} $    ) from $\bar{V}_i$ to $ \bar{V}^0 _{j/k}  $  ($ \bar{V}^0 _{k/j}  $)  as $B^i _j $ ($B^i _k $).

\begin{thm} Let  $\mathcal{G}=(X, I, X_i , \mu^j _{\beta ,\alpha} ) $ be a $W$-graph, $v\in \kappa$ be generic, and  $\Psi=(\bar{V}_i , A^i _{j;v})$ be the associated quiver representation. Assume notations as above. Then $\Psi$ is a $H-$representation if and only if the following conditions are satisfied.

\noindent (1)  For any $i\neq j \in I$, $P^i _j  P^j _i $ is diagonalizable  with eigenvalues in $\{ 4\cos ( \frac{a\pi}{m_{i,j} } ) \} _{a=1,2,...,[\frac{m_{i,j}-1}{2}]   } $;

\noindent (2) For any three different $i,j,k\in I$,
$ M( B^i _j )( I - M(f_{11}) M( g_{11} )   )^{-1} ( M(  f_{11} )M( g_{12} ) +M( f_{12})  )$

$ = M(B^i _k ) ( I - M(g_{11}) M( f_{11} )   )^{-1} ( M(  g_{11} )M( f_{12} ) +M( g_{12})  ) $.

\end{thm}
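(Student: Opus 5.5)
The plan is to reduce the statement to the three defining conditions of Definition 1.5 (equivalently Definition 1.5's reformulation after Proposition 1.1), and then treat each one separately. Condition (1) will take care of $(T^{(m_{i,j};q)}_{i,j})$. Condition (2) will be shown equivalent to the remaining triangular relation $(T^i_{j,k})$, because by Theorem 4.1(2) the relations $(T^{i,j}_k)$ hold automatically.

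\textbf{Step 1: the eigenvalue conditions.} By Theorem 4.1(1), condition (1) is equivalent to $\Psi$ satisfying $(T^{(m_{i,j};q)}_{i,j})$ together with $\bar{V}_{i,j}=\kappa<v^i_\alpha>_{\alpha\in X_i\cap X_j}$. For the converse direction I must make sure that an $H$-representation coming from a $W$-graph really has this $1$-eigenspace. That is supplied by Theorem 3.3(2), since $\mathcal{G}$ is assumed to be a $W$-graph. The eigenvalue set must be rescaled correctly: the induced couple on $\bar{V}_i/\bar{V}_{i,j}$ is $(\omega P^i_j,\omega P^j_i)$ with $\omega=-1/(v+v^{-1})$, so the $\lambda_{m;q}(a)$ correspond to eigenvalues $4\cos^2(a\pi/m)$ of $P^i_jP^j_i$. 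I read the statement's "$4\cos$" in that sense. Once (1) holds, Theorem 4.1(2) gives $(T^{i,j}_k)$ for free.

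\textbf{Step 2: the triangular relation $(T^k_{j,i})$ becomes a matrix identity.} By Remark 1.2, $(T^i_{j,k})$ is equivalent to the following: for every $v\in\bar{V}_i$, the $\bar{V}_{j,k}$-component of $A^i_{j;v}(v)$, taken in the canonical decomposition $\bar{V}_j=\bar{V}_{j,k}\oplus\bar{V}_{j/k}$, is carried by $A^j_{k;v}$ to the $\bar{V}_{k,j}$-component of $A^i_{k;v}(v)$.

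To compute these components, I use the change of basis worked out before the theorem. We have $\bar{V}_{j/k}=\{u+\phi_1(u)\}$, so a vector $x=x_1+x_2$ with $x_1\in\bar{V}^0_{j/k}$ and $x_2\in\bar{V}_{j,k}$ splits canonically as
\[(x_1+\phi_1(x_1))+(x_2-\phi_1(x_1)).\]
Hence the canonical $\bar{V}_{j,k}$-component is $p_{j,2}(x)-\phi_1(p_{j,1}(x))$.

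\textbf{Step 3: simplify and compare.} On the basis vectors of $\bar{V}_{j,k}$ the map $A^j_{k;v}$ is the identity $v^j_\alpha\mapsto v^k_\alpha$, so with the common ordering (3) of $X_j\cap X_k$ it can be dropped. The relation therefore becomes
\[p_{j,2}A^i_j-\phi_1B^i_j=p_{k,2}A^i_k-\phi_2B^i_k.\]
Both $p_{j,2}A^i_j$ and $p_{k,2}A^i_k$ send $v^i_\alpha$ to $v_\alpha$ when $\alpha\in X_i\cap X_j\cap X_k$. When $\alpha\notin X_j$ they give $-\frac{1}{v+v^{-1}}\sum_{\beta\in X_j\cap X_k}\mu_{\beta,\alpha}v_\beta$, and this is the same expression for $k$ because the $\mu$'s are indexed independently of $i$ (the renaming after Theorem 3.4). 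I expect these terms to cancel, though the mixed cases such as $\alpha\in X_j\setminus X_k$ need a separate check. What remains is $M(\phi_1)M(B^i_j)=M(\phi_2)M(B^i_k)$. Substituting the explicit formulas for $M(\phi_1)$ and $M(\phi_2)$ should give condition (2); the order of the factors in the statement should be read compatibly with this composition.

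\textbf{Main obstacle.} The hardest part is the bookkeeping in the mixed cases $\alpha\in X_j\setminus X_k$ or $\alpha\in X_k\setminus X_j$, where one of $A^i_j(v^i_\alpha)$ and $A^i_k(v^i_\alpha)$ is a basis vector and the other is a $\mu$-sum. I will handle these by splitting $X_i$ into the four regions determined by membership in $X_j$ and $X_k$. I will also use the invertibility of $I-M(f_{11})M(g_{11})$, which holds because $1$ is not an eigenvalue there by Step 1.
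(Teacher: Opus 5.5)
Your route is the paper's own: Theorem 4.1 for the eigenvalue part, $(T^{i,j}_k)$ for free, and Remark 1.2 with the graph maps $\phi_1,\phi_2$ for $(T^i_{j,k})$. Your reading of ``$4\cos$'' as $4\cos^2$ is also right.

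\textbf{The gap.} The step you flag as the main obstacle is a genuine gap, and it cannot be closed. In general $p_{j,2}A^i_{j;v}\neq p_{k,2}A^i_{k;v}$. So the relation you correctly obtained in Step 3, $p_{j,2}A^i_{j;v}-\phi_1B^i_j=p_{k,2}A^i_{k;v}-\phi_2B^i_k$, does not reduce to $\phi_1B^i_j=\phi_2B^i_k$.

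Take $\alpha\in(X_i\cap X_j)\setminus X_k$.
\begin{itemize}
\item On the $j$ side, $A^i_{j;v}(v^i_\alpha)=v^j_\alpha\in\bar{V}^0_{j/k}$. Hence $B^i_j(v^i_\alpha)=v^j_\alpha$ and $p_{j,2}A^i_{j;v}(v^i_\alpha)=0$.
\item On the $k$ side, $A^i_{k;v}(v^i_\alpha)=A^j_{k;v}(v^j_\alpha)=f_{11}(v^j_\alpha)+f_{12}(v^j_\alpha)$. Hence $B^i_k(v^i_\alpha)=f_{11}(v^j_\alpha)$ and $p_{k,2}A^i_{k;v}(v^i_\alpha)=f_{12}(v^j_\alpha)$.
\end{itemize}
The defining relation of $\phi_1,\phi_2$ is the paper's identity (1), read as maps: $f_{12}+\phi_1-\phi_2\circ f_{11}=0$. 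It makes the full relation hold identically on such vectors, as it must, since $v^i_\alpha\in\bar{V}_{i,j}$. But the residual identity $\phi_1B^i_j=\phi_2B^i_k$ becomes $f_{12}(v^j_\alpha)=0$, that is, $\mu^k_{\beta,\alpha}=0$ for all $\beta\in X_j\cap X_k$. Symmetrically, $\alpha\in(X_i\cap X_k)\setminus X_j$ forces $g_{12}(v^k_\alpha)=0$.

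\textbf{Counterexample.} This vanishing fails for honest $W$-graphs. Use the Kazhdan--Lusztig $W$-graph of $S_4$, where $I_w$ is the left descent set and $\mu^k_{s_kw,w}=1$ by Proposition 3.1(2). Take $(i,j,k)=(2,1,3)$, $\alpha=s_1s_2s_1$ and $\beta=s_3\alpha$. Then $I_\alpha=\{1,2\}$, $\{1,3\}\subset I_\beta$ and $\mu^3_{\beta,\alpha}=1$. Here $\Psi$ is an $H$-representation by Theorem 3.3, yet (2) fails on $v^2_\alpha$. So the ``only if'' direction of the statement as written is false.

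\textbf{Second problem.} Your treatment of the region $\alpha\notin X_j\cup X_k$ misreads the renaming after Theorem 3.4. That renaming removes the dependence on the \emph{source} index. Here, $p_{j,2}A^i_{j;v}(v^i_\alpha)$ and $p_{k,2}A^i_{k;v}(v^i_\alpha)$ carry $\mu^j_{\beta,\alpha}$ and $\mu^k_{\beta,\alpha}$, which have different \emph{upper} (target) indices. They agree only when $\mu$ is independent of its upper index, as for Kazhdan--Lusztig $W$-graphs. The paper's proof asserts the same unjustified identity ``$p_{j,2}\circ A^i_{j;v}=p_{k,2}\circ A^i_{k;v}$'', so following it offers no way around this.

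\textbf{What the analysis actually yields.} Given (1), $(T^i_{j,k})$ holds if and only if the uncancelled identity holds on the vectors $v^i_\alpha$ with $\alpha\in X_i\setminus(X_j\cup X_k)$. The other three regions are automatic: $X_i\cap X_j\cap X_k$ trivially, and the two mixed regions by the computation above. Only under the extra hypothesis $\mu^j_{\beta,\alpha}=\mu^k_{\beta,\alpha}$ for $\beta\in X_j\cap X_k$ does this become the matrix identity (2), and even then only restricted to those vectors.
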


\begin{pf}
First, as in (1) of Theorem 4.1,  $\Psi$ satisfies the condition $(T_{i,j})$ if and only if condition (1) hold. $\Psi$ satisfies
 the triangular conditions $(T^{i,j}_k )$ automatically.  It is enough to show when condition (1) hold, $\Psi$ satisfies the
 conditions $(T^{i}_{jk})$ if and only if the condition $(2)$ hold. Let $i,j,k\in I$ be different from each other. The condition $(T^i _{j,k})$ is equivalent to the following statement by Remark 1.2.

 \noindent $\bullet$ For any $v\in \bar{V}_i$, the component of $A^i _{j;v} (v)$ in $\bar{V}_{j,k}$ (with respect to the canonical decomposition
 $\bar{V}_j = \bar{V}_{j/k} \oplus \bar{V}_{j,k}$) equals the component of $A^i _{k;v} (v)$ in $\bar{V}_{k,j}$ (with respect to the canonical decomposition $\bar{V}_j = \bar{V}_{j/k} \oplus \bar{V}_{j,k}$).

 By above construction, the component of $A^i _{j;v} (v)$ in $\bar{V}_{j,k}$ is
 $- \phi_1 \circ p_{j,1} \circ A^i _{j;v} (v) + p_{j,2}\circ A^i _{j;v}(v) $. The the component of $A^i _{k;v} (v)$ in $\bar{V}_{k,j}$ is
 $- \phi_2 \circ p_{k,1} \circ A^i _{k;v} (v) + p_{k,2}\circ A^i _{k;v} (v) $. Since $p_{j,2}\circ A^i _{j;v} (v) =  p_{k,2}\circ A^i _{k;v} (v)$, this identity is equivalent to  $- \phi_1 \circ p_{j,1} \circ A^i _{j;v} (v)= - \phi_2 \circ p_{k,1} \circ A^i _{k;v} (v)$, whose matrix form is just the condition (2).\\

\end{pf}

\section{The Basic Data of $H-$Representation }

For every $H-$representation $\Psi=( \bar{V}_i , A^i _j  )$, there is a quiver representation $\Theta_{\Psi}$ called \textbf{the basic data of $\Psi$}  "hidden" in $\Psi$. The relevant quiver $\mathbb{Q}^2 _I$ is defined in Definition 5.1. $\Theta_{\Psi}$ contains partial information of the operators $A^i _j$, and it determine the subspace arrangement $\cup_{j\neq i}\{ \bar{V}_{i,j}, \bar{V}_{i/j} \}$ in $\bar{V}_i$ for any $i\in I$.

 \begin{defi}
 For a set $I$, the quiver $\mathbb{Q}^2 _I $ is defined as follows. Let $I^2 = \{ \{ i,j\} \subset I  |  i\neq j \} $ be the set of two-element sets of $I$. Notice $\{ i,j \} =\{ j,i\} $. The set of vertices of $\mathbb{Q}^2 _I $ is
 $ I^2  $. For any two different
 $\{ i,j  \} ,  \{ k,l\} \in I^2 $, there is an edge $J^{i,j}_{k,l}$ from ${i,j}$ to ${k,l}$, and they constitute all edges of $\mathbb{Q}^2 _I $.  Notice in above notations  $ J^{i,j} _{k,l} = J^{j,i} _{k,l} = J^{i,j}_{l,k}  $.

 \end{defi}

 For convenience of discussions, we choose any total order $ \prec $ on $I$. In the following whenever we talk about an element
 $\{ i,j\}  $ in $I^2$, it implies $i\prec j$.

 Suppose $\Psi=(\bar{V}_i , A^i _j )$ is a $H-$representation of type $(\Gamma ;q)$. Recall $\bar{V}_{i,j}$ is the $1-$eigenspace of
 $A^j _i A^i _j$. Recall the $H_q (\Gamma)-$representation $(  V,  \rho_{\Psi})$ associated with $\Psi$ as in Theorem 1.3. In the proof of that theorem there is a quotient map $p: \bar{V}= \oplus_{i\in I} \bar{V}_i \rightarrow V$. The restriction of $p$ on $\bar{V}_i$ is an isomorphism to certain subspace $V_i \subset V$ by Lemma 1.3, which will be denoted as $p_i$. By lemma 1.4, there are isomorphisms
 $p_i |_{ \bar{V}_{i,j}   } : \bar{V}_{i,j} \rightarrow V_i \cap V_j$ and $p_j |_{ \bar{V}_{j,i}   } : \bar{V}_{j,i} \rightarrow V_i \cap V_j$. There is also an canonical isomorphism $A^i _j |_{ \bar{V}_{i,j}  } : \bar{V}_{i,j} \rightarrow \bar{V}_{j,i} $.  These three isomorphisms satisfy $p_j |_{ \bar{V}_{j,i}  } \circ A^i _j |_{V_{i,j}}= p_i |_{ V_{i,j} }   $. It is convenient to identify these
 spaces $ \bar{V}_{i,j} ,  \bar{V}_{j,i} $ and $V_i \cap V_j$ through these isomorphisms in this way without making any confusion.

\noindent \textbf{A natural morphism from $\bar{V}_{i,j}$ to $\bar{V}_{k,l}$ .}
 Let $\{ i,j\} ,\{ k,l\} \in I^2$ be two different element,  the following argument shows several morphisms from  $V_{i,j}$ to $V_{k,l}$ defined in different ways coincide, thus there exists a single natural morphism from  $V_{i,j}$ to $V_{k,l}$.

\noindent Case (1).  $| \{ i,j \} \cap \{ k,l \} |=1$.
These cases are further divided into four subcases $i=k$, $i=l$, $j=k $, $j=l$. It is enough to deal with the cases $i=k$ since other subcases can be dealt with little modifications of the following argument. In this subcase let the two elements of $I^2$  be $\{ i,j\} $ and $\{ i,k\} $ where $i,j,k\in I$ are different to each other.

 There are three natural ways to chose a morphism  from $V_i \cap V_j$ to $V_i \cap V_k$ as follows.

 Recall the canonical decomposition $\bar{V}_i \cong \bar{V}_{i,j} \oplus \bar{V}_{i/j}$. Denote the natural projector from $\bar{V}_i$ to $\bar{V}_{i,j}$ with respect to this decomposition as $p^i _{i,j}$.

 $A^{i,j} _{i,k} [1] :  \bar{V}_{i,j} \stackrel{ p^i _{i,k} }{\longrightarrow } \bar{V}_{i,k} $.
 $A^{i,j} _{i,k}[2] :  \bar{V}_{i,j} \stackrel{A^i _k }{\longrightarrow  } \bar{V}_k
  \stackrel{p^k _{k,i}}{\longrightarrow} \bar{V}_{k,i}\stackrel{A^k _i }{\longrightarrow} \bar{V}_{i,k} $.

  $A^{i,j} _{i,k} [3]: \bar{V}_{i,j}\stackrel{A^i _j }{\longrightarrow } \bar{V}_{j,i} \stackrel{ A^j _k }{\longrightarrow} \bar{V}_k
  \stackrel{ p^k _{k,i}}{\longrightarrow} \bar{V}_{k,i} \stackrel{A^k _i }{\longrightarrow } \bar{V}_{i,k} $.

\begin{lem}
Above three morphisms are the same. We denote this morphism as $A^{i,j}_{i,k} $.

\end{lem}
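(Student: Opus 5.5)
The plan is to prove the two equalities $A^{i,j}_{i,k}[1]=A^{i,j}_{i,k}[2]$ and $A^{i,j}_{i,k}[2]=A^{i,j}_{i,k}[3]$ separately. The first uses only the eigenvalue relation $(T^{(m_{i,k};q)}_{i,k})$, through Lemma 1.1. The second uses only the triangular relation $(T^{i,j}_k)$.

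\emph{Step 1: $[1]=[2]$.} I will prove the stronger identity $A^k_i\circ p^k_{k,i}\circ A^i_k = p^i_{i,k}$ on all of $\bar{V}_i$, and then restrict it to $\bar{V}_{i,j}$.

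Let $\mathscr{C}=(A^i_k,A^k_i)$, which is an $(m_{i,k};q)$-couple between $\bar{V}_i$ and $\bar{V}_k$. In the notation of Lemma 1.1 we have $\bar{V}_{i,k}=(\bar{V}_i)_{\mathscr{C},\cap}$ and $\bar{V}_{i/k}=(\bar{V}_i)_{\mathscr{C},\setminus}$, and the same holds on the $k$ side.

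Lemma 1.1(1) says that $A^i_k$ maps $\bar{V}_{i,k}$ isomorphically onto $\bar{V}_{k,i}$. Lemma 1.1(4) says that $A^i_k$ maps $\bar{V}_{i/k}$ into $\bar{V}_{k/i}$. Thus $A^i_k$ respects the canonical decompositions on both sides, which gives the intertwining identity
\[
p^k_{k,i}\circ A^i_k = A^i_k\circ p^i_{i,k}.
\]
Composing on the left with $A^k_i$ gives
\[
A^k_i\circ p^k_{k,i}\circ A^i_k = (A^k_i A^i_k)\circ p^i_{i,k} = p^i_{i,k},
\]
because $\bar{V}_{i,k}$ is by definition the $1$-eigenspace of $A^k_iA^i_k$. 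Restricting to $\bar{V}_{i,j}$ then yields $[1]=[2]$.

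\emph{Step 2: $[2]=[3]$.} Take $v\in\bar{V}_{i,j}$. The relation $(T^{i,j}_k)$ in Definition 1.4 states exactly that $A^i_k(v)=A^j_k A^i_j(v)$. Applying $A^k_i\circ p^k_{k,i}$ to both sides gives $[2](v)=[3](v)$.

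There is also a bookkeeping point about codomains. In $[3]$ we use that $A^i_j(\bar{V}_{i,j})=\bar{V}_{j,i}$, which is Lemma 1.1(1) again. Both composites land in $\bar{V}_{i,k}$, since $A^k_i(\bar{V}_{k,i})=\bar{V}_{i,k}$.

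I expect no real obstacle. The only point needing care is the intertwining identity in Step 1, which requires that $A^i_k$ preserves both summands of the canonical decomposition and not merely the $\cap$-part. This is exactly Lemma 1.1(4), and in the even case Lemma 1.1(2) handles the $0$-eigenspace. The other subcases $i=l$, $j=k$, $j=l$ follow from the same argument after relabeling.
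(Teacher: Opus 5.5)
Your proof is correct and follows the paper's argument. The paper also proves $[1]=[2]$ by splitting $v=v_1+v_2$ along $\bar{V}_i=\bar{V}_{i,k}\oplus\bar{V}_{i/k}$, using that $A^i_k$ sends the two summands into $\bar{V}_{k,i}$ and $\bar{V}_{k/i}$ and that $A^k_iA^i_k$ is the identity on $\bar{V}_{i,k}$, and it gets $[2]=[3]$ directly from $(T^{i,j}_k)$; your packaging of Step 1 as the operator identity $A^k_i\circ p^k_{k,i}\circ A^i_k=p^i_{i,k}$ on all of $\bar{V}_i$ is the same computation written globally rather than elementwise.
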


\begin{pf} Let $v\in \bar{V}_{i,j}$.
 Proof of $A^{i,j} _{i,k}[1](v)= A^{i,j} _{i,k}[2](v) $ is as follows.  Suppose $v=v_1 +v_2$, where $v_1 \in \bar{V}_{i,k}, v_2 \in \bar{V}_{i/k}$. Then
$A^{i,j} _{i,k}[1](v)= p^i _{i,k}(v)= v_1 $. The  mapping process from $v$ to $A^{i,j} _{i,k}[2](v)$  is as follows:
$ v= v_1 +v_2 \stackrel{ A^i _k }{\longrightarrow} A^i _k (v_1 ) + A^i _k (v_2 )\stackrel{p^k _{k,i}}{\longrightarrow}
  A^i _k (v_1 ) \stackrel{A^k _i }{\longrightarrow} A^i _k A^i _k (v_1 )=v_1  .$

  The second arrow is because $A^i _k (v_1 )\in \bar{V}_{k,i}  $ and $  A^i _k (v_2 )\in \bar{V}_{k/i} $. The third arrow is because
  $v_i \in \bar{V}_{i,k} $ so there is $ A^i _k A^i _k (v_1 )=v_1    $.

  Proof of $A^{i,j} _{i,k}[2](v)= A^{i,j} _{i,k}[3](v) $ is as follows. Since $v\in \bar{V}_{i,j}$,  the triangular condition $(T^{i,j}_k )$ shows $ A^j _k A^i _j (v) = A^i _k (v) $, and their subsequent  mapping process are the same.\\

\end{pf}

\noindent Case(2). $\{ i,j\} \cap \{ k,l\} =\emptyset$.
In these cases there are four natural ways to define a morphism from $\bar{V}_{i,j}$ to $\bar{V}_{k,l}$ as follows.

$A^{i,j}_{k.j}[1] : \bar{V}_{i,j} \stackrel{A^i _k }{\longrightarrow} \bar{V}_k  \stackrel{p^k _{k,l}}{\longrightarrow} \bar{V}_{k,l} $.
$A^{i,j}_{k.j}[2]: \bar{V}_{i,j} \stackrel{A^i _j }{\longrightarrow} \bar{V}_{j,i} \stackrel{A^j _k }{\longrightarrow} \bar{V}_k
 \stackrel{p^k _{k,l}}{\longrightarrow} \bar{V}_{k,l} $.

$A^{i,j}_{k.j}[3]: \bar{V}_{i,j} \stackrel{A^i _l }{\longrightarrow} \bar{V}_{l} \stackrel{p^l _{l,k}}{\longrightarrow} \bar{V}_{l,k}
 \stackrel{A^l _k }{\longrightarrow} \bar{V}_{k,l} $.
$A^{i,j}_{k.j}[4]: \bar{V}_{i,j} \stackrel{A^i _j }{\longrightarrow} \bar{V}_{j,i} \stackrel{A^j _l }{\longrightarrow} \bar{V}_l \stackrel{p^l _{l,k}}{\longrightarrow} \bar{V}_{l,k} \stackrel{A^l _k }{\longrightarrow} \bar{V}_{k,l} $.

\begin{lem}
Above four morphisms are the same. We denote this morphism as $A^{i,j}_{k,l}$.
\end{lem}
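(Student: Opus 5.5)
The plan is to reduce all four equalities to the triangular relations that a $H-$representation satisfies by definition. The pairs $[1]=[2]$ and $[3]=[4]$ will come from $(T^{i,j}_k)$ and $(T^{i,j}_l)$. The equality $[1]=[3]$ will come from $(T^i_{k,l})$, in the reformulation given in Remark 1.2. Throughout, fix $v\in \bar{V}_{i,j}$, where $i,j,k,l$ are pairwise different.

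\emph{Step 1: $[1]=[2]$ and $[3]=[4]$.} Since $v\in \bar{V}_{i,j}$, the relation $(T^{i,j}_k)$ gives $A^j_k A^i_j(v)=A^i_k(v)$. Applying $p^k_{k,l}$ to both sides gives $A^{i,j}_{k,l}[2](v)=A^{i,j}_{k,l}[1](v)$. In the same way, $(T^{i,j}_l)$ gives $A^j_l A^i_j(v)=A^i_l(v)$. Applying $A^l_k\circ p^l_{l,k}$ to both sides gives $A^{i,j}_{k,l}[4](v)=A^{i,j}_{k,l}[3](v)$. This is the same argument as the second half of the proof of Lemma 5.1.

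\emph{Step 2: $[1]=[3]$.} This is the only step that uses the third set of relations. By Remark 1.2, $(T^i_{k,l})$ is equivalent to the following identity: for every $w\in \bar{V}_i$,
\[
A^k_l\bigl(p^k_{k,l}(A^i_k(w))\bigr)=p^l_{l,k}(A^i_l(w)).
\]
This is the Remark with its indices renamed as $k\to i$, $i\to k$, $j\to l$. We apply it with $w=v$. The vector $p^k_{k,l}(A^i_k(v))$ lies in $\bar{V}_{k,l}$, which is the $1$-eigenspace of $A^l_kA^k_l$. So applying $A^l_k$ to both sides gives
\[
p^k_{k,l}(A^i_k(v))=A^l_k\bigl(p^l_{l,k}(A^i_l(v))\bigr),
\]
that is, $A^{i,j}_{k,l}[1](v)=A^{i,j}_{k,l}[3](v)$. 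Combined with Step 1, all four morphisms coincide.

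The main point needing care is Step 2. We must check that the reformulation in Remark 1.2 applies to an arbitrary source vector in $\bar{V}_i$; the hypothesis $v\in\bar{V}_{i,j}$ is not needed there. We must also check that $A^l_k$ restricted to $\bar{V}_{l,k}$ is inverse to $A^k_l$ restricted to $\bar{V}_{k,l}$. The second fact follows from Lemma 1.1(1) applied to the $(m_{k,l};q)$-couple $(A^k_l,A^l_k)$, since $A^k_l$ maps $\bar{V}_{k,l}$ isomorphically onto $\bar{V}_{l,k}$ and $A^l_kA^k_l$ is the identity there. Note that the triangular relation $(T^i_{k,l})$ is what links the two "sides" $k$ and $l$; without it, only the pairwise equalities of Step 1 would hold.
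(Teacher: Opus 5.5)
Your proposal is correct and follows essentially the same route as the paper: $[1]=[2]$ and $[3]=[4]$ come from $(T^{i,j}_k)$ and $(T^{i,j}_l)$, and $[1]=[3]$ comes from $(T^i_{k,l})$ in the form of Remark 1.2. The paper describes the last step as identifying $\bar{V}_{k,l}$ with $\bar{V}_{l,k}$, which is exactly what you make explicit by applying $A^l_k$ and using that $A^l_kA^k_l$ is the identity on $\bar{V}_{k,l}$.
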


\begin{pf} Let $v\in \bar{V}_{i,j}$.

\noindent  $\bullet$ The proof of $ A^{i,j}_{k.j}[1](v) =A^{i,j}_{k.j}[2](v)$. Because of the  condition $(T^{i,j}_k )$ and $v\in \bar{V}_{i,j}$, there is
 $ A^i _k (v) = A^j _k A^i _j (v) $ and their subsequent mapping processes are the same.

\noindent$\bullet$ The proof of $ A^{i,j}_{k.j}[1](v) =A^{i,j}_{k.j}[3](v)$. By using the condition $( T^i _{k,l})$, $p^k _{k,l} (  A^i _k (v) )\in \bar{V}_{k,l} $
should be  the element identified with $ p^l _{l,k} ( A^i _l (v) ) \in \bar{V}_{l,k} $. So
 $A^l _k ( p^l _{l,k} ( A^i _l (v) ) )= p^k _{k,l} (  A^i _k (v) )\in \bar{V}_{k,l} $.

\noindent $\bullet$ The proof of $ A^{i,j}_{k.j}[3](v) =A^{i,j}_{k.j}[4](v)$. By using the condition $(T_{i,j})$ and because $v\in \bar{V}_{i,j}$, so
  $A^j _l A^i _j (v) = A^i _l (v) $ and their subsequent mapping processes are the same.\\

\end{pf}

Now we can define the claimed quiver representation.

\begin{defi}
 Let $\Psi = (\bar{V}_i , A^i _j) $ be a $H-$representation of type $\Gamma$. Assuming notations as above. Define a representation $\Xi_{\Psi} $ of the quiver
 $\mathbb{Q}^2 _{I}$ as follows.

\noindent $\bullet$ For a vertex ${i,j}$ of $\mathbb{Q}^2 _{I}$ where $\{ i,j\} \in I^2$, let $\Xi_{\Psi} ( {i,j} ) = \bar{V}_{i,j}$.

 \noindent $\bullet$ For an edge  $J^{i,j} _{ k,l }$ of $\mathbb{Q}^2 _{I}$ where $\{ i,j\}\neq \{ k,l\}  \in I^2$, let $\Xi_{\Psi} ( J^{i,j}_{k,l} ) = A^{i,j} _{k,l}$  defined in Lemma 5.1 and Lemma 5.2.

\end{defi}

If $\Xi $ is a linear representation of the quiver $\mathbb{Q}^2 _{I}$, and for each $i\in I$, a pair of subspaces
$U_i ,W_i \subset \oplus_{j:j\neq i} \Xi({i,j}) $ are chosen. Then the combination $[ \Xi , \cup_{i\in I} \{ U_i , W_i \}   ]$ will be called as a \textbf{decorated $\mathbb{Q}^2 _{I}-$representation}.

 Assume above notations. For $i\in I$, for any $j\in I$ different with $i$, denote the natural injection from $\bar{V}_{i,j}$ to $\bar{V}_i $ as $J^{i,j} _i $. Suming up these injections gives a mophism
 $J_i  : \oplus_{j\neq i} \bar{V}_{i,j} \longrightarrow \bar{V}_i $ as :
 for any $\sum_{j:j\neq i} v_j \in  \oplus_{j\neq i} \bar{V}_{i,j} $,
 $J_i ( \sum_{j:j\neq i} v_j   ) = \sum_{J:j\neq i} J^{i,j} _i (v_j ) \in \bar{V}_i $.

 $\ker (J_i   )  $ is an important space in this construction, it will be denoted as $U_i$. Furthermore the subspace
 $Im ( J_i )$ is denoted as $ \bar{V}^0 _i $. On the other hand recall the projector $p^i _{i,j} : \bar{V}_i \longrightarrow \bar{V}_{i,j}$ with respect to the decomposition $\bar{V}_i = \bar{V}_{i,j} \oplus \bar{V}_{i/j}$.  Summing up them gives a map $P_i : \bar{V}_i \longrightarrow \oplus_{j\neq i} \bar{V}_{i,j} $ as:
for any $ v\in \bar{V}_i $, $P_i (v) = \sum _{j\neq i} p^i _{i,j}(v) \in \oplus_{j\neq i} \bar{V}_{i,j} $.
 $Im (  P_i )$ will be denoted as $W_i$.

\begin{defi}
 Let $\Psi=( \bar{V}_i , A^i _j  )$ be a $H-$representation.  Assuming above notations,  we call the decorated $\mathbb{Q}^2 _{I}-$representation $[ \Xi_{\Psi} , \cup_{i\in I} \{ U_i ,W_i \} ] $ as the basic data of  $\Psi$,  and denote it as $\Theta_{\Psi}$.  On the other hand if a decorated $\mathbb{Q}^2 _{I}-$representation $\Theta$  can become the basic data of some $H-$representation of type $(\Gamma;q)$, then we call $\Theta$ as $(\Gamma ;q)-$realizable.
\end{defi}

The basic data $\Theta_{\Psi}$ contains partial message of the $H-$representation $\Psi$ as follows.

\noindent$\bullet $ $\Theta_{\Psi} (\{ i,j\} )= \bar{V}_{i,j}$;

\noindent$\bullet $ The subspace $\bar{V}^0 _i = \sum_{j:j\neq i} \bar{V}_{i,j}\subset \bar{V}_i $ is determined as the quotient $\oplus_{j:j\neq i} \Theta_{\Psi}  (\{i,j\}) / U_i $. Also from definition of
$A^{i,j} _{k,l} $ we see $\Theta_{\Psi}$ contains partial information of $ A^i _j | _{V^0 _i} : V^0 _i   \rightarrow V_j  $.

\noindent$\bullet $ For each $i$ and $j: j\neq i$ , there is a canonical decomposition $\bar{V}_i \cong \bar{V}_{i,j} \oplus \bar{V}_{i/j}$. Information about these decompositions is contained in the operators $\{ A^{i,j} _{i,k} \}$, seen from the definition of these operators.

  \noindent \textbf{Dual of decorated $\mathbb{Q}^2 _{I}-$representations. }

  Let $\Psi = ( V_i , A^i _j  )$ be a $H-$representation. Recall its dual
 $H-$representation $\Psi^{\#}= ( V^{\#}_i , A^{\# i}_j   )$ ( Lemma 1.6 ) .  Now if $\Theta=
 ( (V_{i,j} , A^{i,k}_{j,l}), \cup_{i\in I} \{ U_i ,W_i \}  )  $ is a decorated $\mathbb{Q}^2 _{I}-$represent
 -ation, we define its dual
 $\Theta^{\#}=( (V^{\#}_{i,j} , A^{\# i,k}_{j,l}), \cup_{i\in I} \{ U^{\#} _i ,W^{\#} _i \}  ) $ as follows.

 \noindent $\bullet$ $V^{\#}_{i,j} = ( V_{i,j} )^{*}$ and $ A^{\# i,k} _{j,l} = ( A^{j,l}_{i,k} )^{*}   $.

 \noindent $\bullet$ $ U^{\#}_i = ( W_i )^{\perp}  $ and $W^{\#}_i = (U_i )^{\perp}  $.  Meaning of the notation " $ (.)^{\perp}  $  " is as follows. Denote the natural pairing $ V^{\#} _{i,j} \times V_{i,j}\rightarrow \kappa  $ as $( -,- )_{i,j}$. Summing up these pairings gives a perfect pairing: $(-,-): \oplus_{j:j\neq i } V^{\#}_{i,j} \times \oplus_{j: j\neq i } V_{i,j} \rightarrow \kappa$. $(X)^{\perp}$ means the perpendicular subspace of $X$ with respect to $(-,-)$.

  \begin{thm} Let $\Psi = (V_i , A^i _j )$ be a $H-$representaiton, then $\Theta^{\#} _{\Psi} = \Theta_{ \Psi^{\#} } .   $

  \end{thm}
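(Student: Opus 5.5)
We compare the two decorated $\mathbb{Q}^2_I$-representations component by component: the vertex spaces, the edge maps, and the decorations $U_i,W_i$.

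\textbf{Vertex spaces.} By Lemma 1.6, the canonical decomposition of $V^{\#}_i=V_i^*$ attached to the dual couple $(A^{\# i}_j,A^{\# j}_i)$ is $V^{\#}_i=V^*_{i,j}\oplus V^*_{i/j}$. Here $V^*_{i,j}$ is the annihilator of $V_{i/j}$, so it is canonically $(V_{i,j})^*$. Hence $\Theta_{\Psi^{\#}}(\{i,j\})=V^{\#}_{i,j}=(V_{i,j})^*=\Theta^{\#}_\Psi(\{i,j\})$, with the pairing $(-,-)_{i,j}$ being the restriction of the evaluation pairing on $V_i^*\times V_i$.

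Under this identification, the projector $p^{\# i}_{i,j}$ onto $V^{\#}_{i,j}$ along $V^{\#}_{i/j}$ is the transpose $(J^{i,j}_i)^*$ of the inclusion. Likewise the inclusion $J^{\# i,j}_i$ is the transpose $(p^i_{i,j})^*$, and $A^{\# i}_j=(A^j_i)^*$ by definition.

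\textbf{Decorations.} Summing over $j$ gives
\[
P^{\#}_i=(J_i)^*, \qquad J^{\#}_i=(P_i)^*
\]
as maps between $V_i^*$ and $\oplus_{j\neq i}V^*_{i,j}$. Elementary duality, $\ker(f^*)=(\operatorname{Im} f)^{\perp}$ and $\operatorname{Im}(f^*)=(\ker f)^{\perp}$, then gives
\[
U^{\#}_i=\ker J^{\#}_i=(\operatorname{Im}P_i)^\perp=W_i^\perp, \qquad W^{\#}_i=\operatorname{Im}P^{\#}_i=(\ker J_i)^\perp=U_i^\perp,
\]
which is exactly the definition of the decoration of $\Theta^{\#}_\Psi$.

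\textbf{Edge maps.} This is the main obstacle, since $\Theta^{\#}$ is defined with the edge transposed and reversed: $A^{\# i,k}_{j,l}=(A^{j,l}_{i,k})^*$. We must show that each edge map of $\Theta_{\Psi^{\#}}$ equals the transpose of the reverse edge map of $\Theta_\Psi$. Lemmas 5.1 and 5.2 give several equal expressions for each $A^{i,j}_{k,l}$ in $\Psi^{\#}$, so we may choose a convenient one and transpose it; transposition reverses the order of composition.

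In Case (1), take $A^{\#\, i,j}_{i,k}[1]=p^{\# i}_{i,k}\circ J^{\# i,j}_i$. Its transpose is $p^i_{i,j}\circ J^{i,k}_i=A^{i,k}_{i,j}[1]$, which is the required reverse edge map of $\Psi$.

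In Case (2), take $A^{\#\, i,j}_{k,l}[1]=p^{\# k}_{k,l}\circ A^{\# i}_k\circ J^{\# i,j}_i$. Its transpose is
\[
p^i_{i,j}\circ A^k_i\circ J^{k,l}_k .
\]
This is not literally one of the four expressions of Lemma 5.2 for $A^{k,l}_{i,j}$. To match it, use the alternative $A^{k,l}_{i,j}[3]=A^j_i\, p^j_{j,i}\, A^k_j$ on $V_{k,l}$ together with the relation $(T^k_{i,j})$ in the form of Remark 1.2: $A^i_j\, p^i_{i,j}\, A^k_i=p^j_{j,i}\, A^k_j$. Applying $A^j_i$ to both sides and using $A^j_iA^i_j=\mathrm{id}$ on $V_{i,j}$, we get $p^i_{i,j}A^k_i=A^j_ip^j_{j,i}A^k_j$. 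The left side restricted to $V_{k,l}$ is the transposed map above, so it coincides with $A^{k,l}_{i,j}$.

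The subcases $i=l$, $j=k$, $j=l$ of Case (1) follow by the same transposition argument after relabeling. This completes the identification $\Theta^{\#}_\Psi=\Theta_{\Psi^{\#}}$.
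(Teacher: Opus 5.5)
Your proof is correct and is essentially the paper's argument. You identify $V^{\#}_{i,j}$ with $(V_{i,j})^*$, use $J^{\# i,j}_i=(p^i_{i,j})^*$, $p^{\# i}_{i,j}=(J^{i,j}_i)^*$ and $A^{\# i}_k=(A^k_i)^*$ to transpose the defining compositions of the edge maps, and use $J^{\#}_i=(P_i)^*$ and $P^{\#}_i=(J_i)^*$ for the decorations.

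The only slip is your remark in Case (2). The transposed map $p^i_{i,j}\circ A^k_i\circ J^{k,l}_k$ is literally the expression $A^{k,l}_{i,j}[1]$ of Lemma 5.2 after relabeling $(i,j,k,l)\to(k,l,i,j)$, and this is how the paper concludes. Your detour through $A^{k,l}_{i,j}[3]$ and $(T^k_{i,j})$ is valid, but it just re-derives the equality $[1]=[3]$ already contained in that lemma.
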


  \begin{pf}
  \noindent \textbf{Proof of $ A^{\# i,j}_{k,l}= ( A^{k,l}_{i,j}  )^{*}  $.} in cases $i\neq k$, by definition the morphism $A^{\# i,j}_{k,l}$ is a composition of morphisms $V^{\#}_{i,j}  \stackrel{ J^{\# i,j} _i }{\longrightarrow} V^{\#}_i   \stackrel{ A^{\# i}_k  }{\longrightarrow}  V^{\#}_k
   \stackrel{ P^{\# k}_{k,l} }{\longrightarrow} V^{\# }_{k,l}  $.

   Since $ J^{\# i,j} _i =( P^i _{i,j} )^* $, $ P^{\# k}_{k,l} = ( J^{k,l} _k  )^* $ and $ A^{\# i}_k = ( A^k _i )^* $, above morphism is the dual of the morphism  $ V_{k,l}  \stackrel{J^{k,l}_k  }{\longrightarrow}   V_k  \stackrel{ A^k _i  }{\longrightarrow}    V_i
    \stackrel{ P^i _{i,j}  }{\longrightarrow}   V_{i,j}  $,  which is just $ A^{k,l} _{i,j} $.

    In cases $i=k$, by definition $ A^{\# i,j } _{i,l} $ is the composition
    $ V^{\#} _{i,j}  \stackrel{ J^{\# i,j}_i  }{\longrightarrow}   V^{\#}_i   \stackrel{P^{\# i}_{j,k}  }{\longrightarrow}  V^{\#}_{i,l}  $, so it is the dual of the morphism
    $ V_{i,k}  \stackrel{ J^{i,k}_i }{\longrightarrow}   V_i  \stackrel{ P^i _{i,j} }{\longrightarrow}   V_{i,j} $, which is just
    $A^{i,k}_{i,j} $.

    \noindent \textbf{ Proof of $ U^{\#}_i = ( W_i )^{\perp}  $ and $W^{\#}_i = (U_i )^{\perp}  $.} Recall the definitions of $U_i$ and $W_i$ below Definition 5.2. For the dual $H-$representation $\Psi^{\#}$ there are similar
    morphisms $J^{\#} _i  : \oplus_{j\neq i} V^{\#} _{i,j} \longrightarrow V^{\#}_i $ and
     $P^{\#}_i : V^{\#}_i \longrightarrow \oplus_{j\neq i} V^{\#}_{i,j} $. It is easy to see $ J^{\#}_i = ( P_i )^* $ and
     $ P^{\#}_i = ( J_i )^* $. So $U^{\#}_i = \ker (  J^{\#} _i    ) = ( Im (  P_i ) )^{\perp} = ( W_i )^{\perp} $ and
     $  W^{\# }_i = Im ( P^{\#} _i )= ( \ker ( J_i ) )^{\perp}  = (U_i )^{\perp}. $\\

  \end{pf}

\noindent \textbf{Necessary conditions for $(\Gamma;q)-$realizability of $\Theta$ .}
 Let $\Theta= [ ( \bar{V}_{i,j} , \bar{A}^{i,j}_{k,l} ) ,  \cup_{i\in I} \{ \bar{U}_i ,\bar{W}_i \}    ] $ be a decorated $\mathbb{Q}^2 _I -$representation. The following objects related to $\Theta$ are introduced for further discussion.

 \noindent $\bullet$ For any $i\in I$,  $\bar{V}^0 _i $ is defined as the quotient space $ \oplus_{j: j\neq i} \bar{V}_{i,j} /  \bar{U}_i  $.

\noindent $\bullet$ $\bar{V}^0 $ is defined as the quotient space $\oplus _{\{ i,j\} \in I^2 } \bar{V}_{i,j} / \sum_{i\in I } \bar{U}_i   $.  The natural imbedding from  $ \oplus_{j: j\neq i} \bar{V}_{i,j} $ to $\oplus _{\{ i,j\} \in I^2 } \bar{V}_{i,j}$ induces a morphism
$ \Upsilon_i : \bar{V}^0 _i \rightarrow \bar{V}^0$, since $ \bar{U}_i  \subset \sum_{j\in I } \bar{U}_j  $.

\noindent $\bullet$ Let $i,j\in I$. Then a morphism  $ \Phi^i _j : \oplus_{j: k\neq i} \bar{V}_{i,k} \rightarrow  \oplus_{l: l\neq j} \bar{V}_{j,l} $ is defined by

$\Phi^i _j (  \sum_{k:k\neq i} v_{i,k}  ) =   \sum_{ k: k\neq i  } \sum _{l: l\neq j } \bar{A}^{i,k} _{j,l} (v_{i,k})   $.

  \begin{thm}
  Let $\Theta = [ ( \bar{V}_{i,j} , \bar{A}^{i,j}_{k,l} ) ,  \cup_{i\in I} \{ \bar{U}_i ,\bar{W}_i \}    ] $ be a decorated $\mathbb{Q}^2 _I -$representation. Let $\Theta^{\#}$ be the dual of $\Theta$.  Assume above notations.  If $\Theta$ is $( \Gamma;q) -$realizable, then the following conditions hold.

 \noindent (1)  For any $i\in I$, $\Upsilon_i $ is injective.

 \noindent (2)  For any $i\in I$, and for any $j\in I$,  $\bar{U}_i  \subset \ker( \Phi^i _j  ) $.

 \noindent (3)  For any $i\in I$, and for any $j\in I$, $Im ( \Phi^j _i   )\subset \bar{W}_i   $.

  \noindent (4) For any $i\in I$, in $ \oplus_{j: j\neq i} \bar{V}_{i,j} $,   $  \bar{U}_i \cap  \bar{V}_{i,j} = \{ 0\} $, for any $j\neq i$.

 \noindent (5)  (1)$\sim$(4) are satisfied for the dual $\Theta^{\#}$.

  \end{thm}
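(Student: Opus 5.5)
Since $\Theta$ is $(\Gamma;q)-$realizable, I fix a $H-$representation $\Psi=(\bar{V}_i ,A^i _j)$ of type $(\Gamma;q)$ with $\Theta=\Theta_{\Psi}$, together with the associated $H_q(\Gamma)-$representation $(V,\rho_{\Psi})$ of Theorem 1.3. By Corollary 1.1 I identify each $\bar{V}_i$ with $V_i\subset V$ and each $\bar{V}_{i,j}$ with $V_i\cap V_j$, using the identifications fixed at the beginning of Section 5. Then $\bar{U}_i=\ker(J_i)$, $\bar{W}_i=Im(P_i)$, and the maps $\bar{A}^{i,k}_{j,l}$ are the compositions of Lemmas 5.1 and 5.2. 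Part (5) should cost nothing extra. By Proposition 1.1(3), $\Psi^{\#}$ is again a $H-$representation of type $(\Gamma;q)$, and by Theorem 5.1 $\Theta^{\#}=\Theta_{\Psi^{\#}}$. So $\Theta^{\#}$ is realizable, and (5) follows once (1)--(4) are proved for an arbitrary realizable $\Theta$.

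\emph{Part (4).} The summand $\bar{V}_{i,j}$ embeds into $\oplus_{k\neq i}\bar{V}_{i,k}$, and $J_i$ restricted to it is the natural injection $J^{i,j}_i$. Hence $\bar{U}_i\cap\bar{V}_{i,j}=\ker(J^{i,j}_i)=0$.

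\emph{Part (1).} Every element of $\oplus_{\{k,l\}}\bar{V}_{k,l}$ maps to $\sum_k V_k\subset V$ through $p$, since $\bar{V}_{k,l}$ is identified with $V_k\cap V_l$. This gives a map $\oplus_{\{k,l\}}\bar{V}_{k,l}\to V$ that kills each $\bar{U}_k$, because $\bar{U}_k$ is the kernel of the summation into $V_k$. So it factors through $\bar{V}^0$. Composing with $\Upsilon_i$, we recover $J_i$ followed by the isomorphism $p|_{\bar{V}_i}$ (Corollary 1.1(1)). The induced map $\bar{V}^0_i\to\bar{V}_i$ is injective by definition of $\bar{U}_i$, so $\Upsilon_i$ is injective. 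The only care needed is that the identification of $\bar{V}_{i,j}$ with $\bar{V}_{j,i}$ via $A^i_j$ is compatible with $p$, and this is exactly the relation $p_j\circ A^i_j=p_i$ on $\bar{V}_{i,j}$ noted in Section 5.

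\emph{Parts (2) and (3).} These are the core of the argument. I claim $\Phi^i_j=P_j\circ A^i_j\circ J_i$, with the convention $A^i_i=\mathrm{id}$ when $j=i$. To check it, I unwind the definitions. For $k\neq i$ and $l\neq j$, the component $\bar{A}^{i,k}_{j,l}(v)$ equals $p^j_{j,l}(A^i_j(v))$ when $j\neq i$: this is representative $[1]$ in Lemma 5.2, or representative $[3]$ in Lemma 5.1 after using $(T^{i,k}_j)$. When $j=i$ it is $p^i_{i,l}(v)$, by Lemma 5.1$[1]$. The degenerate cases $\{i,k\}=\{j,l\}$ also have to be checked: there the component should be the identity on $\bar{V}_{i,j}$ transported by $A^i_j$, which agrees because $p^j_{j,i}A^i_j=A^i_j$ on $\bar{V}_{i,j}$. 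I expect this case bookkeeping, making sure every $\bar{A}^{i,k}_{j,l}$ factors through $A^i_j$ on $\bar{V}_i$, to be the main obstacle, and I would rely on Lemmas 5.1 and 5.2 for all representatives. Granting the factorization, (2) is immediate because $\bar{U}_i=\ker J_i\subset\ker\Phi^i_j$, and (3) is immediate because $Im(\Phi^j_i)\subset Im(P_i)=\bar{W}_i$.
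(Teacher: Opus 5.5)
Your proposal is correct and follows essentially the same route as the paper. Both arguments get (1) from the map induced by $p$ into $\sum_k V_k\subset V$ (Corollary 1.1), (2) and (3) from the factorization $\Phi^i_j=P_j\circ A^i_j\circ J_i$, (4) from the embedding of $\bar V_{i,j}$ as a summand, and (5) from Proposition 1.1(3) together with Theorem 5.1. Your explicit unwinding of the factorization through Lemmas 5.1 and 5.2 adds detail the paper dismisses as ``by definition'', namely the case $j=i$ and the diagonal terms $\{i,k\}=\{j,l\}$, where the convention $\bar A^{i,j}_{i,j}=\mathrm{id}$ is implicit.
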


  \begin{pf} Suppose  $\Psi =(\bar{V}_i , A^i _j )$ is any $H-$representation of type $(\Gamma;q)$, suppose
  $\Theta_{\Psi} = [ ( \bar{V}_{i,j} , A^{i,j}_{k,l} ) ,  \cup_{i\in I} \{ \bar{U}_i ,\bar{W}_i \}    ]$ is the basic data of $\Psi$  as in Definition 5.3.
  It is enough to prove $\Theta_{\Psi}$ satisfies the conditions in this theorem.

   Recall the $H_{q}(\Gamma)-$representation $(V, \rho_{\Psi})$ associated to $\Psi$ constructed in Theorem 1.3.
    For (1), consider the following morphisms. Recall the quotient map $p: \bar{V}=\oplus_{i\in I} \bar{V}_i \rightarrow V$, the subspace
    $\bar{V}_{i,j}\subset \bar{V}_i $. By (2) of Corollary 1.1, restriction of $p$ gives a isomorphism $\bar{f}_{i,j}: \bar{V}_{i,j} \rightarrow V_i \cap V_j  $. Now consider the following morphisms.

     $ (\oplus_{j:j\neq i} \bar{V}_{i,j})/\bar{U}_i  \stackrel{\Upsilon_i }{\longrightarrow}   (\oplus_{k\neq j} \bar{V}_{k,j}) /\sum_{k} \bar{U}_k \stackrel{f }{\longrightarrow}  \sum_{k\neq j} (V_{k}\cap V_{j} ) $.

     The morphism $f$ is defined as follows. Summing up all $\bar{f}_{i,j}$'s gives a morphism

    \noindent $\bar{f}: \oplus_{k\neq j} \bar{V}_{k,j}\rightarrow  \sum_{k\neq j} (V_{k}\cap V_{j} ) $. For any $i\in I$, restriction of $\bar{f}$ on $\oplus_{j:j \neq i} \bar{V}_{i,j}$ have $ \sum_{j: j\neq i} (V_{i}\cap V_{j} )  $ as the image and $\bar{U}_i$ as the kernel. So $\bar{U}_i \subset \ker (\bar{f})$ and $ \sum_{i} \bar{U}_i  \subset \ker(\bar{f}) $, and $f$ is the morphism  induced by  $\bar{f}$.  Now by definition of $\bar{U}_i$, $f\circ \Upsilon_i $ gives an isomorphism from $(\oplus_{j:j\neq i} \bar{V}_{i,j})/\bar{U}_i $ to $\oplus_{j:j\neq i} (V_i \cap V_j)  $. So $\Upsilon_i $ must be injective.

     Now consider the morphisms:
$ \oplus_{k:k\neq i} \bar{V}_{i,k}  \stackrel{J_i }{\longrightarrow}   V_i \stackrel{A^i _j }{\longrightarrow}  V_j
     \stackrel{ P_j  }{\longrightarrow}   \oplus_{l:l\neq j} \bar{V}_{j,l} ,  $
     where morphisms $J_i $ and $P_j$ are  introduced behind Definition 5.2 . By definition of
     $ A^{i,j} _{k,l} $, we have $ \Phi^i _j =  P_j \circ A^i _j \circ J_i   $. So $U_i = \ker( J_i  ) \subset \ker(  \Phi^i _j  )   $
      and $Im( \Phi^i _j   )\subset Im (  P_j )= W_j   $, so (2) and (3) follow.

      Since $\bar{V}_{i,k} $ imbeds in $ \oplus_{j: j\neq i} \bar{V}_{i,j} / U_i $, then $(4)$ must holds.

      If $\Theta$ can be realized as the basic data of some H-representation  $\Psi$ of type $(\Gamma;v)$, then $\Theta^{\#}$ can be realized as the basic data of the dual H-representation $\Psi^{\#}$ by Theorem 5.1, which is also of type $(\Gamma;q)$. So
      (1)$\sim$(3) should be satisfied for $\Theta^{\#}$.

  \end{pf}

\begin{defi} Let  $\Theta = [ ( \bar{V}_{i,j} , \bar{A}^{i,j}_{k,l} ) ,  \cup_{i\in I} \{ \bar{U}_i ,\bar{W}_i \}    ] $ be a decorated $\mathbb{Q}^2 _I -$representation.  $\Theta$ is called as a $H^2$-representation if it satisfy the conditions $(1)\sim (5)$ in Theorem 5.2.

\end{defi}

\section{Construction of general $H-$Representations}

Comparing to construction of nonintersecting $H-$representations, it seems hard to construct a $H-$representation $( \bar{V}_{i} , A^i _j  )$ for which $\bar{V}_{i,j}$ do not disappear, even in low rank cases. In this section a procedure to construct general $H-$representations is developed, roughly consisting the following steps.

\noindent \textbf{Step 1}. Chose a $(\Gamma;q)-$realizable  $H^2 -$representation $\Theta $ (Definition 5.1).

\noindent \textbf{Step 2}.  From $\Theta$ realize some pairs of spaces $V_i , V^{\#}_i , (i\in I) $,together with some auxiliary structures as follows.

\noindent $\bullet$ A nondegenerate pairing  $[-,-]_i : V_i \times V^{\#}_i \rightarrow \kappa $.

\noindent $\bullet$ Subspaces $V^0 _i \subset V_i$ and $ V^{\# 0}_i \subset V^{\#} _i $ ( refer to the setting above Theorem 5.2  ).

\noindent $\bullet$ Canonical decompositions $V_i \cong V_{i,j}\oplus V_{i/j} $ and $V^{\#}_{i}\cong V^{\#}_{i,j}\oplus V^{\#}_{i/j}$ compatible with $\Theta$ ( refer to the setting above Remark 1.2) such that $V^{\#}_{i,j} = ( V_{i/j} )^{\perp}$ and $ V^{\#}_{i/j} = ( V_{i,j} )^{\perp}  $.

\noindent \textbf{Step 3}.  Choose a set of morphisms $\phi^i _j : V^0 _i \rightarrow V_j $ and $\phi^{\# i }_j : V^{\# 0}_i \rightarrow V^{\#}_j $.

\noindent $\bullet$ They are candidates of $A^i _j |_{ V^0 _i }$ and $A^{\# i} _j |_{ V^{\# 0} _i }$. The choice should be compatible with $\Theta$ since $\{ A^{i,k}_{j,l} \}$ contain partial information of $A^i _j |_{ V^0 _i }$ and $A^{\# i} _j |_{ V^{\# 0} _i }$. The choice is not unique and depend on s set of linear morphisms $\{  \Delta_i : \bar{V}_0 \rightarrow V_{i/} \}_{i\in I} $. Here
$V_{i/}= \cap_{j\neq i} V_{i/j}$ and $\bar{V}^0$ is defined above Theorem 5.2.

\noindent $\bullet$ It naturally holds that for any $i,j,k$ and $v\in V_{j,k}  , \alpha \in V^{\#}_{j,k}$, $\phi^j _i (v)= \phi^k _i (v)$, $\phi^{\# j}_i (\alpha)=\phi^{\# k}_i (\alpha)  $. This guarantees conditions $(T^{i,j}_k )$ and $( T^{\# i,j}_k  )$ in Definition 1.6. So
 the left job is to extend $\phi^i _j $ and $\phi^{\# i} _j$ as in the next step, making sure that the conditions $( T^{( m_{i,j};q  )}_{i,j})$ hold.

\noindent \textbf{Step 4}.  Extend $\phi^i _j$ to $A^i _j : V_i \rightarrow V_j $ and $\phi^{\# i}_j $ to $A^{\# i} _j : V^{\#}_i \rightarrow V^{\#}_j $ such that $A^{\# j} _i $ is the dual of $A^i _j$, and $ ( A^i _j , A^j _i   )  $ ( $ ( A^{\# i} _j , A^{\# j} _i   )  $   ) is a $(m_{i,j} ;q)$-couple between $V_i $ and $V_j$  ( $V^{\#}_i $ and $V^{\#}_j$   ).

\begin{rem}
This procedure is still far from "constructing all $H-$representations of type $(\Gamma;q)$", because if $\phi^i _j $ and $\phi^{\# i}_j$ in (3) is a "bad choice" then expected extension in (4) would not exist.  Objects constructed in (1)$\sim$(3) constitute a six-tuple
$[V_{i} , V^{0}_{i}; V_{j}, V^0 _{j}; \phi^i _j , \phi^j _i   ]$ which is a new conception called as \textbf{cross morphism}. A thorough study of cross morphism enable us to transmit  "determining whether $\{  \phi^i _j , \phi^{\# i}_j  \}$ is a good choice " to "simultaneous solvability of a set of linear equation systems". If those equation systems are solvable, based on a solution we can construct the expected extension in (4) and construct a $H-$representation successfully.
\end{rem}

\subsection{Space realization of decorated $\mathbb{Q}^2 _{I}-$representations }

Here are some explains about notations.

(a) If $X$ is some notation related to $H$-representation $(V_i , A^{i}_{j})$, then denote the corresponding notation related to
$(V^{\#}  , A^{\# i}_j )$ as $X^{\#}$. The symbol $\#$ here means on the dual side.

(b) If there are some non degenerate bilinear forms defined earlier, $(-,-)_1 : V_1 \times W_1 \rightarrow \kappa $ and $ (-,-)_2 : V_2 \times W_2 \rightarrow \kappa$, if $f\in Hom( V_1 , V_2  ) $, then $f^* \in Hom (W_2 , W_1)$ means the morphism satisfying
$(f(v) ,w  )_2 = ( v, f^* (w)  )_1  $ for any $v\in V_1 , w\in W_2$.

(c) If there is a bilinear form $(-,-): V\times W \rightarrow \kappa$ defined earlier, then for subspace $W_1 \subset W$, $(W)^{\perp}_{(-,-)} \subset V $ mean the subspace $\{ v\in V | (v ,w)=0 $ for any $ w\in W \}  $.

(d) If there is a bilinear form $(-,-): V\times W\rightarrow \kappa$ defined earlier, then $\ker( -, W  )\subset V$ means the subspace
$\{  v\in V | (v,w)=0 $ for any $ w\in W  \}$. Similarly   $\ker( V, -  )\subset W$ means the subspace
$\{  w\in W | (v,w)=0 $ for any $ v\in V  \}$.

Let $\Theta = [ ( V_{i,j} , A^{i,j}_{k,l} ) ,  \cup_{i\in I} \{ U_i ,W_i \}    ] $ be a $H^2$-representation as in Definition 5.4.  Let $\Theta^{\#} = [ ( V^{\#} _{i,j} , A^{\# i,j}_{k,l} ) ,  \cup_{i\in I} \{ U^{\#} _i ,W^{\#}_i \}    ] $ be the dual of $\Theta$.
Let $V^0 _i = \oplus_{k} V_{ik} / U_i  $ and $ V^{\# 0}_i = \oplus_{k} V^{\#} _{ik} / U^{\#} _i   $. There are two related short exact sequences as follows.
 $0\rightarrow U_i \rightarrow \oplus_{j:j\neq i} V_{i,j} \rightarrow V^0 _i \rightarrow 0  $,
$0\rightarrow U^{\#} _i \rightarrow \oplus_{j:j\neq i} V^{\#} _{i,j} \rightarrow V^{\# 0} _i \rightarrow 0   $.  Several bilinear forms are defined as follows.

\noindent $\bullet$ $ [-,-]^{0} _i : \oplus_{j:j\neq i} V_{i,j} \times \oplus_{j:j\neq i}  V^{\#} _{i,j}   \rightarrow \kappa  $ is the natural pairing between a linear space and its dual. So $ U^{\#}_i = (W_i )^{\perp}_{ [-,-]^{0} _i} $ and $ W^{\#}_i = ( U_i )^{\perp}_{ [-,-]^{0} _i}  $, and  $[-,-]^{0} _i$ induces the following  perfect pairings.

\noindent $\bullet$ $(-,- )_i : V^0 _i \times W^{\#}_i \rightarrow \kappa   $;  $ (-,- )^{'}_i : W_i \times V^{\#0}_i \rightarrow \kappa. $

\noindent $\bullet$  $[-,-]_i : V^0 _i \times V^{\# 0}_i \rightarrow \kappa $ is defined as follows. Recall the morphism (above Theorem 5.2)

\noindent$\Phi^j _i : \oplus_{l:l\neq j} V_{j,l} \rightarrow  \oplus_{k:k\neq i} V_{i,k} $. Since $\Theta$ is a $H^2 -$rerpesentation, there hold   $\ker (\Phi^j _i  )\subset U_j  $ and $ Im (\Phi^j _i )\subset W_i $ by Theorem 5.2.  So $\Phi^j _i$  induces a morphism
 $\bar{\Phi}^j _i : V^0 _j \rightarrow W_i $. Now for  $v\in V^0 _i $ and $\alpha \in V^{\# 0}_i $,   set
 $ [ v ,\alpha ]_i =( \bar{\Phi}^i _i (v),  \alpha   )^{'} _i  $.

In the dual side a morphism $\bar{\Phi}^{\# i} _i : V^{\# 0}_i \rightarrow W^{\#} _i $ is defined similarly. Denote $Im( \bar{\Phi}^i _i )$  ($Im ( \bar{\Phi}^{\# i} _i  )   $   ) as  $W^0 _i$ ( $ W^{\# 0}_i  $   ). For later use in $W_i$ ( $W^{\# }_i $   ) choose a subspace
 $W^{'}_i$ ( $W^{\# '}_i $ ) complementary to $W^0 _i$(  $W^{\# 0}_i$  ) , so $W_i = W^0 _i \oplus W^{'}_i $ ( $W^{\#}_i = W^{\# 0}_i \oplus W^{\# '} _i  $  ).

\begin{lem} Assume notations  as above. Then

\noindent(1) For any $v\in V^0 _i $ and  $\alpha \in V^{\# 0}_i$, there is $( v,  \bar{\Phi}^{\# i}_i (\alpha )   )_i  = ( \bar{\Phi}^i _i (v) ,\alpha  )^{'} _i    $.

\noindent(2) $\ker [-, V^{\# 0}_i  ]_i = ( W^{\# 0}_i )^{\perp} _{(-,-)_i   } $ and $\ker [ V^0 _i  ,-]_i = ( W^0 _i  )^{\perp} _{(-,-)^{'}_i }  $.

\end{lem}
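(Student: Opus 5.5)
Both identities in the statement should follow from unwinding definitions. The key input is that the maps $\Phi^i _i$ and $\Phi^{\# i}_i$ are adjoint to each other with respect to the natural pairing $[-,-]^0 _i$ on $\oplus_{j\neq i} V_{i,j}\times \oplus_{j\neq i} V^{\#}_{i,j}$. By definition of the dual decorated representation, $A^{\# i,k}_{i,l}=(A^{i,l}_{i,k})^{*}$ for each pair of summands. Summing over all $k,l\neq i$ then gives $\Phi^{\# i}_i=(\Phi^i _i)^{*}$ with respect to $[-,-]^0 _i$, i.e.
\[
[\Phi^i _i(x),\xi]^0 _i=[x,\Phi^{\# i}_i(\xi)]^0 _i
\]
for all $x\in\oplus_{j\neq i}V_{i,j}$ and $\xi\in\oplus_{j\neq i}V^{\#}_{i,j}$. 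One point needs care: $\Phi^i _i$ includes the terms $k=l$, where $A^{i,k}_{i,k}$ has to be read as the identity or as the projection convention. We fix the same convention on both sides so that the adjointness also holds for the diagonal blocks.

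For (1), take $v\in V^0 _i$ and $\alpha\in V^{\# 0}_i$, and lift them to $x\in\oplus_j V_{i,j}$ and $\xi\in\oplus_j V^{\#}_{i,j}$. Recall the definitions of the induced pairings. First, $(-,-)_i$ on $V^0 _i\times W^{\#}_i$ is induced by $[-,-]^0 _i$, and it is well defined because $W^{\#}_i=(U_i)^{\perp}$. Second, $(-,-)'_i$ on $W_i\times V^{\# 0}_i$ is also induced by $[-,-]^0 _i$, and it is well defined because $U^{\#}_i=(W_i)^{\perp}$. Third, $\bar\Phi^i _i(v)=\Phi^i _i(x)\in W_i$, and this is independent of the lift by condition (2) of Theorem 5.2, namely $U_i\subset\ker\Phi^i _i$. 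Likewise $\bar\Phi^{\# i}_i(\alpha)=\Phi^{\# i}_i(\xi)\in W^{\#}_i$. Hence
\[
(v,\bar\Phi^{\# i}_i(\alpha))_i=[x,\Phi^{\# i}_i(\xi)]^0 _i=[\Phi^i _i(x),\xi]^0 _i=(\bar\Phi^i _i(v),\alpha)'_i .
\]
We still have to check that each step is independent of the chosen lifts. This uses the conditions (2),(3) of Theorem 5.2 for $\Theta$ and for $\Theta^{\#}$ (condition (5)), which give $\mathrm{Im}\,\Phi\subset W$ and $U\subset\ker\Phi$ on both sides.

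For (2), by (1) the form can be written as $[v,\alpha]_i=(v,\bar\Phi^{\# i}_i(\alpha))_i$. So $v\in\ker[-,V^{\# 0}_i]_i$ holds exactly when $v$ is $(-,-)_i$-orthogonal to $\mathrm{Im}\,\bar\Phi^{\# i}_i=W^{\# 0}_i$, which gives the first identity. Symmetrically, $[v,\alpha]_i=(\bar\Phi^i _i(v),\alpha)'_i$, so $\alpha\in\ker[V^0 _i,-]_i$ holds exactly when $\alpha$ is $(-,-)'_i$-orthogonal to $W^0 _i$, which gives the second identity (as a subspace of $V^{\# 0}_i$).

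The only real obstacle is the adjointness $\Phi^{\# i}_i=(\Phi^i _i)^{*}$ blockwise. This amounts to a careful check of index conventions in the definition $A^{\# i,k}_{j,l}=(A^{j,l}_{i,k})^{*}$ and in the diagonal terms. We carry out this check first, and everything else is formal.
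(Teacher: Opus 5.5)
Your proposal is correct and follows the paper's route: (1) comes from the blockwise adjointness $A^{\# i,k}_{j,l}=(A^{j,l}_{i,k})^{*}$, which gives $\Phi^{\# i}_i=(\Phi^i_i)^{*}$ with respect to $[-,-]^0_i$. Then (2) follows by writing $[v,\alpha]_i$ both as $(\bar\Phi^i_i(v),\alpha)'_i$ and as $(v,\bar\Phi^{\# i}_i(\alpha))_i$, and using $W^0_i=\mathrm{Im}\,\bar\Phi^i_i$ and $W^{\# 0}_i=\mathrm{Im}\,\bar\Phi^{\# i}_i$. Your checks of lifts and well-definedness (via conditions (2), (3), (5) of Theorem 5.2 and $W^{\#}_i=U_i^{\perp}$, $U^{\#}_i=W_i^{\perp}$) and your diagonal convention $A^{i,k}_{i,k}=\mathrm{id}$ make explicit what the paper leaves implicit; the paper even misprints $W^{\# 0}_i=\mathrm{Im}(\bar\Phi^i_i)$, which you state correctly.
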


\begin{pf}
(1) can be proved essentially by the relation $ A^{\# i,k} _{j,l} = ( A^{j,l} _{i,k}   )^{*}   $. Since

 \noindent $ [ v ,\alpha ]_i =( \bar{\Phi}^i _i (v),  \alpha   )^{'} _i =( v,  \bar{\Phi}^{\# i}_i (\alpha )   )_i  $ and $W^{\# 0}_i = Im ( \bar{\Phi}^i _i )$, the first identity of (2) follows. The second identity can be proved similarly.\\

\end{pf}

 As the next step we extend $V^0 _i ( V^{\# 0}_i  )$ to larger space $V_i (V^{\#}_i )$, and extend the bilinear form $[-,-]_i$ to a non degenerate bilinear form between $V_i$ and $V^{\#}_i$.

 Since $W^{\#} _i = W^{\# 0}_i \oplus W^{\# '}_i $, by the perfect pairing $(-,-)_i$, $V^0 _i$ gets a decomposition
$V^0 _i = (W^{\# 0}_i )^{\perp}_{(-,-)_i } \oplus (W^{\# '}_i )^{\perp}_{(-,-)_i} $. In the dual side similarly there is a decomposition
$V^{\# 0}_i = (W^{ 0}_i )^{\perp}_{(-,-)^{'}_i } \oplus (W^{ '}_i )^{\perp} _{(-,-)^{'}_i }  $. By (2) of the lemma 6.1, the rank of $[-,-]_i $
equals $\dim V^0 _i - \dim \ker [-, V^{\# 0}_i  ]_i =\dim V^0 _i - \dim ( W^{\# 0} _i   )^{\perp}_{(-,-)_i } = \dim ( W^{\# '} _i   )^{\perp}_{(-,-)_i }$. Similarly, the rank of the bilinear form $[-,-]_i $ equals
$\dim (  W^{'} _i  )^{\perp}_{(-,-)^{'}_i }$. So $\dim (  W^{'} _i  )^{\perp}_{(-,-)^{'}_i }= \dim ( W^{\# '} _i   )^{\perp}_{(-,-)_i }  $.  Furthermore, the pairing by restriction of $[-,-]_i$:
$( W^{\# '} _i  )^{\perp}_{(-,-)_i } \times (  W^{'} _i  )^{\perp}_{(-,-)^{'}_i } \rightarrow \kappa   $ must be nondegenerate, otherwise $\ker [-, V^{\# 0}_i  ]_i$
would be larger than $ (W^{\# 0}_i )^{\perp}_{(-,-)_i }  $.
Let $l= \dim  (  W^{'} _i  )^{\perp}_{(-,-)^{'}_i } $,
$l_1 =\dim  (  W^{\# 0} _i  )^{\perp}_{(-,-)_i } =\dim W^{\# '}_i $ and
$l_2 =\dim  (  W^{0} _i  )^{\perp}_{(-,-)^{'}_i }=\dim W^{'}_i $.

The spaces $V_i , V^{\#}_i$ and the pairing between them are defined as follows.

\noindent $\bullet$ Take a linear space $V^{'}_i $ ($ V^{\# '}_i  $   )of dimension $l_2 $( $l_1$ ) , together with a linear isomorphism
$\iota_i :V^{'}_i \rightarrow W^{'}_i $(  $ \iota^{\#} _i : V^{\# '}_i \rightarrow  W^{\# '}_i $  ) .

\noindent $\bullet$ Take any bilinear form $<-,->^{'}_i : V^{'} _i \times V^{\# '} _i \rightarrow \kappa   $.

\noindent $\bullet$ For any integer $d_i \geq 0$, take two linear spaces $ V^{"} _i  $ and $V^{\# "}_i $ of the same dimension $d_i$. Let
 $<-,->^{"}_i : V^{"} _i \times V^{\# "} _i \rightarrow \kappa $ be any non degenerate bilinear form.

\noindent $\bullet$ let $V_i = V^0 _i \oplus V^{'} _i \oplus V^{"} _i $ and  $V^{\#} _i = V^{\# 0} _i \oplus V^{\# '} _i \oplus V^{\# "} _i $. A bilinear form $[-, -]_i : V_i \times V^{\#} _i \rightarrow \kappa  $ is defined as follows ( we use the same notation $[-,-]_i$ without making any confusion) .
 For any $v+v^{'}+v^{"}\in V_i$ where $v\in  V^0 _i,  v^{'}\in  V^{'} _i  , v^{"}\in  V^{"} _i  $, and any
 $\alpha  +\alpha^{'} +\alpha^{"} \in V^{\#}_i $ where $\alpha\in V^{\# 0} _i ,\alpha^{'}\in V^{\# '} _i , \alpha^{"}\in V^{\# "} _i  $,

$ [v+v^{'}+v^{"}, \alpha  +\alpha^{'} +\alpha^{"}  ]_i = [v,\alpha]_i + ( \iota_i (v^{'}) , \alpha )^{'}_i + (v, \iota^{\#}_i (\alpha^{'})_i )_i + < v^{'} , \alpha^{'}  >^{'}_i + < v^{"} ,\alpha^{"}   >^{"}_i  $.

  \begin{prop}
  Above bilinear form $[-,-]_i$ is non degenerate.

  \end{prop}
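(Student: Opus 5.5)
We need to show the pairing $[-,-]_i$ on $V_i\times V^{\#}_i$ has trivial left kernel; since $\dim V_i=\dim V^{\#}_i$, that suffices. The dimensions agree because $\dim V^0_i=\dim W^{\#}_i=l+l_1$ (via the perfect pairing $(-,-)_i$) and $\dim V^{\# 0}_i=\dim W_i=l+l_2$ (via $(-,-)'_i$), while $\dim V'_i=l_2$, $\dim V^{\#'}_i=l_1$ and $\dim V''_i=\dim V^{\#''}_i=d_i$. Hence $\dim V_i=\dim V^{\#}_i=l+l_1+l_2+d_i$.

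The plan is to write the form in block shape with respect to suitable decompositions and check that the block matrix is invertible. On the $V_i$ side I will use
\[
V_i=(W^{\#0}_i)^{\perp}_{(-,-)_i}\oplus (W^{\#'}_i)^{\perp}_{(-,-)_i}\oplus V'_i\oplus V''_i ,
\]
with dimensions $l_1,l,l_2,d_i$. On the $V^{\#}_i$ side I will use
\[
V^{\#}_i=V^{\#'}_i\oplus (W'_i)^{\perp}_{(-,-)'_i}\oplus (W^0_i)^{\perp}_{(-,-)'_i}\oplus V^{\#''}_i ,
\]
with dimensions $l_1,l,l_2,d_i$. The summand $V''_i$ pairs only with $V^{\#''}_i$, through the nondegenerate form $<-,->''_i$, so it splits off and we may reduce to the remaining three blocks.

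The next step is to record which blocks vanish.
\begin{itemize}
\item By Lemma 6.1(2), $[v,\alpha]_i=0$ whenever $v\in(W^{\#0}_i)^{\perp}$ or $\alpha\in(W^0_i)^{\perp}$.
\item The cross term $(\iota_i(v'),\alpha)'_i$ vanishes for $\alpha\in(W'_i)^{\perp}_{(-,-)'_i}$, since $\iota_i(v')\in W'_i$.
\item The cross term $(v,\iota^{\#}_i(\alpha'))_i$ vanishes for $v\in(W^{\#'}_i)^{\perp}_{(-,-)_i}$, since $\iota^{\#}_i(\alpha')\in W^{\#'}_i$.
\end{itemize}

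Now let $x=v_1+v_2+v'$ lie in the left kernel, with $v_1\in(W^{\#0}_i)^{\perp}$, $v_2\in(W^{\#'}_i)^{\perp}$ and $v'\in V'_i$. I will kill the components in a fixed order by pairing against the three blocks of $V^{\#}_i$.
\begin{enumerate}
\item Pair against $\alpha\in(W^0_i)^{\perp}$. The $[-,-]_i$ term and the $V^{\#'}$-terms vanish, leaving $(\iota_i(v'),\alpha)'_i=0$ for all such $\alpha$. Now $(-,-)'_i$ restricts to a perfect pairing $W'_i\times(W^0_i)^{\perp}$, because $W_i=W^0_i\oplus W'_i$. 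Hence $\iota_i(v')=0$, and so $v'=0$.
\item Pair against $\alpha\in(W'_i)^{\perp}$. What remains is $[v_2,\alpha]_i=0$. The restricted pairing $(W^{\#'}_i)^{\perp}\times(W'_i)^{\perp}$ was shown above to be nondegenerate, so $v_2=0$.
\item Pair against $\alpha'\in V^{\#'}_i$. What remains is $(v_1,\iota^{\#}_i(\alpha'))_i$, since the term $<v',\alpha'>'_i$ is gone now that $v'=0$. As $\iota^{\#}_i$ is onto $W^{\#'}_i$ and $(W^{\#0}_i)^{\perp}\times W^{\#'}_i$ is perfect, we get $v_1=0$.
\end{enumerate}

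The main point needing care is this ordering. The arbitrary form $<-,->'_i$ is harmless only because $v'$ is eliminated first, which makes the matrix block-triangular. The other thing to verify is the restricted perfect pairings used in steps 1 and 3, which follow from the complementary decompositions and elementary linear algebra.
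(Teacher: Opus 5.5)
Your proof is correct and follows the paper's argument. The paper uses the same four-piece decompositions of $V_i$ and $V^{\#}_i$ and writes $[-,-]_i$ as a block matrix. Its blocks $A_2,A_1,A_3,A_7$ are exactly your three restricted perfect pairings together with $\langle -,-\rangle''_i$, and they are invertible with zeros above them, so your successive elimination of the left kernel is the same computation after reordering the columns. You are slightly more explicit than the paper: you check $\dim V_i=\dim V^{\#}_i$ (needed for the blocks to be square) and note that the blocks the paper calls $A_4$ and $A_5$ in fact vanish, which the paper does not need.
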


  \begin{pf}
  Chose a basis of $ V_i $   as $( \vec{v}^0 _{1}, \vec{v}^0 _{2}, \vec{v}^{'} ,\vec{v}^{"}    )$ where  $\vec{v}^0 _{1}   , \vec{v}^0 _{2} , \vec{v}^{'}  ,\vec{v}^{"} $  are some basis of subspace $  (W^{\# 0}_i )^{\perp}_{(-,-)_i }, (W^{\# '}_i )^{\perp}_{(-,-)_i} ,
  V^{'}_i , V^{"}_i $  respectively.  Also chose a basis of $ V^{\#}_i $   as
  $( \vec{v}^{\# 0} _{1}, \vec{v}^{\# 0} _{2}, \vec{v}^{\# '} ,\vec{v}^{\# "}    )$ where  $\vec{v}^{\# 0} _{1}   , \vec{v}^{\# 0} _{2} , \vec{v}^{\# '}  ,\vec{v}^{\# "} $  are some basis of subspace $  (W^{ 0}_i )^{\perp}_{(-,-)_i }, (W^{ '}_i )^{\perp}_{(-,-)_i} ,
  V^{\# '}_i , V^{\# "}_i $  respectively. By using these two basses, $[-,-]_i$ is presented by the following block matrix.
  \[
  \left(
    \begin{array}{cccc}
      0 & 0 & A_2 & 0 \\
      0 & A_1 & A_4 & 0 \\
      A_3 & A_5 & A_6 & 0 \\
      0 & 0 & 0 & A_7 \\
    \end{array}
  \right)
  \]

  Where each row presents the pairing of some element in the first basis  with elements of the second basis. By the previous construction, $A_1 , A_2 , A_3 ,A_7$ are nondegenerate square matrices so this matrix is nondegenerate.\\
  \end{pf}

  By (4) of Theorem 5.2 and Definition 5.4, the space $V^0 _i$, and those spaces $V_{i,j}$ imbed  in $V_i$ naturally. Denote the images of these embedding morphisms still as  $V^0 _i$, $V_{i,j}$ without making any confusion.  In the dual side  subspaces $V^{\# 0} _i  $, $V^{\#}_{i,j }$ of $V^{\#}_i$ are defined similarly.

  Let $p^i _{i,j} : V_i \rightarrow V_{i,j} $ be the morphism  defined as follows.

   \[
p^i _{i,j} (v)= \begin{cases}
   \varrho _{i,j} \circ \bar{\Phi}^i _i  (v), & \text{ when $v\in V^0 _i$;} \\
    \varrho_{i,j} \circ \iota_i (v), & \text{ when $v\in V^{'} _i $.}\\
    0,& \text{when $v\in V^{"}_i $. }
\end{cases}
\]

  Where $\varrho_{i,j} : \oplus_{j:j\neq i  } V_{i,j} \rightarrow V_{i,j}  $ is  the projection morphism  to $ V_{i,j} $.

  When $v\in V_{i,j} \subset V_i $, $p^i _{i,j} (v) =  \varrho _{i,j} \circ \bar{\Phi}^i _i  (v)= v$. So $p^i _{i,j}$ is a projector to the subspace $V_{i,j}$. Denote $\ker( p^i _{i,j} )$ as $ V_{i/j} $. In the dual side a morphism
  $p^{\# i}_{i,j}: V^{\# } _i \rightarrow V^{\#} _{i,j} $ is defined in similar way, and $\ker( p^{\# i}_{i,j} )$ is denoted as $V^{\#}_{i/j} $.

  Let  $p^i _i : V_i \rightarrow \oplus_{j:j\neq i} V_{ij} $ be the morphism defined by
   $p^i _i (v) = \sum_{j: j\neq i} p^i _{ij}(v) $ for any $v\in V_i $. It is easy to see $p^i _i (v)= \bar{\Phi}^i _i (v)$ if $v\in V^0 _i$.

  \begin{prop} Assume notations as above.

  \noindent (1) With respect to  $[-,- ]_i $, for any $j:j\neq i$, $V^{\#}_{i,j}= (  V_{i/j} )^{\perp} $  and $ V^{\# }_{i/j} = (  V_{i,j} )^{\perp}  $.

 \noindent (2) For any $j,k\neq i $,  $p^i _{i,j} |_{ V_{i,k} } : V_{i,k} \rightarrow V_{i,j} $ equals $A^{i,k} _{i,j} $, and
  $ p^{\# i} _{i,j} |_{ V^{\#}_{i,k}  }$ equals $ A^{\# i,k } _{i,j}   $.

  \noindent (3) For any $j: j\neq i$, $V^{"} _i \subset V_{i/j}$, $V^{\# "} _i \subset V^{\# } _{i/j}$.

  \noindent(4) $Im(p^i _i ) = W_i $. Denote $\ker( p^i _i ) $ as $V_{i/}$.

  \noindent (5) For $v\in V_i$, if $p^i _i (v)= w\in \oplus_{l:l\neq i} V_{il}$, then $p^i _{il}(v)= \rho_{il}(w)$.

  \end{prop}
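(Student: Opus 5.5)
The plan is to verify the five items directly from the piecewise definition of $p^i_{i,j}$ on $V_i=V^0_i\oplus V^{'}_i\oplus V^{"}_i$. Items (3), (4), (5) are essentially tautological, (2) is a short computation with $\bar{\Phi}^i_i$, and (1) is the only part that needs an argument. For (1) I will first prove the adjunction identity
\[
[v,\alpha]_i=(p^i_{i,j}(v),\alpha)_{i,j}\qquad (v\in V_i,\ \alpha\in V^{\#}_{i,j}),
\]
together with its dual counterpart, and then read off the orthogonality statements from nondegeneracy.

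For (3): $p^i_{i,j}$ vanishes on $V^{"}_i$ by definition, so $V^{"}_i\subset\ker p^i_{i,j}=V_{i/j}$; the dual inclusion is identical. For (5): $p^i_i=\sum_l p^i_{i,l}$ takes values in $\oplus_l V_{i,l}$, and the $l$-th component of $p^i_i(v)$ is $p^i_{i,l}(v)$, so applying $\varrho_{i,l}$ recovers it. For (4): $p^i_i$ restricts to $\bar{\Phi}^i_i$ on $V^0_i$, to $\iota_i$ on $V^{'}_i$ (since $\sum_j\varrho_{i,j}=\mathrm{id}$), and to $0$ on $V^{"}_i$. Hence $Im(p^i_i)=W^0_i+\iota_i(V^{'}_i)=W^0_i\oplus W^{'}_i=W_i$.

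For (2): take $v\in V_{i,k}\subset V^0_i$, embedded via $V_{i,k}\to\oplus_l V_{i,l}\to V^0_i$. Then
\[
\bar{\Phi}^i_i(v)=\sum_l A^{i,k}_{i,l}(v),
\]
using the convention $A^{i,k}_{i,k}=\mathrm{id}$, which is the one already used when the paper notes that $p^i_{i,j}$ is the identity on $V_{i,j}$. Applying $\varrho_{i,j}$ gives $p^i_{i,j}(v)=A^{i,k}_{i,j}(v)$. The dual statement follows in the same way.

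For (1), take $\alpha\in V^{\#}_{i,j}$, viewed inside $V^{\#0}_i$ as the class of an element of the single summand $V^{\#}_{i,j}$. I check the adjunction identity on each summand of $V_i$.
\begin{itemize}
\item For $v\in V^0_i$: $[v,\alpha]_i=(\bar{\Phi}^i_i(v),\alpha)^{'}_i$. Because $\alpha$ sits in the $j$-th summand, the natural pairing $[-,-]^0_i$ only sees the $j$-component, giving $(\varrho_{i,j}\bar{\Phi}^i_i(v),\alpha)_{i,j}=(p^i_{i,j}(v),\alpha)_{i,j}$.
\item For $v\in V^{'}_i$: the same computation applies to the term $(\iota_i(v),\alpha)^{'}_i$.
\item For $v\in V^{"}_i$: both sides vanish, since $\alpha$ has no $V^{\#'}_i$ or $V^{\#"}_i$ component.
\end{itemize}
Since $(-,-)_{i,j}$ is a perfect pairing $V_{i,j}\times V^{\#}_{i,j}\to\kappa$, the identity gives $(V^{\#}_{i,j})^{\perp}=\ker p^i_{i,j}=V_{i/j}$. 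As $[-,-]_i$ is nondegenerate (the preceding Proposition), taking perpendiculars yields $V^{\#}_{i,j}=(V_{i/j})^{\perp}$.

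Symmetrically, for $v\in V_{i,j}$ and $\alpha\in V^{\#}_i$ I want $[v,\alpha]_i=(v,p^{\#i}_{i,j}(\alpha))_{i,j}$. This gives $(V_{i,j})^{\perp}=V^{\#}_{i/j}$ in the same way. On $\alpha\in V^{\#0}_i$ the identity should follow from Lemma 6.1(1), which says $(\bar{\Phi}^i_i v,\alpha)^{'}_i=(v,\bar{\Phi}^{\#i}_i\alpha)_i$, and on $V^{\#'}_i$ from the term $(v,\iota^{\#}_i(\alpha^{'}))_i$.

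The main obstacle is bookkeeping of well-definedness. The pairings $(-,-)_i$ and $(-,-)^{'}_i$ are defined only on the quotients $V^0_i=\oplus V_{i,l}/U_i$ and $V^{\#0}_i$, paired against $W^{\#}_i$ and $W_i$. So I must check that restricting to a single summand $V_{i,j}$ (or $V^{\#}_{i,j}$) is compatible with these quotients. This uses $U^{\#}_i=W_i^{\perp}$, $W^{\#}_i=U_i^{\perp}$, and condition (4) of Theorem 5.2, which guarantees that $V_{i,j}$ embeds in the quotient. I would handle this first, writing every pairing through the ambient pairing $[-,-]^0_i$ on $\oplus_l V_{i,l}\times\oplus_l V^{\#}_{i,l}$, after which the computations above are formal.
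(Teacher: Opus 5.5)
Your proposal is correct and follows the paper's proof essentially item by item. Items (2)--(5) are read off from the piecewise definition of $p^i_{i,j}$ exactly as in the paper. For (1), the paper likewise computes $[v,\alpha]_i$ for $\alpha\in V^{\#}_{i,j}$ on the summands $V^0_i$, $V^{'}_i$, $V^{"}_i$ and then invokes perfectness of the pairing on $V_{i,j}\times V^{\#}_{i,j}$; that computation is your adjunction identity written out. Two points the paper leaves implicit are made explicit in your version: you use Lemma 6.1(1) for the dual half, where the paper only says ``apply the same argument to the dual side'', and you state the convention $A^{i,k}_{i,k}=\mathrm{id}$.
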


\begin{pf}
 The proof of $V_{i/j} = ( V^{\#}_{ij}  )^{\perp}  $ is as follows. Suppose $v= \sum_{k: k\neq i} v_{ik} +v_2 +v_3 \in V_i$ where $v_{ik} \in V_{ik}, v_2 \in V^{'}_i , v_3 \in V^{"}_i  $. By definition of $ p^i _{ij} $, $v\in \ker(p^i _{ij})$ if and only if

$0= \varrho_{ij}\circ \bar{\Phi}^i _i (\sum_{k:k\neq i} v_{ik}   ) + \varrho_{ij}\circ \iota_i (v_2)
 = \sum_{k:k\neq i} A^{ik}_{ij} (v_{ik}) + \rho_{ij} \circ \iota_i (v_2 ) .$

On the other hand by definition of $[-,-]_i$, $v\in ( V^{\#}_{ij}  )^{\perp} $ if and only if for any $\alpha \in V^{\#} _{ij} $,

 $ 0=[v,\alpha  ]_i =  [\bar{\Phi}^i _i ( \sum_{k:k\neq i} v_{ik}  ),\alpha   ]^0 _i + (\iota_i (v_2 ), \alpha  )^{'}_i ,$
 which is equivalent to

 $\sum_{k:k\neq i} A^{ik}_{ij} (v_{ik}) + \rho_{ij} \circ \iota_i (v_2 )=0$.

 Apply the same argument to dual side gives $ V^{\#} _{i/j} = ( V_{ij}  )^{\perp}   $ so (1) is proved.

 (2) holds because, for any $v\in V_{ik}$, $p^i _{ij} (v) = \varrho_{ij} \circ \bar{\Phi}^i _i (v) = A^{ik}_{ij}(v)$.

 By definition of $p^i _{ij}$, for any $v\in V^{"}_{ij}$, $p^i _{ij} (v)=0$, so (3) follow.

 For (4), by definition of $p^i _{i,j}$, we see $p^i _i ( V^0 _i )= W^0 _i , p^i _i (V^{'}_i )=W^{'}_i  $ and $p^i _i (V^{"}_i )=0 $.
 Because $W_i = W^0 _i \oplus W^{'}_i$, therefore  $Im(p^i _i ) = W_i $.

  (5) is just by definition of $p^i _i$.\\
\end{pf}

\begin{lem} Assume notations as above. If $v \in V _i $, and
$\alpha =\sum_{k:k\neq i} \alpha_{ik}   \in V^{\# 0}_i$,where $\alpha_{ik}\in V^{\# 0}_{ik}$,
 then $[ v, \alpha  ]_i = [ p^i _i (v) , \sum_{k:k\neq i} \alpha_{ik}    ]^0 _i  $.
\end{lem}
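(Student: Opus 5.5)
Since both sides of the claimed identity are linear in $v$, I will check it separately on each of the three summands of $V_i = V^0_i \oplus V^{'}_i \oplus V^{"}_i$. The element $\alpha=\sum_{k\neq i}\alpha_{ik}$ lies in $V^{\# 0}_i$, which is the image of $\oplus_k V^{\#}_{i,k}$ modulo $U^{\#}_i$. I first note that the right-hand side does not depend on the chosen lift $\sum_k\alpha_{ik}$ of $\alpha$. Indeed, $p^i_i(v)\in W_i$ by (4) of Proposition 6.2, and $U^{\#}_i=(W_i)^{\perp}_{[-,-]^0_i}$. So a change of lift by an element of $U^{\#}_i$ changes $[p^i_i(v),\cdot]^0_i$ by zero. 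With this, both sides are well defined and I only need to compare them on each summand.

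\emph{Case $v\in V^0_i$.} By the definition of the extended form, $[v,\alpha]_i$ reduces to the original form $[v,\alpha]_i$ on $V^0_i\times V^{\# 0}_i$. That form was defined as $(\bar{\Phi}^i_i(v),\alpha)^{'}_i$. The pairing $(-,-)^{'}_i: W_i\times V^{\# 0}_i\to\kappa$ is the one induced by $[-,-]^0_i$ on lifts, so it equals $[\bar{\Phi}^i_i(v),\sum_k\alpha_{ik}]^0_i$. Finally, $p^i_i(v)=\bar{\Phi}^i_i(v)$ for $v\in V^0_i$, as remarked after the definition of $p^i_i$. This gives the identity on $V^0_i$.

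\emph{Case $v\in V^{'}_i$.} Here only the cross term survives, since $\alpha$ has no $V^{\# '}_i$ or $V^{\# "}_i$ component. Thus $[v,\alpha]_i=(\iota_i(v),\alpha)^{'}_i=[\iota_i(v),\sum_k\alpha_{ik}]^0_i$. By definition $p^i_{i,j}(v)=\varrho_{i,j}\iota_i(v)$, so summing over $j$ gives $p^i_i(v)=\iota_i(v)$, and the identity follows.

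\emph{Case $v\in V^{"}_i$.} Both sides vanish. The left side vanishes because $V^{"}_i$ pairs only with $V^{\# "}_i$. The right side vanishes because $p^i_i(V^{"}_i)=0$.

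The only point requiring care is the identification of $(-,-)^{'}_i$ with $[-,-]^0_i$ evaluated on lifts. One must also make sure that the symbol $[-,-]_i$ on $V^0_i\times V^{\# 0}_i$ really means the form built from $\bar{\Phi}^i_i$, and not the one built through $\bar{\Phi}^{\# i}_i$. These two agree by Lemma 6.1(1), so either reading gives the same result.
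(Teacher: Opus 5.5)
Your proposal is correct and follows the paper's proof essentially line by line: the same three-case split along $V_i=V^0_i\oplus V^{'}_i\oplus V^{"}_i$, with the same computation in each summand. Your preliminary check that the right-hand side does not depend on the chosen lift of $\alpha$ (using $p^i_i(v)\in W_i$ and $U^{\#}_i=(W_i)^{\perp}$) is a small point the paper leaves implicit.
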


\begin{pf} Since $V_i = V^0 _i \oplus V^{'}_i \oplus V^{"}_i $, it is enough to prove this statement in cases $v \in V^0 _i$, $v \in V^{'}_i $ and $v \in V^{"}_i $.   When $v \in V^0 _i $, by definition
$[ v, \alpha   ]_i = ( \bar{\Phi}^i _i (v) , \alpha    )^{'}_i =[ p^i _i (v) , \sum_{k:k\neq i} \alpha_{ik}  ]^0 _i $.

When $v\in V^{'}_i  $, by definition of $[-,-]_i$,
$[v, \alpha   ]_i = ( \iota_i (v) , \alpha  )^{'} _i $. On the other hand

\noindent $[ p^i _i (v) , \sum_{k:k\neq i} \alpha_{ik}    ]^0 _i = ( \sum_{l:l\neq i} p^{i}_{i,l}(v), \sum_{k:k\neq i}\alpha_{ik}  )^0 _i
= ( \sum_{l:l\neq i} \rho_{i,l}\circ \iota_i (v) , \sum_{k:k\neq i} \alpha_{ik}    )^0 _{i} = ( \iota_i (v) , \alpha  )^{'} 9_i  $.

When $v\in V^{"}_i $, then $[ v, \alpha  ]_i =0 $. On the other hand by (3) of proposition 6.2,  $v\in V_{i/j}$ for any $j\neq i$, thus $p^i _i (v)=0$, and hence  $[ p^i _i (v) , \sum_{k:k\neq i} \alpha_{ik}    ]^0 _i =0=[ v, \alpha  ]_i $.

\end{pf}

\begin{rem}  Since in a $H-$representation $( V_i , A^i _j )$, if $m_{i,j}$ is odd, then $A^i _j$ is an isomorphism. So the integers
 $\{ d_i = \dim V^{"}_i \}_{i\in I}$ should be chosen so that $\dim V_i = \dim V_j$ whenever $m_{i,j}$ is odd.

\end{rem}

\subsection{Partial realization of the jumping operators}

let $\Theta = [ ( V_{i,j} , A^{i,j}_{k,l} ) ,  \cup_{i\in I} \{ U_i ,W_i \}    ] $ be a $H^2$-representation. Assume that the second step  of construction of a $H-$representation has been completed. This yields some objects like spaces $V_i , V^{\#}_i$, pairings $[-,-]_i$, subspaces $V^0 _i ,  V_{i,j} , V_{i/j} , V_{i/}$. Based on them, in this section we will carry out the third step  of the construction , that is, to construct suitable morphisms $\phi^i _j : V^0 _i \rightarrow V_j $ and $\phi^{\# i }_j : V^{\# 0}_i \rightarrow V^{\#}_j $.
As restriction of $A^i _j$ ( $ A^{\# i}_j $  ), $\phi^i _j $ ( $ \phi^{\# i} _j $  ) should satisfy the constraints from $\Theta$ listed in the following proposition.

  \begin{prop} Suppose $(V_i , A^i _j )$ is a $H-$representation of type $[\Gamma;q]$. Assume notations $V^0 _{i}, V_{ij}, V_{i/j}, A^{ik}_{jk}$ and $p^i _{il}$ as section 6.1. Denote $A^i _j |_{V^0 _i }: V^0 _i \rightarrow V_j  $ as $\phi^i _j$, then

  \noindent(1) In cases $j=i$, $\phi^i _i (v) =v $ for any $v\in V^0 _i $.

  \noindent(2) For any $k\neq i$, $l\neq j$, and any $v\in V_{ik}$,  $p^j  _{jl}\circ \phi^i _j (v) = A^{ik}_{jl}(v) $ for any $v\in V_{ik}$.

  \noindent(3) For any $k:k\neq i,j$, and for any $v\in V_{ik}$, $\phi^i _j (v)= \phi^k _j \circ A^i _k (v)  $ .

  \noindent(4) When $m_{ij}$ is odd, $\phi^i _j$ is injective.

  \end{prop}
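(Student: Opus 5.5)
Each of the four assertions reduces to facts already established about $H$-representations, once $V^0_i$ is identified with $\sum_{k\neq i}V_{i,k}\subset V_i$, i.e.\ with the image of $J_i$ (Definition 5.2). Here $V_{i,k}$ is the $1$-eigenspace of $A^k_iA^i_k$. I will treat the items in the order (1), (3), (2), (4). The "$(m;q)$-couple" is the one from Definition 1.1, and the canonical decompositions are the ones following Definition 1.6.

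For (1), I read $\phi^i_i$ as the inclusion $V^0_i\hookrightarrow V_i$. This reading matches the convention used in defining $\Phi^i_i$ and $\bar\Phi^i_i$. Under it the claim is immediate, since $V^0_i$ is by construction a subspace of $V_i$.

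For (3), take $v\in V_{i,k}$. The triangular relation $(T^{i,k}_j)$ of Definition 1.6 gives $A^i_j(v)=A^k_jA^i_k(v)$. Since $A^i_k$ maps $V_{i,k}$ into $V_{k,i}\subset V^0_k$, the right-hand side is $\phi^k_j(A^i_k v)$. The left-hand side is $\phi^i_j(v)$ because $v\in V^0_i$. This settles (3) whenever $j\neq i,k$. In the edge case $j=k$ the statement reads $\phi^i_k(v)=A^i_k(v)$, which is a tautology once we set $\phi^k_k=\mathrm{id}$ as in (1).

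For (2), I unwind the definition of $A^{i,k}_{j,l}$ in Lemmas 5.1 and 5.2. When $i\neq j$ and $\{i,k\}\cap\{j,l\}=\emptyset$, version $[1]$ of Lemma 5.2 is exactly $p^j_{j,l}\circ A^i_j$ restricted to $V_{i,k}$, so it equals $p^j_{j,l}\circ\phi^i_j$. When $j=k$ (with $l\neq i$), Lemma 5.1, in the variant obtained by the indicated index modifications, gives $A^{i,k}_{k,l}$ as $p^k_{k,l}\circ A^i_k$, so again it equals $p^j_{j,l}\circ\phi^i_j$. When $j=i$ and $l\neq k$, we have $A^{i,k}_{i,l}=A^{i,k}_{i,l}[1]=p^i_{i,l}$ on $V_{i,k}$, which agrees with $p^i_{i,l}\circ\phi^i_i$ by (1). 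The remaining cases are those where $\{j,l\}=\{i,k\}$. There the map $A^{i,k}_{j,l}$ is the identification of $V_{i,k}$ with $V_{k,i}$, or the identity. It agrees with $p^j_{j,l}\circ\phi^i_j$ because $A^i_k$ maps $V_{i,k}$ isomorphically onto $V_{k,i}$, and $p^k_{k,i}$ is the identity on that summand. The main care needed is to check that the case split and index conventions match the paper's definitions; I expect no conceptual obstacle.

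For (4), suppose $m_{i,j}$ is odd. Then $(A^i_j,A^j_i)$ is an $(m_{i,j};q)$-couple with no $0$ eigenvalue. So $A^j_iA^i_j$ is semisimple with all eigenvalues nonzero, hence invertible. Therefore $A^i_j$ is injective, and so is its restriction $\phi^i_j$.
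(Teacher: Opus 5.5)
Your route is the paper's. Its proof gives (1) from $A^i_i=\mathrm{id}$, (2) from the definition of $A^{i,k}_{j,l}$, (3) from $(T^{i,k}_j)$, and (4) from the bijectivity of $A^i_j$ when $m_{i,j}$ is odd. Your (1), (3) and (4) are fine.

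The one real gap is in your case analysis for (2). After the disjoint case and the cases $j=i$ and $j=k$, it is not true that only $\{j,l\}=\{i,k\}$ remains. The projection $p^j_{j,l}$ acts inside $V_j$, so $j$ and $l$ play different roles. You have therefore omitted $l=i$ and $l=k$ with $j\notin\{i,k\}$.

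Neither omitted case has the form of the formulas you used. Both are covered by the other versions in Lemma 5.1:
\begin{itemize}
\item For $l=i$, version $[2]$ of $A^{i,k}_{i,j}$ is $A^j_i\circ p^j_{j,i}\circ A^i_j$ on $V_{i,k}$.
\item For $l=k$, version $[3]$ of $A^{k,i}_{k,j}$, read on $V_{i,k}$ through $V_{i,k}\cong V_{k,i}$, is $A^j_k\circ p^j_{j,k}\circ A^i_j$.
\end{itemize}
After the identifications $V_{j,i}\cong V_{i,j}$ and $V_{j,k}\cong V_{k,j}$, these are exactly $p^j_{j,l}\circ\phi^i_j$, as required. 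The case $l=k$ is not pure bookkeeping. It rests on the equality of versions $[2]$ and $[3]$, that is, on a triangular relation: $(T^{i,k}_j)$, or directly $(T^i_{j,k})$ in the form of Remark 1.2.

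A smaller point about (3): the hypothesis $k\neq i,j$ excludes $j=k$, which you treated, but not $j=i$. In that case the claim is $v=A^k_iA^i_k(v)$ for $v\in V_{i,k}$, which holds immediately by the definition of $V_{i,k}$ as a $1$-eigenspace.
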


  \begin{pf}
  (1) is true since by definition  $A^i _i = id_{V_i }$. (2) follows by definition of the morphism $A^{ik}_{jl}$. When $m_{ij}$ is odd, $A^i _j$ is bijective, so $\phi^i _j$ is injective and (4) follows. (3) follows from the triangular condition $(T^{ik}_j )$.  \\
  \end{pf}

 Considering the short exact sequence  $V_{i/}\rightarrowtail V_i \stackrel{p^i _i }{\twoheadrightarrow} W_i $, suppose
 $\eta_i : W_i \rightarrow V_i$ is an injective morphism such that $p^i _i \circ \eta_i = id_{W_i}   $.Denote $Im (\eta_i)$ as $\bar{W}_i$.
  $\bar{W}_i$ is a complementary space of $V_{i/}$ in $V_i$ and can be regarded as a lifting of $W_i$.

  Now for any $j\neq i\in I$, let $\bar{\phi}^{ j} _i =\eta_i \circ \bar{\Phi}^j _i : V^0 _j \rightarrow V_i$. Applying the same construction in the dual side introduces objects $ \bar{W}^{\#}_i , \eta^{\#}_i $, and    produces $\bar{\phi}^{ \# j}_i : V^{\# 0} _j \rightarrow V^{\# } _i $.

  \begin{lem}
  For any $v\in V^0 _i$, $v-\bar{\phi}^{ i} _i (v) \in V_{i/}$.
  \end{lem}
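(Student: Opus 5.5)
Since $V_{i/}=\ker(p^i _i)$, the claim is equivalent to $p^i _i\bigl(\bar{\phi}^{i}_i (v)\bigr)=p^i _i (v)$ for all $v\in V^0 _i$. The plan is to evaluate both sides separately and check that they agree.

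For the left-hand side, $\bar{\phi}^i _i=\eta_i\circ\bar{\Phi}^i _i$ by definition. Moreover $\bar{\Phi}^i _i$ takes values in $W_i$: this is the inclusion $Im(\Phi^i _i)\subset W_i$ from condition (3) of Theorem 5.2, which holds because $\Theta$ is an $H^2$-representation. Since $\eta_i$ was chosen with $p^i _i\circ\eta_i=id_{W_i}$, we get
\[
p^i _i\bigl(\bar{\phi}^i _i (v)\bigr)=p^i _i\circ\eta_i\bigl(\bar{\Phi}^i _i(v)\bigr)=\bar{\Phi}^i _i (v).
\]

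For the right-hand side, the remark following the definition of $p^i _i$ (just before Proposition 6.2) gives $p^i _i (v)=\bar{\Phi}^i _i (v)$ whenever $v\in V^0 _i$. I will verify this from the definition, since it is the only step with any content. On $V^0 _i$ the projector is $p^i _{i,j}=\varrho_{i,j}\circ\bar{\Phi}^i _i$. Summing over $j\neq i$ gives
\[
p^i _i(v)=\sum_{j\neq i}\varrho_{i,j}\bigl(\bar{\Phi}^i _i(v)\bigr)=\bar{\Phi}^i _i(v),
\]
because the projections $\varrho_{i,j}$ onto the summands of $\oplus_{j\neq i}V_{i,j}$ add up to the identity.

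Subtracting the two computations gives $p^i _i\bigl(v-\bar{\phi}^i _i(v)\bigr)=0$, so $v-\bar{\phi}^i _i(v)\in\ker(p^i _i)=V_{i/}$. The only point needing care is that $\bar{\Phi}^i _i$ lands in $W_i$, which is the domain of $\eta_i$, and that is exactly condition (3) of Theorem 5.2. The same argument, run on the dual side with $\eta^{\#}_i$ and $\bar{\Phi}^{\# i}_i$, gives the analogous statement for $V^{\# 0}_i$.
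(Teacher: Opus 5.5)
Your proof is correct and matches the paper's argument: both reduce to $p^i_i(v)=\bar{\Phi}^i_i(v)=p^i_i\bigl(\eta_i\circ\bar{\Phi}^i_i(v)\bigr)$ for $v\in V^0_i$ and then conclude from $V_{i/}=\ker(p^i_i)$. You also spell out two points the paper leaves implicit: why $\bar{\Phi}^i_i$ takes values in $W_i$, the domain of $\eta_i$, and why $p^i_i$ agrees with $\bar{\Phi}^i_i$ on $V^0_i$.
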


  \begin{pf}
  By definition of $V_{i/}$, it is enough to show $P^{i} _{ij} (v) = P^i _{ij} (\bar{\phi}^{ i} _i (v)   )   $ for any $j: j\neq i$, for any $v\in V^0 _i$.  Let $v\in V^0 _i$, then $p^i _i (v)= \bar{\Phi}^i _i (v)$. So
   $p^i _i (v)= p^i _i ( \eta_i \circ \bar{\Phi}^i _i (v)  )  $ and $v-\bar{\phi}^{ i} _i (v) \in \ker(p^i _i) = V_{i/}$.\\
  \end{pf}

  Recall the space $\bar{V}^0 =  \oplus _{ \{i,j\} \in I^2   } V_{ij} / \sum_{i\in I} U_i $. For any $i\in I$, the space $V^0 _i$ is naturally imbedded in $\bar{V}^0$ as a subspace which is also denoted as $V^0 _i$.  Now fix some $i\in I$, set

  \[ \mathscr{B}_i = \{ \Delta_i : \bar{V}^0 \rightarrow V_{i/} |\  \Delta_{i} \text{ satisfies }  (1),(2) \text{ as  follows }   \}. \]

   \noindent(1) $\Delta_i (v)= v-\bar{\phi}^{ i} _i (v)  $ for any $v\in V^0 _i$,

   \noindent (2) $\ker(\Delta_i )\cap \ker ( \bar{\Phi}^j _i  )=0 $ for any $j: j\neq i$ and  $m_{ij}$ being odd.

   Then chose some $\Delta_i \in \mathscr{B}_i $,  for any $ j:j\neq i $, let $\phi ^j _i : V^0 _j \rightarrow V_i$ by $ \phi ^j _i (v)= \bar{\phi}^j _i (v) + \Delta_i (v) $.

  \begin{lem} Assume $\{ \phi^j _i \} $ as morphisms defined as above.

  \noindent(1) For any $i\in I$ and any $v\in V^0 _i  $, $ \phi^i _i (v) =v $.

   \noindent(2) For any $k\neq j$ and any $l\neq i$, for any $v\in V_{jk}\subset V^0 _j$,  $p^i _{il} \circ \phi^j _i (v) =  A^{jk}_{il}(v) \in V_{il}$.

   \noindent(3) For any $k, j\neq i$ and $k\neq j$, and any $v\in V_{jk}=V_{kj}$, $ \phi^k _i (v)= \phi^j _i (v) $.

  \end{lem}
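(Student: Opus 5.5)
The plan is to unwind the definition $\phi^j_i=\bar{\phi}^j_i+\Delta_i=\eta_i\circ\bar{\Phi}^j_i+\Delta_i$ and use three facts. First, $\Delta_i$ takes values in $V_{i/}=\ker(p^i_i)$. Second, $\eta_i$ is a section of $p^i_i$, so $p^i_i\circ\eta_i=id_{W_i}$. Third, Proposition 6.2(5) converts $p^i_i$ into the individual projectors $p^i_{i,l}$ via $p^i_{i,l}=\varrho_{i,l}\circ p^i_i$. The three statements are handled in order (1), (2), (3).

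For (1), I take $j=i$ in the defining formula, so that $\phi^i_i(v)=\bar{\phi}^i_i(v)+\Delta_i(v)$ for $v\in V^0_i$. Condition (1) in the definition of $\mathscr{B}_i$ gives $\Delta_i(v)=v-\bar{\phi}^i_i(v)$, so $\phi^i_i(v)=v$. Lemma 6.3 guarantees that this prescription for $\Delta_i$ on $V^0_i\subset\bar{V}^0$ indeed lands in $V_{i/}$, so it is consistent.

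For (2), let $v\in V_{j,k}\subset V^0_j$. Since $\Delta_i(v)\in V_{i/}=\ker p^i_i$, Proposition 6.2(5) gives $p^i_{i,l}(\Delta_i(v))=\varrho_{i,l}(p^i_i(\Delta_i(v)))=0$. For the other summand, $p^i_i(\eta_i(\bar{\Phi}^j_i(v)))=\bar{\Phi}^j_i(v)$, hence by Proposition 6.2(5) again $p^i_{i,l}(\eta_i\bar{\Phi}^j_i(v))=\varrho_{i,l}(\bar{\Phi}^j_i(v))$. By the definition of $\Phi^j_i$ (the morphism above Theorem 5.2) applied to the single summand $v\in V_{j,k}$, this component is exactly $A^{j,k}_{i,l}(v)$. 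Here $\bar{\Phi}^j_i$ is well defined on the quotient $V^0_j$ because $U_j\subset\ker\Phi^j_i$ and $\mathrm{Im}\,\Phi^j_i\subset W_i$, which is condition (2)--(3) of Theorem 5.2. The degenerate subcases are covered by the same formula: when $i\in\{j,k\}$ the map $A^{j,k}_{i,l}$ is the one of Lemma 5.1, and when $\{j,k\}=\{i,l\}$ it is the identity. The case $j=i$ also follows from (1) together with Proposition 6.2(2).

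For (3), fix $v\in V_{j,k}=V_{k,j}$ with $j,k\neq i$. The main point, and the step needing the most care, is that the two images of $v$ in $\bar{V}^0=\oplus_{\{a,b\}\in I^2}V_{a,b}/\sum_a U_a$ coincide: one through $V^0_j\subset\bar{V}^0$ and one through $V^0_k\subset\bar{V}^0$. They coincide because $V_{j,k}$ occurs as a single summand indexed by the unordered pair $\{j,k\}$, and both embeddings $\Upsilon_j,\Upsilon_k$ are induced by the inclusion of that summand. Hence $\Delta_i(v)$ is the same element in both cases. For the $\bar{\phi}$-part, $\varrho_{i,l}\bar{\Phi}^j_i(v)=A^{j,k}_{i,l}(v)$ and $\varrho_{i,l}\bar{\Phi}^k_i(v)=A^{k,j}_{i,l}(v)$, and these agree because $J^{j,k}_{i,l}=J^{k,j}_{i,l}$ is the same edge of $\mathbb{Q}^2_I$ (Definition 5.1). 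Thus $\bar{\Phi}^j_i(v)=\bar{\Phi}^k_i(v)$ in $W_i$, and applying $\eta_i$ and adding $\Delta_i(v)$ gives $\phi^j_i(v)=\phi^k_i(v)$.
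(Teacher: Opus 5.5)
Your proof is correct and follows essentially the same route as the paper: (1) is immediate from condition (1) in the definition of $\mathscr{B}_i$, and for (2) you unwind $\phi^j_i=\eta_i\circ\bar{\Phi}^j_i+\Delta_i|_{V^0_j}$, eliminate the $\Delta_i$-part using $\mathrm{Im}(\Delta_i)\subset V_{i/}=\ker(p^i_i)$, and evaluate the $\eta_i$-part via $p^i_i\circ\eta_i=id_{W_i}$ and Proposition 6.2(5). For (3) you spell out what the paper states in one line: $v\in V_{j,k}$ has a single image in $\bar{V}^0$, so the $\Delta_i$-parts agree, and $\bar{\Phi}^j_i(v)=\bar{\Phi}^k_i(v)$ because $J^{j,k}_{i,l}=J^{k,j}_{i,l}$ is a single edge of $\mathbb{Q}^2_I$.
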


  \begin{pf}
  For (1), let $v\in V^0 _i$. Then $\phi^i _i (v) = \bar{\phi}^i _i (v) + \Delta_i (v) =  \bar{\phi}^i _i (v) + v-  \bar{\phi}^i _i (v)=v.  $ The second equals sign is by (1) of the definition of $\mathscr{B}_i$.

  For (2),  because $\phi^j _i =  \bar{\phi}^j _i + \Delta_i |_{V^0 _j }=  \eta_i \circ \bar{\Phi}^j _i  + \Delta_i |_{V^0 _j } $, and $Im(\Delta_i) \subset V_{i/}$, so it is enough to prove
   $p^i _{il} ( \eta_i \circ \bar{\Phi}^j _i (v)  ) = A^{jk}_{il}(v) $, since there is
   $V_{i/}=\ker(p^i _i) =\cap_{l:l\neq i} \ker(p^i _{il} ) $. Now since
    $ p^i _i \circ \eta_i \circ \bar{\Phi}^j _i (v)= \bar{\Phi}^j _i (v) $, then by (5) of Proposition 6.2,
   $p^i _{il} (  \eta_i \circ \bar{\Phi}^j _i (v)  ) = \rho_{il}( \bar{\Phi}^j _i (v)  ) = A^{jk}_{il}(v)  $, so (2) is proved.

  For $v\in  V_{jk}$, there are $ \bar{\phi}^j _i (v)=\bar{\phi}^k _i (v)  $ and $\Delta_i |_{V^0 _j} (v) = \Delta_i |_{V^0 _k} (v)   $ so (3) follows. \\

  \end{pf}

  The following Lemma explains condition (2) in the definition of $\mathscr{B}_i$.

  \begin{lem} Assume notations $\bar{\phi}^j _i $ as above. Let $F: \bar{V}^0 \rightarrow V_{i/}$ be some linear morphism and let
  $\phi^j _i = \bar{\phi}^j _i + F|_{V^0 _j }$. Then $\phi^j _i$ is injective if and only if $\ker( \bar{\Phi}^j _i )\cap \ker(F)=0 $.
  \end{lem}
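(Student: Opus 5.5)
The statement to prove: $\phi^j_i=\bar{\phi}^j_i+F|_{V^0_j}$ is injective on $V^0_j$ if and only if $\ker(\bar{\Phi}^j_i)\cap\ker(F)=0$. Here $\ker(F)$ is read as $\ker(F|_{V^0_j})$, since only that restriction enters $\phi^j_i$. The plan is to split the target $V_i$ along the decomposition $V_i=\bar{W}_i\oplus V_{i/}$. This decomposition exists because $\eta_i:W_i\to V_i$ is a section of $p^i_i$ with image $\bar{W}_i$, and $V_{i/}=\ker(p^i_i)$.

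By construction $\bar{\phi}^j_i=\eta_i\circ\bar{\Phi}^j_i$ takes values in $\bar{W}_i$, while $F$ takes values in $V_{i/}$. So for $v\in V^0_j$ the vector $\phi^j_i(v)$ has $\bar{W}_i$-component $\eta_i\bar{\Phi}^j_i(v)$ and $V_{i/}$-component $F(v)$. The key step is to apply $p^i_i$. Since $p^i_i\circ\eta_i=id_{W_i}$ and $p^i_i$ kills $V_{i/}$, we get
\[
p^i_i\circ\phi^j_i=\bar{\Phi}^j_i \quad\text{on } V^0_j .
\]
Subtracting $\eta_i$ of this from $\phi^j_i$ gives $\phi^j_i(v)-\eta_i p^i_i\phi^j_i(v)=F(v)$.

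From this, $\phi^j_i(v)=0$ holds if and only if both components vanish, that is, $\bar{\Phi}^j_i(v)=0$ (using that $\eta_i$ is injective) and $F(v)=0$. Hence $\ker(\phi^j_i)=\ker(\bar{\Phi}^j_i)\cap\ker(F|_{V^0_j})$, and both directions of the equivalence follow at once.

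There is no real obstacle. The only points needing care are that $\bar{W}_i\cap V_{i/}=0$, which holds because $p^i_i\eta_i=id$, and that $\ker(F)$ must be interpreted as intersected with $V^0_j\subset\bar{V}^0$, the embedding noted before the definition of $\mathscr{B}_i$.
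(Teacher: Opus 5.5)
Your proof is correct and follows the paper's own argument. Both split $V_i=\bar{W}_i\oplus V_{i/}$, note that $\bar{\phi}^j_i$ lands in $\bar{W}_i$ while $F$ lands in $V_{i/}$, and use injectivity of $\eta_i$ to get $\ker\bar{\phi}^j_i=\ker\bar{\Phi}^j_i$; you make the component extraction explicit via $p^i_i$, and your reading of $\ker(F)$ as $\ker(F|_{V^0_j})$ is automatic anyway, since $\ker\bar{\Phi}^j_i\subset V^0_j$.
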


  \begin{pf} Firstly, if $\ker( \bar{\Phi}^j _i )\cap \ker(F)\neq 0 $ and contains a nonzero element $v$, then
  $\phi^j _i (v)= \bar{\phi}^j _i (v) + F|_{V^0 _j }(v) =0  $. Secondly if $v\in V^0 _j $ satisfying $\phi^j _i (v)=0 $, then
  since $\phi^j _i(v) = \bar{\phi}^j _i (v) + F|_{V^0 _j }(v)   $, $\bar{\phi}^j _i (v)\in \bar{W}_i   $, $ F|_{V^0 _j }(v) \in V_{i/}  $ and $V_i \cong \bar{W}_i \oplus V_{i/}$, so both $ \bar{\phi}^j _i (v)  $ and $F|_{V^0 _j }(v)   $ must be zero, hence
  $ v\in \ker( \bar{\Phi}^j _i )\cap \ker(F)=\{ 0\}  $ as $ \ker( \bar{\Phi}^j _i )= \ker{\bar{\phi}}^j _i $. \\

  \end{pf}

 The same constructions in the dual side produce a set of morphisms $\{ \phi^{\# j} _i : V^{\# 0}_j \rightarrow V^{\#}_i  \}   $.  By Lemma 6.4 and Lemma 6.5, the morphisms $\{ \phi^i _j \}$ and  $\{ \phi^{\# i} _j \}$ constructed above satisfy all the properties listed in Proposition 6.3.  In addition, we call the set of morphisms
  $\{ \phi^j _i \} $ and $\{  \phi^{\# j} _i \}$ as \textbf{$( \Gamma; q )-$extendable} if  there exists a morphism $A^i _j : V_i \rightarrow V_j  $ for every $  (i,j)\in (I \times I )  \setminus \{ (i,i)\}_{i\in I}$  , such that:

 \noindent (1) For any $i\neq j$, $\phi^i _j = A^i _j |_{V^0 _i }  $ and $ \phi^{\# j}_i = ( A^i _j   )^{*} | _{ V^{\# 0}_j  }  $.

 \noindent (2)  For any $i\neq j$, $( A^i _j , A^j _i  )$ is a $( m_{i,j} ;q )-$couple between $V_i$ and $V_j$.\\

The following lemma shows some kind of compatibility between $\phi^i _j$ and $\phi^{\# j}_i $.

\begin{lem} Let $v\in V^0 _i $, $\alpha \in V^{\# 0}_j $, then
 $[  v, \phi^{\# j} _i (\alpha )   ]_i = [ \phi^i _j (v) ,\alpha    ]_j   $.

\end{lem}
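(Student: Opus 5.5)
We reduce the statement to Lemma 6.2, by splitting $\phi^i_j$ into its two summands. By construction $\phi^i_j(v)=\eta_j\circ\bar{\Phi}^i_j(v)+\Delta_j(v)$, with $\eta_j\circ\bar{\Phi}^i_j(v)\in\bar{W}_j$ and $\Delta_j(v)\in V_{j/}$. On the dual side $\phi^{\# j}_i(\alpha)=\eta^{\#}_i\circ\bar{\Phi}^{\# j}_i(\alpha)+\Delta^{\#}_i(\alpha)$, with $\Delta^{\#}_i(\alpha)\in V^{\#}_{i/}$.

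The first step is to observe that the $V_{j/}$ and $V^{\#}_{i/}$ parts drop out of the relevant pairings. Since $\alpha\in V^{\# 0}_j$, Lemma 6.2 gives $[x,\alpha]_j=[p^j_j(x),\tilde\alpha]^0_j$, where $\tilde\alpha$ is any lift of $\alpha$ to $\oplus_{k\neq j}V^{\#}_{j,k}$. (The lemma is insensitive to the choice of lift, because $p^j_j(x)\in W_j$ and $U^{\#}_j=W_j^{\perp}$.) For $x=\Delta_j(v)\in V_{j/}=\ker p^j_j$ this term is zero. Symmetrically, applying the dual version of Lemma 6.2 with $v\in V^0_i$ kills $[v,\Delta^{\#}_i(\alpha)]_i$. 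Note that Lemma 6.2 is stated with $v$ arbitrary and $\alpha\in V^{\#0}$; its dual, with the roles of $V$ and $V^{\#}$ exchanged, holds by the same proof.

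The second step handles the remaining terms. Using $p^j_j\circ\eta_j=\mathrm{id}_{W_j}$, we get
\[
[\phi^i_j(v),\alpha]_j=[\bar{\Phi}^i_j(v),\tilde\alpha]^0_j ,
\]
and dually
\[
[v,\phi^{\# j}_i(\alpha)]_i=[\tilde v,\bar{\Phi}^{\# j}_i(\alpha)]^0_i ,
\]
where $\tilde v$ is a lift of $v$ to $\oplus_{k\neq i}V_{i,k}$. So it suffices to prove the identity
\[
[\Phi^i_j(\tilde v),\tilde\alpha]^0_j=[\tilde v,\Phi^{\# j}_i(\tilde\alpha)]^0_i
\]
on the direct sums. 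By definition $\Phi^i_j=\sum_{k,l}A^{i,k}_{j,l}$ and $\Phi^{\# j}_i=\sum_{l,k}A^{\# j,l}_{i,k}$, and the dual data satisfy $A^{\# j,l}_{i,k}=(A^{i,k}_{j,l})^*$ with respect to the natural pairings $V_{i,k}\times V^{\#}_{i,k}\to\kappa$. Hence, summand by summand, $\Phi^{\# j}_i=(\Phi^i_j)^*$ for the pairings $[-,-]^0_i$ and $[-,-]^0_j$, which is the required identity. This is the same computation that underlies Lemma 6.1(1), where $i=j$.

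The main obstacle is bookkeeping rather than any real difficulty. One must check that the lifts $\tilde v,\tilde\alpha$ may be chosen arbitrarily: $\Phi^i_j$ kills $U_i$ by condition (2) of Theorem 5.2, and its image lies in $W_j=(U^{\#}_j)^{\perp}$ by condition (3), and the same holds dually. One must also check that the index conventions in $A^{\# i,k}_{j,l}=(A^{j,l}_{i,k})^*$ match the order of the indices in $\Phi^{\# j}_i$. The case of a vertex $\{i,k\}$ with $\{i,k\}=\{j,l\}$ in the sum, which occurs only when $k=j$ and $l=i$, should be checked separately, treating $A^{i,j}_{j,i}$ as the identity identification of $V_{i,j}$ with $V_{j,i}$. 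Its dual is again the identity, so that case also goes through.
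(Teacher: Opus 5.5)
Your proposal is correct and follows essentially the same route as the paper's proof: discard the contributions $\Delta_j(v)\in V_{j/}$ and $\Delta^{\#}_i(\alpha)\in V^{\#}_{i/}$, apply Lemma 6.2 together with $p^j_j\circ\eta_j=\mathrm{id}_{W_j}$ (the paper also uses the dual of Lemma 6.2, implicitly), and then expand $\bar{\Phi}^i_j$ and $\bar{\Phi}^{\# j}_i$ into the summands $A^{ik}_{jl}$ and $A^{\# jl}_{ik}=(A^{ik}_{jl})^{*}$. Your extra checks that the lifts may be chosen freely and that the degenerate term with $\{i,k\}=\{j,l\}$ is the identity are details the paper leaves implicit, and they work as you describe.
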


\begin{pf} Suppose $v=\sum_{k:k\neq i} v_{ik}$ and $\alpha=\sum_{l:l\neq j} \alpha_{jl}$, where $v_{ik}\in V_{ik}$ and $\alpha_{jl}\in V_{jl}$. Recall $ \phi^{\# j} _i (\alpha )= \bar{\phi}^{\# j}_i (\alpha )+ \Delta^{\#}|_{V^{\# 0}_j }(\alpha)  $. Since
$ \Delta^{\#}|_{V^{\# 0}_j }(\alpha)\in V^{\#}_{i/}  $, so $[ v, \Delta^{\#}|_{V^{\# 0}_j }(\alpha)  ]_i =0$ and

$[  v, \phi^{\# j} _i (\alpha )   ]_i = [  v,\bar{ \phi}^{\# j} _i (\alpha )  ]_i =[  v,\eta^{\#}_i \circ \bar{\Phi}^{\# j} _i (\alpha )  ]_i = [v, p^{\# i}_i \circ \eta^{\#}_i \circ \bar{\Phi}^{\# j} _i (\alpha )   ]^{0} _i $

$= [\sum_{k:k\neq i} v_{ik}, \bar{\Phi}^{\# j} _i ( \sum_{l:l\neq j} \alpha_{jl})    ]^0 _i =\sum_{k\neq i,l\neq j} [v_{ik} , A^{\# jl}_{ik}( \alpha_{jl} )   ]^0 _i $.

Where the third equals sign is by Lemma 6.2, since $v\in V^0 _i $.
Similarly, on the other hand

$[ \phi^i _j (v) ,\alpha    ]_j  = [ \bar{\phi}^i _j (v) ,\alpha  ]_j =
[\eta_j \circ \bar{\Phi}^i _j (v) ,\alpha   ]_j =
[ p^j _j \circ \eta_j \circ \bar{\Phi}^i _j (v) ,\alpha   ]^0 _j = [ \bar{\Phi}^i _j ( \sum_{k:k\neq i}v_{ik}  ) , \sum_{l:l\neq j} \alpha_{jl}  ]^0 _j $

$=\sum_{k\neq i,l\neq j} [ A^{ik} _{jl}(v_{ik}) , \alpha_{jl}   ]^0 _j = \sum_{k\neq i,l\neq j} [v_{ik} , A^{\# jl}_{ik}( \alpha_{jl} )   ]^0 _i .  $\\

\end{pf}

Since $V_{ij}\subset V^0 _i \subset V_i$, there exists a subspace $V^0 _{i/j}\subset V_{i/j}$ such that
$V^0 _i \cong V_{ij}\oplus V^{0}_{i/j}$. Similarly in the dual side there exists a subspace $V^{\#0}_{j/i} \subset V^{\#}_{j/i} $ such that
$V^{\#0 }_{j} \cong V^{\#}_{ji}\oplus V^{\#0}_{j/i} $.

\begin{lem} Assume above notations. Then $ \phi^i _j (v) \in V_{j/i}   $ for any $v\in V^0 _{i/j}$  , $ \phi^{\#j}_i (\alpha)\in V^{\#}_{i/j} $ for any
$\alpha \in V^{\#0}_{j/i} $, and the restriction of $\phi^j _i $ on $ V_{j,i} $ is an isomorphism to $V_{i,j}$.
\end{lem}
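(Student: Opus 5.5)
The plan is to verify the three assertions directly from the explicit form
\[
\phi^j_i=\eta_i\circ\bar{\Phi}^j_i+\Delta_i|_{V^0_j},
\]
using Proposition 6.2, Lemma 6.4 and the definition of $\mathscr{B}_i$. The dual assertion is the same argument run for the dual objects $\eta^{\#}_j,\bar{\Phi}^{\# i}_j,\Delta^{\#}_j$.

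\textbf{Step 1: the complement $V^0_{i/j}$.} I first observe that $V^0_{i/j}$ is forced to be $V^0_i\cap V_{i/j}$. Indeed, it lies in $V_{i/j}\cap V^0_i$ and is complementary to $V_{ij}$ in $V^0_i$, while $V_{ij}\cap V_{i/j}=0$. So take $v\in V^0_i$, lifted to $\sum_k v_{ik}\in\oplus_{k\neq i}V_{ik}$. By Lemma 6.4(2), or by the definition of $p^i_{ij}$ together with $p^i_i|_{V^0_i}=\bar\Phi^i_i$, the condition $p^i_{ij}(v)=0$ reads
\[
\varrho_{ij}\bar{\Phi}^i_i(v)=\sum_{k\neq i}A^{ik}_{ij}(v_{ik})=0 ,
\]
where the term $k=j$ is the identity of $V_{ij}$.

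\textbf{Step 2: $\phi^i_j(V^0_{i/j})\subset V_{j/i}$.} We have $V_{j/i}=\ker p^j_{ji}$, and $\operatorname{Im}\Delta_j\subset V_{j/}=\cap_l\ker p^j_{jl}\subset V_{j/i}$. So it suffices to show $p^j_{ji}\,\eta_j\bar\Phi^i_j(v)=0$. By Proposition 6.2(5) and $p^j_j\eta_j=\mathrm{id}$, this equals $\varrho_{ji}\bar\Phi^i_j(v)=\sum_k A^{ik}_{ji}(v_{ik})$.

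The key point is that vertices of $\mathbb{Q}^2_I$ are unordered pairs. Hence $J^{ik}_{ji}=J^{ik}_{ij}$, so $A^{ik}_{ji}=A^{ik}_{ij}$ as maps into the single space $V_{ij}=V_{ji}$, and the $k=j$ term is again the identity under this identification. So the sum is exactly the one from Step 1, which vanishes. Well-definedness modulo $U_i$ is already built into $\bar\Phi$ by Theorem 5.2(2).

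\textbf{Step 3: $\phi^j_i|_{V_{ji}}$ is the identity onto $V_{ij}$.} Let $v\in V_{ji}$. Since $V_{ji}=V_{ij}$ sits inside both $V^0_j$ and $V^0_i$ (compatibly in $\bar V^0$), condition (1) of $\mathscr{B}_i$ gives $\Delta_i(v)=v-\eta_i\bar\Phi^i_i(v)$. Therefore
\[
\phi^j_i(v)=v+\eta_i\bigl(\bar\Phi^j_i(v)-\bar\Phi^i_i(v)\bigr).
\]
Now both $\bar\Phi^j_i(v)$ and $\bar\Phi^i_i(v)$ equal $\sum_{l\neq i}A^{ij}_{il}(v)$, because they are computed from the same vertex $\{i,j\}$ along the same edges. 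Hence $\phi^j_i(v)=v$, which gives the required isomorphism $V_{j,i}\to V_{i,j}$ (the canonical identification, cf.\ Lemma 6.4(2) with $l=j$).

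The only delicate point, which I expect to be the main obstacle, is the bookkeeping of the identification $V_{ij}=V_{ji}$ together with the diagonal edges: one must make sure that "$A^{ij}_{ij}$" is read as the identity in both $\bar\Phi^i_i$ and $\bar\Phi^i_j$. That convention is exactly what makes $p^i_i|_{V_{ij}}$ the inclusion, as remarked after the definition of $p^i_{i,j}$.
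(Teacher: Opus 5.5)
Your proposal is correct and follows the paper's proof essentially line by line. For the first claim you discard $\Delta_j$ using $\operatorname{Im}\Delta_j\subset V_{j/}$, reduce $p^j_{ji}\eta_j\bar\Phi^i_j(v)$ to $\sum_k A^{ik}_{ji}(v_{ik})$ via Proposition 6.2(5), and match this with $p^i_{ij}(v)=0$ using $A^{ik}_{ji}=A^{ik}_{ij}$. For the last claim you use condition (1) of $\mathscr{B}_i$, exactly as the paper does. The only difference is that you state explicitly two things the paper's one-line computation $\bar{\phi}^j_i(v)+v-\bar{\phi}^i_i(v)=v$ uses without comment: the equality $\bar\Phi^j_i(v)=\bar\Phi^i_i(v)$ on $V_{ij}$, and the convention that the diagonal terms are the identity.
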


\begin{pf} Suppose $v=\sum_{k\neq i} v_{ik}$. Since $V_{j/i}=\ker (p^j _{ji} ) $,
it is enough to show $p^j _{ji} ( \phi^i _j (v) )=0 $, which is equivalent to
$p^j _{ji} ( \bar{\phi}^i _j (v) )= p^j _{ji}( \eta_j \circ \bar{\Phi}^i _j (v)  )= 0 $, because
$\phi^i _j (v) =\bar{\phi}^i  _j (v) + \Delta_j (v) $ and $\Delta_j (v)\in V_{j/}$.
Now by (5) of Proposition 6.2, $p^j _{ji}( \eta_j\circ \bar{\Phi}^i _j (v)  )=\rho_{ij}( \bar{\Phi}^i _j (v)   )
= \sum_{k\neq i} A^{ik}_{ji} (v_{ik})$.

On the other hand, because $v\in V_{i/j}$ so $0=p^i _{ij}(v)= \sum_{k\neq i}A^{ik}_{ij}(v_{ik})$. Since $V_{ij}=V_{ji}$ and $A^{ik}_{ji}=A^{ik}_{ij}$, so $p^j _{ji}( \eta_j \circ \bar{\Phi}^i _j (v)  )=0$ and the proof of the first statement is complete.

Now suppose $v\in V_{j,i}$, then
$\phi^j _i (v)= \bar{\phi}^j _i (v)+ \Delta_i |_{V^0 _j } (v)=\bar{\phi}^j _i (v) +v-\bar{\phi}^i _i (v)=v $ by (1) of the definition of
$\mathscr{B}_i$. So the second statement follows.\\
\end{pf}

For every pair $i,j\in I$, spaces $V_i , V^{\#}_i , V_j , V^{\#}_j $ and their subspaces $V^0 _i , V^{\# 0}_i , V^0 _j , V^{\# 0}_j   $, perfect pairings $[-,-]_i , [-,-]_j$, and morphisms $\phi^i _j , \phi^j _i , \phi^{\# i}_j ,\phi^{\# j}_i$ are constructed as above. We introduce the following conception of \textbf{"cross morphism"} to organize them. Denote the morphism
$\phi^i _j |_{ V^{0}_{i/j} }:V^{0}_{i/j}  \rightarrow V_{j/i}$ as $\psi^i _j $,  the morphism
$\phi^{\# i}_{j} |_{ V^{\# 0}_{i/j}  }: :V^{\# 0}_{i/j}  \rightarrow V^{\#} _{j/i}   $ as $ \psi^{\# i}_j $.   Denote the restricted perfect pairing $[-,-]_i : V_{i/j} \times V^{\#}_{i/j}\rightarrow \kappa$ also as $[-,-]_i$, and $[-,-]_j : V_{j/i} \times V^{\#}_{j/i}\rightarrow \kappa$ also as $[-,-]_j$.

\begin{defi}
A cross  morphism is a six-tuple $[V, V_0 ; W, W_0 ; h,g   ]$, where $V_0 \subset V , W_0 \subset W$ are linear spaces,
 and $h: V_0 \rightarrow W ; g: W_0 \rightarrow V$ are linear morphisms.
\end{defi}

\begin{defi} A \textbf{matched pair of cross morphisms} are two cross morphisms $[V, V_0 ; W, W_0 ; h,g   ]$,
$[V^{\#}, V^{\#}_0 ; W^{\#}, W^{\#}_0 ; h^{\#},g^{\#}   ]$, together with two perfect pairings $[-,-]_1 : V\times V^{\#} \rightarrow \kappa $ and $[-,-]_2 : W\times W^{\#} \rightarrow \kappa $ such that for any $v\in V_0  , \beta\in W^{\#}_0   $,  there is $[ h(v) ,\beta ]_2 =[ v, g^{\#}(\beta)  ]_1 $, and for any
$w\in W, \alpha\in V^{\#}$, there is $[w, h^{\#}(\alpha)   ]_2 =[g(w) , \alpha   ]_1  $.

Let $m\in \mathbb{Z}^{\geq 1}, q \in \kappa$, we call \textbf{this matched pair as $(m;q)$-extendable}, if there exist $\tilde{h}: V\rightarrow W$ and $\tilde{g}:W\rightarrow V$ such that

\noindent(1) $\tilde{h}|_{V_0} = h , \tilde{g}|_{W_0}=g $; $\tilde{h}^{*}|_{W^{\#}_0} = g^{\#} , \tilde{g}^{*}|_{V^{\#}_0}=h^{\#} $.

\noindent(2) $( \tilde{h} ,\tilde{g}   )$ is a specialized $(m;q)$-couple as in Definition 1.4.
\end{defi}

 $[V_{i/j} , V^{0}_{i/j}; V_{j/i}, V^0 _{j/i}; \psi^i _j , \psi^j _i   ]$ and
 $[V^{\#}_{i/j} , V^{\# 0}_{i/j}; V^{\#}_{j/i}, V^{\# 0} _{j/i}; \psi^{\# i} _j , \psi^{\# j} _i   ]$ constitute a typical example of matched pair. The needed pairings are  $[-,-]_i : V_{i/j} \times V^{\#}_{i/j}\rightarrow \kappa$ and $[-,-]_j : V_{j/i} \times V^{\#}_{j/i}\rightarrow \kappa$. By Lemma 6.6 they satisfy the conditions for a matched pair.

Let $\Theta = [ ( V_{i,j} , A^{i,j}_{k,l} ) ,  \cup_{i\in I} \{ U_i ,W_i \}    ] $ be a $H^2$-representation. Let
${ V_i , V^{\#}_i ,[-,-]_i  }_{i\in I } $ be a  space realization of $\Theta$. Assume notations $V_{i/j}, V^{\#}_{i/j}, \psi^i _j , \psi^{\# i} _j$ as above. Now assume for any $i, j \in I ,i\neq j$, the matched pair  $[V_{i/j} , V^{0}_{i/j}; V_{j/i}, V^0 _{j/i}; \psi^i _j , \psi^j _i   ]$ and
 $[V^{\#}_{i/j} , V^{\# 0}_{i/j}; V^{\#}_{j/i}, V^{\# 0} _{j/i}; \psi^{\# i} _j , \psi^{\# j} _i   ]$ is $(m_{i,j}; q)$-extendable. So there exist $B^i _j : V_{i/j} \rightarrow V_{j/i} , B^j _i :V_{j/i} \rightarrow V_{i/j} $ as in Definition 6.2.

 Now let $A^i _j : V_i \rightarrow V_j $ be defined by $A^i _j (v)= B^i _j (v) $ if $v\in V_{i/j}$ and $A^i _j (v) = v$ if $v\in V_{i,j}$ for any $i\neq j \in I$. So $A^i _j |_{V^0 _i}=\phi^i _j$ and $(A^j _i )^{*}|_{  V^{\# 0}_i }= \phi^{\# i}_j$.

 \begin{thm} Assume notations as above. Then $(  V_i , A^i _j )$ is a $H-$representation of type $(\Gamma ;q)$.

 \end{thm}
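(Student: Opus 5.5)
We verify the three families of conditions in Definition 1.6 (the alternative definition following Proposition 1.1): the couple conditions $(T^{(m_{i,j};q)}_{i,j})$, the triangular conditions $(T^{i,j}_k)$ for $\Psi=(V_i,A^i_j)$, and the conditions $(T^{\# i,j}_k)$ for the dual $\Psi^{\#}$. Here $\Psi^{\#}$ is realized on the spaces $V^{\#}_i$ through the perfect pairings $[-,-]_i$, with $A^{\# i}_j=(A^j_i)^*$.

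\textbf{Step 1: the couple conditions.} By construction $A^i_j$ is block diagonal with respect to $V_i=V_{i,j}\oplus V_{i/j}$ and $V_j=V_{j,i}\oplus V_{j/i}$. On the first block, $A^i_j|_{V_{i,j}}$ is the identification $V_{i,j}=V_{j,i}$, and $A^j_i$ is its inverse (Lemma 6.7). On the second block it is $B^i_j$, and $(B^i_j,B^j_i)$ is a specialized $(m_{i,j};q)$-couple. Hence $A^j_iA^i_j$ is semisimple: it is the identity on $V_{i,j}$ and has eigenvalues $\lambda_{m_{i,j};q}(a)$, or $0$ with the kernel condition, on $V_{i/j}$. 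The same holds for $A^i_jA^j_i$, so $(A^i_j,A^j_i)$ is an $(m_{i,j};q)$-couple. Since $1$ is not an eigenvalue on $V_{i/j}$, the $1$-eigenspace is exactly $V_{i,j}$. Consequently the canonical decomposition of the realized $\Psi$ is precisely $V_{i,j}\oplus V_{i/j}$, and the projector $p^i_{i,j}$ of Section 6.1 is the canonical projector. This identification is essential for Step 3.

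\textbf{Step 2: the conditions $(T^{i,j}_k)$.} Take $v\in V_{i,j}\subset V^0_i$. Then $A^i_k(v)=\phi^i_k(v)$ because $A^i_k|_{V^0_i}=\phi^i_k$. Next, $A^i_j(v)=v$ viewed in $V_{j,i}\subset V^0_j$, so $A^j_kA^i_j(v)=\phi^j_k(v)$. By Lemma 6.4(3), $\phi^i_k(v)=\phi^j_k(v)$ on $V_{i,j}=V_{j,i}$, which proves $(T^{i,j}_k)$. One must check that $A^i_k|_{V^0_i}=\phi^i_k$ genuinely holds. This is the extension property (1) of Definition 6.2 applied on $V^0_{i/k}$, combined with Lemma 6.7 on $V_{i,k}$ and the decomposition $V^0_i=V_{i,k}\oplus V^0_{i/k}$.

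\textbf{Step 3: the dual conditions $(T^{\# i,j}_k)$.} First we show $(A^j_i)^*$, computed with respect to $[-,-]_i,[-,-]_j$, is block diagonal for $V^{\#}_i=V^{\#}_{i,j}\oplus V^{\#}_{i/j}$. This follows from Proposition 6.2(1): $V^{\#}_{i,j}=(V_{i/j})^\perp$ and $V^{\#}_{i/j}=(V_{i,j})^\perp$, which are preserved because $A$ is block diagonal. Next, extension property (1) of Definition 6.2 on the dual side gives $(A^j_i)^*|_{V^{\# 0}_{i/j}}=\psi^{\# i}_j$. On $V^{\#}_{i,j}$ we need $(A^j_i)^*=\phi^{\# i}_j$; we derive this from Lemma 6.6 together with the identity on $V_{j,i}$. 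Together these give $(A^j_i)^*|_{V^{\#0}_i}=\phi^{\#i}_j$. Then the argument of Step 2, using the dual version of Lemma 6.4(3), yields $(T^{\# i,j}_k)$. By Proposition 1.1(2), this is equivalent to $(T^k_{i,j})$ for $\Psi$.

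\textbf{Main obstacle.} The delicate point is the claim $(A^j_i)^*|_{V^{\#0}_i}=\phi^{\#i}_j$. We must show that $[A^i_j(w),\alpha]_j=[w,\phi^{\#j}_i(\alpha)]_i$ for all $w\in V_i$, not just for $w\in V^0_i$ as in Lemma 6.6. For $w\in V_{i/j}$ outside $V^0_{i/j}$, this reduces to the dual extension condition $\tilde g^*|_{V^{\#}_0}=h^{\#}$ in Definition 6.2. For $w\in V_{i,j}$, it reduces to Lemma 6.6 and Lemma 6.7. The remaining work is to make sure the block decompositions on both sides are the ones used in Definition 6.2, so that these two cases exhaust $V_i$. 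After that, Definition 1.6 applies directly and gives the conclusion.
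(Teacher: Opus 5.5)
Your proposal is correct and follows the paper's own proof. Both verify the three conditions of Definition 1.6: the couple condition comes from the block form of $A^i_j$, and $(T^{i,j}_k)$ and $(T^{\# i,j}_k)$ come from $A^i_k|_{V^0_i}=\phi^i_k$, $(A^k_i)^*|_{V^{\#0}_i}=\phi^{\#i}_k$ and Lemma 6.4(3) with its dual. Your extra checks are sound and fill in points the paper only asserts: that the $1$-eigenspaces of the realized operators are exactly the constructed $V_{i,j}$ and $V^{\#}_{i,j}$, and why the two restriction identities hold, using the extension property, Lemma 6.6, Lemma 6.7 and the orthogonality in Proposition 6.2(1).
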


 \begin{pf}
 By choice of $B^i _j$ and definition of $A^i _j$,  for any $i\neq j\in I$, $(A^i _j , A^j _i)$ is a $(m_{i,j};q)-$couple between $V_i$ and $V_j$.

 For three different $i,j,k\in I$, for any $v\in V_{i,j}\subset V_i$, Since $A^i _k |_{V^0 _i}=\phi^i _k $ and $v\in V_{i,j}\subset V^0 _i$, so $A^i _k (v)= \phi^i _k (v) = \phi^j _k (v)= A^j _k ( A^i _j (v)  )$. The second equality is by construction of those $\phi^i _k$'s.
 On the other hand for any $\alpha \in V^{\#}_{i,j} \subset V^{\#}_i$, since $ ( A^k _i )^{*} |_{V^{\# 0}_i } = \phi^{\# i} _k  $ and $\alpha \in V^{\#}_{i,j} \subset V^{\# 0}_i$, so $( A^k _i )^{*}(\alpha )=  \phi^{\# i} _k (\alpha)=  \phi^{\# j} _k (\alpha)
 = ( A^k _j )^{*} ( ( A^j _i )^{*} (\alpha)   )  $.

 So the theorem is proved since $(  V_i , A^i _j )$  fulfills all conditions in Definition 1.6.
 \end{pf}

\subsection{Cross Morphisms}

Suppose  $[V, V_0 ; W, W_0 ; h,g   ]$ and $[V^{'}, V^{'}_0 ; W^{'}, W^{'}_0 ; h^{'},g^{'}   ]$ are two cross  morphisms. A isomorphism from the first one to the second one is a 2-tuple  $[ F_1 , F_2  ]$ where $F_1 : V\rightarrow V^{'} $
and $F_2 : W\rightarrow W^{'}$ are linear isomorphisms such that

\noindent(1) $F_1 |_{V_0}  $ is an isomorphism from $V_0$ to $V^{'}_0 $, $F_2 |_{W_0}  $ is an isomorphism from $W_0$ to $W^{'}_0 $.

\noindent(2) $F_2 \circ h = h^{'} \circ F_1 |_{V_0 } : V_0 \rightarrow W^{'}   $; $F_1 \circ g = g^{'} \circ F_2 |_{W_0 } : W_0 \rightarrow V^{'}   $.

If there exist an isomorphism from a cross morphism $[V, V_0 ; W, W_0 ; h,g   ]$ to another one
$[V^{'}, V^{'}_0 ; W^{'}, W^{'}_0 ; h^{'},g^{'}   ]$, then denote $[V, V_0 ; W, W_0 ; h,g   ]\sim [V^{'}, V^{'}_0 ; W^{'}, W^{'}_0 ; h^{'},g^{'}]   $. It is easy to see $\sim$ is an equivalence relation. A basic problem is to classify the equivalence classes of cross  morphisms.\\

\noindent \textbf{Adding a trivial factor to a matched pair.}

Suppose $[V , V_0; W ,W_0 ; h , g ]$ and $[ V^{'} , V^{'} _0 ; W^{'}, W^{'} _0 ; h^{'} , g^{'}  ]   $ are two cross morphisms matched by  nondegenerate pairings  $(-,-)_l$ and $(-,-)_r$.    Let $U_l ,U_r $ be two $\kappa-$linear spaces of dimension $k_l ,k_r $ respectively, and let $U^{*}_l , U^{*}_r$ be the dual space of $U_l ,U_r $ respectively. Let $(- ,-  )_{l,0} : U_l \times U^{*}_l \rightarrow \kappa   $ and
$ (- ,-  )_{r,0} : U_r \times U^{*}_r \rightarrow \kappa    $ be the natural pairings between a linear space and its dual.

Define two larger cross  morphisms $[ \tilde{V} , \tilde{V}^{'}; \tilde{W} ,\tilde{W}^{'} ; \tilde{h} , \tilde{g} ]$,
$[ \tilde{V}^{'} , \tilde{V}^{'}_0 ; \tilde{W}^{'} , \tilde{W}^{'} _0 ; \tilde{h}^{'} , \tilde{g}^{'}  ]   $ as follows.

\noindent $\bullet$ Let $\tilde{V} = V \oplus U_l   $, $\tilde{V}^{'} = V^{'} \oplus U^{*}_l $;  $\tilde{W} = W \oplus U_r $, $\tilde{W}^{'} = W^{'} \oplus U^{*}_r $.

 Denote the imbedding map from  $V , V^{'} , W  $ and $W^{'}$ to $\tilde{V}  , \tilde{V}^{'} , \tilde{W}  $ and $\tilde{W}^{'}$ as $\iota_{V } , \iota_{V^{'}} ,\iota_{W}   $ and $\iota_{W^{'} } $ respectively.
 In this way  $V_0  , V^{'} _0 , W_0  $ and $W^{'} _0$ can be also identified with subspaces with the same notations in $\tilde{V} , \tilde{V}^{'} , \tilde{W}  $ and $\tilde{W}^{'}$ respectively.

\noindent $\bullet$  Let $\tilde{V}_0 = V_0  , \tilde{V}^{'}_0 = V^{'}_0, \tilde{W}_0 =W_0 $ and $\tilde{W}^{'}_0 = W^{'}_0$.

\noindent $\bullet$ Let $\tilde{h} = \iota_{W }\circ h : \tilde{V}_0 = V^0 _1 \rightarrow \tilde{W }$. Similarly let $\tilde{g} =\iota_{V}\circ g  $, $\tilde{h}^{'} = \iota_{W^{'}} \circ h^{'}   $ and $ \tilde{g}^{'}= \iota_{V^{'} }\circ g^{'}  $.

\noindent $\bullet$ Let $[-,-]_l : \tilde{V} \times \tilde{V}^{'} \rightarrow \kappa$ be defined as

 $[ v_1 +w_1 , v_2 + w_2   ]_l = ( v_1 , v_2    )_l + ( w_1 , w_2  )_{l,0} $ for any $v_1 \in V , w_1 \in U_l , v_2 \in V^{'} $ and $w_2 \in U^{*}_l$.

\noindent $\bullet$ Let $[-,-]_r : \tilde{W} \times \tilde{W}^{'} \rightarrow \kappa$ be defined as

 $[ v_1 +w_1 , v_2 + w_2   ]_r = ( v_1 , v_2    )_r + ( w_1 , w_2  )_{r,0} $ for any $v_1 \in W , w_1 \in U_r , v_2 \in W^{'} $ and $w_2 \in U^{*}_r$.

 Then it is not hard to see that the combination of $[-,-]_l$ and $[-,-]_r$ is a matching  between $[ \tilde{V} , \tilde{V}_0 ; \tilde{W} ,\tilde{W}_0 ; \tilde{h} , \tilde{g}]$ and $[ \tilde{V}^{'} , \tilde{V}^{'}_0 ; \tilde{W}^{'} , \tilde{W}^{'}_0; \tilde{h}^{'} , \tilde{g}^{'}  ]   $. We call it as a a $(k_l ,k_r)$ trivial extension of the original one.\\

\noindent \textbf{Structure of cross morphisms .}Now let $[V, V_0 ; W, W_0 ; h,g   ]$ be a cross morphism. Let $v\in V_0$. If $h(v)\in W_0$, then $gh(v)$ is well defined. If $gh(v)\in V_0$ then
$hgh(v)$ is well defined. Let $[...gh]_i $ be the word of length $i$ with $h$ on the rightmost side, in which $h$ and $g$ appear alternatively. By induction we say that $[...gh]_i (v)$
is well defined if inductively $[...gh]_{i-1}(v)$ is well defined and lies in $W_0 $ or $V_0 $ depending on $i$ is even or odd. In the following, the
 notation $v\in (?)$ means $v$ is in $V$ or $W$, and  the notation  $v\in (?)_0 $ means $v$ is in $V_0$ or $W_0$.

Let $V^i _0 = \{ v\in V_0 | [...gh]_{i+1} (v) $ is well defined.$ \}$.

Let $W^i _0 = \{ w\in W_0 | [...hg]_{i+1} (w) $ is well defined.$ \}$.

It is evident that $V^{i+1}_0 \subset V^i _0$, $V^0 _0 = V_0$,$W^{i+1}_0 \subset W^i _0$, $W^0 _0 = W_0$.

Let $N_1 = \max \{\  i\  |  V^{i+1}_0 \subsetneq V^i _0   \}$ and  $N_2 = \max \{\  i\  |  W^{i+1}_0 \subsetneq W^i _0   \}$. Since $V^{N_1 +1}_0 = V^{N_1 +2}_0
=...$, we denote $ V^{N_1 +1}_0 $ as $V^{\infty}_0 $, and   denote $W^{N_2 +1}_0  $ as $ W^{\infty} _0 $ similarly.

\begin{prop} Assume notations as above. Then

\noindent(1)If $j\leq i$ then  $[...gh]_j (V^i _0 )\subset (?)^{i-j}_0 $.

\noindent(2)If a subspace  $U \subset V^{i}_0 $ satisfies $U\cap V^{i+1}_0 =0$, then for any $j\leq i+1$, $[...gh]_j$ is an isomorphism from $U$ to $ [...gh]_j (U)$. In addition $[...gh]_j (U) $ is a subspace in $(?)^{i-j} _0$ such that $[...gh]_j (U)\cap (?)^{i-j+1} _0 =0  $.

\noindent(3)$|N_1 -N_2 |\leq 1$.

\end{prop}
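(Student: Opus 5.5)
The proof is a direct unwinding of the definitions of $V^i_0$ and $W^i_0$. The one fact used throughout is the following. A word of length $i+1$ being well defined at $v\in V_0$ means exactly that each intermediate image $[...gh]_j(v)$ with $1\leq j\leq i$ lies in $W_0$ or $V_0$, according as $j$ is odd or even, and that the remaining $i+1-j$ letters can still be applied to it. For the edge case $j=i+1$ I adopt the convention that $(?)^{-1}_0$ denotes the whole ambient space $V$ or $W$, so that the statements make sense.

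For (1), take $v\in V^i_0$ and $j\leq i$, and put $w=[...gh]_j(v)$. Since $[...gh]_{i+1}(v)$ is defined and $j\leq i$, the vector $w$ lies in the appropriate $(?)_0$. Applying the remaining $i+1-j$ alternating letters to $w$ gives $[...gh]_{i+1}(v)$, so the word of length $(i-j)+1$ starting from $w$ is defined. By definition this says $w\in(?)^{i-j}_0$.

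For (2), the key observation is that an element of $V_0$ whose orbit under the alternating words ever becomes $0$, or ever enters a deeper filtration level, is itself deeper. Concretely, let $u\in U$ and $j\leq i+1$.
\begin{itemize}
\item Injectivity: suppose $[...gh]_j(u)=0$. Then $0$ lies in every $V_0$ and $W_0$, and all further letters send $0$ to $0$. Hence words of every length are defined at $u$, so $u\in V^{\infty}_0\subset V^{i+1}_0$. Since $u\in U$ and $U\cap V^{i+1}_0=0$, this gives $u=0$, so $[...gh]_j$ is injective on $U$.
\item Image: by (1) the image lies in $(?)^{i-j}_0$, with the convention above when $j=i+1$. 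Now suppose $[...gh]_j(u)\in(?)^{i-j+1}_0$. Then the word of length $i-j+2$ is defined at this image, so $[...gh]_{i+2}(u)$ is defined. This means $u\in V^{i+1}_0$, hence $u=0$. Thus the image meets $(?)^{i-j+1}_0$ trivially.
\end{itemize}

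For (3), the subspaces involved are finite dimensional, so both chains stabilize and $N_1,N_2$ are defined, with value $-\infty$, or say $-1$, if a chain never drops. Suppose $V^{N_1+1}_0\subsetneq V^{N_1}_0$ with $N_1\geq 1$, and choose a complement $U$ of $V^{N_1+1}_0$ in $V^{N_1}_0$, so $U\neq 0$. Apply (2) with $i=N_1$ and $j=1$: the subspace $h(U)$ is nonzero, lies in $W^{N_1-1}_0$, and meets $W^{N_1}_0$ trivially. Therefore $W^{N_1}_0\subsetneq W^{N_1-1}_0$, which gives $N_2\geq N_1-1$. The symmetric argument with the roles of $h$ and $g$ exchanged gives $N_1\geq N_2-1$, and so $|N_1-N_2|\leq 1$.

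The only real obstacle is bookkeeping. One has to keep the parity, that is whether a vector lives in $V$ or in $W$, correct in (1) and (2), and handle the degenerate cases consistently: $N_1\leq 0$ and $j=i+1$. Both are dealt with by the convention on $(?)^{-1}_0$ fixed at the start.
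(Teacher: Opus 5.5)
Your proposal is correct and follows the paper's argument essentially step for step: (1) by splitting the word after $j$ letters; (2) by observing that a zero image, or an image one level too deep, forces $[...gh]_{i+2}(u)$ to be defined and hence $u\in V^{i+1}_0$; and (3) by applying (2) with $j=1$ to a nonzero complement of $V^{N_1+1}_0$ in $V^{N_1}_0$, where your two symmetric inequalities replace the paper's ``without loss of generality $N_1\geq N_2$''. Your indexing is more careful than the paper's (membership in $V^i_0$ refers to the word of length $i+1$, and you treat $j=i+1$ explicitly), but you should commit to the value $-1$, not $-\infty$, for a chain that never drops: for $V_0=V=W=\kappa$, $W_0=0$, $h=\mathrm{id}$ one gets $N_1=0$ with a constant $W$-chain, so (3) would fail under the $-\infty$ convention.
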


\begin{pf} For (1),let $j\leq i$, $v\in V^i_0$. Then $[...gh]_i(v)$ is well defined. If $j$ is even, Then $[...gh]_i(v)$ equals
$[...gh]_{i-j}( [...gh]_{j}(v)  )$, so $ [...gh]_{j}(v)\in V^{i-j}_0   $. If $j$ is odd, then $ [...gh]_i(v) $ equals
$ [...hg]_{i-j}( [...gh]_{j}(v)  )  $, so $ [...gh]_{j}(v)\in W^{i-j}_0   $.

For the first statement of  (2), if $[...gh]_j$ is not an isomorphism from $U$ to $ [...gh]_j (U)$, then there is a nonzero $v\in U$, such that $[...gh]_j (v)=0   $. Yet consequently $[...gh]_i (v) $ is well defined for any $i$ and $v\in V^{\infty}_0$, thus contradicts with the assumption $U\cap V^{i+1}_0 =0$.
For the second statement of (2), if  $[...gh]_j (U)\cap (?)^{i-j+1} _0 \neq0  $, then there is $v\in U$ such that $[...gh]_j (v) \in (?)^{i-j+1} _0  $. But then $[...gh]_{j+i-j+1+1}(v)$ is well defined, which means $ v\in V^{i+1}_0  $, thus contradicts with the choice of $U$.

For (3), without loss of generality suppose $N_1 \geq N_2$. Let $U\subset V^{N_1}_0 $ be a nonzero subspace such that
$V^{N_1} = V^{\infty}_0 \oplus U $. By (2), $h(U )$ is a nonzero subspace in $ W^{N_1 -1 }_0 $ and $h(U)\cap W^{N_1 }=0   $. So
$W^{N_1}_0 \subsetneq W^{N_1 -1}_0 $ and $N_1 -1 \leq N_2  $. \\

\end{pf}

Without loss of generality, suppose $N_1 \geq N_2$. Suppose $V^{N_1}_0 = V^{\infty}_0 \oplus X_{N_1 }$ for some nonzero subspace
$X_{N_1}\subset V^{N_1}_0$.
 Similarly suppose $W^{N_1}_0 = W^{\infty}_0 \oplus Y_{N_1}$ for some subspace $Y_{N_1}\subset W^{N_1}_0$. But now $Y_{N_1}$ could be the zero space.

We can decompose every $V^i _0$ and $W^i_0$ in this way inductively as follows.

\begin{thm}  There exist subspaces $X_i \subset V^{i}_0$, $Y_i \subset W^i_0  $ for every $i\leq N_1$ such that

For any $j\leq N_1$,  $V^j_0 =  V^{j+1}_0 \oplus X_j \oplus g(Y_{j+1})\oplus gh(X_{j+2})\oplus...\oplus [gh...]_{N_1 -j}( (?)_{N_1} )$.

For any $j\leq N_1$,  $W^j_0 =  W^{j+1}_0 \oplus Y_j \oplus h(X_{j+1})\oplus hg(Y_{j+2})\oplus...\oplus [hg...]_{N_1 -j}( (?)_{N_1} )$.

Besides for every term $[gh...]_k ( (?)_{k+j}  )$ (  $[hg...]_k ( (?)_{k+j}  )$) appearing in above identities, the morphism $[gh...]_k$
( $[hg...]_k$  ) is an isomorphism from $(?)_{k+j}$ to  $[gh...]_k ( (?)_{k+j}  )$

\noindent(  $[hg...]_k ( (?)_{k+j}  )$   )  .

\end{thm}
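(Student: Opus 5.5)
We prove the statement by downward induction on $j$, starting from $j=N_1$ and going down to $j=0$. At each stage we choose the complements $X_j$, $Y_j$ and check that the translates of the higher-level pieces already chosen sit inside $V^j_0$ (resp. $W^j_0$) in direct sum with $V^{j+1}_0$.

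\textbf{Base case.} For $j=N_1$ we have $V^{N_1+1}_0=V^{\infty}_0$. We take $X_{N_1}$ with $V^{N_1}_0=V^{\infty}_0\oplus X_{N_1}$, and $Y_{N_1}$ with $W^{N_1}_0=W^{N_1+1}_0\oplus Y_{N_1}$. No further terms appear, so the displayed identities hold trivially.

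\textbf{Inductive step.} Suppose $X_k$ and $Y_k$ have been chosen for all $k>j$. Set
\[
Z_j=g(Y_{j+1})+gh(X_{j+2})+\dots+[gh\dots]_{N_1-j}\bigl((?)_{N_1}\bigr).
\]
By the induction hypothesis applied at level $j+1$, the relevant pieces are in direct sum. Each $X_{k}$ (resp. $Y_k$) lies in $V^k_0$ (resp. $W^k_0$) and meets $V^{k+1}_0$ (resp. $W^{k+1}_0$) trivially. Proposition 6.4(2) then gives two facts. First, the alternating word of length $k-j$ is injective on $(?)_k$. Second, its image lies in $(?)^{j}_0$ and meets $(?)^{j+1}_0$ trivially. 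Hence each summand of $Z_j$ lies in $V^j_0$, meets $V^{j+1}_0$ trivially, and is isomorphic to its source; this already gives the final isomorphism assertion of the theorem.

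The real point is that the whole sum $V^{j+1}_0+Z_j$ is direct. Suppose
\[
v+\sum_{k>j}[gh\dots]_{k-j}(x_k)=0,
\]
with $v\in V^{j+1}_0$ and $x_k\in(?)_k$. Take the largest $k$ with $x_k\neq 0$, and apply the structure of iterated words as follows. Each term $[gh\dots]_{k'-j}(x_{k'})$ with $k'<k$, as well as $v$, lies in $(?)^{j}_0$. The words can be applied to $v$ up to length $j+1$, and to $[\dots](x_{k'})$ exactly up to length $k'-j$ beyond its level. Apply one further letter at a time, say $h$ then $g$, so that every term advances by one level. Alternatively, and more cleanly, argue by linearity: $[gh\dots]_{k-j}(x_k)=-v-\sum_{k'<k}(\cdots)$. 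The right-hand side lies in $V^{j+1}_0+\sum_{k'<k}[\dots](?)_{k'}$, which by the induction hypothesis at level $j+1$ is contained in the span of $V^{k}_0$-type data. Pulling back along the injective word then forces $x_k\in(?)^{k+1}_0$. This contradicts $(?)_k\cap(?)^{k+1}_0=0$, so all $x_k=0$ and then $v=0$.

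I expect this directness check to be the main obstacle. The difficulty is that $g$ and $h$ are only defined on $W_0$ and $V_0$, so one must be careful that the preimage computation stays inside the domains. I would handle it by applying the extended word $[\dots gh]_{j+1}$, which is defined on all of $V^{j}_0\supset V^{j+1}_0+Z_j$ since every summand lies in $V^j_0$. Then use additivity of the well-definedness sets: $(?)^{i}_0$ is a linear subspace.

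Having established $V^{j+1}_0\oplus Z_j\subset V^j_0$, choose $X_j$ as any complement of $V^{j+1}_0\oplus Z_j$ in $V^j_0$. Do the same on the $W$ side with $h(X_{j+1})$ and the other terms, choosing $Y_j$ symmetrically. Note the interleaving: the $W$-side at level $j$ uses $X_{j+1}$, and the $V$-side uses $Y_{j+1}$, so both are available from the previous step. Since $N_2\le N_1$ (using Proposition 6.4(3)), all indices stay within range, and the zero spaces $Y_k$ for $k>N_2$ cause no trouble.
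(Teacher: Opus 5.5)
\textbf{There is a gap in your inductive step, at exactly the point you flag as the main obstacle.} Your setup is right, and you correctly isolate the real point: the sum $V^{j+1}_0+Z_j$ must be direct. Your argument for that directness does not work, for two reasons.
\begin{itemize}
\item \emph{The top-term argument is circular.} After isolating the top term you have $[gh\ldots]_{k-j}(x_k)=-v-\sum_{k'<k}[gh\ldots]_{k'-j}(x_{k'})$. To conclude $x_k\in(?)^{k+1}_0$ from Proposition 6.4 you would need this element to lie in $V^{j+1}_0$, and it need not. Each lower term $[gh\ldots]_{k'-j}(x_{k'})$ lies in $V^j_0$ and meets $V^{j+1}_0$ trivially. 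So the right-hand side is only known to lie in $V^{j+1}_0+\sum_{k'<k}[gh\ldots]_{k'-j}((?)_{k'})$. Asserting that the top summand cannot lie there is exactly the directness you are trying to prove.
\item \emph{Applying $[\ldots gh]_{j+1}$ goes the wrong way.} Adding letters lowers levels, so this pushes every term below level $j$. In a downward induction no decomposition is available there yet, so no contradiction follows.
\end{itemize}

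\textbf{The fix is to pull back by a single letter and use the other side's hypothesis at level $j+1$.} That hypothesis says $U':=Y_{j+1}\oplus h(X_{j+2})\oplus hg(Y_{j+3})\oplus\cdots$ is a complement of $W^{j+2}_0$ in $W^{j+1}_0$, and termwise $Z_j=g(U')$. Apply Proposition 6.4(2) once, to the single subspace $U'$, with the word $g$ of length one.
\begin{itemize}
\item $g|_{U'}$ is injective. This transports the directness of $U'$ to $Z_j=g(Y_{j+1})\oplus gh(X_{j+2})\oplus\cdots$.
\item $g(U')\cap V^{j+1}_0=0$. Concretely, if $v+g(u')=0$ with $v\in V^{j+1}_0$ and $u'\in U'$, then $g(u')\in V^{j+1}_0$. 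Hence $[\ldots hg]_{j+3}(u')$ is defined, so $u'\in W^{j+2}_0\cap U'=0$.
\end{itemize}
The symmetric argument, with $h$ applied to $X_{j+1}\oplus g(Y_{j+2})\oplus\cdots$, handles the $W$ side. This is the paper's proof.

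A minor point: $N_2\le N_1$ is a without-loss-of-generality normalization. It is not a consequence of Proposition 6.4(3), which only gives $|N_1-N_2|\le 1$, and it is not actually needed for this statement.
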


\begin{pf} We prove the following statement ($i$) by induction on $i$.

\textit{Statement ($i$): for any $j\geq i$, a subspace $X_j \subset V^j_0$ and a subspace $Y_j \subset W^j_0$ are chosen such that for any
$j\geq i$,$V^j_0 =  V^{j+1}_0 \oplus X_j \oplus g(Y_{j+1})\oplus gh(X_{j+2})\oplus...\oplus [gh...]_{N_1 -j}( (?)_{N_1} )$ and}

\noindent\textit{$W^j_0 =  W^{j+1}_0 \oplus Y_j \oplus h(X_{j+1})\oplus hg(Y_{j+2})\oplus...\oplus [hg...]_{N_1 -j}( (?)_{N_1} )$.}

First the statement ($N_1$) is true by the discussion before the theorem. Now assume the statement ($i+1$) has been proved.
Since in $V^{i+1}_0$ , there holds

$X_{i+1} \oplus g(Y_{i+2 }) \oplus ...\oplus [gh...]_{N_1 -i-1}( (?)_{N_1} ) \cap V^{i+2}_0 =0 ,$

by using (2) of Proposition 6.4, there holds

$h( X_{i+1} \oplus g(Y_{i+2 }) \oplus ...\oplus [gh...]_{N_1 -i-1}( (?)_{N_1} ) )=h( X_{i+1}) \oplus hg(Y_{i+2 }) \oplus ...\oplus [hg...]_{N_1 -i}( (?)_{N_1} )  ,$

 and its intersection with $ W^{i+1}_0   $ is zero.  So there is a subspace
 $Y_i \subset W^i_0  $ such that

 $W^i_0 =  W^{i+1}_0 \oplus Y_i \oplus h(X_{i+1})\oplus hg(Y_{i+2})\oplus...\oplus [hg...]_{N_1 -i}( (?)_{N_1} )$.

Similarly we can prove there is a subspace  $X_i \subset V^i _0 $ such that

 $ V^i_0 =  V^{i+1}_0 \oplus X_i \oplus g(Y_{i+1})\oplus gh(X_{i+2})\oplus...\oplus [gh...]_{N_1 -i}( (?)_{N_1} ) $. So the statement ($i$) is proved, therefore the induction is complete.

 The last statement is also proved by using (2) of Proposition 6.4.
 \\

\end{pf}

\noindent \textbf{The structures of $V^{\infty}_0$ and $W^{\infty}_0 $.}  The six-tuple $ [V^{\infty}_0 , V^{\infty}_0 ; W^{\infty} _0 , W^{\infty}_0 ; h|_{V^{\infty}_0 } , g|_{W^{\infty}_0}  ] $ constitutes a cross  morphism, since $h(V^{\infty}_0 ) \subset W^{\infty}_0 $ and $g ( W^{\infty}_0  )\subset V^{\infty}_0  $.

Let $V^{\infty}_0 [i]=\{ v\in V^{\infty}_0 |  [...gh ]_i (v)=0 \}  $, and  $W^{\infty}_0 [i]=\{ w\in W^{\infty}_0 |  [...hg ]_i (w)=0 \} .  $ Evidently $V^{\infty}_0 [i]\subset V^{\infty}_0 [i+1]   $, $W^{\infty}_0 [i]\subset W^{\infty}_0 [i+1] $,
$ h(V^{\infty}_0 [i]  ) \subset W^{\infty}_0 [i-1]  $ and $ g(W^{\infty}_0 [i]  ) \subset V^{\infty}_0 [i-1]  $.

Let $L_1 = \max \{ i| V^{\infty}_0 [i-1]\subsetneq V^{\infty}_0 [i]   \}$ and $L_2 = \max \{ i| W^{\infty}_0 [i-1]\subsetneq W^{\infty}_0 [i]
  \}$. Denote $V^{\infty}_0 [L_1 ]= V^{\infty}_0 [L_1 +1 ] =... $ as $ V^{\infty}_0 [\infty ]  $ and denote  $W^{\infty}_0 [L_2 ]= V^{\infty}_0 [L_2 +1 ] =... $ as $ W^{\infty}_0 [\infty ]  $.

\begin{lem}
Let $U\subset V^{\infty}_0 [i] $ be a subspace such that $U\cap V^{\infty}_0 [i-1] =0 $. Then for $j\leq i-1$,

 \noindent(1)The morphism from $U$ to $[...gh]_j (U) $  sending $v$ to $[...gh]_j (v)  $ is an isomorphism.

 \noindent (2)Further more, in $(?)^{\infty}_0 [i-j]  $, there is $[...gh]_j (U)\cap (?)^{\infty}_0 [i-j-1]=0   $.
\end{lem}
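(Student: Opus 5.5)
The plan is to mirror the argument of Proposition 6.4(2), now working with the nilpotent-type filtration $V^{\infty}_0[\cdot]$ instead of the domain filtration $V^{i}_0$. The setting is the cross morphism $[V^{\infty}_0, V^{\infty}_0; W^{\infty}_0, W^{\infty}_0; h|_{V^{\infty}_0}, g|_{W^{\infty}_0}]$. Here every word $[...gh]_j$ is defined on all of $V^{\infty}_0$ and takes values in $V^{\infty}_0$ or $W^{\infty}_0$ according to the parity of $j$, because $h(V^{\infty}_0)\subset W^{\infty}_0$ and $g(W^{\infty}_0)\subset V^{\infty}_0$. The one identity I will use throughout is
\[
[...gh]_i = [...]_{i-j}\circ [...gh]_j ,
\]
where the outer word $[...]_{i-j}$ alternates starting with $g$ if $j$ is odd and with $h$ if $j$ is even. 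In words, $[...gh]_{j}(v)$ lies in $(?)^{\infty}_0[k]$ exactly when $[...gh]_{j+k}(v)=0$.

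For (1), suppose $v\in U$ and $[...gh]_j(v)=0$ with $j\le i-1$. Then $v\in V^{\infty}_0[j]$, and $V^{\infty}_0[j]\subset V^{\infty}_0[i-1]$ by monotonicity. So $v\in U\cap V^{\infty}_0[i-1]=0$. Thus the linear map $v\mapsto [...gh]_j(v)$ is injective on $U$, hence an isomorphism onto its image.

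For (2), I first check that $[...gh]_j(U)\subset (?)^{\infty}_0[i-j]$. For $v\in U\subset V^{\infty}_0[i]$, the identity above gives $[...]_{i-j}([...gh]_j(v))=[...gh]_i(v)=0$, which is the claim; this is the analogue of the observations $h(V^{\infty}_0[i])\subset W^{\infty}_0[i-1]$ made before the lemma. Next, take $w=[...gh]_j(v)\in (?)^{\infty}_0[i-j-1]$ with $v\in U$. Then $[...gh]_{i-1}(v)=[...]_{i-j-1}(w)=0$, so $v\in V^{\infty}_0[i-1]\cap U=0$, and hence $w=0$. This proves $[...gh]_j(U)\cap (?)^{\infty}_0[i-j-1]=0$.

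The only real obstacle is bookkeeping. I must make sure the composition identity is stated with the correct alternation of $g$ and $h$ for both parities of $j$. I also need the boundary case $j=i-1$, where $(?)^{\infty}_0[0]=0$ and the intersection statement is trivial. I would handle both by stating the composition identity once and splitting into $j$ even and $j$ odd, exactly as in the proof of Proposition 6.4(1).
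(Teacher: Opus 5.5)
Your proof is correct and follows the paper's own argument. Injectivity holds because a kernel element would lie in $V^{\infty}_0[j]\subset V^{\infty}_0[i-1]$. The trivial intersection holds because $[...gh]_j(v)\in(?)^{\infty}_0[i-j-1]$ forces $[...gh]_{i-1}(v)=0$. Your explicit check that $[...gh]_j(U)\subset(?)^{\infty}_0[i-j]$ is a small detail the paper leaves implicit in the statement.
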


\begin{pf} Let $v\in U$, if $[...gh]_j (v)=0$, then $v\in V^{\infty}_0 [j] \subset V^{\infty}_0 [i-1]$, which contradicts with the definition of $U$. Thus the first statement is proved.  For the second statement, if $v\in U$ satisfies
$[...gh]_j (v)\in (?)^{\infty}_0 [i-j-1] $, then $ [...gh]_{i-1}(v)=0 $ so $v\in V^{\infty}_0 [i-1]$, so $v=0$ by the condition of $U$.\\

\end{pf}

Let $L= \max \{ L_1 ,L_2 \} $. There are subspace $S_{L} \subset V^{\infty}_0 [L ] $ such that
$V^{\infty}_0[L]\cong V^{\infty}_0[L-1] \oplus S_{L}$ , and subspace $T_{L}\subset W^{\infty}_0 [L]  $ such that
$W^{\infty}_0[L]\cong W^{\infty}_0[L-1] \oplus T_{L}$.   The following theorem equips every $V^{\infty}_0 [i] $ and $W^{\infty}_0 [i]$ with
a favorable decomposition, for $i\leq L$.

\begin{thm} There exist a sequence of subspaces $\{ S_j ,T_j  \}_{j=1,2,...,L }$ where $ S_j \subset V^{\infty}_0 [j]  $,
$ T_j \subset V^{\infty}_0 [j] $ such that for any $i\leq L$,

$V^{\infty}_0 [i]= V^{\infty}_0 [i-1]\oplus S_i \oplus g( T_{i+1} ) \oplus gh( S_{i+2} ) \oplus ...\oplus [gh...]_{L-i}(  (?)_L  )  $, and

$W^{\infty}_0 [i]= W^{\infty}_0 [i-1]\oplus T_i \oplus h( S_{i+1} ) \oplus hg( T_{i+2} ) \oplus ...\oplus [gh...]_{L-i}(  (?)_L  )  $ .

Further more, for every term $[gh...]_k ((?)_{i+k})$ ( $[hg...]_k ((?)_{i+k})$  ) appearing in the first (second) identity, $[gh...]_k $ ( $[hg...]_k $)is an isomorphism from $(?)_{i+k}$ to $[gh...]_k ((?)_{i+k})$

\noindent(  $[hg...]_k ((?)_{i+k})$   ) .
\end{thm}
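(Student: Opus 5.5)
The plan is to mirror the proof of Theorem 6.3, running the induction downward on $i$ inside the cross morphism $[V^{\infty}_0 , V^{\infty}_0 ; W^{\infty}_0 , W^{\infty}_0 ; h|_{V^{\infty}_0}, g|_{W^{\infty}_0}]$. The role of Proposition 6.4 (2) is now played by Lemma 6.7. I prove by descending induction on $i$ the following.

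\emph{Statement $(i)$:} for every $j\geq i$, subspaces $S_j\subset V^{\infty}_0[j]$ and $T_j\subset W^{\infty}_0[j]$ have been chosen so that the two displayed decompositions hold for all $j\geq i$. Moreover, each alternating word $[gh\ldots]_k$ (resp. $[hg\ldots]_k$) is injective on $(?)_{j+k}$, and its image meets $(?)^{\infty}_0[j-1]$ trivially.

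The base case $i=L$ is exactly the choice of $S_L, T_L$ made before the theorem. Here $V^{\infty}_0[L]=V^{\infty}_0[\infty]$, and by maximality of $L$ there is no higher term. One of $S_L,T_L$ may be zero, which is harmless.

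For the inductive step, assume statement $(i+1)$. Consider the complement
\[C_{i+1}=T_{i+1}\oplus h(S_{i+2})\oplus hg(T_{i+3})\oplus\cdots\]
of $W^{\infty}_0[i]$ inside $W^{\infty}_0[i+1]$. This is a subspace $U\subset W^{\infty}_0[i+1]$ with $U\cap W^{\infty}_0[i]=0$. Lemma 6.7, in its $W$-side version with $j=1$, shows two things: $g$ is injective on $U$, and $g(U)\subset V^{\infty}_0[i]$ satisfies $g(U)\cap V^{\infty}_0[i-1]=0$. Applying $g$ term by term gives
\[g(U)=g(T_{i+1})\oplus gh(S_{i+2})\oplus\cdots,\]
and the sum is direct because $g$ is injective on $U$. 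Since $g(U)\cap V^{\infty}_0[i-1]=0$, I can extend $V^{\infty}_0[i-1]\oplus g(U)$ to all of $V^{\infty}_0[i]$ by choosing a complement $S_i$. This yields the first identity at level $i$. Symmetrically, I apply $h$ to the complement of $V^{\infty}_0[i]$ in $V^{\infty}_0[i+1]$, which produces $h(S_{i+1})\oplus hg(T_{i+2})\oplus\cdots$ inside $W^{\infty}_0[i]$, and then choose $T_i$.

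For the final isomorphism claim, each composite $[gh\ldots]_k$ restricted to $(?)_{i+k}$ is an iterate of single applications of $g$ or $h$. Each such application was shown injective at the corresponding inductive stage. Alternatively, one can apply Lemma 6.7 (1) directly to $U=(?)_{i+k}$, which satisfies $U\cap(?)^{\infty}_0[i+k-1]=0$ by construction, with $j=k\leq i+k-1$.

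The main obstacle is bookkeeping in the step where I claim $g(U)$ has trivial intersection with $V^{\infty}_0[i-1]$ for the whole direct sum $U$, not only for each summand separately. Lemma 6.7 (2) handles this only if $U$ is taken as a single subspace satisfying $U\cap W^{\infty}_0[i]=0$. That is why I apply it to the full complement $C_{i+1}$ rather than summand by summand. Edge cases need a brief check: when $k$ reaches the top index the word lands in $(?)^{\infty}_0[1]$ or $[0]=0$. There is also the index off-by-one of $j\leq i-1$ in Lemma 6.7, which is fine here because $j=1\leq i$ for $i\geq 1$.
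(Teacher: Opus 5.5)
Your proposal is correct and follows the paper's own argument: a descending induction on $i$ starting from the choice of $S_L,T_L$, in which the lemma on subspaces $U\subset V^{\infty}_0[i]$ with $U\cap V^{\infty}_0[i-1]=0$ (Lemma 6.8 in the paper's numbering, not 6.7) shows that $g(T_{i+1})+gh(S_{i+2})+\cdots$ is direct and meets $V^{\infty}_0[i-1]$ trivially, after which $S_i$ (and symmetrically $T_i$) is chosen as a complement. You apply that lemma once to the whole complement $C_{i+1}$ with $j=1$, which spells out what the paper's brief appeal to ``Lemma 6.8 and the statement [i+1]'' leaves implicit.
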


\begin{pf} We prove the following statement [i] inductively.

Statement [i]:  There exist a sequence of subspaces $\{ S_j ,T_j  \}_{j=i,i+1 ,...,L }$ where $ S_j \subset V^{\infty}_0 [j]  $,
$ T_j \subset V^{\infty}_0 [j] $ such that for any $i\leq k \leq L$,

$V^{\infty}_0 [k]= V^{\infty}_0 [k-1]\oplus S_k \oplus g( T_{k+1} ) \oplus gh( S_{k+2} ) \oplus ...\oplus [gh...]_{L-k}(  (?)_L  )  $, and

$W^{\infty}_0 [k]= W^{\infty}_0 [k-1]\oplus T_k \oplus h( S_{k+1} ) \oplus hg( T_{k+2} ) \oplus ...\oplus [gh...]_{L-k}(  (?)_L  )  $.

The statement $[L]$ is evident. Suppose we have prove those statement [j] for all $i<j\leq L$. Now in $ V^{\infty}_0[i] $, by using Lemma 6.8 and the statement [i+1], $ g( T_{i+1} ) + gh( S_{i+2} ) + ...+ [gh...]_{L-i}(  (?)_L  )   $ is a direct sum
$  g( T_{i+1} ) \oplus gh( S_{i+2} ) \oplus ...\oplus [gh...]_{L-i}(  (?)_L  )  $ whose intersection with $ V^{\infty}_0[i-1] $ is zero. So There exists $ S_i \subset V^{\infty}_0 [i] $ such that

$V^{\infty}_0 [i]= V^{\infty}_0 [i-1]\oplus S_i \oplus g( T_{i+1} ) \oplus gh( S_{i+2} ) \oplus ...\oplus [gh...]_{L-i}(  (?)_L  )  $.

Similarly there exists $T_i \subset W^{\infty}_0[i] $ such that

$W^{\infty}_0 [i]= W^{\infty}_0 [i-1]\oplus T_i \oplus h( S_{i+1} ) \oplus hg( T_{i+2} ) \oplus ...\oplus [hg...]_{L-i}(  (?)_L  ) $.

So the statement [i] is proved, therefore the induction is complete. The last statement is also proved by using  Lemma 6.8. \\

\end{pf}

We call the cross partial morphism $ [V^{\infty}_0 , V^{\infty}_0 ; W^{\infty} _0 , W^{\infty}_0 ; h|_{V^{\infty}_0 } , g|_{W^{\infty}_0}  ] $ as split, if there exist $ V^{\infty '} _0 \subset V^{\infty}_0  $ and $ W^{\infty '} _0 \subset W^{\infty}_0   $ such that

\noindent(1) $V^{\infty}_0 = V^{\infty}_{0}[\infty] \oplus  V^{\infty '} _0$ and $W^{\infty}_0 = W^{\infty}_{0}[\infty] \oplus  W^{\infty '} _0$,

\noindent(2) $h (  V^{\infty '} _0   ) \subset W^{\infty '} _0  $  and $g(  W^{\infty '} _0   ) \subset V^{\infty '} _0 . \\$

The following theorem describe some features of $ [ V_{i/j} , V^{0}_{i/j} ; V_{j/i}, V^0 _{j/i} ; \psi^i _j , \psi^j _i     ]   $ as a cross morphism. For saving notations denote it as $[V, V_0 ; W, W_0 ; h,g   ]$, and denote the terms like $V^i _0 , X_i , V^{\infty}_0 ,
  V^{\infty}_0 [i], S_i ...$ associated with $ [ V_{i/j} , V^{0}_{i/j} ; V_{j/i}, V^0 _{j/i} ; \psi^i _j , \psi^j _i     ]   $ just as  $V^i _0 , X_i , V^{\infty}_0 ,  V^{\infty}_0 [i], S_i ...$ without introducing new notations.

  \begin{thm}
  (1) If $m_{i,j}$ is odd, then $h|_{V^{\infty}_0 }$ and $g|_{ W^{\infty}_0 }$ are isomorphisms. $g|_{ W^{\infty}_0 } h|_{V^{\infty}_0}$ is semisimple with eigenvalues in  $  \{ 4 cos(\frac{a\pi}{m_{i,j}})^2 \frac{1}{(v+v^{-1})^2} \} _{a=1,2,...,[\frac{m_{i,j}-1}{2} ]}  $.

  (2) If $m_{i,j}$ is even, then $( h|_{V^{\infty}_0 },g|_{ W^{\infty}_0 }   )$ is a specialized $(m,v)$-couple between $ V^{\infty}_0 $ and $W^{\infty}_0 $. Especially $ [V^{\infty}_0 , V^{\infty}_0 ; W^{\infty} _0 , W^{\infty}_0 ; h|_{V^{\infty}_0 } , g|_{W^{\infty}_0}  ] $ is split,  and $L_1 =L_2 =1$.

  \end{thm}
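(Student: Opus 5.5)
The plan is to deduce every assertion from the spectral behaviour of the composite $gh$ on the finite-dimensional space $V^{\infty}_0$, on which all iterates $[\dots gh]_k$ are defined. The data $V^{\infty}_0,W^{\infty}_0,h,g$ are built from the realizable $H^2$-representation $\Theta$ chosen in Step 1, through the space realization of section 6.1 and the morphisms $\phi^i_j=\bar\phi^i_j+\Delta_j|_{V^0_i}$. So I would first relate this restricted cross morphism $[V^{\infty}_0,V^{\infty}_0;W^{\infty}_0,W^{\infty}_0;h|,g|]$ to a genuine $H$-representation, and only then do linear algebra.

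\textbf{Step 1 (comparison with a realizing $H$-representation).} Since $\Theta$ is $(\Gamma;q)$-realizable, fix an $H$-representation $\Psi=(V'_i,A^i_j)$ with $\Theta_\Psi=\Theta$. By the uniqueness of the space realization up to a free factor, I would identify $V_i\supset V^0_i,V_{i,j},V_{i/j}$ with the corresponding objects of $\Psi$, modulo a summand $V''_i$ that is disjoint from $V^0_i$. Proposition 6.3 says $A^i_j|_{V^0_i}$ satisfies exactly the constraints that $\Theta$ imposes on the $\phi^i_j$. The point to prove is that on the stable part $V^{\infty}_0$ the operators $h,g$ agree with $A^i_j,A^j_i$, after the $\Delta$'s are adjusted within $\mathscr{B}_j$. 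The mechanism is that $V^{\infty}_0$ is the largest subspace of $V^0_{i/j}$ on which all alternating words stay inside $V^0_{i/j}\cup V^0_{j/i}$. There the components of $\phi$ are pinned down by $\Theta$: by Lemmas 6.4 and 6.7 the $\bar\phi$-parts are determined by the $A^{ik}_{jl}$. I expect this identification to be the main obstacle, because the $\Delta_j$ are free. I would first try to show that $\Delta_j(V^{\infty}_0)$ lands in $V_{j/}\cap V^0_{j/i}$, and that the resulting ambiguity can be absorbed by changing the complement $\bar W_j$. If this fails, I would state the theorem for the choice of $\Delta$'s induced from $\Psi$.

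\textbf{Step 2 (semisimplicity).} Once Step 1 is in place, $gh|_{V^{\infty}_0}$ is the restriction of $A^j_iA^i_j$ to an invariant subspace contained in $V_{i/j}$. By Theorem 1.2 this operator is semisimple on $V_{i/j}$, with eigenvalues in $\{\lambda_{m_{i,j};q}(a)\}$, together with $0$ when $m_{i,j}$ is even. The restriction of a semisimple operator to an invariant subspace is again semisimple, with eigenvalues among the original ones. The same argument applies to $hg|_{W^{\infty}_0}$.

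\textbf{Step 3 (odd $m_{i,j}$).} For odd $m_{i,j}$, $0$ is not an eigenvalue, so $gh$ and $hg$ are invertible. Hence $h|_{V^{\infty}_0}$ and $g|_{W^{\infty}_0}$ are injective, which forces $\dim V^{\infty}_0=\dim W^{\infty}_0$, so both maps are isomorphisms.

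\textbf{Step 4 (even $m_{i,j}$).} By Theorem 1.2(2), a $0$-eigenvector of $A^j_iA^i_j$ is killed by $A^i_j$, and symmetrically on the other side. Hence the pair is a specialized $(m_{i,j};q)$-couple. Semisimplicity gives $V^{\infty}_0[\infty]=\ker(gh)=\ker h$, which equals $V^{\infty}_0[1]$; together with the symmetric statement this yields $L_1=L_2=1$. For the splitting, take $V^{\infty\prime}_0$ to be the sum of the nonzero eigenspaces of $gh$, and $W^{\infty\prime}_0$ likewise for $hg$. Lemma 1.1 shows that $h$ and $g$ preserve these sums, which gives conditions (1) and (2) of the definition of split.
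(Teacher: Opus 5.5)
The paper states this theorem without proof, so I measure your proposal against what the statement needs. There is a genuine gap: Step~1 carries all of the content, and it cannot be carried out as planned.

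\textbf{Why Step~1 fails.} Realizability of $\Theta$ controls $\phi^k_j$ only modulo $V_{j/}$. This is the content of Lemma 6.4(2): $p^j_j\circ\phi^k_j=\bar\Phi^k_j$. The $V_{j/}$-components are the free datum $\Delta_j\in\mathscr{B}_j$, and they enter the definition of $h=\psi^i_j$ and $g=\psi^j_i$. ``Adjusting the $\Delta$'s'' therefore replaces $h,g$ by different maps, and says nothing about the given cross morphism.

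Changing the lift $\bar W_j$ absorbs nothing either. Replacing $\eta_j$ by $\eta_j+\tau$, with $\tau\colon W_j\to V_{j/}$, is compensated exactly by $\Delta_j\mapsto\Delta_j-\tau\circ\Phi_j$. Here $\Phi_j\colon\bar V^0\to W_j$ is the map induced by all the $\bar\Phi^k_j$; it exists by (1)--(3) of Theorem 5.2. Hence the attainable families $\{\phi^k_j\}_k$ form, up to the open injectivity condition, one affine space over $\{D\colon\bar V^0\to V_{j/}\mid D|_{V^0_j}=0\}$, and such $D$ are essentially arbitrary on $V^0_{i/j}$.

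Your proposed first move, showing $\Delta_j(V^\infty_0)\subset V_{j/}\cap V^0_{j/i}$, isolates exactly the part of this freedom that survives, namely $V_{j/}\cap V^0_j=\ker\bar\Phi^j_j\subset V^0_{j/i}=W_0$. Nothing in $\Theta$ constrains it. When it is nonzero, a $D$ with values there moves $h$ without leaving $W_0$. This changes the iterates, and in general $V^\infty_0$ and $gh|_{V^\infty_0}$, while $\Theta$ stays fixed. So no argument from realizability of $\Theta$ alone gives the conclusion for every admissible choice of $\Delta$'s. This agrees with the paper itself: the conclusion reappears as hypothesis (1) of the extendability criterion (Theorem 6.5), and Remark 6.1 warns that a bad choice of the $\phi^i_j$ admits no extension.

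\textbf{How to repair it.} Read the statement under the hypothesis that the matched pair is $(m_{i,j};q)$-extendable in the sense of Definition 6.2, possibly after a trivial extension. A trivial extension changes neither $V_0,W_0,h,g$ nor $V^\infty_0,W^\infty_0$. This is exactly how the statement is used, namely as the necessity of condition (1) in Theorem 6.5. Under this hypothesis:
\begin{itemize}
\item Drop Step~1, and run Steps~2--4 with the extensions $\tilde h,\tilde g$ in place of $A^i_j,A^j_i$, and the definition of a specialized couple in place of Theorem 1.2.
\item $V^\infty_0$ is $\tilde g\tilde h$-invariant, since $h(V^\infty_0)\subset W^\infty_0$ and $g(W^\infty_0)\subset V^\infty_0$.
\item Restriction to an invariant subspace preserves semisimplicity and the eigenvalue list.
\item The eigenvalue $1$ is excluded by definition. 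In your route this would have required identifying $\ker p^i_{i,j}$ with the canonical $V_{i/j}$ of $\Psi$.
\item Your odd and even case arguments then go through as written.
\end{itemize}

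Your fallback, using the $\Delta$'s induced from $\Psi$, is a special case of this hypothesis. As stated, though, it also needs two things the paper does not supply: the realization must be identified with $\Psi$ (uniqueness up to a free factor is only asserted), and the differences $A^i_j|_{V^0_i}-\bar\phi^i_j$ must glue to a single $\Delta_j$ on $\bar V^0$.

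Finally, your even case argument gives $V^\infty_0[k]=\ker h$ for all $k\geq 1$. That means $L_1\leq 1$, with equality only when $\ker h\neq 0$, and similarly for $L_2$. The claim ``$L_1=L_2=1$'' should be read that way.
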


\subsection{Extending $\phi^i _j$ to $A^i _j$ .}

Now come back to $ [ V_{i/j} , V^{0}_{i/j} ; V_{j/i}, V^0 _{j/i} ; \psi^i _j , \psi^j _i     ]   $ and
$[ V^{\#} _{i/j} , V^{\# 0}_{i/j} ; V^{\#} _{j/i}, V^{\# 0} _{j/i} ; \psi^{\# i} _j , \psi^{\# j} _i     ] $.
 To simplify notation denote them  just as
$[V,V_0 ; W, W_0 ; h,g  ]$ and
 $[V^{\#}, V^{\#}_0 ; W^{\#}, W^{\#}_0 ; h^{\#} ,g^{\#}  ] $. Denote the bilinear form between $V$ and $V^{\#}$ as $[-,-]_l$, and the bilinear form between $W$ and $W^{\#}$ as $[-,-]_r$. Assume (notation from section 6.3) both  $[V^{\infty}_0 ,V^{\infty}_0 ; W^{\infty}_0, W^{\infty}_0 ; h|_{V^{\infty}},g|_{W^{\infty}}  ]$ and
$[V^{\# \infty}_0 , V^{\# \infty}_0 ; W^{\# \infty}_0 , W^{\# \infty}_0 , h^{\#}|_{V^{\# \infty}_0}, g^{\#}|_{W^{\# \infty}_0}   ]$ are split.

 Recall the terms  $V^i _0 , W^i _0 , N_1 , N_2 , X_i , Y_i $ in section 6.3 associated with  $[V,V_0 ; W, W_0 ; h,g  ]$. Denote the corresponding terms associated with $[V^{\#}, V^{\#}_0 ; W^{\#}, W^{\#}_0 ; h^{\#} ,g^{\#}  ] $ as
  $V^{\# i} _0 , W^{\# i} _0 , M_1
  $ $, M_2 , S_i , T_i   $ respectively. Subspaces $V_0 +g(W_0 )$ ($W_0 + h(V_0 ) $) and $V^{\#}_0 +g^{\#}(W^{\#}_0 )$
  ( $W^{\#}_0 +h^{\#}(V^{\#}_0 )$  ) and the pairing between them are important in this section. Theorem 6.2 equip both  of them with a "good" basis as follows.

\noindent $\bullet$ Choose any basis $\{ e^i _{v} \}_{v=1,..., K_i } $ of $X_i $, and any basis
$\{ f^j _{u} \}_{u=1,...,L_j}   $ of $Y_j$. On the dual side chose any basis $\{  \alpha^i _s \}_{s=1,..., K^{\#}_i } $ of $S_i$, and any basis
$\{ \beta^j _{t} \}_{t=1,...,L^{\#}_j }  $ of $T_j$.

\noindent $\bullet$ Denote $h( e^i _{v}  ) $ as $ \bar{f}^{i-1} _{v}  $, $g(  f^j _{u} ) $ as $\bar{e}^{j-1} _u $, $ h^{\#}(\alpha^i _{s})  $ as $\bar{\beta}^{i-1} _{s}$,
 and $ g^{\#} ( \beta^j _{t}  )  $ as $\bar{\alpha}^{j-1} _t $. Especially $h( e^0 _{v}  ) , g(f^0 _u ) ,$
  $ h^{\#}(\alpha^0 _s ), g^{\#}(\beta^0 _{t})  $ are denoted as $\bar{e}^{-1}_v , \bar{f}^{-1}_u , \bar{\alpha}^{-1}_s , \bar{\beta}^{-1}_t   $ respectively.

\noindent $\bullet$ Denote $[ [...gh]_{a}( e^i _v  )  , [...g^{\#} h^{\#} ]_{b}( \alpha^j _s )    ]_{*}   $ as $\Upsilon^{i,j}_{v,s}(a+b)_{0,0} $, where $0\leq a\leq i+1 $, $0\leq b\leq j+1$, $a\equiv b \mod 2 $. The symbol $*$ is $i$ if $a$ is even and is $j$ if $a$ is odd. $\Upsilon^{i,j}_{v,s}(a+b)_{0,0} $ is well defined since by Lemma 6.6, this pairing only depends on $a+b$. A series of numbers from pairings and the corresponding range of parameters are defined as follows.

\noindent$\Upsilon^{i,j}_{v,t}(a+b)_{0,1} = [ [...gh]_{a}( e^i _v  )  , [...h^{\#} g^{\#} ]_{b}( \beta^j _t )    ]_{*}   $  ,  $0\leq a\leq i+1 $, $0\leq b\leq j+1$, $2|a-b+1$.

\noindent $\Upsilon^{j,i}_{u,s}(a+b)_{1,0} =[ [...hg]_{a}( f^j _u )  , [...g^{\#} h^{\#} ]_{b}( \alpha^i _s )    ]_{*}   $ ,  $0\leq a\leq i+1 $, $0\leq b\leq j+1$, $2|a-b+1 $.

\noindent$\Upsilon^{j,i}_{u,t}(a+b)_{1,1} = [ [...hg]_{a}( f^j _u )  , [...h^{\#} g^{\#} ]_{b}( \beta^i  _t )    ]_{*}   $ ,  $0\leq a\leq j+1 $, $0\leq b\leq i+1$, $2|a-b $.\\

By the notation "$S \rhd [V_1 : V_2 ] $   " we mean $S$ is a basis of a complement of $V_2$ in $V_1$. By Theorem 6.2, there are the following facts.

\noindent  $\bigcup^{N_1} _{i=0} \{ [...gh]_{2a}e^i _v , [...gh]_{2a}\bar{e}^i _v  |  0\leq 2a\leq i+1 \}
 \bigcup \{  \bar{e}^{-1}_v  \} \rhd [V_0 +g(W_0 ) : V^{\infty}_0 ] $ .

\noindent $\bigcup^{N_2}_{i=0}\{ [...hg]_{2a}f^i _u ,[...hg]_{2a}\bar{f}^i _u  |  0\leq 2a\leq i+1 \} \bigcup \{ \bar{f}^{-1} _u \} \rhd
[W_0 +h(V_0 ): W^{\infty}_0 ]$.

\noindent $\bigcup^{M_1}_{i=0}\{ [...g^{\#}h^{\#} ]_{2a}\alpha^i _{s} , [...g^{\#}h^{\#}]_{2a}\bar{\alpha}^i _{t}  |  0\leq 2a\leq i+1 \} \bigcup \{ \bar{\alpha}^{-1}_s \} \rhd [V^{\#}_0 +g^{\#}(W^{\#}_0 ): V^{\# \infty}_0 ]$  .

\noindent $\bigcup^{M_2}_{j=0} \{ [...h^{\#}g^{\#} ]_{2b}\beta^j _{s} ,
 [...g^{\#}h^{\#}]_{2b}\bar{\beta}^j _{t}  | 0\leq 2b\leq j+1 \} \bigcup\{ \bar{\beta}^{-1}_s \} \rhd [W^{\#}_0 +g^{\#}(V^{\#}_0 ): W^{\# \infty}_0 ]$ .

 To obtain complete bases of these four subspaces, we need to chose bases of  $V^{\infty}_0 , W^{\infty}_0 ,V^{\# \infty}_0  $ and
  $ W^{\# \infty}_0$ further. To simplify notation,

\noindent If $m\in 2\mathbb{Z}^{\geq0}+1$, let $L_m =[  \frac{m-1}{2}  ]$, and $\lambda_i = 4\cos( \frac{i\pi}{m} )^2 \frac{1}{(\nu + \nu ^{-1})^2 }$ for any $1\leq i\leq L_m$.

\noindent  If  $m\in 2\mathbb{Z}^{>0}$, let $L_m =[  \frac{m-1}{2}  ]+1 $,  $\lambda_i = 4\cos( \frac{i\pi}{m} )^2 \frac{1}{(\nu + \nu ^{-1})^2 }$ for any $1\leq i\leq L_m -1 $, and  $\lambda_{L_m} =0$.

\noindent $\bullet$  By Theorem 6.4, denote the $\lambda_d -$eigenspace of $g|_{W^{\infty}_0 } h|_{V^{\infty}_0 }$ as
$V^{\infty}_{0,d} $, and the  $\lambda_d -$eigenspace of $h|_{V^{\infty}_0 } h|_{W^{\infty}_0 }$ as
$W^{\infty}_{0,d} $. Let $\{  e^{\infty} _{u,d} \}$ be a basis of $V^{\infty}_{0,d}$ for $1\leq d\leq L_{m}$.
If $m$ is odd, let $f^{\infty}_{u,d}= h( e^{\infty}_{u,d} )$, then for any $d$, $\{ f^{\infty}_{u,d} \}$ is a basis of $W^{\infty}_{0,d} $, and there hold $g (  f^{\infty}_{u,d}  )= \lambda_d e^{\infty}_{u,d}   $.

If $m$ is even, for $1\leq d\leq L_{m}-1$, let $f^{\infty}_{u,d}= h ( e^{\infty}_{u,d} )$, then  $\{ f^{\infty}_{u,d} \}$ is still a basis of $W^{\infty}_{0,d} $ and there hold $g (  f^{\infty}_{u,d}  )= \lambda_d e^{\infty}_{u,d}   $. For the special eigenvalue $0=\lambda_{L_m }$, let $\{ f^{\infty}_{u, L}  \}$ be a basis of $W^{\infty}_{0,L_m } $. There hold $ h (e^{\infty}_{u,d})=0 $ and
$g( f^{\infty}_{u,d}  )=0$.

In the dual side, similarly let $\{ \alpha^{\infty}_{u,d}\}$ be a basis of the $\lambda_d -$eigenspace of
$g^{\#}|_{W^{\# \infty}_0 } h^{\#}|_{V^{\# \infty}_0 }  $, let  $\{ \beta^{\infty}_{v,d}\}$ be a basis of the $\lambda_d -$eigenspace of
$ h^{\#}|_{V^{\# \infty}_0 } g^{\#}|_{W^{\# \infty}_0 } $ such that if $m$ is odd and $1\leq d\leq L_m$, or $m$ is even and $1\leq d\leq L_m -1$, $\beta^{\infty}_{u,d}= h^{\#}( \alpha^{\infty}_{u,d} )$, and if $m$ is even then $h^{\#}(\alpha^{\infty}_{u,L_m })=0 $,
$g^{\#}(\beta^{\infty}_{u,L_m })=0 $.




Define some constants as follows, which will be used in the proof of Theorem 6.5.

   ${}^l \Upsilon^{i,\infty}_{u,v;d}=[ e^i _u , \alpha^{\infty}_{v,d}  ]_l $,
  ${}^l \Upsilon^{\bar{i},\infty}_{u,v;d}=[ \bar{e}^i _u , \alpha^{\infty}_{v,d}  ]_l $,
   ${}^r \Upsilon^{i,\infty}_{u,v;d}=[ f^i _u , \beta^{\infty}_{v,d}  ]_r $,
   ${}^r \Upsilon^{\bar{i},\infty}_{u,v;d}=[ \bar{f}^i _u , \beta^{\infty}_{v,d}  ]_r $,

   ${}^l \Upsilon^{\infty,\infty}_{u,v;d}=[ e^{\infty}_{u,d} , \alpha^{\infty}_{v,d}  ]_l   $,
    ${}^r \Upsilon^{\infty,\infty}_{u,v;d}=[ f^{\infty}_{u,d} , \beta^{\infty}_{v,d}  ]_r  $,
     ${}^l \Upsilon^{\infty,i}_{u,v;d}=[ e^{\infty}_{u,d} , \alpha^{i}_{v,d}  ]_l   $,
     ${}^l \Upsilon^{\infty,\bar{i}}_{u,v;d}=[ e^{\infty}_{u,d} , \bar{\alpha}^{i}_{v,d}  ]_l   $,

      ${}^r \Upsilon^{\infty,i}_{u,v;d}=[ f^{\infty}_{u,d} , \beta^{i}_{v,d}  ]_r  $,
       ${}^r \Upsilon^{\infty,\bar{i}}_{u,v;d}=[ f^{\infty}_{u,d} , \bar{\beta}^{i}_{v,d}  ]_r  $.

    There hold
 ${}^l \Upsilon^{\bar{i},\infty}_{u,v;d}= {}^r \Upsilon^{i+1,\infty}_{u,v;d} $,
    ${}^r \Upsilon^{\bar{i},\infty}_{u,v;d}=\lambda_d {}^l \Upsilon^{i+1,\infty}_{u,v;d}$,
    ${}^{l}\Upsilon^{\infty,\overline{j-1}}_{u,v;d}={}^{r} \Upsilon^{\infty,j}_{u,v;d} $,
    ${}^r \Upsilon^{\infty,\bar{j}}_{u,v;d}=\lambda_d {}^l \Upsilon^{\infty,j+1}_{u,v;d}   $.


\noindent \textbf{Relevant linear equation systems. } Assume this matched pair of cross morphisms is $(m;q)$-extendable, then there
  exist $h:V\rightarrow W$ and $g:W\rightarrow V$ satisfying conditions in discussions after Definition 6.1.
  Then $gh, hg, h^{*} g^{*} , g^{*} h^{*}$ are all diagonalizable with eigenvalues in $\{ \lambda_d \}_{1\leq d\leq L_m }$. So
  $e^i _u =\sum_{d} e^i _{u,d} $ where $e^i _{u,d}$ is a $\lambda_d$-eigenvector of $g h$. Similarly there are the following decompositions.

  $\bar{e}^i _u = \sum_{d} e^i _{u,d} $,
  $f^i _u =\sum_{d} f^i _{u,d} ,  \bar{f}^i _u = \sum_{d} f^i _{u,d}  $.

  $\alpha^i _{v} =\sum_{d} \alpha^i _{v,d}$,
   $\bar{\alpha}^{i} _{v} =\sum_{d} \bar{\alpha}^{i}  _{v,d}$,
    $ \beta^{i} _{v} =\sum_{d} \beta^{i} _{v,d} $ ,
    $\bar{\beta}^i _{v} =\sum_{d} \bar{\beta}^{i} _{v,d}$.

    $e^{\infty} _u =\sum_{d} e^{\infty} _{u,d} $, $f^{\infty} _u =\sum_{d} f^{\infty} _{u,d}$.

     Since $h( e^i _v  ) = \bar{f}^{i-1}_v $,  there are
 $ h( e^i _{v,d}  )= \bar{f}^{i-1}_{v,d}  $ and $g( \bar{f}^{i-1}_{v,d}   )= \lambda_d e^i _{v,d}   $. Similarly there are
 $g( f^j _{u,d} )= \bar{e}^{j-1}_{u,d}  $, $h ( \bar{e}^{j-1}_{u,d}  )=\lambda_d f^{j}_{u,d}  $.

 \begin{prop} If $m\in 2\mathbb{Z}^{\geq0}+1$, assume above notations, then

\noindent (1) $ [ \bar{f}^{i-1}_{v,d} , \bar{\beta}^{j-1}_{u,d}    ]_r = \lambda_d [ e^i _{v,d} , \alpha^j _{u,d}   ]_l  $,
 $[\bar{e}^{i-1}_{v,d} ,\bar{\alpha}^{j-1} _{u,d} ]_l = \lambda_d [ f^i _{v,d} , \beta^j _{u,d}   ]_r   $.

 \noindent(2) $ [ f^i _{v,d} , \bar{\beta}^{j-1}_{u,d}  ]_r =[ \bar{e}^{i-1} _{v,d} , \alpha^j _{u,d}     ]_l  $,
 $[ e^i _{v,d} , \bar{\alpha}^{j-1}_{u,d}  ]_l = [ \bar{f}^{i-1}_{v,d} , \beta^j _{u,d}    ]_r $.

\noindent (3) $\sum^{L}_{d=1} (\lambda_d )^a [e^i _{v,d} ,\alpha^j _{u,d}   ]_l = \Upsilon^{i,j}_{v,u}(2a)_{00}  $,  if  $i\equiv j \pmod{2}$ and
 $0\leq 2a \leq  i+j+2 $  .

\noindent (4) $\sum^{L}_{d=1} (\lambda_d )^a [e^i _{v,d} ,\bar{\alpha}^j _{u,d}   ]_l = \Upsilon^{i, j+1}_{v,u}(2a+1)_{01}  $, if $i\equiv j \pmod{2}$ and
 $ 0\leq 2a\leq i+j+2  $ .

 \noindent(5)  $\sum^{L}_{d=1} (\lambda_d )^a [f^i _{v,d} ,\beta^j _{u,d}   ]_r = \Upsilon^{i,j}_{v,u}(2a)_{11}  $, if $i\equiv j \pmod{2}$ and
 $0\leq 2a \leq i+j+2  $ .

\noindent (6) $\sum^{L}_{d=1} (\lambda_d )^a [f^i _{v,d} ,\bar{\beta}^j _{u,d}   ]_r = \Upsilon^{i, j+1}_{v,u}(2a+1)_{10}  $, if  $i\equiv j \pmod{2}$ and
 $ 0\leq 2a\leq i+j+2   $.

 If $m\in 2\mathbb{Z}^{>0}$, if let $\bar{f}^i _{v, L_m }=0 $, $\bar{e}^i _{u, L_m }  =0 $, $\bar{\alpha}^j _{v,L_m }=0 $ and $\bar{\beta}^j _{u,L_m }=0$, then above identities (1)$\sim$(6) hold.

 \end{prop}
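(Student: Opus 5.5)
The plan is to derive everything from the intertwining relations between the extended morphisms and their eigen-decompositions, together with the compatibility of the pairings. Here $h,g$ denote the extensions $\tilde h,\tilde g$ from Definition 6.2, and $h^{\#}=\tilde g^{*}$, $g^{\#}=\tilde h^{*}$ on the dual side.

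\textbf{Step 1: the dual eigen-decompositions are orthogonal.} Since $gh$ is semisimple, $V=\oplus_d V_d$, where $V_d$ is the $\lambda_d$-eigenspace of $gh$. Dually, $V^{\#}=\oplus_d V^{\#}_d$, where $V^{\#}_d$ is the $\lambda_d$-eigenspace of $(gh)^{*}=h^{*}g^{*}=g^{\#}h^{\#}$. For $x\in V_d$ and $y\in V^{\#}_{d'}$ we have
\[
\lambda_d[x,y]_l=[ghx,y]_l=[x,(gh)^{*}y]_l=\lambda_{d'}[x,y]_l .
\]
Since the $\lambda_d$ are pairwise distinct, the pairing between different eigencomponents vanishes. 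The same argument applies to $W$ with $hg$. Therefore $[\sum_d x_d,\sum_d y_d]=\sum_d[x_d,y_d]$. Moreover $h$ maps $V_d$ into $W_d$, and the components satisfy $h(e^i_{v,d})=\bar f^{i-1}_{v,d}$ and $g(\bar f^{i-1}_{v,d})=\lambda_d e^i_{v,d}$, as already noted before the proposition. The analogous relations hold on the dual side, for instance $h^{\#}(\alpha^j_{u,d})=\bar\beta^{j-1}_{u,d}$.

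\textbf{Step 2: items (1) and (2).} For (1), write
\[
[\bar f^{i-1}_{v,d},\bar\beta^{j-1}_{u,d}]_r=[h e^i_{v,d},\,\tilde g^{*}\alpha^j_{u,d}]_r=[gh\, e^i_{v,d},\alpha^j_{u,d}]_l=\lambda_d[e^i_{v,d},\alpha^j_{u,d}]_l .
\]
The second identity in (1) is proved symmetrically. For (2), move one morphism across the pairing:
\[
[f^i_{v,d},h^{\#}\alpha^j_{u,d}]_r=[f^i_{v,d},\tilde g^{*}\alpha^j_{u,d}]_r=[g f^i_{v,d},\alpha^j_{u,d}]_l=[\bar e^{i-1}_{v,d},\alpha^j_{u,d}]_l .
\]

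\textbf{Step 3: moment identities (3)--(6).} By the definition of $\Upsilon$ and by Lemma 6.6 (the pairing depends only on $a+b$), we have
\[
\Upsilon^{i,j}_{v,u}(2a)_{00}=[(gh)^a e^i_v,\alpha^j_u]_l ,
\]
obtained by taking all $2a$ letters on the left. This requires $[...gh]_{2a}(e^i_v)$ to be defined, with $2a\le i+1$; otherwise we split the letters as $a'+b'=2a$ with $a'\le i+1$ and $b'\le j+1$, which is possible in the stated range $2a\le i+j+2$. Once the extension exists, these iterates coincide with $(\tilde g\tilde h)^a$ and with adjoints on the dual side, because $\tilde h,\tilde g$ extend $h,g$ and $\tilde h^{*},\tilde g^{*}$ extend $g^{\#},h^{\#}$. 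Expanding $e^i_v=\sum_d e^i_{v,d}$ and $\alpha^j_u=\sum_d\alpha^j_{u,d}$, then applying $(gh)^a e^i_{v,d}=\lambda_d^a e^i_{v,d}$ and Step 1, gives (3). Items (4)--(6) follow in the same way. In (4) and (6) there is one extra $g^{\#}$ or $h^{\#}$ letter, which produces $\bar\alpha$ or $\bar\beta$; the odd total length $2a+1$ comes from that letter. The parity conditions $i\equiv j$ only ensure that the required index constraints $a\equiv b \pmod 2$ in the definition of $\Upsilon$ are met.

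\textbf{Step 4: even $m$.} When $m$ is even, the new feature is the eigenvalue $\lambda_{L_m}=0$. By the specialized couple condition, on the $0$-eigenspace $h$ and $g$ vanish. Hence the $L_m$-components of $\bar f,\bar e,\bar\alpha,\bar\beta$ (images under $h$, $g$, $h^{\#}$, $g^{\#}$) are zero, which justifies setting them to $0$. The same computations then go through, with terms $\lambda_{L_m}^a$ for $a\ge1$ vanishing consistently.

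The main obstacle is the bookkeeping in Step 3: checking that each $\Upsilon$ with total length $2a$ or $2a+1$ in the stated range can indeed be realized by a legitimate split $a'+b'$ of well-defined words, and that the indices in (4) and (6) (the $j+1$ versus $\bar\alpha^j$ shift) match the definitions of $\bar\alpha$ and $\bar\beta$.
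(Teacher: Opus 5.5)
Your proposal is correct and follows essentially the paper's argument: you prove (1) and (2) by moving one of the extended maps across the pairing, and (3)--(6) by realizing each $\Upsilon$ through an admissible split of the words and expanding into $\lambda_d$-eigencomponents. The only cosmetic difference is that you push all letters to one side uniformly using adjointness of $\tilde h,\tilde g$, and you state explicitly that distinct eigencomponents are orthogonal, which the paper uses without comment; the paper instead chooses even-length splits and handles the extreme case ($i,j$ even, $2a=i+j+2$) separately via identity (1).
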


 \begin{pf} For the first equation of (1), $[ \bar{f}^{i-1}_{v,d} , \bar{\beta}^{j-1}_{u,d}    ]_r
 =[ h( e^i _{v,d}  ), h^{\#}( \alpha^j _{u,d}  )   ]_r = [ gh( e^i _{v,d} ), \alpha^j _{u,d}  ]_l =  \lambda_d [ e^i _{v,d} , \alpha^j _{u,d}   ]_l $. The second equation of (1) and (2) can be proved similarly.

 For (3), if $i,j$ are both odd, for any  $ 0\leq a\leq \frac{i+j+2}{2}  $, suppose $2a= 2b +2c$ where $2b\leq i+1 ,2c\leq j+1  $, then
 $[ [...gh  ]_{2b}e^i _v  , [...g^{\#} h^{\#}]_{2c} \alpha^j _u      ]_l =\Upsilon^{ij}_{v,u}(2a) _{00}  $. Notice the left side of this identity is
 $ \sum^{L}_{d=1} (\lambda_d )^a [ e^i _{v,d} ,\alpha^j _{u,d}   ]_l  $, so the identities in (3) follow.

 If $i,j$ are both even, for $0\leq a\leq \frac{i+j}{2}$, suppose
 $a= b+c$ where $ b\leq i ,c\leq j$ and both $b,c$ are even, then $ [ [...gh  ]_{b}e^i _v  , [...g^{\#} h^{\#}]_{c} \alpha^j _u      ]_l =\Upsilon^{ij}_{v,u}(a) _{00}  $, and the equation of (3) can be derived from this identity as before.

  If $i,j$ are both even and $a=\frac{i+j+2}{2}$, recall the identity $[ [...gh]_{i+1}e^i _v ,[...g^{\#}h^{\#} ]_{j+1}\alpha^j _u    ]_r =\Upsilon^{ij}_{v,u}(i+j+2)_{00}  $. Now left side of this identity equals $[ [...hg]_i ( \bar{f}^{i-1}_v ), [...h^{\#}g^{\#}]_j (\bar{\beta}^{j-1}_u)   ]_r
 = [ [...hg]_{i+j} ( \bar{f}^{i-1}_v ),  \bar{\beta }^{j-1}_u ]_r = \sum^L _{d=1} ( \lambda_d )^{\frac{i+j}{2}} [ \bar{f}^{i-1}_{v,d} , \bar{\beta }^{j-1}_{u,d}   ]_r  = \sum^L _{d=1}  ( \lambda_d )^{\frac{i+j+2}{2}} [ e^i _{v,d} , \alpha^j _{u,d}  ]_l  $, where the last equals sign is by the first identity of (1).

 (4)$\backsim $(6) can be proved similarly.\\

 \end{pf}

 We apply the same procedure to identities in (3)$\sim$(6) to obtain several linear equation systems as follows.

  $ \langle X^{i,j}_{v,u;d}   \rhd  (e^i _{v,d} , \alpha^j _{u,d}   )    \ \text{in}\ ( 3 )    \rangle  : \mathscr{E}^{i,j}_{v,u}  $, $\ \ \ $
   $ \langle  X^{i, \bar{j}}_{v,u;d}   \rhd ( e^i _{v,d} ,\bar{\alpha}^j _{u,d}   )   \ \text{in}\ ( 4 )    \rangle  :  \mathscr{E}^{i,\bar{j}}_{v,u}    $,

   $ \langle  Y^{i,j}_{v,u;d}  \rhd ( f^i _{v,d} , \beta^j _{u,d}   )   \ \text{in}\ ( 5 )   \rangle  :  \tilde{\mathscr{E}}^{i,j}_{v,u}  $, $\ \ \ $
     $ \langle  Y^{i,\bar{j}}_{v,u;d}   \rhd  (  f^i _{v,d} , \bar{\beta}^j _{u,d} )   \ \text{in}\ ( 6 )   \rangle  :  \tilde{\mathscr{E}}^{i,\bar{j}}_{v,u}   $.\\

 \noindent \textbf{Sufficient and necessary conditions for a matched couple to be $(m;q)-$extendable. } Let  $[V,V_0 ; W, W_0 ; h,g  ]$ and $[V^{\#} , V^{\#}_0 ; W^{\#} , W^{\#}_0 , h^{\#}, g^{\#}   ]$ be two cross  morphisms
  matched by a  pair of bilinear forms  $[-,- ]_l : V \times V^{\#}\rightarrow \kappa$ and $[-,-]_r : W \times W^{\#} \rightarrow \kappa $.

  \begin{thm} Assume notations $X_i ,Y_j ,S_i ,T_j$;$N_1 ,N_2 ,M_1 ,M_2$, $\Upsilon^{*,*}_{*,*}(a)_{*,*}    $,${}^{*}\Upsilon^{*,*}_{*,*;d}  $; $V^{\infty}_0 , W^{\infty}_0 $,
  $ V^{\# \infty}_0 ,  W^{\# \infty}_0 $ as before. Then there exists  a suitable trivial extension of this matched pair being
   $(m;q)-$extendable if and only if the following conditions hold.

\noindent(1) $( h|_{V^{\infty}_0 } , g|_{W^{\infty}_0 }   )$ is a specialized $(m;q)-$couple between $V^{\infty}_0$ and $W^{\infty}_0$.
     In the dual side $( h^{\#}|_{V^{\# \infty}_0 } , g^{\#}|_{W^{\# \infty}_0 }   )$ is a specialized $(m;q)-$couple between $V^{\# \infty}_0$ and $W^{\# \infty}_0$.

\noindent(2) The linear equation systems  $\mathscr{E}^{i,j}_{v,u}$ $\mathscr{E}^{i,\bar{j}}_{u,v}   $,  $ \tilde{\mathscr{E}}^{i,j}_{u,v} $, $ \tilde{\mathscr{E}}^{i,\bar{j}}_{v,u} $ are all solvable.

\noindent(3) $[\frac{N_1 +1}{2}] +1 \leq L_m$,  $[\frac{N_2 +1}{2}] +1 \leq L_m  $, $ [\frac{M_1 +1}{2}]+1 \leq L_m  $, $ [\frac{M_2 +1}{2}]+1 \leq L_m$.
   \end{thm}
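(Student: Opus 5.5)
The plan is to prove the two directions separately, working in the bases built from Theorem 6.2 (the decompositions via $X_i,Y_j,S_i,T_j$) and Theorem 6.3 (the decompositions of $V^{\infty}_0, W^{\infty}_0$ via $S_j,T_j$). We use the split hypothesis on the $\infty$-parts throughout.

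\textbf{Necessity.} Suppose some trivial extension is $(m;q)$-extendable by $\tilde h,\tilde g$. Condition (1) then follows as in Theorem 6.4. The operators $\tilde h,\tilde g$ restrict to $h|_{V^{\infty}_0}, g|_{W^{\infty}_0}$, since these subspaces are stable. Hence $\tilde g\tilde h$ restricted to $V^{\infty}_0$ is semisimple with the prescribed eigenvalues, and the $0$-eigenvector condition is inherited; the dual side is identical.

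Condition (2) is precisely Proposition 6.5. Decomposing $e^i_u,\alpha^j_v,\dots$ into $\lambda_d$-eigencomponents of $\tilde g\tilde h$ (respectively $\tilde g^*\tilde h^*$) gives unknowns $X^{i,j}_{v,u;d}=[e^i_{v,d},\alpha^j_{u,d}]_l$ etc. These solve the Vandermonde-type systems $\mathscr E$, $\tilde{\mathscr E}$, and the solution is unaffected by adding trivial factors.

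Condition (3) is a dimension count on Krylov chains. Take $e\in X_{N_1}$ nonzero. The vectors $e,\,gh(e),\,\dots,\,[gh]^{[\frac{N_1+1}{2}]}(e)$ are linearly independent modulo $V^{\infty}_0$ by Theorem 6.2. If they all lay in the span of at most $L_m$ eigencomponents, then $e$ would be annihilated by a polynomial of degree $\le L_m$ in $\tilde g\tilde h$. The claim to establish is that such a relation forces a combination of the chain into $V^{\infty}_0$, or beyond $V^{N_1+1}_0$, contradicting the definition of $N_1$. This yields $[\frac{N_1+1}{2}]+1\le L_m$, and the same argument gives the other three inequalities.

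\textbf{Sufficiency.} Here we construct the extension. First enlarge by trivial factors $U_l,U_r$ so that there is room to realize eigencomponents. Then prescribe, for each chain vector $e^i_u$, $\bar e^i_u$, $f^j_u$, $\bar f^j_u$ and their duals, a decomposition into $L_m$ eigencomponents. The $\infty$-parts use the bases $e^{\infty}_{u,d}, f^{\infty}_{u,d}$ from (1). The finite chains use fresh vectors in the trivial factors. Define $\tilde h$ on the $d$-th component of $V$ by $e_{\cdot,d}\mapsto f_{\cdot,d}$, and define $\tilde g$ by $f_{\cdot,d}\mapsto\lambda_d e_{\cdot,d}$ (or by $0$ when $\lambda_d=0$). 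Then $\tilde g\tilde h$ is diagonal with eigenvalues $\lambda_d$, so $(\tilde h,\tilde g)$ is a specialized couple. It extends $h,g$ exactly when $[gh]^a(e)=\sum_d\lambda_d^a e_d$ for $a$ up to the chain length. Inequality (3) guarantees that the Vandermonde system in $\lambda_1,\dots,\lambda_{L_m}$ has enough distinct nodes, which gives such decompositions. The pairings $[e_d,\alpha_{d'}]=\delta_{dd'}X_d$ must reproduce the given $\Upsilon$ values. This is exactly solvability of $\mathscr E$ and $\tilde{\mathscr E}$, using the relations of Proposition 6.5 (1)–(2) to reduce to the listed systems. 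Finally, the remaining freedom in the trivial factors is used to make the extended pairings perfect and the adjointness $\tilde h^*|=g^\#$, $\tilde g^*|=h^\#$ hold.

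\textbf{Main obstacle.} The hard step is this last compatibility: choosing the eigencomponents on both sides simultaneously so that the pairings between components of different $d$ vanish and the enlarged pairing stays nondegenerate. The first attempt would be a block construction in which each $d$-component of $\tilde V$ is paired only with the $d$-component of $\tilde V^{\#}$, using a matrix of chosen $X$'s that is completed to an invertible one by the free trivial-factor entries.
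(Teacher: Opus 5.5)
Your necessity half is sound, and it goes beyond the paper: the paper's proof only constructs the extension, necessity of (2) is implicit in Proposition 6.5, and necessity of (3) is never argued. The claim you leave open for (3) closes directly. If $[\frac{N_1+1}{2}]\geq L_m$, then $(gh)^{L_m}e$ is defined and the minimal polynomial of $\tilde g\tilde h$ has degree at most $L_m$. Hence $U=\mathrm{span}\{(gh)^k e\}_{k<L_m}$ satisfies $U\subset V_0$, $h(U)\subset W_0$ and $gh(U)\subset U$. So every word is defined on $U$, giving $U\subset V^{\infty}_0$, which contradicts $0\neq e\in X_{N_1}$.

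\textbf{The genuine gap is in sufficiency}, exactly at the step you call the main obstacle. The chain vectors $[\ldots gh]_{2a}e^i_u$ already lie in $V$, and their pairings against $[\ldots g^{\#}h^{\#}]_{2b}\alpha^j_v$ are fixed. So their eigencomponents cannot simply be ``fresh vectors in the trivial factors''. You must produce components whose weighted sums $\sum_d\lambda_d^a e^i_{u,d}$ reproduce given vectors of $V$. All cross-block pairings must vanish and the diagonal ones must equal a solution of the systems, simultaneously for every chain and for the $\infty$-parts. Your sketch has no mechanism for this.

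The paper never decomposes inside $V$. Instead it does the following.
\begin{itemize}
\item It builds an abstract graded model $\bar V=\bigoplus_d\bar V_d$ on fresh bases $E^i_{v,d},\bar E^i_{v,d},E^{\infty}_{u,d}$ and their duals. The pairing is block-diagonal, with entries given by the solutions $\mathring X,\mathring Y$ and the constants ${}^l\Upsilon,{}^r\Upsilon$.
\item It completes this pairing to a perfect one by adding summands $\bar V''_d$. It pads with $\bar V'''_d$ so that $\dim\bar V_d=\dim\bar W_d$ whenever $\lambda_d\neq 0$; your ``enlarge by trivial factors'' does not address this per-block constraint.
\item It checks that the vectors $E^i_v[a]=\sum_d\lambda_d^aE^i_{v,d}$ are linearly independent (Vandermonde plus (3)). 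It checks that they pair with the dual model vectors exactly as the real chain vectors do (Proposition 6.5 plus the systems).
\item It applies Lemma 6.9: in spaces of equal dimension, two independent families and two independent dual families with equal mutual pairings are matched by one isomorphism $\Phi$ whose transpose matches the dual families.
\end{itemize}
This model-and-transport step is the missing idea.

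Your final step, ``the remaining freedom in the trivial factors is used to make \ldots the adjointness hold'', is also unjustified. On each block you must extend $\bar h_1,\bar g_1$, prescribed on a subspace, to $\bar H_d,\bar G_d$ with $\bar G_d\bar H_d=\lambda_d\,\mathrm{id}$ and $\bar H_d\bar G_d=\lambda_d\,\mathrm{id}$. At the same time, $\bar H_d^{*}$ and $\bar G_d^{*}$ must restrict to the prescribed dual maps $\bar g^{\#}_1,\bar h^{\#}_1$. These are competing constraints on the same operators, and their consistency is a real statement, not a matter of free entries. The paper proves it as Lemma 6.10, extending one vector at a time. It uses the compatibilities $(h_0v,\beta)=(v,g^{\#}_0\beta)$ and $(g_0w,\alpha)=(w,h^{\#}_0\alpha)$, together with the $\lambda$-relations on both sides, to rule out the only case in which the next vector cannot be extended. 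Without the transport lemma and an adjoint-compatible extension lemma of this kind, your sufficiency direction remains a heuristic. Your envisaged block construction matches the paper's Step 1, but the two steps that make it a proof are absent.
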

   \begin{pf}  The proof proceeds as follows.

   \noindent Step 1. Two cross morphisms  $[\bar{V},\bar{V}_0 ; \bar{W}, \bar{W}_0 ; \bar{h},\bar{g}  ]$ and $[\bar{V}^{\#} , \bar{V}^{\#}_0 ; \bar{W}^{\#} , \bar{W}^{\#}_0 , \bar{h}^{\#}, \bar{g}^{\#}   ]$ are built
   from a solution of these linear equation systems. A matching of these two cross morphisms are constructed. It is called the new matched pair.

    \noindent Step 2. An isomorphism $[ \Phi ,\Theta ]$  from the new  matched pair to the old one is constructed.

    \noindent Step 3. Two morphisms $H: \bar{V} \rightarrow \bar{W}$ and $G: \bar{W} \rightarrow \bar{V}$ are constructed, which form a
$(m;q)-$extension of the new matched pair. Then the isomorphism $[\Phi, \Theta]$ converts $(H,G)$ to a $(m;q)-$ extension of the old
matched pair, thus finish the proof.



  \noindent \textbf{Step 1.}  Suppose $\{ e^i _u \}_{1\leq u\leq \varrho_i  }$, $\{ f^j _v \}_{1\leq v\leq \varrho^{'} _j    } $ , $\{ \alpha^i _u \}_{1\leq u\leq \varrho^{\#}_i   }$, $\{ \beta^j _v \}_{ 1\leq v \leq \varrho^{\# '}_j }$ are base of $X_i , Y_j , S_i , T_j$ respectively.

  By (1), denote the $\lambda_{d}-$eigenspace of $g|_{W^{\infty}_0 }\circ h|_{V^{\infty}_0 } $ as $V^{\infty}_{0,d}  $,  the $\lambda_{d}-$eigenspace of $ h|_{V^{\infty}_0 } \circ g|_{W^{\infty}_0 }$ as $W^{\infty}_{0,d}  $.
  Denote the $\lambda_{d}-$eigenspace of $g^{\#}|_{W^{\# \infty}_0 }\circ h^{\#}|_{V^{\# \infty}_0 } $ as $V^{\# \infty}_{0,d}  $,  the $\lambda_{d}-$eigenspace of $ h^{\#}|_{V^{\# \infty}_0 } \circ g^{\#}|_{W^{\# \infty}_0 }$ as $W^{\# \infty}_{0,d}  $.
   Let
   $\{  e^{\infty}_{u,d}  \}_{u=1,...,C_d }  $ be a basis of  $V^{\infty}_{0,d}  $, and  $\{  f^{\infty}_{v,d}  \}_{v=1,...,E_d }  $ be a basis of  $W^{\infty}_{0,d}$.   Let
   $\{  {\alpha}^{ \infty}_{u,d}  \}_{u=1,...,C^{\#}_d }  $ be a basis of  $V^{\# \infty}_{0,d}  $, and  $\{  {\beta}^{\infty}_{v,d}  \}_{v=1,...,E^{\#}_d }  $ be a basis of  $W^{\# \infty}_{0,d}$.

   \noindent $\bullet$ If $m$ is odd and $1\leq d\leq L_m$, or $m$ is even and $1\leq d\leq L_m -1$,  let $\bar{V}^{'}_d$ be a linear space with a specified basis
   $\{ E^i _{u,d} \}_{1\leq u\leq \rho_i } \cup \{ \bar{E}^i _{v,d} \}_{-1\leq i\leq N_1 -1; 1\leq v\leq \varrho^{'}_{i+1}   }\cup \{ E^{\infty}_{u,d}\}_{1\leq u\leq C_d} $,

   so $\bar{V}^{'} _d= \kappa< \{ E^i _{u,d} \}_{1\leq u\leq \rho_i } \cup \{ \bar{E}^i_{v,d} \}_{-1\leq i\leq N_1 -1; 1\leq v\leq \varrho^{'}_{i+1}   } \cup \{ E^{\infty}_{u,d} \}_{1\leq u\leq C_d} >$.

   Similarly we introduce linear spaces

   $ \bar{W}^{'}_d=\kappa<  \{ F^j _{u,d} \}_{1\leq u\leq \rho^{'}_j } \cup \{ \bar{F}^j _{v,d} \}_{-1\leq j\leq N_2 -1; 1\leq v\leq \varrho_{j+1}   } \cup \{ F ^{\infty}_{v,d} \}_{1\leq v\leq E_d} > $,

    $\bar{V}^{\# '} _d= \kappa< \{ A^i _{u,d} \}_{1\leq u\leq \rho^{\# }_i } \cup \{ \bar{A}^i_{v,d} \}_{-1\leq i\leq M_1 -1; 1\leq v\leq \varrho^{\# '}_{i+1}   } \cup \{ A^{\infty}_{u,d} \}_{1\leq u\leq C^{\#}_d }  >$,

     $ \bar{W}^{\# '} _d=\kappa<  \{ B^j _{u,d} \}_{1\leq u\leq \rho^{\# '}_j } \cup \{ \bar{B}^j _{v,d} \}_{-1\leq i\leq M_2 -1; 1\leq v\leq \varrho^{\# }_{i+1}   } \cup \{ B^{\infty}_{v,d} \}_{1\leq v\leq E^{\#}_d } > $.

     If $m$ is even and $d=L_m$,  define $\bar{V}^{'}_d , \bar{W}^{'}_d , \bar{V}^{\# '} _d$ and $\bar{W}^{\# '} _d $ by the identities as above but with the middle term omitted. For example
     $\bar{V}^{'} _{L_m }= \kappa< \{ E^i _{u,L_m} \}_{1\leq u\leq \rho_i } \cup \{ E^{\infty}_{u,L_m} \}_{1\leq u\leq C_{L_m}} >$.

   \noindent   Let $\bar{V}^{\infty}=\kappa<  \{ E^{\infty}_{u,d} \}  >$,  $\bar{W}^{\infty}=\kappa< \{  F^{\infty}_{v,d}  \} >$,
       $\bar{V}^{\#, \infty}=\kappa<  \{ A^{\infty}_{u,d} \}  >$,
      $\bar{W}^{\# \infty}=\kappa< \{  B^{\infty}_{v,d}  \} >$.

  \noindent Let $\bar{V}^{'}= \oplus_{d} \bar{V}^{'}_d   $,  $\bar{W}^{'}= \oplus_{d} \bar{W}^{'} _d    $,
    $\bar{V}^{\# '}= \oplus_{d} \bar{V}^{\# '}_d   $, $\bar{W}^{\# '} = \oplus_{d} \bar{W}^{\# '}_d  $.


\noindent $\bullet$ Since the linear equation systems $\mathscr{E}^{i,j}_{v,u}$, $\mathscr{E}^{i,\bar{j}}_{uv}   $,  $ \tilde{\mathscr{E}}^{i,j}_{u,v} $, $ \tilde{\mathscr{E}}^{i,\bar{j}}_{v,u} $ are solvable, suppose $X^{i,j}_{v,u;d}=\mathring{X}^{i,j}_{v,u;d} $,$X^{i,\bar{j}}_{u,v;d}=\mathring{X}^{i,\bar{j}}_{u,v;d}  $,
 $Y^{i,j}_{u,v;d}=\mathring{Y}^{i,j}_{u,v;d}$, $Y^{i,\bar{j}}_{u,v;d}=\mathring{Y}^{i,\bar{j}}_{u,v;d}$ are solutions of them.


 Let $1\leq d\leq L$. Two bilinear forms are defined by using these solutions.

 \noindent $ < >_l : \bar{V}^{'}_{d} \times \bar{V}^{\# '}_{d} \rightarrow \kappa $ be the bilinear form defined by requesting

    \noindent$< E^i _{u,d} , A^j _{v,d}   >_l =\mathring{X}^{i,j}_{u,v;d} $; $< E^i _{u,d} , \bar{A}^j _{v,d}   >_l =\mathring{X}^{i,\bar{j}}_{u,v;d} $; $< \bar{E}^i _{u,d} , A^j _{v,d}   >_l =\mathring{Y}^{i+1,\overline{j-1}}_{u,v;d} $;

    \noindent$< \bar{E}^i _{u,d} , \bar{A}^j _{v,d}   >_l =\lambda_d \mathring{Y}^{i+1,j+1}_{u,v;d} $;  $< E^i _{u,d} , A^{\infty} _{v,d}   >_l ={}^l \Upsilon^{i,\infty}_{u,v;d}$;$< \bar{E}^i _{u,d} , A^{\infty} _{v,d}   >_l ={}^l \Upsilon^{\bar{i} ,\infty}_{u,v;d} $;

 \noindent $< E^{\infty} _{u,d} , A^{i} _{v,d}   >_l ={}^{l}\Upsilon^{\infty,i}_{u,v;d}$,  $< E^{\infty} _{u,d} , \bar{A}^{i} _{v,d}   >_l ={}^{l}\Upsilon^{\infty,\bar{i}}_{u,v;d} $; $< E^{\infty}_{u,d} , A^{\infty}_{v,d}  >_l ={}^{l}\Upsilon^{\infty,\infty}_{u,v;d}. $

\noindent $ < >_r : \bar{W}^{'}_{d} \times \bar{W}^{\# '}_{d} \rightarrow \kappa    $ the bilinear form defined by requesting

     \noindent $< F^i _{u,d} , B^j _{v,d}   >_r =\mathring{Y}^{i,j}_{u,v;d} $; $< F^i _{u,d} , \bar{B}^j _{v,d}   >_r =\mathring{Y}^{i,\bar{j}}_{u,v;d} $; $< \bar{F}^i _{u,d} , B^j _{v,d}   >_r =\mathring{X}^{i+1,\overline{j-1}}_{u,v;d}$;

 \noindent $< \bar{F}^i _{u,d} , \bar{B}^j _{v,d}   >_r =\lambda_d \mathring{X}^{i+1,j+1}_{u,v;d} $;  $< F^i _{u,d} , B^{\infty} _{v,d}   >_r ={}^{r} \Upsilon^{i,\infty}_{u,v;d}  $;  $< \bar{F}^i _{u,d} , B^{\infty} _{v}   >_r ={}^{r} \Upsilon^{i ,\infty}_{u,v;d}$;

  \noindent $< F^{\infty} _{u,d} , B^{i} _{v,d}   >_r ={}^{r} \Upsilon^{\infty,i}_{u,v;d} $,  $< F^{\infty} _{u} , \bar{B}^{i} _{v,d}   >_r ={}^{r} \Upsilon^{\infty,\bar{i}}_{u,v;d} $,  $< F^{\infty}_{u,d} , B^{\infty}_{v,d}  >_r ={}^{r} \Upsilon^{\infty, \infty}_{u,v;d} .  $    \\

  \noindent $\bullet$ These bilinear forms are extended to nondegenerate ones as follows. Let $\bar{V}_{1,d} $ be some complementary space of $\ker<-, \bar{V}^{\# '}_d  >_l  $ in $\bar{V}^{'}_d$, and $\bar{V}^{\#} _{1,d}$ be some complementary space of $\ker< \bar{V}^{'}_d ,->_r  $ in $\bar{V}^{\# '}_d $. Let $l=\dim \bar{V}_{1,d} =\dim \bar{V}^{\#} _{1,d} $, and $l_1 =\dim \ker<-, \bar{V}^{\# '}_d  >_l $, $l_2 =\dim \ker< \bar{V}^{'}_d ,->_r$.
     Let $\bar{V}^{"}_d $ be a linear space of dimension $l_2$,  and let $\bar{V}^{\# "}_d $  be a linear space of dimension $l_1$.
     Let $f_{1,d} : V^{"}_d \rightarrow ( \ker< \bar{V}^{'}_d ,->_r )^{*} $ be some linear isomorphism, let $f_{2,d} : \bar{V}^{\# "}_d \rightarrow (\ker<-, \bar{V}^{\# '}_d  >_l  )^{*} $ be some linear isomorphism.

     The bilinear form $<-,->_l $ is extended to a bilinear form  $   (\bar{V}^{"}_d \oplus \bar{V}^{'}_d ) \times  (\bar{V}^{\# "}_d \oplus \bar{V}^{\# '} _d )\rightarrow \kappa $ as follows.
For any $v= v_1 + v_2 + v_3 \in \bar{V}^{"}_d \oplus \bar{V}^{'}_d $ where $v_1 \in \bar{V}^{"}_d , v_2 \in \ker<-, \bar{V}^{\# '}_d  >_l ,v_3 \in \bar{V}_{1,d}$,    and any $\alpha = \alpha_1 +\alpha_2 +\alpha_3 \in \bar{V}^{\# "}_d \oplus \bar{V}^{\# '}_d $ where $\alpha_1 \in \bar{V}^{\# "}_d , \alpha_2 \in \ker< \bar{V}^{'}_d ,->_r ,\alpha_3 \in \bar{V}^{\#} _{1,d}$,

     $< v, \alpha >_l = < v_3 , \alpha_3  >_l + f_{1,d} (v_1 )( \alpha_2  )+ f_{2,d} (\alpha_1 )( v_2 ) $.

    Evidently this bilinear form is nondegenerate,and it is still denoted as $<-,->_l $.






     Similarly we chose linear space $\bar{W}^{"}_d$ and $\bar{W}^{\# "}_d $, and extend $<-,->_r$ to a non degenerate bilinear form
     $<-,->_r : (\bar{W}^{"}_d \oplus \bar{W}^{'}_d ) \times  (\bar{W}^{\# "}_d \oplus \bar{W}^{\# '}_d )\rightarrow \kappa  $.

We can add a trivial factor of sufficiently high dimension to this matched couple such that
$\dim V \geq  \sum_{d}  \max \{ \dim \bar{V}^{"}_d +\dim \bar{V}^{'}_d  , \dim \bar{W}^{"}_d +\dim \bar{W}^{'}_d    \} $. With this condition, we can divide the number $\dim V - \sum_{d} (\dim \bar{V}^{"}_d +\dim \bar{V}^{'}_d    ) $ as a sum $\sum_d P_d   $, and  divide the number $\dim W - \sum_{d} (\dim \bar{W}^{"}_d +\dim \bar{W}^{'}_d    ) $ as a sum $\sum_d Q_d   $, such that if $m$ is odd, then
for any $d$,

$P_d + \dim \bar{V}^{"}_d +\dim \bar{V}^{'}_d  = Q_d + \dim \bar{W}^{"}_d +\dim \bar{W}^{'}_d $.

If $m$ is even, above identities are only requested to hold for $1\leq d\leq L_m -1  $.

     For any $d$, choose a linear space $\bar{V}^{'''}_d$ of dimension $P_d$ and a linear space $\bar{W}^{'''}_d$ of dimension $Q_d$,  and   let $ \bar{V}^{'''*}_d  , \bar{W}^{'''*}_d$ be their dual spaces.

     Let $\bar{V}^{'''}= \oplus_d \bar{V}^{'''}_d$, $\bar{V}^{\# '''}= \oplus_d ( \bar{V}^{ ''' *}_d ) $,$\bar{W}^{'''}= \oplus_d \bar{W}^{'''}_d$,
     $\bar{W}^{\# '''}= \oplus_d (\bar{W}^{'''}_d )^* $.

     Let $\bar{V}= \bar{V}^{'''}\oplus \bar{V}^{"} \oplus \bar{V}^{'}$,  $\bar{V}^{\#} = \bar{V}^{'''*}\oplus \bar{V}^{\# "} \oplus \bar{V}^{\# '}$,

      $\bar{W}= \bar{W}^{'''}\oplus \bar{W}^{"} \oplus \bar{W}^{'}$ and $\bar{W}^{\#} = \bar{W}^{'''*}\oplus \bar{W}^{\# "} \oplus \bar{W}^{\# '}$.

     The bilinear form $<-,->_l$ can be extended further to a non degenerate bilinear form $<-,->_l : \bar{V} \times \bar{V}^{\#} \rightarrow \kappa  $ as, for any $v= \sum_d v^{'}_d +v_2 \in \bar{V} $ where $v^{'}_d \in \bar{V}^{'''}_d ,  v_2 \in \bar{V}^{"} \oplus \bar{V}^{'} $,
     and any $\alpha =\sum_d  \alpha^{'}_d +\alpha_2 \in \bar{V}^{\#}$,
 $< v,\alpha >_l =\sum < v^{'}_d , \alpha^{'}_d  > + < v_2 ,\alpha_2  >_l  $,
where on the right hand side,   $<-,->$ is the natural pairing between a linear space and its dual space, and $<-,->_l $ is the bilinear form from $ (\bar{V}^{"} \oplus \bar{V}^{'} ) \times  (\bar{V}^{\# "} \oplus \bar{V}^{\# '} ) $  to $\kappa$.

     Similarly  $<-,->_r$ can be  extended to a non degenerate bilinear form

     $<-,->_r : \bar{W}\times \bar{W}^{\#}\rightarrow \kappa$. \\

 \noindent \textbf{ Step 2.} Define several elements as follows.

     Let $E^i _v [a] =\sum_{d} (\lambda_d)^a  E^i _{v,d}$, $\bar{E}^i _v [a] = \sum_{d} (\lambda_d )^a \bar{E}^i _{v,d} $ for $0\leq a\leq [\frac{i+1}{2}]$. Let $\bar{E}^{-1}_v = \sum_{d} \bar{E}^{-1}_{v,d}$.

     let $A^j _u [a] =\sum_{d} (\lambda_d)^a  A^j _{u,d}$, $\bar{A}^j _u [a] =\sum_{d}  (\lambda_d)^a \bar{A}^j _{u,d}$ for $0\leq a\leq [\frac{j+1}{2}]$  . Let $\bar{A}^{-1}_u =\sum_d \bar{A}^{-1}_{u,d} $.

      let  $F^i _v [a]=\sum_{d} (\lambda_d)^a F^i _{v,d}$,  $\bar{F}^i _v [a]= \sum_{d} (\lambda_d )^a \bar{F}^i _{v,d} $  for $ 0\leq a\leq [\frac{i+1}{2}]$ . Let $\bar{F}^{-1}_v =\sum_d \bar{F}^{-1}_{v,d}$.

       let $B^j _u [a] =\sum_{d} (\lambda_d )^a  B^j _{u,d}$, $\bar{B}^j _u =\sum_{d} (\lambda_d)^a  \bar{B}^j _{u,d}$  for $ 0\leq a\leq [\frac{j+1}{2}]$ . Let $\bar{B}^{-1}_u =\sum_d \bar{B}^{-1}_{u,d}$.

       Since $[\frac{N_1 +1}{2}]+1 , [\frac{N_2 +1}{2}]+1  \leq L_m$ by condition (3) of this theorem, and $\{ \lambda_d \}$ are all different,  so the set $\{ E^i _v [a], \bar{E}^i _u [a] | 0\leq i\leq N_1 ;0\leq a\leq [\frac{i+1}{2}] \}  \cup \{ \bar{E}^{-1}_u \} \cup   \{ E^{\infty}_{u,d} \} $ is linear independent.
       Similarly the sets $\{ A^i _v [a], \bar{A}^i _u [a] | 0\leq i\leq N^{\#} _1 ;0\leq a\leq [\frac{i+1}{2}] \}  \cup \{  \bar{A}^{-1}_{u}   \}  \cup \{ A^{\infty}_{u,d} \} $,  $\{ F^i _v [a], \bar{F}^i _u [a] | 0\leq i\leq N_2 ;0\leq a\leq [\frac{i+1}{2}] \}  \cup \{ \bar{F}^{-1}_{u} \} \cup \{ F^{\infty}_{u,d} \} $,   $\{ B^i _v [a], \bar{B}^i _u [a] | 0\leq i\leq N^{\#} _2 ;0\leq a\leq [\frac{i+1}{2}] \}  \cup \{ \bar{B}^{-1}_{u} \} \cup \{ B^{\infty}_{u,d} \} $ are all linear independent in the linear spaces they belong.

       Define several sets as follow.

        $\bar{S}_0 = \{ E^{i}_{v}[a], \bar{E}^{i}_{v}[a]| 0\leq 2a < i+1 \} \cup \{ E^{\infty}_{u,d} \} $,  $\bar{S}= \{ E^{i}_{v}[a], \bar{E}^{i}_{v}[a]| 0\leq 2a \leq i+1 \} \cup \{ E^{\infty}_{u,d} \} \cup \{  \bar{E}^{-1}_u \}$.

         $\bar{S}^{\#}_0 = \{ A^{i}_{v}[a], \bar{A}^{i}_{v}[a]| 0\leq a < i+1 \}   \cup \{ A^{\infty}_{u,d} \} $,  $\bar{S}^{\#} = \{ A^{i}_{v}[a], \bar{A}^{i}_{v}[a]| 0\leq a \leq i+1 \}  \cup \{ A^{\infty}_{u,d} \} \cup \{ \bar{A}^{-1}_u \}  $.

        $\bar{T}_0 = \{ F^{i}_{v}[a], \bar{F}^{i}_{v}[a]| 0\leq 2a < i+1 \} \cup \{ F^{\infty}_{u,d} \} $, $\bar{T}= \{ F^{i}_{v}[a], \bar{F}^{i}_{v}[a]| 0\leq 2a \leq i+1 \} \cup \{ F^{\infty}_{u,d} \} \cup \{ \bar{F}^{-1}_u \}   $.

         $\bar{T}^{\#}_0 =\{ B^{i}_{v}[a], \bar{B}^{i}_{v}[a]| 0\leq a < i+1 \}  \cup \{ B^{\infty}_{u,d} \} $,  $\bar{T}^{\#}= \{ B^{i}_{v}[a], \bar{B}^{i}_{v}[a]| 0\leq a \leq i+1 \}   \cup \{ B^{\infty}_{u,d} \}  \cup \{ \bar{B}^{-1}_u \}$.

      Define several subspaces as follows.

      \noindent In $\bar{V}$, $\bar{V}_0 =\kappa <\bar{S}_0   > $, $\bar{V}_1 =\kappa <  \bar{S} > $ ,
        $\bar{V}_2 = \kappa <\{ E^{i}_{v,d} , \bar{E}^{i}_{v,d}\} \cup \{ E^{\infty}_{u,d} \}  >$.So
       $\bar{V}_0 \subset \bar{V}_1 \subset \bar{V}_2$.

     \noindent   In  $  \bar{V}^{\#} $,  $\bar{V}^{\#}_0 =\kappa < \bar{S}^{\#}_0  > $,   $\bar{V}^{\#}_1 = \kappa < \bar{S}^{\#} >$,   $\bar{V}^{\#}_2 =\kappa < \{ A^{i}_{v,d} , \bar{A}^{i}_{v,d}\} \cup \{ A^{\infty}_{v,d} \}  >$. So

     \noindent $\bar{V}^{\#}_0 \subset \bar{V}^{\#}_1 \subset \bar{V}^{\#}_2$.

      \noindent  In $ \bar{W}$, $\bar{W}_0 =\kappa < \bar{T}_0 >$,  $\bar{W}_1 =\kappa < \bar{T}>$ ,
          $\bar{W}_2 =\kappa <\{ F^{i}_{v,d} , \bar{F}^{i}_{v,d}\} \cup \{ F^{\infty}_{u,d} \} >$.  So $\bar{W}_0 \subset \bar{W}_1 \subset \bar{W}_2$.

      \noindent  In $\bar{W}^{\#} $,   $\bar{W}^{\#}_0 =\kappa <\bar{T}^{\#}_0  > $,  $\bar{W}^{\#}_1 =\kappa <\bar{T}^{\#} >$,
         $\bar{W}^{\#}_1 =\kappa < \{ B^{i}_{v,d} , \bar{B}^{i}_{v,d}\} \cup \{ B^{\infty}_{u,d} \} >$. So $\bar{W}^{\#}_0 \subset \bar{W}^{\#}_1 \subset \bar{W}^{\#}_2$.

         Define several linear morphisms as follows. First if $m$ is odd,

       \noindent $\bar{h}_1 \in Hom( \bar{V}_2 , \bar{W} )$:  $\bar{h}_1 ( E^i _{v,d}  )= \bar{F}^{i-1}_{v,d}$,
        $\bar{h}_1 ( \bar{E}^i _{v,d} )= \lambda_d F^{i+1}_{v,d}$, $\bar{h}_1 ( E^{\infty}_{u,d}  )= F^{\infty}_{u,d} $.

        \noindent $\bar{g}_1 \in Hom (\bar{W}_2 , \bar{V}) $: $\bar{g}_1 ( F^i _{v,d}  )= \bar{E}^{i-1}_{v,d}$,
        $\bar{g}_1 ( \bar{F}^i _{v,d} )= \lambda_d E^{i+1}_{v,d}$, $\bar{g}_1 ( F^{\infty}_{u,d}  )= \lambda_d E^{\infty}_{u,d} $.

        \noindent $\bar{h}^{\#}_1 \in Hom( \bar{V}^{\#}_2 , \bar{W}^{\#} )$: $\bar{h}^{\#}_1 ( A^i _{v,d}  )= \bar{B}^{i-1}_{v,d}$,
        $\bar{h}^{\#}_1 ( \bar{A}^i _{v,d} )= \lambda_d B^{i+1}_{v,d}$, $\bar{h}^{\#}_1 ( A^{\infty}_{u,d}  )= B^{\infty}_{u,d} $.

        \noindent $\bar{g}^{\#}_1 \in Hom(\bar{W}^{\#}_2 , \bar{V}^{\#})$:  $\bar{g}^{\#}_1 ( B^i _{v,d}  )= \bar{A}^{i-1}_{v,d}$,
        $\bar{g}^{\#}_1 ( \bar{B}^i _{v,d} )= \lambda_d A^{i+1}_{v,d}$, $\bar{g}^{\#}_1 ( B^{\infty}_{u,d}  )= \lambda_d A^{\infty}_{u,d} $.

        When $m$ is even, those four morphisms are defined by above identities with a correction when $d=L_m$ as
        $\bar{h}_1 ( E^{\infty}_{u, L_m } )=0 $,  $\bar{g}_1 ( F^{\infty}_{u, L_m } )=0    $,  $\bar{h}^{\#}_1 ( A^{\infty}_{u, L_m } )=0    $, $\bar{}^{\#}_1 ( B^{\infty}_{u, L_m } )=0  $; and
         $\bar{h}_1 ( E^{i}_{u, L_m } )=0 $,  $\bar{g}_1 ( F^{i}_{u, L_m } )=0    $,  $\bar{h}^{\#}_1 ( A^{i}_{u, L_m } )=0    $ , $\bar{}^{\#}_1 ( B^{i}_{u, L_m } )=0  $.

        Denote  $\bar{h}_1 |_{\bar{V}_0 }$ as $\bar{h}$,  $\bar{g}_1 |_{\bar{W}_0 }$ as $\bar{g}$,$\bar{h}^{\#}_1 |_{\bar{V}^{\#}_0 }$ as $\bar{h}^{\#}$,  and  $\bar{g}^{\#}_1 |_{\bar{W}^{\#}_0 }$ as $\bar{g}^{\#}$.

        $[ \bar{V};  \bar{S}  ]$ $\&$  $[ \bar{V}^{\#};  \bar{S}^{\#}  ]$ is a couple of linear spaces with a prescribed subset. It will be
       compared with the couple   $[ V;  S  ]$ $\&$ $[ V^{\#}; S^{\#}  ]$ as follows. Where

        $S=\{[...gh]_{2a} e^{i}_{v}, [...gh]_{2a}\bar{e}^{i}_{v}| 0\leq 2a \leq i+1 \} \cup \{ \bar{e}^{-1}_u \}  \cup \{ e^{\infty}_{u,d} \}   $ and

       $S^{\#}= \{[...g^{\#}h^{\#}]_{2a} \alpha^{i}_{v}, [...g^{\#}h^{\#}]_{2a}\bar{\alpha}^{i}_{v}| 0\leq a \leq i+1 \} \cup \{ \bar{\alpha}^{-1}_u \}  \cup \{ \alpha^{\infty}_{u,d} \}  $. By the way let

       $S_0=\{[...gh]_{2a} e^{i}_{v}, [...gh]_{2a}\bar{e}^{i}_{v}| 0\leq 2a < i+1 \}   \cup \{ e^{\infty}_{u,d} \}   $ and

       $S^{\#}_0= \{[...g^{\#}h^{\#}]_{2a} \alpha^{i}_{v}, [...g^{\#}h^{\#}]_{2a}\bar{\alpha}^{i}_{v}| 0\leq a < i+1 \}  \cup \{ \alpha^{\infty}_{u,d} \}  $.

        We want to apply Lemma 6.9 to these  two couples. First there is $\dim \bar{V} =\dim V$ by construction of $V$ in step 1.

      Since $E^i _v [a]= \sum^{L_m }_{d=1} (\lambda_d )^a E^i _{v,d} $, $[\frac{N_1 +1}{2}]+1 \leq L_m$, and since $\{ E^i _{v,d}  \}_{d=1,...,L_m}$ is a linear independent subset in $\bar{V}$, so the set $\{  E^i _{v}[a]   \}_{0\leq a\leq [\frac{N_1 +1}{2}]   } $ is a linear independent subset in the subspace $\kappa<\{   E^i _{v,d}  \}_{d=1,...,L_m}>$. Similarly  $\{  \bar{E}^i _{v}[a]   \}_{0\leq a\leq [\frac{N_1 +1}{2}]   } $ is a linear independent subset in the subspace  $\kappa<\{   \bar{E}^i _{v,d}  \}_{d=1,...,L_m}>$. Now since
      $\{   E^i _{v,d}   \} \cup \{  E^{\infty}_{u,d} \} $ is a linear independent subset, so $\bar{S}$ is a linear independent subset in $\bar{V}$.

      By similar argument the set $\bar{S}^{\#}$ is a linear independent subset in $\bar{V}^{\#}$.

      Next let $\phi: \bar{S} \rightarrow
        S $ be the bijective map defined by

        $\phi( E^i _{v}[a] )=[...gh]_{2a}e^i _v , \phi(\bar{E}^i _{v}[a])=[...gh]_{2a}\bar{e}^i _v , \phi (\bar{E}^{-1}_v )= \bar{e}^{-1}_v ,
      \phi ( E^{\infty}_{u,d}  )= e^{\infty}_{u,d} $.

      and let $\psi: \bar{S}^{\#} \rightarrow
       S^{\#}$ be the bijective map defined by

       $\psi( A^i _{v}[a] )=[...g^{\#}h^{\#}]_{2a}\alpha^i _v , \psi(\bar{A}^i _{v}[a])=[...g^{\#}h^{\#}]_{2a}\bar{\alpha}^i _v , \psi (\bar{A}^{-1}_v )= \bar{\alpha}^{-1}_v , \psi ( A^{\infty}_{u,d}  )= \alpha^{\infty}_{u,d} $.

       Then by using Proposition 6.5, we can check that for any element $v \in \bar{S} $ and any element $\alpha \in  \bar{S}^{\#}$, there is
 $ (  v, \alpha  )= ( \phi(v) ,\psi(\alpha)    )  $.

       So the conditions for this pair in the following Lemma 6.9 is satisfied, and by Lemma 6.9 there exist a linear isomorphism
       $\Phi: \bar{V}\rightarrow V$, such that
       $\Phi (v) = \phi (v)  $ for any $v \in \bar{S}$, $\Phi^{*}( \alpha )= (\psi)^{-1} ( \alpha) $ for any $\alpha \in S^{\#}  $.

       Similarly consider the couple of linear spaces with a prescribed subset $[\bar{W};  \bar{T} ]$ $\&$ $[\bar{W}^{\#}; \bar{T}^{\#}]$, and compare it with another couple $[W; T ]$ $\&$ $[W^{\#}; T^{\#}]$, where

       $T= \{ [...hg]_{2a}f^{i}_{v}, [...hg]_{2a}\bar{f}^{i}_{v}| 0\leq 2a \leq i+1 \} \cup \{ \bar{f}^{-1}_u \} \cup  \{ f^{\infty}_{u,d} \} $ and

       $T^{\#}=\{ [...h^{\#} g^{\#}]_{2a} \beta^{i}_{v}, [...h^{\#} g^{\#}]_{2a}\bar{\beta}^{i}_{v}| 0\leq 2a \leq i+1 \} \cup \{  \bar{\beta}^{-1}_u \}  \cup \{ \beta^{\infty}_{u,d} \}  $. By the way let

        $T_0 = \{ [...hg]_{2a} f^{i}_{v}, [...hg]_{2a}\bar{f}^{i}_{v}| 0\leq 2a < i+1 \}  \cup  \{ f^{\infty}_{u,d} \} $ and

        $T^{\#}_0 =\{ [...h^{\#}g^{\#}]_{2a}\beta^{i}_{v},[...h^{\#}g^{\#}]_{2a} \bar{\beta}^{i}_{v}| 0\leq 2a < i+1 \}  \cup \{ \beta^{\infty}_{u,d} \}  $.

        The same argument produces a linear isomorphism $\Theta: \bar{W} \rightarrow W $ such that

       $\Theta(  F^i _v [a]  )= [...hg]_{2a} f^i _v ,  \Theta(\bar{F}^i _v [a])= [...hg]_{2a} \bar{f}^i _v   ,
        \Theta ( \bar{F}^{-1}_u  )= \bar{f}^{-1}_u  , \Theta (  F^{\infty} _{v,d} )= f^{\infty}_{v,d} $, and

       $\Theta^{*}([...hg    ]_{2a} \beta^i _v    )= B^i _v [a] , \Theta^{*}([...hg    ]_{2a} \bar{\beta}^i _v    )= \bar{B}^i _v [a] ,
        \Theta^{*} ( \bar{\beta}^{-1} _u    )= \bar{B}^{-1}_u , \Theta^{*}( \beta^{\infty}_{v,d}  )= B^{\infty}_{v,d} $.\\

 Then  we show $[ \Phi ,\Theta ]$ constitute an isomorphism between these two matched pairs. It is enough to show

        (1) $\Phi(\bar{V}_0)= V_0 , \Phi^{*}( V^{\#}_0  )=\bar{V}^{\#}_0 , \Theta(\bar{W}_0 ) = W_0 ,\Theta^{*}( W^{\#}_0 )=\bar{W}^{\#}_0 .$

        (2)$\Theta \circ \bar{h}(v )= h\circ \Phi (v) $ for any $v\in \bar{S}_0 $;
         $\Phi \circ \bar{g}(w)=g\circ \Theta(w)$ for any $w\in \bar{W}$;

         $\Theta^{*} \circ h^{\#}(\alpha )= \bar{h}^{\#}\circ \Phi^{*} (\alpha) $ for any $\alpha\in S^{\#} _0 $;
         $\Phi^{*} \circ g^{\#}(\beta)=\bar{g}^{\#} \circ \Theta^{*}(\beta)$ for any $\beta\in T^{\#}_0 $.\\

 \noindent \textbf{Step 3.} Consider the spaces $\bar{V}_d  ,  \bar{V}^{'}_d ;\bar{W}_d  ,  \bar{W}^{'}_d  $,  ($1\leq d\leq L_m $ when $m$ is odd and
 $1\leq d<L_m $ when $m$ is even)  and the morphisms

       $\bar{h}_1 :\bar{V}^{'}_d \rightarrow \bar{W}^{'} _d \subset \bar{W}_d $, $\bar{g}_1 :\bar{W}^{'}_d \rightarrow \bar{V}^{'} _d \subset \bar{W}_d   $,

       $\bar{h}^{\#}_1 :\bar{V}^{\# '}_d \rightarrow \bar{W}^{\# '} _d \subset \bar{W}^{\#}_d $, $\bar{g}^{\#}_1 :\bar{W}^{\# '}_d \rightarrow \bar{V}^{\# '} _d \subset \bar{V}^{\#}_d   $.

       By construction  there is $\dim \bar{V}_d = \dim \bar{W}_d = \dim \bar{V}^{\#}_d = \dim \bar{W}^{\#}_d$.  It is straightforward to check that

       for any $v\in \bar{V}^{'}_d$ and $\beta \in \bar{W}^{\# '}_d    $,$< \bar{h}_1 (v) ,\beta   >_r =< v , \bar{g}^{\#}_1 (\beta)    >_l $,

       and for any $w\in \bar{W}^{'}_d ,  \alpha\in \bar{V}^{\# '}_d $, $< \bar{g}_1 (w) ,\alpha  >_l =< w, \bar{h}^{\#}_1 (\alpha)  >_r $.

       So by using Lemma 6.10, there exist an isomorphism $\bar{H}_d : \bar{V}_d \rightarrow \bar{W}_d $ and an isomorphism $\bar{G}_d : \bar{W}_d \rightarrow \bar{V}_d $ such that

       (1)$\bar{H}_d |_{\bar{V}^{'}_d  }= \bar{h}_1 ; \bar{G}_d |_{\bar{W}^{'}_d }=\bar{g}_1 ;(\bar{H}_d )^* |_{\bar{W}^{\# '}_d  }= \bar{g}^{\#}_1 ; (\bar{G}_d)^* |_{\bar{V}^{\# '}_d }=\bar{h}^{\# }_1   $.

       (2)$\bar{H}_d \circ \bar{G}_d = \lambda_d id_{\bar{W}_d }$ and $ \bar{G}_d \circ \bar{H}_d =\lambda_d id_{\bar{V}_d }  $.

 When $m$ is even and $d=L_m$, just let $\bar{H}_{L_m } : \bar{V}_d \rightarrow \bar{W}_d $ and  $\bar{G}_{L_m } : \bar{W}_{L_m } \rightarrow \bar{V}_{L_m }$ be the zero maps. So $\bar{H}_d ,\bar{G}_d $ ($1\leq d\leq L_m $) are defined whenever $m$ is odd or even.

      Let $\bar{H}=\oplus_d \bar{H}_d : \bar{V}=\oplus_d \bar{V}_d  \rightarrow \bar{W}=\oplus_d \bar{W}_d $, and
      $ \bar{G}=\oplus_d \bar{G}_d : \bar{W}=\oplus_d \bar{W}_d  \rightarrow \bar{V}=\oplus_d \bar{V}_d $.

      By construction of $\bar{H}$ and $\bar{G}$, $(\bar{H} ,\bar{G})$ is a specialized $(m;q)-$couple between $\bar{V}$ and $\bar{W}$. \\

     As the final step, let $H= \Theta \bar{H} \Phi^{-1}: V\rightarrow W$ and  $G=\Phi \bar{G} \Theta^{-1}:W\rightarrow V $,

      $H^{\#}=G^* = (\Theta^*)^{-1}\bar{G}^* \Phi^* : V^{\#}\rightarrow W^{\#} $ and
      $ G^{\#}=H^* = (\Phi^*)^{-1}\bar{H}^* \Theta^* : W^{\#}\rightarrow V^{\#}  $.

      (1) Since $( \bar{H} ,\bar{G} )$ is a specialized $(m;q)-$couple between $\bar{V}$ and $\bar{W}$, so $( H ,G )$ is a specialized $(m;q)-$couple between $V$ and $W$, and $( H^{\#} ,G^{\#} )$ is a specialized $(m;q)-$couple between $V^{\#}$ and $W^{\#}$.

      (2) $H,G,H^{\#},G^{\#}$ are extensions of $h,g,h^{\#}, g^{\#}$ respectively. This is proved case by case as follows.

      For $v\in V_0$, $H(v)=\Theta \bar{H} \Phi^{-1} (v) = \Theta \bar{h} \Phi^{-1}(v) =\Theta \Theta^{-1} h(v)= h(v) $.

      For $w\in W_0$, $G(v)=\Phi \bar{G} \Theta^{-1} (v) = \Phi \bar{g} \Theta^{-1}(w) =\Phi \Phi^{-1} g(v)= g(v) $.

      For $\alpha\in V^{\#}_0$, $ H^{\#}(\alpha)=(\Theta^* )^{-1} \bar{G}^{*} \Phi^{*}(\alpha) =
       (\Theta^* )^{-1} \bar{h}^{\#} \Phi^{*}(\alpha)=  (\Theta^* )^{-1} \Theta^* h^{\#}(\alpha)= h^{\#}(\alpha)$.

       For $\beta\in W^{\#}_0$, $ G^{\#}(\beta)=(\Phi^* )^{-1} \bar{H}^{*} \Theta^{*}(\beta) =
       (\Phi^* )^{-1} \bar{g}^{\#} \Theta^{*}(\beta)=  (\Phi^* )^{-1} \Phi^* g^{\#}(\beta)= h^{\#}(\beta)$.\\

   \end{pf}

\begin{lem}
Suppose $V,W$ are two linear space of the same dimension. Suppose $\{ v_i \} _{1\leq i\leq m} \subset V ,
 \{ w_i \} _{1\leq i\leq m } \subset W  $ are two linear independent subsets. Suppose
 $\{  \alpha_j  \} _{1\leq j\leq l   } \subset V^* , \{ \beta_j \}_{1\leq j\leq l }\subset W^* $ are two linear independent subsets.
 Suppose $\alpha_j ( v_i  )  =\beta_j ( w_i  ) $ for any $1\leq i\leq m , 1\leq j\leq l$.
 Then there exists a linear isomorphism $F: V\rightarrow W$ such that $F(v_i )= w_i $  for any $1\leq i\leq m$ and
 $F^* ( \beta_j  )= \alpha_j $ for any $  1\leq j\leq l $.
\end{lem}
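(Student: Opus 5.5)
The plan is to construct bases of $V$ and $W$ in which the two configurations (vectors plus functionals) look identical, and to define $F$ by matching these bases. Let $n=\dim V=\dim W$, $A=\kappa<v_1,\dots ,v_m>\subset V$ and $A'=\kappa<w_1,\dots ,w_m>\subset W$. Let $B=\kappa<\alpha_j>\subset V^*$ and $B'=\kappa<\beta_j>\subset W^*$.

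\textbf{Step 1: adapt the functionals.} The hypothesis $\alpha_j(v_i)=\beta_j(w_i)$ says $\alpha_j|_A$ and $\beta_j|_{A'}$ have the same matrix $M=(\alpha_j(v_i))$. Let $r=\mathrm{rank}\, M$. Apply one invertible $l\times l$ change of coefficients to both families simultaneously, obtaining $\alpha'_j=\sum_t c_{jt}\alpha_t$ and $\beta'_j=\sum_t c_{jt}\beta_t$. Choose it so that $\alpha'_1,\dots ,\alpha'_r$ restrict to independent functionals on $A$ and $\alpha'_{r+1},\dots ,\alpha'_l$ vanish on $A$. The identity of matrices gives the same for the $\beta'$ on $A'$, and the hypothesis $\alpha'_j(v_i)=\beta'_j(w_i)$ is preserved. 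It suffices to prove the lemma for the primed families, since $F^*(\beta'_j)=\alpha'_j$ for all $j$ is equivalent to $F^*(\beta_j)=\alpha_j$ for all $j$.

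\textbf{Step 2: add vectors dual to the part of $B$ vanishing on $A$.} Since $\alpha'_1,\dots ,\alpha'_l$ are independent, I choose $u_1,\dots ,u_{l-r}\in V$ with $\alpha'_t(u_k)=0$ for $t\le r$ and $\alpha'_{r+s}(u_k)=\delta_{sk}$. I claim $\{v_i\}\cup\{u_k\}$ is independent. Indeed, if $a+\sum c_k u_k=0$ with $a\in A$, applying $\alpha'_{r+s}$ (which kills $A$) gives $c_s=0$, and then $a=0$. Likewise I choose $u'_k\in W$ using the $\beta'$, with the same prescribed values. On the spans $\kappa<v_i,u_k>$ and $\kappa<w_i,u'_k>$, the map $v_i\mapsto w_i$, $u_k\mapsto u'_k$ intertwines every $\alpha'_j$ with $\beta'_j$. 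On the $v_i$ this is the hypothesis, and on the $u_k$ the values were prescribed identically.

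\textbf{Step 3: complete to bases inside the common kernel.} This dimension count is the only real obstacle. Let $K=\bigcap_j\ker\alpha'_j$, so $\dim K=n-l$. Let $D=\kappa<v_i,u_k>$, of dimension $m+l-r$, and compute $\dim(D\cap K)$. An element $a+\sum c_ku_k$ lies in $K$ iff $c_k=0$ for all $k$ (apply $\alpha'_{r+k}$) and $a\in A\cap\ker\alpha'_1\cap\dots\cap\ker\alpha'_r$. The latter space has dimension $m-r$. Hence $\dim(D+K)=(m+l-r)+(n-l)-(m-r)=n$, so $V=D+K$. I therefore complete $\{v_i,u_k\}$ to a basis of $V$ by vectors $z_p\in K$, with $n-m-l+r$ of them, and complete $\{w_i,u'_k\}$ to a basis of $W$ by vectors $z'_p\in K'=\bigcap_j\ker\beta'_j$. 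The same count applies in $W$, and $W$ has the same dimension $n$, so the number of added vectors agrees. Finally, I define $F$ by $v_i\mapsto w_i$, $u_k\mapsto u'_k$, $z_p\mapsto z'_p$. It is an isomorphism taking a basis to a basis. One checks $\beta'_j\circ F=\alpha'_j$ on each basis vector: on the $z_p$ both sides vanish, and on the rest this was arranged in Step 2. Therefore $F^*(\beta'_j)=\alpha'_j$, and $F(v_i)=w_i$ holds by construction.
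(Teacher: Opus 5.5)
Your argument is correct and is essentially the paper's proof. Both write $V=\kappa<v_i>\oplus(\text{a complement of }\kappa<v_i>+K)\oplus(\text{a complement of }\kappa<v_i>\cap K\text{ in }K)$ with $K=\bigcap_j\ker\alpha_j$, so your $u_k$ and $z_p$ play the roles of the paper's $V_1$ and $V_0$; both then do the same in $W$ and match the middle pieces by equality of functional values. Your preliminary row reduction of the $\alpha_j,\beta_j$ makes that middle matching explicit, and it also makes visible the dimension count the paper uses without stating it when it pairs bases of $V_0$ and $W_0$.
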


\begin{pf} Choose some subspace $V_0 \subset \kappa< \alpha_j >^{\perp} $ such that

$\kappa<\alpha_j >^{\perp}=( \kappa<\alpha_j>^{\perp}\cap \kappa<v_i> ) \oplus V_0    $. Then choose some subspace $V_1 \subset V$ such that
$V= ( \kappa<\alpha_j>^{\perp} + \kappa<v_i>   )\oplus V_1 $. Let $\phi: \kappa<\alpha_j>\rightarrow \kappa<\beta_j>$ be the isomorphism that $\phi(\alpha_j)=\beta_j$ for any $1\leq j\leq l$. Let $\psi: V\rightarrow \kappa<\alpha_j>^*$ be the natural morphism such that
$\psi(v)(\alpha)=\alpha(v)  $ for any $v\in V, \alpha\in \kappa<\alpha_j>$. Then $\psi|_{V_1}: V_1 \rightarrow \kappa<\alpha_j>^*$ is injective and $ \kappa<\alpha_j>^* = \psi(V_1)\oplus \psi(\kappa<v_i>) $.

Let $\bar{\psi}: W\rightarrow \kappa<\beta_j>^*$ be the natural morphism as $\psi$. Then since $\alpha_j ( v_i  )  =\beta_j ( w_i  )$, there is $\kappa<\beta_j>^*=  ( \phi^*  )^{-1}( \kappa<\alpha_j >^*  ) = ( \phi^*  )^{-1}\circ \psi (V_1)  \oplus (\phi^*)^{-1}\circ \psi (\kappa<v_i>)   $, and $(\phi^*)^{-1}\circ \psi (\kappa<v_i>) =\bar{\psi}(\kappa<w_i>)  $.  So  we have
$\kappa<\beta_j>^* = (\phi^*)^{-1}\circ\psi(V_1)\oplus \bar{\psi}(\kappa<w_i>) $. Now since $\bar{\psi}$ is surjective, there is a subspace $W_1 \subset W$ such that $\bar{\psi}|_{W_1}: W_1 \rightarrow \kappa<\beta_j>^*$ is a injective morphism whose image is $(\phi^*)^{-1}\circ\psi(V_1)$.

Let $W_0 \in \kappa<\beta_j>^{\perp}$ be a subspace such that
$\kappa<\beta_j>^{\perp}=(\kappa<\beta_j>^{\perp}\cap \kappa<w_i>)\oplus W_0$.  With these notations we have

$V= V_1 \oplus ( (\kappa<\alpha_j>^{\perp}+\kappa<v_i>)  )= V_1 \oplus V_0 \oplus \kappa<v_i> $,

$W= W_1 \oplus ( (\kappa<\beta_j>^{\perp}+\kappa<w_i>)  )= W_1 \oplus W_0 \oplus \kappa<w_i>$.

Let $\{ v^{'}_k \}$ be a basis of $V_1$ and $\{ v^{"}_d \}$ be a basis of $V_0$. Then $\{ v_i \} \cup \{ v^{'}_k \} \cup \{ v^{"}_d \} $ is a basis of $V$.  Let $\{ w^{"}_d \} $ be a basis of $W_0$. For any $k$, let $w^{'}_k = (\bar{\psi}|_{W_1})^{-1}\circ (\phi^*)^{-1}\circ \psi(v^{'}_k )$, then $\{ w^{'}_k \} $ is a basis of $W_1$, and $ \{ w_i \} \cup \{ w^{'}_k \} \cup \{ w^{"}_d \} $ is a basis of $W$.

Now let $F: V\rightarrow W$ be the isomorphism such that $F(v_i)=w_i , F(v^{'}_k )=w^{'}_k  $ and $ F(v^{"}_d )= w^{"}_d $.  The last step of the proof is to check $F^* (\beta_j )= \alpha_j $. It is done by direct computation as follows. For any $1\leq i\leq m$ and $1\leq j\leq l$,

$F^* (\beta_j )(v_i )=  \beta_j ( F(v_i)  )=  \beta_j (w_i )= \alpha_j (v_i )$.

$F^* (\beta_j )(v^{'}_k  )= \beta_j (  F( v^{'}_k ) )= \beta_j ( w^{'}_k )= \beta_j (   (\bar{\psi}|_{W_1})^{-1}\circ (\phi^*)^{-1}\circ \psi(v^{'}_k )    ) = (\phi^*)^{-1}\circ \psi (v^{'}_k)(\beta_j )$

$= \psi(v^{'}_k )( \phi^{-1}(\beta_j )   )= \psi(v^{'}_k )(\alpha_j) =\alpha_j (v^{'}_k)  $.

$F^* (\beta_j)(v^{"}_d)= \beta_j (  F(v^{"}_k) )=\beta_j (w^{"}_k)=0=\alpha_j(v^{"}_k)$.\\

\end{pf}

\begin{lem}
Suppose $V$ and $W$ are linear spaces with the same dimension. Denote the dual spaces of $V$ and $W$ as $V^{*}$ and $W^*$ respectively.
Suppose there are subspaces $V_0 \subset V  ,  W_0 \subset W , V^{\#}_0 \subset V^{*}, W^{\#}_0 \subset W^{*}$ and morphisms
$h_0 : V_0 \rightarrow W_0 , g_0 : W_0 \rightarrow V_0 , h^{\#}_0 : V^{\#}_0 \rightarrow W^{\#}_0 , g^{\#}: W^{\#}_0 \rightarrow V^{\#}_0 $ such that $g_0 \circ h_0 = \lambda id_{V_0 } ; h_0 \circ g_0 = \lambda id_{W_0 }; g^{\#}_0 \circ h^{\#}_0 =\lambda id_{V^{\#}_0 }, h^{\#}_0 \circ g^{\#}_0 =\lambda id_{W^{\#}_0 }$, and $ ( h_0 (v) , \beta   )= ( v, g^{\#}_0 (\beta)   )  $ for any $v\in V_0 , \beta \in W^{\#}_0 $, $ ( g_0 (w) , \alpha   )= ( w, h^{\#}_0 (\beta)   )  $ for any $w\in W_0 , \alpha \in V^{\#}_0 $, then there exist an isomorphism $h: V\rightarrow W$ and an isomorphism $g: W\rightarrow V  $ such that $h|_{V_0}=h_0 ,
   h^{*}|_{W^{\#}_0}=g^{\#}_0 , g|_{W_0 }= g_0 , g^{*}_{V^{\#}_0 }=h^{\#}_0 $ and $g\circ h = \lambda id_{V} , h\circ g =\lambda id_{W}$.

\end{lem}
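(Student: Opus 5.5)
The approach reduces to a "free" model plus Lemma 6.9. Assume $\lambda\neq 0$; in the application $\lambda=\lambda_d\neq0$ for the non-special eigenvalues, and the case $\lambda=0$ is handled separately by taking zero maps. Then $h_0$ and $g_0$ are mutually inverse up to the scalar $\lambda$. In particular $h_0:V_0\to W_0$ and $g_0:W_0\to V_0$ are isomorphisms, and likewise $h^{\#}_0, g^{\#}_0$ on the dual side. The target is an isomorphism $h:V\to W$ with prescribed behaviour on $V_0$ and with $h^*$ prescribed on $W^{\#}_0$. Once $h$ is found, $g$ is forced to be $g=\lambda h^{-1}$. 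So the task reduces to constructing a single isomorphism $h$ and then checking that $g=\lambda h^{-1}$ restricts correctly.

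\textbf{Reformulation.} The conditions become:
\begin{itemize}
\item[(a)] $h|_{V_0}=h_0$;
\item[(b)] $h^*|_{W^{\#}_0}=g^{\#}_0$;
\item[(c)] $\lambda h^{-1}|_{W_0}=g_0$, which is equivalent to (a) because $g_0=\lambda h_0^{-1}$;
\item[(d)] $(\lambda h^{-1})^*|_{V^{\#}_0}=h^{\#}_0$, i.e.\ $h^*\circ h^{\#}_0=\lambda\,\mathrm{id}$ on $V^{\#}_0$, which is equivalent to $h^*|_{W^{\#}_0}=\lambda (h^{\#}_0)^{-1}=g^{\#}_0$ on $h^{\#}_0(V^{\#}_0)=W^{\#}_0$, i.e.\ to (b).
\end{itemize}
So it suffices to find an isomorphism $F:V\to W$ with $F(v)=h_0(v)$ for $v\in V_0$ and $F^*(\beta)=g^{\#}_0(\beta)$ for $\beta\in W^{\#}_0$.

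\textbf{Application of Lemma 6.9.} Choose a basis $\{v_i\}$ of $V_0$ and set $w_i=h_0(v_i)$, a basis of $W_0$. Choose a basis $\{\beta_j\}$ of $W^{\#}_0$ and set $\alpha_j=g^{\#}_0(\beta_j)$, which is linearly independent in $V^*$ since $g^{\#}_0$ is injective. The compatibility hypothesis of Lemma 6.9 requires $\alpha_j(v_i)=\beta_j(w_i)$, i.e.\ $(v_i,g^{\#}_0\beta_j)=(h_0v_i,\beta_j)$, which is exactly the given adjointness relation. Lemma 6.9 therefore yields the isomorphism $F$, and we set $h=F$ and $g=\lambda F^{-1}$. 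Then $g\circ h=\lambda\,\mathrm{id}_V$ and $h\circ g=\lambda\,\mathrm{id}_W$ hold by construction.

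\textbf{Remaining checks and the main obstacle.} The one hypothesis not yet used is the second adjointness relation, $(g_0w,\alpha)=(w,h^{\#}_0\alpha)$. It is implied by the first: write $w=h_0v$ and $\alpha=g^{\#}_0\beta$ (possible since $g^{\#}_0$ is onto $V^{\#}_0$). Then both sides equal $\lambda(v,g^{\#}_0\beta)=\lambda (h_0v,\beta)$. So consistency holds automatically. The step most likely to need care is the degenerate case $\lambda=0$, where $h_0, g_0$ need not be invertible and the reduction $g=\lambda h^{-1}$ fails. For the paper's use this case is avoided, since $\bar H_{L_m},\bar G_{L_m}$ are taken to be zero. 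I would state the lemma for $\lambda\neq0$, or note that for $\lambda=0$ the conclusion requires the maps to be zero. The equal-dimension hypothesis is what makes Lemma 6.9 applicable.
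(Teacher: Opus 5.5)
Your argument is correct, but it takes a different route from the paper's.

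\textbf{The paper's route.} The paper proves the lemma by induction on $\dim V-\dim V_0$. It picks $e\in V\setminus V_0$ and forms the functional $\eta_e(\beta)=(e,g^{\#}_0(\beta))$ on $W^{\#}_0$. It then enlarges $V_0,W_0$ by $e$ and a suitably chosen $f\notin W_0$ with $(f,-)=\eta_e$ on $W^{\#}_0$, setting $h_1(e)=f$ and $g_1(f)=\lambda e$. The choice of $f$ splits into three cases, according to whether $\eta_e$ is represented by an element of $W_0$ and whether $(W^{\#}_0)^{\perp}\subset W_0$. The third case is shown impossible by a dimension count using $\dim V=\dim W$. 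The dual subspaces are never enlarged. The condition $h^*|_{W^{\#}_0}=g^{\#}_0$ simply emerges at the end, from the adjointness relation once $V_0=V$.

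\textbf{Your route.} You observe that for $\lambda\neq0$ all four given maps are isomorphisms, and $g$ is forced to equal $\lambda h^{-1}$. The four extension conditions therefore collapse to two conditions on a single isomorphism $F$: extend $h_0$, and have $F^*$ restrict to $g^{\#}_0$ on $W^{\#}_0$. That is exactly Lemma 6.9, with its compatibility hypothesis supplied by the first adjointness relation.

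\textbf{Comparison.} Your approach is shorter and reuses the preceding lemma instead of re-deriving its dimension argument inside a case analysis. It also makes two points explicit:
\begin{itemize}
\item The second adjointness hypothesis is redundant.
\item $\lambda\neq0$ is genuinely required. The paper uses this silently, for instance by dividing by $\lambda$ in its Case (3), and the conclusion that $h,g$ are isomorphisms with $g\circ h=\lambda\,\mathrm{id}$ forces it anyway when $V\neq0$.
\end{itemize}
The paper's induction has the modest advantage of being self-contained and building the extension one vector at a time.
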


\begin{pf} We prove it by induction on $\dim V-\dim V_0$. If $\dim V -\dim V_0 =0$, then $h=h_0 ,g=g_0$ meet the requirements of $h,g$. Suppose
the lemma is proved for cases when $\dim V-\dim V_0 \leq N$. Suppose the present case satisfies $\dim V - \dim V_0 = N+1$.
Choose any $e \in V\setminus V_0 $. Let $\eta_e \in ( W^{\#}_0 )^* $ be the element determined by $\eta_e ( \beta ) = (e,g^{\#} _0 (\beta)   )  $.  For $w\in W$, let $(w,-)\in ( W^{\#}_0 )^* $ be the element determined by $ (w,-)(\beta)= (w,\beta)  $ for any $\beta \in W^{\#}_0$.

Case (1). $\eta_e \notin \{ (w,-) \}_{w\in W_0 })$.   In these cases choose any $f\in W$ such that $(f, \beta)= \eta_e (\beta)$ for any $\beta \in W^{\#}_0 $. Since $\eta_e \notin \{ (w,-) \}_{w\in W_0 })$ then $f\notin W_0 $. Let $V_1 =V_0 \oplus \kappa<e> ,
 W_1 = W_0 \oplus \kappa<f> $. Extend $h_0 $ to $h_1 : V_1 \rightarrow W_1$ by requesting $h_1 (e)= f$, and extend $g_0 $ to $g_1 : W_1 \rightarrow V_1$ by requesting $g_1 (f)= \lambda e$. Then the combination $[ V ,V_1 ; W, W_1 ; h_1 , g_1 ; V^*  ,V^{\#}_0 ; W^{*}, W^{\#}_0 ; h^{\#}_0 , g^{\#}_0   ]$ satisfies all conditions in this lemma and $\dim V -\dim V_1 =N $. So by induction such $h:V\rightarrow W$ exist.

 Case (2).  $\eta_e \in \{ (w,-) \}_{w\in W_0 })$ and $( W^{\#}_0   )^{\perp} \nsubseteq  W_0 $. In these cases choose $f_0 \in W_0$ such that $(f_0 ,-  )= \eta_e  $, and choose any $f^{'}\in ( W^{\#}_0   )^{\perp}\setminus W_0 $, then let $f= f_0 + f^{'}$. Let $V_1 =V_0 \oplus \kappa<e> , W_1 = W_0 \oplus \kappa<f> $. Extend $h_0 $ to $h_1 : V_1 \rightarrow W_1$ by requesting $h_1 (e)= f$, and extend $g_0 $ to $g_1 : W_1 \rightarrow V_1$ by requesting $g_1 (f)= \lambda e$, and the rest of the proof is the same as Case (1).  The key is we need to find a $f$ such that  $(f_0 ,-  )= \eta_e  $ and $f\notin W_0$.

 Case (3).  $\eta_e \in \{ (w,-) \}_{w\in W_0 })$ and $( W^{\#}_0   )^{\perp} \subset  W_0 $.  We prove these cases are impossible.
 First we prove if these conditions are satisfied then $(e,- )\in \{ (v,-) \}_{v\in V_0 }  $ and $( V^{\#}_0  )^{\perp} \subset V_0  $.

 By above conditions there exists $w_0 \in W_0$ such that $ \eta_e = (w_0 ,- )$. So $ (e, g^{\#}_0 (\beta)   )=(w_0 ,\beta) $
for any $\beta \in W^{\#}_0$. Now for any $\alpha\in V^{\#}_0$,

$(e,\alpha)=\frac{1}{\lambda}(e,g^{\#}_0 h^{\#}_0 (\alpha) ) = \frac{1}{\lambda}(w_0 ,h^{\#}_0 (\alpha))=
 \frac{1}{\lambda}( g_0 (w_0 ) , \alpha ) $.

 So $v=\frac{1}{\lambda} g_0 (w_0 ) \in V_0 $ is an element such that $(e,- )= (v,- )  $ as an element in  $( V^{\#}_0 )^* $.

 Next let $w\in W_0 \cap ( W^{\#}_0 )^{\perp}$, so  $(w,\beta  )=0$  for any $\beta \in W^{\#}_0 $. So for any $\alpha \in V^{\#}_0 $ there is $( g_0 (w) ,\alpha   )= (w, h^{\#}_0 (\alpha ))=0 $. So $g_0 (w) \in V_0 \cap ( V^{\#}_0 )^{\perp}$.   Similarly for any  $v \in V_0 \cap ( V^{\#}_0 )^{\perp}$ we can prove $h_0 (v) \in \in W_0 \cap ( W^{\#}_0 )^{\perp} $. So by $h_0 \circ g_0 = \lambda id_{W_0 }$ and
 $g_0 \circ h_0 = \lambda id_{V_0 }$,  there is $\dim  W_0 \cap ( W^{\#}_0 )^{\perp} = \dim  V_0 \cap ( V^{\#}_0 )^{\perp} $.  Now since
 $( W^{\#}_0   )^{\perp} \subset  W_0 $ then $\dim  V_0 \cap ( V^{\#}_0 )^{\perp}=\dim  W_0 \cap ( W^{\#}_0 )^{\perp} =\dim( W^{\#}_0 )^{\perp} = ( V^{\#}_0 )^{\perp} $. So $( V^{\#}_0  )^{\perp} \subset V_0  $.

   Finally, $(e,- )\in \{ (v,-) \}_{v\in V_0 }  $ and $( V^{\#}_0  )^{\perp} \subset V_0  $ contradicts with the condition $e\notin V_0 $. Because by $(e,- )\in \{ (v,-) \}_{v\in V_0 }  $ there exists $v_0 \in V_0$ such that $(e,- )=(v_0 .-)$ , so  $e-v_0 \in ( V^{\#}_0  )^{\perp} $. Then by $( V^{\#}_0  )^{\perp} \subset V_0  $, $e-v_0 \in V_0 $, so $e\in V_0 $.

\end{pf}

\begin{thm}
Suppose  $\Theta = [ ( \bar{V}_{i,j} , \bar{A}^{i,j}_{k,l} ) ,  \cup_{i\in I} \{ \bar{U}_i ,\bar{W}_i \}    ] $ is a decorated $\mathbb{Q}^2 _I -$representation satisfying conditions (1)$\sim$ (5) in Theorem 5.2. Suppose $\Gamma$ is a Coxeter system such that
$m_{i,j}\geq 5$ for any $i\neq j$. Then $\Theta$ is $(\Gamma ,v)$-realizable for any $v\in \kappa$.
\end{thm}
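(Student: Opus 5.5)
We follow the four-step procedure of Section 6. Starting from $\Theta$, we first carry out the space realization of Section 6.1. This produces $V_i,V^{\#}_i$ with the perfect pairings $[-,-]_i$ (Proposition 6.1), the canonical decompositions $V_i=V_{i,j}\oplus V_{i/j}$, $V^{\#}_i=V^{\#}_{i,j}\oplus V^{\#}_{i/j}$ with $V^{\#}_{i,j}=(V_{i/j})^{\perp}$ (Proposition 6.2), and free summands $V^{"}_i$ of dimension $d_i$. We keep the $d_i$ as free parameters, to be taken very large at the end. This freedom is also what lets us arrange $\dim V_i=\dim V_j$ whenever $m_{i,j}$ is odd (Remark 6.1).

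Next we choose $\Delta_i\in\mathscr{B}_i$ and obtain $\phi^j_i,\phi^{\# j}_i$ as in Section 6.2. By Lemmas 6.4, 6.6 and 6.7, for each pair $i\neq j$ this yields a matched pair of cross morphisms
\[[V_{i/j},V^0_{i/j};V_{j/i},V^0_{j/i};\psi^i_j,\psi^j_i]\quad\text{and}\quad [V^{\#}_{i/j},V^{\#0}_{i/j};V^{\#}_{j/i},V^{\#0}_{j/i};\psi^{\#i}_j,\psi^{\#j}_i].\]
By Theorem 6.3 it suffices to show that every such matched pair is $(m_{i,j};q)$-extendable. Enlarging $V^{"}_i$ is exactly adding a trivial factor, so Theorem 6.5 applies, and we must verify its conditions (1)--(3) for each pair.

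The key freedom is in $\Delta_i$. We choose each $\Delta_i$ \emph{generically}: once $d_i$ is large, $V_{i/}$ is large, and $\Delta_i$ restricted to the complement of $V^0_i$ in $\bar V^0$ is arbitrary. For such a choice we aim to show the following.
\begin{itemize}
\item The images $\psi^i_j(V^0_{i/j})$ meet $V^0_{j/i}$ only in what $\Theta$ forces. In particular the iterated words $[\dots gh]_k$ become undefined after very few steps, so $N_1,N_2,M_1,M_2\le 1$.
\item $V^\infty_0=W^\infty_0=0$.
\end{itemize}
The second point makes condition (1) of Theorem 6.5 vacuous. The first gives condition (3), since then $[\frac{N+1}{2}]+1\le 2$, while $m_{i,j}\ge5$ gives $L_m\ge 2$; for $m=5$ we have $L_m=2$. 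This is where the hypothesis $m_{i,j}\ge5$ enters. It also leaves enough distinct values $\lambda_d$ to make the Vandermonde-type systems solvable.

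For condition (2), the systems $\mathscr{E}^{i,j}_{v,u}$, $\mathscr{E}^{i,\bar j}_{v,u}$, $\tilde{\mathscr{E}}$ of Proposition 6.5 are Vandermonde-type systems in the unknowns $X_{\cdot;d}$, indexed by $d=1,\dots,L_m$. Each has at most $[\frac{i+j+2}{2}]+1\le 2$ equations when $i,j\le1$, so a system with $L_m\ge2$ distinct nodes $\lambda_d$ is always solvable. One must check the count of equations carefully in the boundary case $i=j=1$, $a=2$; there the relation from (1) of Proposition 6.5 should make the top equation dependent.

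The main obstacle is the genericity step: proving that a generic $\Delta_i$ actually forces $N_1,N_2\le1$ and $V^\infty_0=0$, while still satisfying condition (2) of $\mathscr{B}_i$ (injectivity when $m_{i,j}$ is odd, Lemma 6.5). The elements of $V^0_{i/j}$ that lie in $\Theta$-determined parts (the $\bar\phi$-components in $\bar W_j$) cannot be moved by $\Delta_j$. I would first try to show that $h(V^0_{i/j})\cap V^0_{j/i}$ is already controlled by the $\bar W$-parts, using conditions (2)--(4) of Theorem 5.2. The $V_{j/}$-component $\Delta_j$ can then be chosen to avoid $V^0_{j/i}$ except on the kernel forced by $\Theta$. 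If that fails, I would instead absorb the forced part into $N\le1$, accepting length-one chains, which is still enough for $L_m\ge2$.
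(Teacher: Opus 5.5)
You have the right skeleton: space realization, a choice of $\Delta_i\in\mathscr{B}_i$, reduction via Theorem 6.1 to extendability of each matched pair, then Theorem 6.5 with $L_{m_{i,j}}\ge 2$. But the step that actually carries the proof is left as an unproved ``genericity'' hope, and the fallback you propose would not suffice.

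\textbf{The missing construction.} What is missing is an explicit $\Delta_i$ that makes every chain have length zero. The paper proceeds as follows. Choose a complement $U_i$ of $V^0_i$ in $\bar V^0$. Taking $d_i$ large, choose $U'_i\subset V_{i/}$ with $\dim U'_i=\dim U_i$ and $U'_i\cap(X_i+\bar W_i+V^0_i)=0$, where $X_i=\{v-\bar\phi^i_i(v)\}_{v\in V^0_i}$. Then put $\Delta_i(v_1+w)=v_1-\bar\phi^i_i(v_1)+\phi(w)$ for an isomorphism $\phi:U_i\to U'_i$.

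You noticed that $\Delta_i$ is free on $U_i$, but not why that freedom reaches every vector that matters. The paper uses $V^0_i\cap V^0_j=V_{i,j}$ inside $\bar V^0$. Hence every nonzero $v=v_1+w\in V^0_{j/i}$ has $w\neq 0$, so no element of $V^0_{j/i}$ sits where $\Delta_i$ is pinned down. Then
\[
\phi^j_i(v)=\bar\phi^j_i(v)+\Delta_i(v_1)+\phi(w),\qquad \bar\phi^j_i(v)\in\bar W_i,\quad \Delta_i(v_1)\in X_i,\quad 0\ne\phi(w)\in U'_i,
\]
so $\phi^j_i(v)\notin V^0_i$. This has three consequences:
\begin{itemize}
\item $\psi^j_i$ is injective with $\psi^j_i(V^0_{j/i})\cap V^0_{i/j}=0$. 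Hence $N_1=N_2=0$ and $V^\infty_0=W^\infty_0=0$, and the same construction works on the dual side.
\item It also gives the injectivity required in $\mathscr{B}_i$ for odd $m_{i,j}$, since $\phi^j_i$ is the identity from $V_{j,i}$ onto $V_{i,j}\subset V^0_i$.
\item Your worry that the $\bar W$-components ``cannot be moved'' is beside the point. One does not move them; one adds a component in a direction independent of $V^0_i+\bar W_i+X_i$.
\end{itemize}

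\textbf{Why $N\le 1$ is not enough.} This target is too weak for $m_{i,j}=5$, and your equation count is off. If $X_1$ and $S_1$ are nonzero, the system $\mathscr{E}^{1,1}_{v,u}$ consists of $[\frac{1+1+2}{2}]+1=3$ equations
\[
\sum_d\lambda_d^{a}X^{1,1}_{v,u;d}=\Upsilon^{1,1}_{v,u}(2a)_{00},\qquad a=0,1,2,
\]
in only $L_5=2$ unknowns. Proposition 6.5(1) does not make the top equation dependent. Solvability requires
\[
\Upsilon(4)_{00}=(\lambda_1+\lambda_2)\Upsilon(2)_{00}-\lambda_1\lambda_2\Upsilon(0)_{00},
\]
a condition on the given pairings that nothing in $\Theta$ forces.

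With $N_1=N_2=M_1=M_2=0$, by contrast, only two kinds of system remain. The systems for $X^{0,0}$ and $Y^{0,0}$ have two equations each, with coefficient rows $1$ and $\lambda_d$. The systems involving $\bar e^{-1},\bar f^{-1},\bar\alpha^{-1},\bar\beta^{-1}$ have one equation each. All of these are solvable precisely because $L_m\ge 2$ with distinct $\lambda_d$.
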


\begin{pf} The "space realization" construction of $\Theta$ produce some spaces $V_i , V^{\#}_i$ ($i\in I$) and nondegenerate pairing $(-,-)$ between them. Notice $\dim V_i $ can be made arbitrary large by adding trivial factors to $V_i , V^{\#}_i $. Every $V_i$ possesses
some decompositions $V_i = V_{i,j}\oplus V_{i/j}$.  The proof is divided into two steps.

(1) There exist  $\{  \phi^i _j \}_{i,j \in I}$ and $\{  \phi^{\# i} _j \}_{i,j \in I}$  such that  for any $j\neq i$,  $\psi^j _i ( V^0 _{j/i} ) \cap V^0 _{i/j}= \{ 0 \} $
and  $\psi^{\# j} _i ( V^{\# 0} _{j/i} ) \cap V^{\# 0} _{i/j}= \{ 0 \} $, where $\psi^j _i = \phi ^j _i |_{ V^0 _{j/i} }$ and
$\psi^{\# j} _i = \phi ^{\# j} _i |_{ V^{\# 0} _{j/i} }$.

We show for any $i\in I$ there exist $\Delta_i : \bar{V}^0 \rightarrow V_{i/}$, such that for any $j\neq i$, the morphism
$\phi^j _i = \bar{\phi}^j _i + \Delta_i |_{V^0 _j} $ satisfy the condition in (1). Recall the restrictions on $\Delta_i$ are
(a) For any $v\in V^0 _i \subset \bar{V}^0 $, $\Delta_i (v) = v- \bar{\phi}^i _i (v) $; (b) if $m_{i,j}$ is odd, then
$\ker ( \Delta_i   ) \cap \ker ( \bar{\phi}^j _i  )=0 $.

Recall notations in section 5.3.  For the condition (b), there exist a lifting $ \bar{W}_i $ of $W_i $ such that $\bar{\phi}^i _i (v)\neq v$ for any $v\in V^0 _i$. The argument is as follows.

First let $\eta^{'}_i : W_i \rightarrow V_i $ be any injective morphism such that $p^i _i \circ \eta^{'}_i = id_{W_i }$. Then suppose $\dim V_i$ is large enough,so there is an injective morphism $\tau_i : W_i \rightarrow V_{i/}$ such that
$Im(\tau_i) \cap ( V^0 _i + Im ( \eta^{'}_i )   )= \{ 0 \}$. Let $\eta_i = \eta^{'}_i + \tau_i $, then $p^i _i \circ \eta_i =id_{W_i} $. Denote $Im (\eta_i)$ as $\bar{W}_i$. Now for any $v\in V^0 _i$,
$\bar{\phi}^i _i ( v )= \eta_i ( \bar{\Phi}^i _i (v)    )= \eta^{'}_i (\bar{\Phi}^i _i (v)   ) + \tau_i (\bar{\Phi}^i _i (v)    ) $. If this element is $v$ then it is in $V^0 _i$, contradicts with the fact that $Im(\tau_i) \cap ( V^0 _i + Im ( \eta^{'}_i )   )= \{ 0 \}$.

With this lifting $\bar{W}_i$, $v-\bar{\phi}^i _i (v)\neq 0 $ for any $v\in V^0 _i$. let $X_i = \{ v-\bar{\phi}^i _i (v) \}_{ v\in V^0 _i }   $.
Suppose $\bar{V}^0 = V^0 _i \oplus U_i $ for some subspace $U_i$.  We can choose space realizations of $\Theta$ for which $\dim V_{i/}$  is large enough such that there exist a subspace $U^{'}_i \subset V_{i/}$ satisfying  (a) $\dim U^{'}_i =\dim U_i $,  (b) $U^{'}_i \cap ( X_i + \bar{W}_i + V^0 _i ) =\{ 0 \}$.  Choose an linear isomorphism \
$\phi: U_j \rightarrow U^{'}_j$.

Let $\Delta_i: \bar{V}^0 \rightarrow V_{i/}$ be defined as ,  for any  $v+w \in V^0 _i \oplus U_i = \bar{V}^0 $ where $v\in V^0 _i$ and $w\in U_i$, $\Delta_i (v+ w) =  ( v-\bar{\phi}^i _i (v)   )+ \phi (w) $.  Let $\phi^j _i = \bar{\phi}^j _i + \Delta_i |_{V^0 _j }$ for any
$j$.

We show for any $j\neq i$, $\phi^j _i ( V^0 _{j/i}  ) \cap V^0 _i = \{ 0 \} $. Suppose $0\neq v\in V^0 _{j/i}$ and $\phi^j _i (v)\in V^0 _i$.
suppose $v=v_1 +v_2$ where $v_1 \in V^0 _i $ and $v_2 \in U_i $. Since $V^0 _i \cap V^0 _j =V_{i,j}$ in $\bar{V}^0 $ and $v\notin V_{j,i}$, there is $v_2 \neq 0$. Then there is an identity $ \phi^j _i (v) =\bar{\phi}^j _i (v) + \Delta_i (v )=\bar{\phi}^j _i (v) + \Delta_i (v_1 )  + \Delta_i (v_2)  $, so $\Delta_i (v_2) = \phi (v_2 )= \phi^j _i (v)-\bar{\phi}^j _i (v)-\Delta_i (v_1 )  $ is a nonzero element in
$U^{'}_i \cap ( X_i + \bar{W}_i + V^0 _i )$, which contradicts with the choice of $U^{'}_i $. Notice $\bar{\phi}^j _i (v)\in \bar{W}_i ,\Delta_i (v_1 )\in X_i  $.

(2) If a set of $\psi ^j _i : V^0 _{j/i} \rightarrow V_{i/j}$ satisfy  the properties in step (1), then the matched pair of crossing morphisms $ [ V_{i/j} , V^{0}_{i/j} ; V_{j/i}, V^0 _{j/i} ; \psi^i _j , \psi^j _i     ]   $ and $[ V^{\#} _{i/j} , V^{\# 0}_{i/j} ; V^{\#} _{j/i}, V^{\# 0} _{j/i} ; \psi^{\# i} _j , \psi^{\# j} _i     ] $ is $(m_{i,j},v)$-extendable.

Since the conditions in (1) is true,  for the cross morphism
$ [ V_{i/j} , V^{0}_{i/j} ; V_{j/i}, V^0 _{j/i} ; \psi^i _j , \psi^j _i     ]   $,there are $N_1 , N_2 =0$ and $X_0 = V^{0}_{i/j}  $, $Y_0 = V^0 _{j/i}   $. Suppose $\{ e^0 _u \} $ is a basis of $X_0$, $\{ f^0 _v \}$ is a basis of $Y_0$. So if let $\bar{f}^{-1}_u = \psi^i _j ( e^0 _u )$ and $ \bar{e}^{-1}_v = \psi^j _i (f^0 _v)  $, then $\{ \bar{f}^{-1}_u \}$ is a basis of $\psi^i _j ( V^{0}_{i/j}  )$ and $\{ \bar{e}^{-1}_v \}$ is a basis of $\psi^j _i ( V^0 _{j/i}  )$.

Similarly $M_1 , M_2 =0$ for the cross morphism  $ [ V^{\#}_{i/j} , V^{\# 0}_{i/j} ; V^{\#}_{j/i}, V^{\# 0} _{j/i} ; \psi^{\# i} _j , \psi^{\# j} _i     ]   $, and $S_0 = V^{\# 0}_{i/j}  $, $T_0 = V^{\# 0} _{j/i}   $. Suppose $\{ \alpha^0 _u \} $ is a basis of $S_0 $, $\{ \beta^{\# 0} _v \}$ is a basis of $T_0$. So if let $\bar{\beta}^{-1}_u = \psi^{\# i} _j ( \alpha^0 _u )$ and $ \bar{\alpha}^{-1}_v = \psi^{\# j} _i (\beta^0 _v)  $, then $\{ \bar{\beta}^{-1}_u \}$ is a basis of $\psi^{\# i} _j ( V^{\# 0}_{i/j}  )$ and $\{ \bar{\alpha}^{-1}_v \}$ is a basis of $\psi^{\# j} _i ( V^{\# 0} _{j/i}  )$.

The related classes of equations in this case is

\noindent(1) $\sum_d  \lambda_d  X^{00}_{u,d;v,d} = ( \bar{f}^{-1}_u ,\bar{\beta}^{-1}_v  )  $ and $\sum_d   X^{00}_{u,d;v,d} = ( e^0 _u ,\alpha^0 _v )  $.

\noindent (2) $\sum_d  \lambda_d  Y^{00}_{u,d;v,d} = ( \bar{e}^{-1}_u ,\bar{\alpha}^{-1}_v  )  $ and $\sum_d   Y^{00}_{u,d;v,d} = ( f^0 _u ,\beta^0 _v )  $.

\noindent(3) $\sum_d X^{0,\bar{-1}}_{u,d;v,d} =  (e^0 _u , \bar{\alpha}^{-1}_v ) $.
(4)$\sum_d X^{\bar{-1} 0}_{v,d;u,d}= ( \bar{e}^{-1}_v ,\alpha^0 _u ) $.

\noindent(5) $\sum_d X^{\bar{-1} \bar{-1} }_{v,d;u.d} = ( \bar{e}^{-1}_v , \bar{\alpha}^{-1}_u  )   $.
(6)$\sum_d Y^{\bar{-1} \bar{-1} }_{v,d;u.d} = ( \bar{f}^{-1}_v , \bar{\beta}^{-1}_u  )   $.

If $m_{i,j}\geq 5$, then $L_{m_{i,j}}\geq 2$, so above six types of equation systems  are  all solvable. By Theorem 5.5 the matched pair of cross morphisms is
$(m_{i,j} ,v)$-extendable after adding a trivial factor with large enough dimension to it. So the theorem is proved by using Theorem 6.1.

\end{pf}

\begin{rem}
In above proof, a way to construct intersecting type $H-$representations for the mentioned Coxeter type $\Gamma$ and generic $q$ is developed. Those constructed $H-$representations satisfy the conditions: $A^i _j ( V^0 _{i/j}  ) \cap V^0 _{j/i} =\{ 0\}   $ for any $i\neq j \in I$.
\end{rem}

\hspace{-0.70cm} {\sc School of Mathematics}

\nd{\sc Hefei university of technology }

\nd {\sc Hefei 230009 China}

\nd {\sc E-mail addresses}: {\sc Zhi Chen} ({\tt zzzchen@ustc.edu.cn}).


\begin{thebibliography}{99}



\bibitem[Ar]{Ar}  {\sc S.Ariki},On the decomposition numbers of the Hecke algebra of G(m,1,n) , {\it J. Math. Kyoto Univ. }{\bf 36}(1996  ),789-808.


\bibitem[Ch]{Ch}  {\sc I.Cherednik},Macdonald's evaluation conjectures and difference Fourier transform, {\it Invent. Math.}{\bf 122 }( 1995 ),191-216 .

\bibitem[EW]{EW}  {\sc B,Elias, G.Williamson}, The Hodge theory of Soergel bimodules  , {\it Ann. Math.(2) }{\bf180(3) }(2014),1089-1136.


\bibitem[GP]{GP}  {\sc M.Geck, G.Pfeiffer},Characters of finite Coxeter groups and Iwahori-Hecke algebras , {\it London Math. Soc. Mono., New Series }{\bf21 }(2000  ).

\bibitem[GJ]{GJ}  {\sc M.Geck, N.Jacon},Representations of Hecke algebras at roots of unity, {\it Algebra and Applications }{\bf 15}(2011  )



\bibitem[Hu]{Hu}  {\sc J.E.Humphreys}, Reflection groups and Coxeter groups , {\it Camb. Stud. in Adv. Math. }{\bf 29}( 1990 ) .


\bibitem[IM]{IM}  {\sc N.Iwahori, H.Matsumoto }, On some Bruhat decomposition and the strusture of the Hecke rings of the p-adic Chevalley groups  , {\it Inst. Hautes.\'Etudes Sci. Publ. Math. }{\bf 25 }( 1965 ),  5-48.

\bibitem[KL1]{KL1}  {\sc D.Kazhdan, G.Lusztig },Proof of the Deligne-Langlands conjecture for affine Hecke algebras , {\it Invent. Math.  }{\bf 87 }( 1987 ),   153-215.

\bibitem[KL2]{KL2}  {\sc D.A.Kazhdan, G.Lusztig},Representations of Coxeter groups and Hecke algebras, {\it Invent. Math. }{\bf 53}(1979  ),165-184 .

\bibitem[Lu1]{Lu1}  {\sc G.Lusztig},Affine Hecke algebras and their graded version, {\it J.Amer.Math.Soc.  }{\bf 2 }( 1989 ), 599-635.

\bibitem[Lu2]{Lu2}  {\sc G.Lusztig}, Hecke algebras with Unequal Parameters   , {\it CRM Monograph Series}{\bf 18 }(2003 ), Amer.Math. Soc.,Providence RI.



\bibitem[OS]{OS}  {\sc E.Opdam,M.Solleveld },Discrete series characters for affine Hecke algebras   , {\it Acta Math. }{\bf 205 }(2010  ),No.1, 105-187.










\bibitem[Gy]{Gy}  {\sc A.Gyoja},On the existence of a W-graph for an irreducible representation of a Coxeter group , {\it J. Algebra }{\bf 86 }(1984  ), 422-438.



\bibitem[Ra]{Ra}  {\sc A.Ram},Seminormal representations of Weyl groups and Iwahori-Hecke algebras, {\it Proc. London Math. Soc. (3) }{\bf 75}( 1997 ),99-133.

\bibitem[So]{So}  {\sc W.Soergel },The combinatorics of Harish-Chandra bimodules , {\it J. Reine Angew. Math. }{\bf 429}(1992), 49-74.


\bibitem[Xi2]{Xi2}  {\sc N. Xi },Representations of Affine Hecke algebras , {\it Lecture notes in Math. }{\bf 1587 }(1994 ), Springer-Verlag.

\bibitem[Xi1]{Xi1}  {\sc N. Xi},Representations of affine Hecke algebras and based rings of affine Weyl groups, {\it J.Amer.Math.Soc.}{\bf 20 }(2007 ), 211-217.

\end{thebibliography}
\end{document}